\newtheorem{theorem}{Theorem}
\newtheorem{lemma}{Lemma}
\newtheorem{definition}{Definition}
\newtheorem{prop}{Proposition}
\newtheorem{remark}{Remark}
\newtheorem{corollary}{Corollary}
\begin{document}

\title[Second Order Level-set PDE in Periodic Media]{On the Homogenization of Second Order Level Set PDE in Periodic Media}
\author[P.S.\ Morfe]{Peter S.\ Morfe}

\begin{abstract}This paper analyzes two classes of second order level set PDE in periodic media in the parabolic scaling.  First, we study fully nonlinear geometric operators under general assumptions in dimension $d = 2$ and prove that the associated equations homogenize in this case.  Next, we treat a class of quasi-linear geometric operators in arbitrary dimensions $d \geq 2$.  In this setting, by adapting arguments form the study of oscillating boundary value problems, we prove that the effective coefficients are generically discontinuous in all dimensions $d \geq 3$.  This necessitates a study of level set PDE driven by operators that are discontinuous at every rational direction on the sphere.  We prove that, in fact, the effective operators so obtained do have a comparison principle and, thus, homogenization occurs.  Finally, we investigate the connection between the effective mobility obtained in the quasi-linear case and linear response, drawing a connection between our results and those obtained in the hyperbolic scaling. \end{abstract}

\date{\today}
\maketitle

\section{Introduction}  

%
In this paper, we are interested in the behavior, as $\epsilon \to 0^{+}$, of the solutions of the parabolically scaled, level set PDE
	\begin{equation} \label{E: level set PDE}
		\left\{ \begin{array}{r l}
			u^{\epsilon}_{t} - F(Du^{\epsilon},D^{2}u^{\epsilon},\epsilon^{-1}x) = 0 & \text{in} \, \, \mathbb{R}^{d} \times (0,\infty), \\
			u^{\epsilon} = u_{0} & \text{on} \, \, \mathbb{R}^{d} \times \{0\}.
		\end{array} \right.
	\end{equation}
Here $F$ is a spatially periodic, geometric operator that has the same ellipticity properties as the mean curvature operator.  (See Section \ref{S: dimension two results} for precise assumptions).  


%

When $d = 2$ and $F$ is a general geometric operator, we prove that there is an effective nonlinearity $\overline{F} = \overline{F}(p,X)$ such that the solutions $(u^{\epsilon})_{\epsilon > 0}$ of \eqref{E: level set PDE} converge as $\epsilon \to 0^{+}$ to the solution $\bar{u}$ of the effective equation
	\begin{equation*}
		\left\{ \begin{array}{r l}
				\bar{u}_{t} - \overline{F}(D\bar{u}, D^{2} \bar{u}) = 0 & \text{in} \, \, \mathbb{R}^{2} \times (0,\infty), \\
				\bar{u} = u_{0} & \text{on} \, \, \mathbb{R}^{2} \times \{0\}.
			\end{array} \right.
	\end{equation*}  
In this case, due to the simpler geometry when $d = 2$, $\overline{F}$ is continuous in $(\mathbb{R}^{2} \setminus \{0\}) \times \mathcal{S}_{2}$.

In dimensions $d \geq 2$, we prove the homogenization of quasi-linear level set PDE with regular coefficients.  Specifically, these equations take the form
	\begin{equation} \label{E: quasilinear}
		\hspace{0.2in}\left\{ \begin{array}{r l}
			m(\epsilon^{-1} x, \widehat{Du^{\epsilon}}) u^{\epsilon}_{t} - \text{tr} \left( A(\epsilon^{-1}x,\widehat{Du^{\epsilon}}) D^{2} u^{\epsilon} \right) = 0 & \text{in} \, \, \mathbb{R}^{d} \times (0,\infty), \\
			u^{\epsilon} = u_{0} & \text{on} \, \, \mathbb{R}^{d} \times \{0\}.
		\end{array} \right. 
	\end{equation}
Here $m : \mathbb{T}^{d} \times S^{d-1} \to (0,\infty)$ is a mobility coefficient and $A$ is obtained from a uniformly elliptic matrix field $a : \mathbb{T}^{d} \times S^{d-1} \to \mathcal{S}_{d}$ through the following relation:
	\begin{equation} \label{E: structure condition}
		A(y,e) = (\text{Id} - e \otimes e) a(y,e) (\text{Id} - e \otimes e).
	\end{equation}  
Though the mobility $m$ can be incorporated into $A$, we will see below that it is illuminating to write the equation in this form.  

We prove that when $m$ and $a$ are regular enough, there are effective coefficients $\overline{m}$ and $\bar{a}$ such that if $\bar{A}$ relates to $\bar{a}$ as in \eqref{E: structure condition}, then the solutions $(u^{\epsilon})_{\epsilon > 0}$ converge as $\epsilon \to 0^{+}$ to the solution $\bar{u}$ of the  effective equation
	\begin{equation} \label{E: effective quasilinear}
		\overline{m}(\widehat{D\bar{u}}) \bar{u}_{t} - \text{tr} \left( \bar{A}(\widehat{D\bar{u}}) D^{2} \bar{u} \right) = 0 \quad \text{in} \, \, \mathbb{R}^{d} \times (0,\infty).
	\end{equation}
In dimensions $d \geq 3$, we prove that the effective coefficients $\bar{a}$ and $\bar{m}$ are generically discontinuous at normal directions $e \in \mathbb{R} \mathbb{Z}^{d}$, so-called \emph{rational directions}.  Therefore, in order to make sense of \eqref{E: effective quasilinear}, we need to study the well-posedness of second order level set PDE driven by operators that are discontinuous at countably many directions on the sphere.  We prove that the comparison principle does indeed extend to equations like \eqref{E: effective quasilinear} and use this to conclude that homogenization occurs.

Finally, we show that, in some cases, the effective mobility $\overline{m}$ describes the linear response of \eqref{E: quasilinear}, precisely as in the work of Spohn \cite{spohn paper} and Katsoulakis and Souganidis \cite{katsoulakis souganidis isotropic}, \cite{katsoulakis souganidis anisotropic}.  Further, when $d = 2$, this allows us to relate $\overline{m}$ to the front speeds obtained in the hyperbolic scaling by Caffarelli and Monneau \cite{caffarelli monneau}.  Again, there are pathologies in rational directions that complicate the picture.

We expect that these results can be generalized to \eqref{E: level set PDE} with fully nonlinear, geometric operators $F$ in arbitrary dimensions $d \geq 3$.  Toward that end, it will be necessary to overcome a number of difficulties that arise when the correctors used in the asymptotic analysis are no longer $C^{2}$.

\subsection{Literature review}  The level-set method for describing geometric flows was introduced by Ohta, Jasnow, and Kawasaki \cite{ohta jasnow kawasaki}, Sethian \cite{sethian thesis}, and Osher and Sethian \cite{osher sethian}.  The first rigorous description of the method was developed by Barles \cite{barles flame propagation} for a first order model of flame propagation and, in the second order setting, by Evans and Spruck \cite{evans spruck} and Chen, Giga, and Goto \cite{chen giga goto}.  See the papers of Barles, Soner, and Souganidis \cite{barles soner souganidis} and Barles and Souganidis \cite{barles souganidis} for the state of the art.  

Much of the literature on the homogenization of second order level set PDE treats the hyperbolic scaling, where the aim is to obtain a first order motion in the limit.  For periodic media, we refer the reader to the papers by Lions and Souganidis \cite{lions souganidis}, Cardaliaguet, Lions, and Souganidis \cite{cardaliaguet lions souganidis}, Caffarelli and Monneau \cite{caffarelli monneau}, and Gao and Kim \cite{gao kim} and the references therein.  The random setting has been treated by Armstrong and Cardaliaguet \cite{geometric motions} and Feldman \cite{feldman mean curvature}.

Fewer results are available in the parabolic scaling.  Novaga and Valdinoci \cite{novaga valdinoci} proved a pinning result for mean curvature with periodic forcing in this scaling, showing that pinning is generic for forcing terms in the $L^{1}(\mathbb{T}^{d})$ topology and relating this phenomenon to a prescribed curvature problem.  Barles, Cesaroni, and Novaga \cite{barles cesaroni novaga} prove homogenization of a similar problem when the forcing is laminar and the initial datum is a graph over a suitable direction.  

A number of works have treated parabolically scaled geometric motions subject to small forcings in the same spirit as equation \eqref{E: quasilinear forced} below.  Cesaroni, Novaga, and Valdinoci \cite{cesaroni novaga valdinoci} proved homogenization of plane-like solutions of a forced mean curvature equation closely related to \eqref{E: quasilinear forced}, obtaining homogenized speeds that are discontinuous with respect to the direction.  That built on the work of Craciun and Bhattacharya \cite{craciun bhattacharya}, who had initially studied the homogenized speeds in question, and is related to a paper by Chen and Lou \cite{chen lou}, who found planar and V-shaped solutions of equations like \eqref{E: quasilinear} and \eqref{E: quasilinear forced} in dimension two and with almost periodic forcings.

During the preparation of this paper, the author learned that a number of the ideas presented here were anticipated by Craciun and Bhattacharya in their study of periodically forced mean curvature flow.  Compare the proofs of Theorem \ref{T: cell problem ergodic constant} and Proposition \ref{P: rational traveling} below to the formal analysis of \cite[Proposition 6]{craciun bhattacharya}.

This paper draws on a number of technical advances in the theory of viscosity solutions.  First of all, our approach draws a novel connection between the homogenization of second order level set PDE and that of uniformly elliptic equations in nondivergence form.  Throughout the paper, we study the asymptotics of \eqref{E: level set PDE} using a degenerate elliptic cell problem in the torus $\mathbb{T}^{d}$.  As shown in Section \ref{S: approximate correctors} below, this cell problem leads to the analysis of a family of uniformly elliptic equations with almost periodic coefficients, which falls under the purview of the papers by Caffarelli, Souganidis, and Wang \cite{caffarelli souganidis wang} and Caffarelli and Souganidis \cite{caffarelli souganidis}.  In the study of the discontinuity properties of the homogenized coefficients, in particular, we make extensive use of the obstacle problem formulation of \cite{caffarelli souganidis wang}. 

Secondly, where the discontinuity of the effective coefficients is concerned, our approach is inspired by recent developments in the study of oscillating boundary value problems.  A non-exhaustive list of references in this area includes the papers by Barles, Da Lio, and Souganidis \cite{barles da lio souganidis}, Barles and Mironescu \cite{barles mironescu}, G\'{e}rard-Varet and Masmoudi \cite{homogenization and boundary layers}, Choi and Kim \cite{choi kim}, Feldman \cite{feldman}, Feldman and Kim \cite{feldman kim}, Feldman and Zhang \cite{feldman zhang}, and Feldman, Kim, and Souganidis \cite{feldman kim souganidis}.  This paper adapts to the setting of geometric flows the approach of \cite{feldman kim}.

Finally, the paper builds on studies of level set PDE with discontinuous coefficients by Gurtin, Soner, and Souganidis \cite{gurtin soner souganidis}, Ohnuma and Sato \cite{ohnuma sato}, and Ishii \cite{ishii level set}.  The major difference between our approach and theirs is the effective equations obtained in higher dimensions are generically discontinuous at a countably infinite set of directions on the sphere.  Nonetheless, by exploiting the particular properties of the effective operators, we prove comparison by adapting the arguments of \cite{ishii level set}.

\subsection{Organization of the Paper}  In the next section, the main results of the paper are stated and the proofs are sketched.  Section \ref{S: approximate correctors} is devoted to the analysis of the degenerate cell problem used throughout the paper.  In Section \ref{S: irrational directions}, these correctors are used to show that the homogenized motion has the desired behavior whenever its normal vector is irrational, and we explain how to extend this to arbitrary normal directions.  Section \ref{S: discontinuity} treats the continuity properties of the homogenized operator, including the proof that discontinuity in rational directions is generic.  Section \ref{S: comparison} extends the comparison principle to a class of level set PDE with discontinuous coefficients that includes the effective equations obtained in Section \ref{S: irrational directions}.  Finally, Section \ref{S: linear response} studies the effective mobility of the quasi-linear problem from the point of view of linear response.

A number of technical results needed in the paper are provided in the Appendix \ref{A: technical lemmata}, while Apppendix \ref{A: geometric construction} specifically treats a geometric construction used in the proof of comparison.

\subsection{Notation} \label{S: notation}  The Euclidean inner product in $\mathbb{R}^{d}$ is denoted by $\langle \cdot, \cdot \rangle$; $\|\cdot\|$ is its associated norm.  $S^{d-1}$ is the unit sphere in $\mathbb{R}^{d}$, that is, those vectors $e$ with $\|e\| = 1$.  We write $\langle e \rangle$ for the real span of $e$ and $\langle e \rangle^{\perp}$ for its orthogonal complement.

Given $p \in \mathbb{R}^{d} \setminus \{0\}$, we write $\hat{p} = \|p\|^{-1} p$.

$\mathcal{S}_{d}$ is the space of real $d \times d$ symmetric matrices.  Given $e \in S^{d-1}$, $e \otimes e$ is the orthogonal projection onto $\langle e \rangle$.  Given such an $e$ and an $X \in \mathcal{S}_{d}$, we define the matrix $\tilde{X}_{e}$ by $\tilde{X}_{e} = (\text{Id} - e \otimes e) X (\text{Id} - e \otimes e)$.  

$\mathbb{T}^{d}$ is the $d$-dimensional torus, that is, the quotient space obtained from $\mathbb{R}^{d}$ by identifying points that differ by an element of the integers $\mathbb{Z}^{d}$.  $\mathbb{R} \mathbb{Z}^{d}$ denotes the real span of $\mathbb{Z}^{d}$.

Given an $e \in S^{d-1}$, the differential operator $D^{2}_{e}$ is defined via its action on smooth functions $\varphi$ by $D^{2}_{e}\varphi = (\text{Id} - e \otimes e) D^{2} \varphi (\text{Id} - e \otimes e)$.

$\mathcal{L}^{d}$ and $\mathcal{H}^{d-1}$ are the Lebesgue measure and $(d-1)$-dimensional Hausdorff measure in $\mathbb{T}^{d}$, respectively, the latter normalized to coincide with surface area.

\section{Main Results} \label{S: main results}

\subsection{Homogenization for $d = 2$} \label{S: dimension two results}  In dimension two, we consider operators $F : (\mathbb{R}^{2} \setminus \{0\}) \times \mathcal{S}_{2} \times \mathbb{T}^{2} \to \mathbb{R}$ satisfying the following assumptions:
	\begin{itemize}
		\item[(i)] (Geometric)  If $\nu, \mu \in \mathbb{R}$ and $\nu > 0$, then
			\begin{equation} \label{A: geometric}
				F(\nu p, \nu X + \mu p \otimes p, y) = \nu F(p, X, y) \quad \text{for} \, \, (p,X,y) \in \mathbb{R}^{2} \times \mathcal{S}_{2} \times \mathbb{T}^{2}.
			\end{equation}
		\item[(ii)] (Stationary planes) For each $e \in S^{1}$, 
			\begin{equation} \label{E: stationary planes}
				F(e,0,y) = 0 \quad \text{for} \, \, y \in \mathbb{T}^{2}.
			\end{equation}
		\item[(iii)](Uniform degenerate ellipticity)  There are constants $\lambda, \Lambda > 0$ such that if $(e,X,y) \in S^{1} \times \mathcal{S}_{2} \times \mathbb{T}^{2}$, $Y \in \mathcal{S}_{2}$ satisfies $Y \geq 0$, and $\tilde{Y}_{e}$ is the matrix $\tilde{Y}_{e} = (\text{Id} - e \otimes e) Y (\text{Id} - e \otimes e)$, then 
			\begin{equation} \label{A: strong degenerate ellipticity}
				\lambda \|\tilde{Y}_{e}\| \leq F(e,X + Y,y) - F(e,X,y) \leq \Lambda \|\tilde{Y}_{e}\|.
			\end{equation}
		\item[(iv)](Regularity) $F$ is continuous in $(\mathbb{R}^{2} \setminus \{0\}) \times \mathcal{S}_{2} \times \mathbb{T}^{2}$.
		\item[(v)](Comparison) $F$ satisfies a technical assumption ensuring \eqref{E: level set PDE} is well-posed.  (This assumption is stated precisely in the appendix.)
	\end{itemize}


	\begin{theorem} \label{T: level set PDE} If $F$ satisfies assumptions (i)-(v) above, then there is a continuous function $\overline{F} : (\mathbb{R}^{2} \setminus \{0\}) \times \mathcal{S}_{2} \to \mathbb{R}$ satisfying the same assumptions such that if $u_{0} \in UC(\mathbb{R}^{2})$, $(u^{\epsilon})_{\epsilon >0}$ are the unique viscosity solutions of \eqref{E: level set PDE}, and $\bar{u} : \mathbb{R}^{d} \times [0,\infty) \to \mathbb{R}$ is the solution of the effective equation
		\begin{equation} \label{E: effective operator in dimension 2}
			\left\{ \begin{array}{r l}
				\bar{u}_{t} - \overline{F}\left(D\bar{u},D^{2} \bar{u} \right) = 0 & \text{in} \, \, \mathbb{R}^{2} \times (0,\infty), \\
				\bar{u} = u_{0} & \text{on} \, \, \mathbb{R}^{2} \times \{0\},
			\end{array} \right.
		 \end{equation}
	then $u^{\epsilon} \to \bar{u}$ locally uniformly in $\mathbb{R}^{2} \times [0,\infty)$.  \end{theorem}  
	
	Well-posedness of \eqref{E: level set PDE} is reviewed in Appendix \ref{A: wellposed} below.
	
\subsection{Homogenization of Quasi-linear Geometric Operators in $d \geq 2$}  We now state the results concerning the homogenization of \eqref{E: quasilinear}.  Here and henceforth we fix $\lambda, \Lambda > 0$ and define the space $\mathcal{S}_{d}(\lambda,\Lambda)$ by 
	\begin{equation*}
		\mathcal{S}_{d}(\lambda,\Lambda) = \left\{a_{0} \in \mathcal{S}_{d} \, \mid \, \lambda \text{Id} < a_{0} < \Lambda \text{Id} \right\}.
	\end{equation*}
The matrix field $a : \mathbb{T}^{d} \times S^{d - 1} \to \mathcal{S}_{d}$ is assumed to satisfy the following:
	\begin{align}
		a &: \mathbb{T}^{d} \times S^{d-1} \to \mathcal{S}_{d}(\lambda,\Lambda) \quad \text{is continuous}, \label{A: a continuous}\\
		&\|a(\cdot,e)\|_{C^{2,\alpha}(\mathbb{T}^{d})} < \infty \quad \text{if} \, \, e \in S^{d-1},  \label{A: a C2}\\
		\sup &\left\{ \frac{\|a(y,e) - a(y',e)\|}{\|y - y'\|} \, \mid \, (e,y), (e,y') \in S^{d-1} \times \mathbb{T}^{d}, \, \, y' \neq y \right\} < \infty. \label{A: a lip}
	\end{align}
Concerning the mobility $m : \mathbb{T}^{d} \times S^{d-1} \to (0,\infty)$, the assumptions are listed next:
	\begin{align}
		m \, \, &\text{is continuous}, \label{A: m continuous} \\
		\|m(\cdot,e)&\|_{C^{2,\alpha}(\mathbb{T}^{d})} < \infty \quad \text{if} \, \, e \in S^{d-1}, \label{A: m smooth} \\
		\sup \bigg \{ &\frac{|m(y,e) - m(y',e)|}{\|y - y'\|} \, \mid \, (y,e), (y',e) \in \mathbb{T}^{d} \times S^{d-1}, \, \, y \neq y' \bigg \} < \infty. \label{A: m lipschitz}
	\end{align}
	
The main result in this setting is:

	\begin{theorem} \label{T: quasilinear}  If $a$ satisfies assumptions \eqref{A: a continuous}, \eqref{A: a C2}, and \eqref{A: a lip} and $m$ satisfies \eqref{A: m continuous}, \eqref{A: m smooth}, and \eqref{A: m lipschitz}, then there are effective coefficients $\overline{m} : S^{d-1} \setminus \mathbb{R} \mathbb{Z}^{d} \to (0,\infty)$ and $\bar{a} : S^{d-1} \setminus \mathbb{R} \mathbb{Z}^{d} \to \mathcal{S}_{d}$ such that if  $\overline{F} : (\mathbb{R}^{d} \setminus \mathbb{R} \mathbb{Z}^{d}) \times \mathcal{S}_{d} \to \mathbb{R}$ is given by 
		\begin{equation} \label{E: homogenized coefficient linear}
			\overline{F}(p,X) = \overline{m}(\hat{p})^{-1} \text{tr} \left( \bar{a}(\hat{p}) \tilde{X}_{\hat{p}} \right),
		\end{equation}
	$u_{0} \in UC(\mathbb{R}^{d})$, and $(u^{\epsilon})_{\epsilon > 0}$ are the solutions of \eqref{E: quasilinear}, then:
		\begin{itemize}
			\item[(i)] There is a unique viscosity solution $\bar{u} : \mathbb{R}^{d} \times [0,\infty) \to \mathbb{R}$ of the equation
				\begin{equation} \label{E: effective equation}
					\left\{ \begin{array}{r l}
						\bar{u}_{t} - \overline{F}(D\bar{u}, D^{2}\bar{u}) = 0 & \text{in} \, \, \mathbb{R}^{d} \times (0,\infty), \\
						\bar{u} = u_{0} & \text{on} \, \, \mathbb{R}^{d} \times \{0\}.
					\end{array} \right.
				\end{equation}
			\item[(ii)] $u^{\epsilon} \to \bar{u}$ locally uniformly as $\epsilon \to 0^{+}$.
		\end{itemize}
	\end{theorem}  
	
	Well-posedness of \eqref{E: quasilinear} is reviewed in Appendix \ref{A: wellposed}.

A few remarks are in order:
	
	\begin{remark} (i) Even though we have imposed considerable regularity restrictions on the matrix field $a$, nonetheless we show below that $\overline{F}$ generically fails to admit a continuous extension to $(\mathbb{R}^{d} \setminus \{0\}) \times \mathcal{S}_{d}$.  We expect this is true more generally (i.e.\ fully non-linear operators) whenever $d \geq 3$.

(ii) In certain special cases, $\overline{F}$ \emph{does} extend to a continuous function.  Examples of this are discussed in Section \ref{S: anisotropic curvature flows} below.

(iii) The discontinuity of $\overline{F}$  means that the usual comparison principle does not apply to \eqref{E: effective equation}.  This is rectified in Section \ref{S: comparison}.
	\end{remark}

\begin{remark}  The smoothness assumptions on $m$ and $a$ are certainly restrictive.  As we will see below, these assumptions are natural in light of the degeneracy of the cell problems used in the asymptotic analysis.  At various places in the arguments, it is not clear how to proceed unless the corrector is $C^{2}$, and requiring the coefficients to be $C^{2}$ is a natural way to achieve this.  

Nonetheless, when $d =2$, we are able to obtain $C^{2}$ approximate correctors by regularizing the coefficients and exploiting the special structure of the cell problem in this dimension.  It is for this reason that Theorem \ref{T: level set PDE} is more general.

We expect that, using the same techniques, Theorem \ref{T: quasilinear} could be extended to convex or concave operators that are twice continuously differentiable in the $(X,y)$ variables, but this is not so interesting since the main examples we have in mind are one-homogeneous in the Hessian and, therefore, not differentiable at the origin.

\end{remark}  

\subsection{(Dis)continuity of the Homogenized Coefficients}  The effective operator $\overline{F}$ in Theorem \ref{T: quasilinear} is determined by the invariant probability measures associated with a certain family of diffusion processes on the torus.  Toward that end, we define the sets of invariant measures $\{\mathscr{I}^{a}_{e}\}_{e \in S^{d-1}} \subseteq \mathscr{P}(\mathbb{T}^{d})$ so that $\mu \in \mathscr{I}_{e}^{a}$ if and only if, for each $\varphi \in C^{\infty}(\mathbb{T}^{d})$, writing $D^{2}_{e} \varphi = (\text{Id} - e \otimes e) D^{2}\varphi (\text{Id} - e \otimes e)$, we have
	\begin{equation*}
		\int_{\mathbb{T}^{d}} \text{tr} \left( a(y,e) D^{2}_{e} \varphi(y) \right) \mu(dy) = 0.
	\end{equation*}
For a given $e \in S^{d-1}$, these are precisely the invariant probability measures of the diffusion process $X^{e}$ on $\mathbb{T}^{d}$ determined by the SDE
	\begin{equation} \label{E: SDE}
		dX^{e}_{t} = (\text{Id} - e \otimes e) \sqrt{a(X^{e}_{t},e)} (\text{Id} - e \otimes e) d B_{t}.
	\end{equation}

The basic structure of the sets $\{\mathscr{I}_{e}^{a}\}_{e \in S^{d-1}}$ and their relationship to the homogenized coefficients $\bar{a}$ and $\overline{m}$ is summed up in the following theorem.  In the rational case, the foliation of $\mathbb{T}^{d}$ into sub-tori normal to $e$ arises naturally.  That is, we will use the decomposition of $\mathbb{T}^{d}$ as
	\begin{equation*}
		\mathbb{T}^{d} = \bigcup_{r \in [0,r_{e})} \mathbb{T}^{d - 1}_{e}(r),
	\end{equation*}
where the sub-tori $\{\mathbb{T}^{d - 1}_{e}(r)\}_{r \in [0,r_{e})}$ and the period $r_{e} > 0$ are defined by 
	\begin{align}
		\mathbb{T}^{d-1}_{e}(r) &= \left\{y \in \mathbb{T}^{d} \, \mid \, \langle y,e \rangle = r + \langle k,e \rangle \, \, \text{for some} \, \, k \in \mathbb{Z}^{d} \right\}, \label{E: subtori} \\
		r_{e} &= \min \left\{ \langle k,e \rangle \, \mid \, k \in \mathbb{Z}^{d} \right\} \cap (0,\infty). \nonumber
	\end{align}
When $e$ is rational, the trajectories of the process $X^{e}$ of \eqref{E: SDE} remain confined to $\mathbb{T}^{d-1}_{e}(\langle X^{e}_{0},e \rangle)$ and this leads to a corresponding decomposition of $\mathscr{I}^{a}_{e}$.  

Now we are prepared to state the main result concerning the structure of the invariant measures and their relation to the effective coefficients $\bar{a}$ and $\overline{m}$:

	\begin{theorem} \label{T: invariant measures} Under the assumptions of Theorem \ref{T: quasilinear}, we have:
	\begin{itemize}
		\item[(i)] If $e \notin \mathbb{R} \mathbb{Z}^{d}$, then there is a unique probability measure $\bar{\mu}_{e}$ such that $\mathscr{I}^{a}_{e} = \{\bar{\mu}_{e}\}$.  Furthermore, $\bar{\mu}_{e} \ll \mathcal{L}^{d}$.  
	
	\item[(ii)] If $e \in \mathbb{R} \mathbb{Z}^{d}$, then there is an $r_{e}$-periodic function $\mu_{e} : \mathbb{R} \to \mathscr{P}(\mathbb{T}^{d})$, $\mu_{e} : s \mapsto \mu_{e}^{s}$, such that $\mathscr{I}^{a}_{e}$ equals the closed convex hull of $\{\mu_{e}^{s} \, \mid \, s \in \mathbb{R}\}$.  For each $s \in [0,r_{e})$, we have $\mu_{e}^{s} \ll \mathcal{H}^{d-1} \restriction_{\mathbb{T}^{d-1}_{e}(r)}$.  
	\item[(iii)] For each $e \in S^{d-1} \setminus \mathbb{R} \mathbb{Z}^{d}$, $\bar{a}$ and $\overline{m}$ are given by 
		\begin{equation} \label{E: effective a and m}
			\overline{a}(e) = \int_{\mathbb{T}^{d}} a(y,e) \, \bar{\mu}_{e}(dy), \quad \overline{m}(e) = \int_{\mathbb{T}^{d}} m(y,e) \, \bar{\mu}_{e}(dy).
		\end{equation}
	\end{itemize} 
	\end{theorem}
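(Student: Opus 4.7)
The plan is to analyze $\mathscr{I}^{a}_{e}$ via the leafwise structure of the SDE \eqref{E: SDE}. After lifting to $\mathbb{R}^{d}$, the diffusion preserves the functional $\langle \cdot, e \rangle$ and is uniformly elliptic in the $(d-1)$ tangent directions of every level hyperplane. The behavior splits sharply according to whether these hyperplanes project to closed or to dense subsets of $\mathbb{T}^{d}$.

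For part (ii), when $e \in \mathbb{R}\mathbb{Z}^{d}$ the leaves $\mathbb{T}^{d-1}_{e}(s)$ are compact closed $(d-1)$-dimensional submanifolds of $\mathbb{T}^{d}$. The operator $\varphi \mapsto \text{tr}(a(y,e) D^{2}_{e}\varphi)$ restricts to a uniformly elliptic operator on each leaf with $C^{2,\alpha}$ coefficients by \eqref{A: a C2}, so the Fredholm alternative for its formal adjoint combined with the strong maximum principle yields a unique invariant probability measure $\mu^{s}_{e}$ per leaf, with strictly positive $C^{2,\alpha}$ density against $\mathcal{H}^{d-1} \restriction_{\mathbb{T}^{d-1}_{e}(s)}$. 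The map $s \mapsto \mu^{s}_{e}$ is $r_{e}$-periodic by construction and continuous by perturbation of Fredholm inverses. An arbitrary $\mu \in \mathscr{I}^{a}_{e}$ then disintegrates along the foliation; each conditional measure is leafwise invariant and hence a scalar multiple of $\mu^{s}_{e}$, giving the claimed convex-hull representation.

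Part (i) is the main technical step. For $e \notin \mathbb{R}\mathbb{Z}^{d}$, each lifted hyperplane $H_{r_{0}} = \{\tilde{y} \in \mathbb{R}^{d} : \langle \tilde{y}, e \rangle = r_{0}\}$ projects densely into $\mathbb{T}^{d}$ and the decomposition used in (ii) is no longer available. I would work on a fixed $H_{r_{0}} \cong \mathbb{R}^{d-1}$, observe that the pulled-back coefficients $\tilde{y} \mapsto a(\pi(\tilde{y}), e)$ are Bohr almost periodic (precisely because $\pi(H_{r_{0}})$ is dense in $\mathbb{T}^{d}$ and $a$ is continuous), and invoke the invariant-measure theory for uniformly elliptic nondivergence-form operators with almost-periodic coefficients developed in \cite{caffarelli souganidis wang, caffarelli souganidis} --- the same framework already in use in Section \ref{S: approximate correctors}. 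This yields a unique positive bounded almost-periodic density $\rho_{r_{0}}$ solving the lifted adjoint equation on $H_{r_{0}}$. Averaging $\rho_{r_{0}}\mathcal{H}^{d-1}\restriction_{H_{r_{0}}}$ in the Besicovitch sense and pushing forward to $\mathbb{T}^{d}$ produces the candidate $\bar{\mu}_{e}$, and boundedness of $\rho_{r_{0}}$ gives $\bar{\mu}_{e} \ll \mathcal{L}^{d}$. Uniqueness follows because any $\mu \in \mathscr{I}^{a}_{e}$, once lifted to $H_{r_{0}}$, determines a positive almost-periodic solution of the same adjoint equation and thus the same push-forward.

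For part (iii), uniqueness of $\bar{\mu}_{e}$ in irrational directions converts the solvability condition for the degenerate cell problem used in the proof of Theorem \ref{T: quasilinear} into the explicit formulas \eqref{E: effective a and m}: forcing the right-hand side of the cell problem to be $\bar{\mu}_{e}$-orthogonal to constants identifies $\overline{a}(e)$ and $\overline{m}(e)$ with the stated $\bar{\mu}_{e}$-averages of $a(\cdot,e)$ and $m(\cdot,e)$. The main obstacle is transferring uniqueness of the almost-periodic density $\rho_{r_{0}}$ on $H_{r_{0}}$ to uniqueness of $\bar{\mu}_{e}$ on $\mathbb{T}^{d}$: the projection $H_{r_{0}} \to \mathbb{T}^{d}$ is infinite-to-one with dense image, and different choices of $r_{0}$ give different almost-periodic adjoint problems that must nonetheless deliver the same probability measure on $\mathbb{T}^{d}$ --- this is the heart of the argument and precisely where the Caffarelli--Souganidis machinery becomes essential.
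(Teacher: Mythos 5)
Your part (ii) is essentially sound (the leafwise Fredholm/strong-maximum-principle construction plus disintegration along the foliation is a legitimate alternative to the argument via penalized correctors and Krein--Milman, using that $D^{2}_{e}(\chi(\langle y,e\rangle)\psi) = \chi(\langle y,e\rangle)D^{2}_{e}\psi$ to localize the invariance condition leaf by leaf), and part (iii) is fine once the ``solvability condition'' heuristic is implemented through the penalized or approximate correctors, since exact correctors need not exist for irrational $e$. The genuine gap is in part (i). You invoke an ``invariant-measure theory for uniformly elliptic nondivergence-form operators with almost-periodic coefficients'' yielding a unique, positive, \emph{bounded}, Bohr-almost-periodic solution $\rho_{r_{0}}$ of the adjoint equation on the lifted hyperplane $H_{r_{0}}$. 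No such theory is contained in \cite{caffarelli souganidis wang} or \cite{caffarelli souganidis}: those papers prove homogenization through obstacle problems and ergodic constants precisely because exact correctors and invariant densities need not exist outside the periodic setting. Even for periodic nondivergence-form operators the adjoint (invariant) density is in general controlled only in $L^{p}$, and in the almost-periodic setting neither existence nor uniqueness of an almost-periodic positive adjoint solution is known, let alone boundedness; so the central object of your construction of $\bar{\mu}_{e}$, and with it the claimed absolute continuity, is unsupported. Your uniqueness transfer is also unjustified: an arbitrary $\mu \in \mathscr{I}^{a}_{e}$ is a measure on $\mathbb{T}^{d}$ with no a priori regularity or leafwise structure, and there is no canonical way to ``lift'' it to a single hyperplane $H_{r_{0}}$ as an almost-periodic density solving the adjoint equation there.

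By contrast, the paper sidesteps densities on the noncompact leaves entirely. For irrational $e$ it defines $\bar{\mu}_{e}$ by the Riesz representation theorem applied to $f \mapsto \bar{f}(e)$, where $\bar{f}(e)$ is the uniform limit of $\delta V^{\delta}$ for the penalized problem $\delta V^{\delta} - \mathrm{tr}(a(y,e)D^{2}_{e}V^{\delta}) = f$ (this is where the almost-periodic homogenization machinery actually enters, via the ergodic constant); uniqueness of the invariant measure then follows softly by integrating the penalized equation against an arbitrary $\mu \in \mathscr{I}^{a}_{e}$ and letting $\delta \to 0^{+}$, and absolute continuity $\bar{\mu}_{e} \ll \mathcal{L}^{d}$ is obtained by approximating $e$ with rational directions and using the equidistribution lemma together with uniform $L^{p}$ bounds on the leafwise densities. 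If you want to salvage your outline, you would need either to prove the existence/uniqueness/boundedness of the almost-periodic adjoint solution on $H_{r_{0}}$ (a substantial open-ended problem, not a citation) or to replace that step by an argument of the ergodic-constant type as above.
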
  
	
In order to prove Theorem \ref{T: quasilinear}, we will study the continuity properties of the set-valued maps $e \mapsto \mathscr{I}^{a}_{e}$.  When $e \notin \mathbb{R} \mathbb{Z}^{d}$, continuity at $e$ is implied directly by uniqueness and compactness.  The continuity question is significantly more complicated when $e \in \mathbb{R} \mathbb{Z}^{d}$.  The next results address that issue.

	First, we need a digression on analysis in $S^{d-1}$ because it turns out that the limiting behavior of $e \mapsto \mathscr{I}^{a}_{e}$ at a rational direction depends on the direction of approach.  To make that precise, notice that if $(e_{n})_{n \in \mathbb{N}} \subseteq S^{d-1}$ converges to $e$ as $n \to \infty$, then there is necessarily a a subsequence $(n_{j})_{j \in \mathbb{N}} \subseteq \mathbb{N}$ and an $\eta \in S^{d-1} \cap \langle e \rangle^{\perp}$ such that 
	\begin{equation*}
		-\eta = \lim_{j \to \infty} \frac{e_{n_{j}} - e}{\|e_{n_{j}} - e\|}.
	\end{equation*}
	Geometrically, that means $e_{n_{j}}$ approximately approaches $e$ along the great circle parametrized by $\theta \mapsto \cos(\theta) e + \sin(\theta) \eta$, or, put simply, $e_{n_{j}} \to e$ along the $\eta$ direction.  We will see that it is necessary to take account of the direction $\eta$ when studying the continuity properties of $e \mapsto \mathscr{I}^{a}_{e}$.
	
	Finally, in the statement, it is convenient to use the metrizability of $\mathscr{P}(\mathbb{T}^{d})$.  Toward that end, we fix here and henceforth a metric $D : \mathscr{P}(\mathbb{T}^{d}) \times \mathscr{P}(\mathbb{T}^{d}) \to [0,\infty)$ inducing the weak-$*$ topology (e.g.\ Wasserstein distance).

	\begin{theorem}  \label{T: limiting measures} Under the assumptions of Theorem \ref{T: quasilinear}, for each $e \in \mathbb{R} \mathbb{Z}^{d}$, if we define the function $\tilde{\mu}_{e} : S^{d-1} \cap \langle e \rangle^{\perp} \to \mathscr{P}(\mathbb{T}^{d})$, $\tilde{\mu}_{e} : \eta \mapsto \tilde{\mu}_{e}^{\eta}$, by 
		\begin{align}
			\tilde{\mu}_{e}^{\eta} &= \left( \int_{0}^{r_{e}} \langle a^{\perp}_{e}(se) \eta, \eta \rangle^{-1} \, ds \right)^{-1} \int_{0}^{r_{e}} \langle a^{\perp}_{e}(se) \eta, \eta \rangle^{-1} \mu^{s}_{e} \, ds, \label{E: limiting measure} \\
			a_{e}^{\perp}(y) &= \int_{\mathbb{T}^{d}} a(y',e) \mu_{e}^{\langle y, e \rangle}(dy'), \label{E: oscillating tensor}
		\end{align}
	then this function describes the continuity properties of $e' \mapsto \mathscr{I}^{a}_{e'}$ at $e$ in the sense that
		\begin{equation*}
			\lim_{\delta \to 0^{+}} \sup \left\{ D(\mu,\tilde{\mu}_{e}^{\eta}) \, \mid \, \mu \in \mathscr{I}^{a}_{e'}, \, \, \|e' - e\| + \left\| \frac{e' - e}{\|e' - e\|} + \eta \right\| < \delta \right\} = 0.
		\end{equation*}
	\end{theorem}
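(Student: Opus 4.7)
The plan is to argue by compactness and contradiction. Suppose the claim fails: there exist $\delta_{0} > 0$, a sequence $(e_{n}) \subset S^{d-1} \setminus \mathbb{R} \mathbb{Z}^{d}$ with $e_{n} \to e$ and $\|e_{n} - e\|^{-1}(e_{n} - e) \to -\eta$, and $\mu_{n} \in \mathscr{I}^{a}_{e_{n}}$ with $D(\mu_{n}, \tilde{\mu}_{e}^{\eta}) \geq \delta_{0}$. By Theorem \ref{T: invariant measures}(i), $\mu_{n} = \bar{\mu}_{e_{n}}$. Passing to a subsequence, $\mu_{n} \to \nu$ weakly-$*$ for some $\nu \in \mathscr{P}(\mathbb{T}^{d})$, and the goal reduces to showing $\nu = \tilde{\mu}_{e}^{\eta}$.

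Two ingredients are needed. First, $\nu \in \mathscr{I}^{a}_{e}$: for any $\varphi \in C^{\infty}(\mathbb{T}^{d})$, passing to the limit in $\int \text{tr}(a(y, e_{n}) D^{2}_{e_{n}} \varphi) \, d\mu_{n} = 0$ (using \eqref{A: a continuous} and uniform convergence of $D^{2}_{e_{n}} \varphi$ to $D^{2}_{e} \varphi$) gives the claim. Theorem \ref{T: invariant measures}(ii), combined with Choquet's theorem applied to the compact convex set $\mathscr{I}^{a}_{e}$ with extreme family $\{\mu_{e}^{s}\}_{s \in [0, r_{e})}$, yields a decomposition $\nu = \int_{0}^{r_{e}} \mu_{e}^{s} \, d\sigma(s)$ for some probability measure $\sigma$ on $[0, r_{e})$. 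The task reduces to identifying $\sigma$.

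Second, $\sigma$ is computed using test functions depending only on $\langle y, e \rangle$. For each smooth $r_{e}$-periodic $\psi$, the function $\varphi_{\psi}(y) := \psi(\langle y, e \rangle)$ is well-defined on $\mathbb{T}^{d}$ (since $\langle \mathbb{Z}^{d}, e \rangle = r_{e} \mathbb{Z}$), and one computes $D^{2}_{e'} \varphi_{\psi}(y) = \psi''(\langle y, e \rangle) \, w(e') \otimes w(e')$ with $w(e') := (\text{Id} - e' \otimes e') e$. A direct Taylor expansion using $\|e_{n}\| = 1$ and $\eta \in \langle e \rangle^{\perp}$ gives $w(e_{n}) = \|e_{n} - e\| \eta + o(\|e_{n} - e\|)$, uniformly in $y$. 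Substituting $\varphi_{\psi}$ into the invariance condition, dividing by $\|e_{n} - e\|^{2}$, and passing to the limit yields
\begin{equation*}
\int_{\mathbb{T}^{d}} \psi''(\langle y, e \rangle) \langle a(y, e) \eta, \eta \rangle \, d\nu(y) = 0.
\end{equation*}
Disintegrating $\nu$ and invoking \eqref{E: oscillating tensor}, this becomes $\int_{0}^{r_{e}} \psi''(s) \langle a^{\perp}_{e}(se) \eta, \eta \rangle \, d\sigma(s) = 0$ for every periodic $\psi$. Hence the positive measure $\langle a^{\perp}_{e}(se) \eta, \eta \rangle \, d\sigma(s)$ has vanishing distributional second derivative on $\mathbb{R}/r_{e} \mathbb{Z}$, so it is a constant multiple of Lebesgue measure. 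Continuity and strict positivity of $s \mapsto \langle a^{\perp}_{e}(se) \eta, \eta \rangle$ (by uniform ellipticity) then force $d\sigma(s) = C \langle a^{\perp}_{e}(se) \eta, \eta \rangle^{-1} \, ds$, with $C$ fixed by $\sigma([0, r_{e})) = 1$. Comparison with \eqref{E: limiting measure} gives $\nu = \tilde{\mu}_{e}^{\eta}$, contradicting $D(\mu_{n}, \tilde{\mu}_{e}^{\eta}) \geq \delta_{0}$.

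The main technical obstacle is the disintegration step: to substitute $\int \psi''(\langle y, e \rangle) \langle a(y,e) \eta, \eta \rangle \, d\nu(y)$ into an integral over $[0, r_{e})$ against $\sigma$, one must verify sufficient measurability (ideally, continuity) of the map $s \mapsto \mu_{e}^{s}$. This should follow from the cell-problem analysis underlying Theorem \ref{T: invariant measures}(ii), where $\mu_{e}^{s}$ is realized as the unique invariant measure of a uniformly elliptic diffusion on the subtorus $\mathbb{T}^{d-1}_{e}(s)$. A secondary concern is the uniformity of the remainder $o(\|e_{n} - e\|)$ in the expansion of $w(e_{n})$, but this remainder is $y$-independent, arising solely from the normalization $\|e_{n}\| = 1$, so it survives division by $\|e_{n} - e\|^{2}$ and integration against $\mu_{n}$.
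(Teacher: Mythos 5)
Your proof is correct, but it takes a genuinely different route from the paper. You argue by soft compactness: extract a weak-$*$ limit $\nu$ of the unique invariant measures $\bar{\mu}_{e_{n}}$, show $\nu \in \mathscr{I}^{a}_{e}$, decompose $\nu = \int_{0}^{r_{e}} \mu_{e}^{s}\, d\sigma(s)$, and identify $\sigma$ by testing the invariance condition with $\varphi_{\psi}(y) = \psi(\langle y,e\rangle)$ and dividing by $\|e_{n}-e\|^{2}$, which isolates the transverse coefficient and forces $\langle a_{e}^{\perp}(se)\eta,\eta\rangle\, d\sigma(s)$ to have vanishing second distributional derivative on the circle, hence to be a constant multiple of Lebesgue measure; this pins down $\sigma$ and gives $\nu = \tilde{\mu}_{e}^{\eta}$. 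The paper instead adapts the obstacle-problem machinery of Caffarelli--Souganidis--Wang in the spirit of Feldman--Kim: it homogenizes the obstacle problems in the directions $e_{n}$ to a limiting problem whose coefficients $a_{e}^{\perp}$, $m_{e}^{\perp}$ vary only in the $\eta$ direction, uses upper semicontinuity of the contact-set densities to pass the critical values $\mu_{n}(X)$ to the limit $\mu_{*}(X)$, and identifies $-\mu_{*}(X)$ with a one-dimensional (harmonic-mean type) homogenized coefficient equivalent to \eqref{E: limiting measure}. Your route is shorter, avoids any quantitative machinery, and directly yields weak-$*$ convergence of the measures rather than only of the averaged coefficients $\bar{a}$, $\overline{m}$; its price is that it exploits the linearity of the invariance condition in the test function, so it would not carry over to the fully nonlinear two-dimensional setting of Proposition \ref{P: continuity in dimension 2}, where the paper reuses the same obstacle-problem argument. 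The two concerns you flag are both resolved by results already in the paper: the proof of Theorem \ref{T: invariant measures}(ii) shows that \emph{any} $\nu \in \mathscr{I}^{a}_{e}$ satisfies $\nu = \int \mu_{e}^{s}\, d\sigma(s)$ with $\sigma$ the push-forward of $\nu$ under $y \mapsto \langle y,e\rangle$, so Choquet's theorem is not needed; and Proposition \ref{P: approximate correctors linear} shows that $s \mapsto \int_{\mathbb{T}^{d}} f\, d\mu_{e}^{s} = f_{e}^{\perp}(se)$ is continuous for every $f \in C(\mathbb{T}^{d})$, which supplies the measurability/continuity of $s \mapsto \mu_{e}^{s}$ as well as the continuity and strict positivity (by uniform ellipticity, $\geq \lambda$) of $s \mapsto \langle a_{e}^{\perp}(se)\eta,\eta\rangle$ needed to invert the density.
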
  
	
The corollary that follows shows that discontinuity is generic among coefficients $a$ satisfying the assumptions of Theorem \ref{T: quasilinear} if $d \geq 3$.

	\begin{corollary} \label{C: generic}  Assume $d \geq 3$.  There is a residual set $\mathscr{C}_{d} \subseteq C^{2,\alpha}(\mathbb{T}^{d}; \mathcal{S}_{d}(\lambda,\Lambda))$ such that if $a$ is independent of the $e$ variable and $a \in \mathscr{C}_{d}$, then the following statements hold:
		\begin{itemize}
			\item[(a)] If $\tilde{\mu}$ is the function defined in Theorem \ref{T: limiting measures}, then
			\begin{equation*}
				\forall e \in S^{d-1} \cap \mathbb{R} \mathbb{Z}^{d} \quad \#\{\tilde{\mu}_{e}^{\eta} \, \mid \, \eta \in S^{d-1} \cap \langle e \rangle^{\perp} \} = \infty.
			\end{equation*} 
			\item[(b)] The effective coefficient $\bar{a} : S^{d-1} \setminus \mathbb{R} \mathbb{Z}^{d} \to \mathcal{S}_{d}(\lambda,\Lambda)$ defined in Theorem \ref{T: invariant measures} has infinitely many distinct directional limits at each $e \in \mathbb{R} \mathbb{Z}^{d}$.
		\end{itemize}
	In particular, by taking $a \in \mathscr{C}_{d}$ and $m \equiv 1$ in Theorem \ref{T: quasilinear}, we find that $\overline{F}^{*}(e,\cdot) \neq \overline{F}_{*}(e,\cdot)$ for each $e \in S^{d-1} \cap \mathbb{R} \mathbb{Z}^{d}$.  
	\end{corollary}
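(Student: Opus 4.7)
\textit{Proof proposal.} The plan is a Baire category argument in $C^{2,\alpha}(\mathbb{T}^d; \mathcal{S}_d(\lambda,\Lambda))$, viewed with its natural Banach topology. Since $S^{d-1} \cap \mathbb{R}\mathbb{Z}^d$ is countable, enumerate it as $\{e^{(k)}\}_{k \in \mathbb{N}}$ and, for each $k, n \in \mathbb{N}$, set
\[
    U_{k,n} = \left\{ a : \#\{\tilde{\mu}_{e^{(k)}}^\eta : \eta \in S^{d-1} \cap \langle e^{(k)} \rangle^\perp\} \geq n \right\}.
\]
Define $\mathscr{C}_d := \bigcap_{k,n} U_{k,n}$; conclusion (a) follows immediately from Baire's theorem once each $U_{k,n}$ is shown to be open and dense. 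Conclusion (b) reduces to (a) via Theorem \ref{T: limiting measures}: if $e_n \to e$ with $e_n \notin \mathbb{R}\mathbb{Z}^d$ and $(e_n - e)/\|e_n - e\| \to -\eta$, then $\bar{\mu}_{e_n} \to \tilde{\mu}_e^\eta$ weakly-$*$, so by continuity of $a$ the directional limits of $\bar{a}$ at $e$ are exactly the values $\int a \, d\tilde{\mu}_e^\eta$, and a parallel Baire argument ensures these integrals are generically distinct once the measures are. The final assertion $\overline{F}^* \neq \overline{F}_*$ for $m \equiv 1$ is then immediate: with $\overline{F}(p, X) = \text{tr}(\bar{a}(\hat{p}) \tilde{X}_{\hat{p}})$, any two distinct directional limits $\bar{a}_{\eta_1} \neq \bar{a}_{\eta_2}$ of $\bar{a}$ at a rational $e$ admit some $X \in \mathcal{S}_d$ with $\text{tr}(\bar{a}_{\eta_1} \tilde{X}_e) \neq \text{tr}(\bar{a}_{\eta_2} \tilde{X}_e)$, witnessing $\overline{F}^*(e, X) > \overline{F}_*(e, X)$.

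Openness of $U_{k,n}$ follows from joint continuity of $(a, \eta) \mapsto \tilde{\mu}_e^\eta$. Theorem \ref{T: invariant measures}(ii), combined with standard Krylov--Safonov / ABP estimates for the uniformly elliptic nondivergence generator on each sub-torus, gives continuous dependence of $\mu_e^s$ on $a \in C^{2,\alpha}$, uniformly in $s \in [0, r_e)$; formulas \eqref{E: oscillating tensor} and \eqref{E: limiting measure} then transfer continuity to $\tilde{\mu}_e^\eta$. Compactness of $S^{d-1} \cap \langle e \rangle^\perp$ propagates the cardinality lower bound to a $C^{2,\alpha}$ neighborhood of any admissible $a$.

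Density is the core of the argument. Because the measures $\{\mu_e^s\}_{s \in [0, r_e)}$ are supported on pairwise disjoint sub-tori, $\tilde{\mu}_e^{\eta_1} = \tilde{\mu}_e^{\eta_2}$ iff the weight profiles $w(s, \eta_i) = \langle a_e^\perp(se) \eta_i, \eta_i \rangle^{-1} / Z(\eta_i)$ agree as functions of $s$; distinctness thus reduces to whether the curve $\gamma_e : s \mapsto a_e^\perp(se)$, restricted to $\text{Sym}(\langle e \rangle^\perp)$, lies in an affine subspace of the form $\mathbb{R} \cdot \text{Id} + \beta$. Given $a_0$ and $\epsilon > 0$, I would perturb $a_0$ by $C^{2,\alpha}$ bumps compactly supported in neighborhoods of finitely many sub-tori $\mathbb{T}^{d-1}_e(s_1), \ldots, \mathbb{T}^{d-1}_e(s_N)$, each carrying a prescribed non-scalar symmetric matrix profile transverse to $\mathbb{R} \cdot \text{Id}$. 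Linearizing the implicit dependence of $\mu_e^s$ on $a$ through the formal adjoint of the sub-torus generator shows that, to first order, the induced change in $\gamma_e(s_i)$ matches the prescribed profile (up to benign averaging against $\mu_e^s$), so the perturbed $\gamma_e$ escapes any fixed affine subspace of the above form. Since $\dim \langle e \rangle^\perp = d - 1 \geq 2$, the map $\eta \mapsto w(\cdot, \eta)$ is then injective on $(S^{d-1} \cap \langle e \rangle^\perp) / \{\pm 1\}$, producing infinitely many distinct $\tilde{\mu}_e^\eta$.

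The main obstacle is exactly this density step: because $\gamma_e$ depends on $a$ implicitly through the invariant measures $\mu_e^s$, verifying that a prescribed infinitesimal non-scalar perturbation of $a$ produces a quantitatively non-scalar increment in $\gamma_e$ requires a first-variation computation for $\mu_e^s$ that is uniform in $s$. This is the point at which the $C^{2,\alpha}$ hypothesis and the regularity of the sub-torus operator underlying Theorem \ref{T: invariant measures} become essential.
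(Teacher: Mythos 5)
Your overall architecture (Baire category over countably many open dense sets, openness from continuity of $a\mapsto\tilde{\mu}_{e}^{\eta}$, a perturbation argument for density, and the reduction of everything to the profiles $s\mapsto\langle a_{e}^{\perp}(se)\eta,\eta\rangle$ via \eqref{E: limiting measure}) is the same as the paper's, but the density step, which you yourself identify as the core, has a genuine gap, in fact two. First, the reduction is wrong: $\tilde{\mu}_{e}^{\eta_{1}}=\tilde{\mu}_{e}^{\eta_{2}}$ holds if and only if $s\mapsto\langle a_{e}^{\perp}(se)\eta_{1},\eta_{1}\rangle$ is a constant multiple of $s\mapsto\langle a_{e}^{\perp}(se)\eta_{2},\eta_{2}\rangle$, and this \emph{proportionality} condition is not captured by asking whether the curve $\gamma_{e}$ lies in an affine family $\mathbb{R}\cdot\mathrm{Id}+\beta$. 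For example, if $a_{e}^{\perp}(se)=f(s)Q$ with $f$ non-constant and $Q$ a fixed non-scalar matrix, then all the measures $\tilde{\mu}_{e}^{\eta}$ coincide, yet $\gamma_{e}$ lies in no affine subspace of your form; so even a successful ``escape'' argument would not deliver infinitely many distinct $\tilde{\mu}_{e}^{\eta}$. Second, the first-variation claim is unjustified at exactly the delicate point: perturbing $a\mapsto a+ha_{*}$ changes $\langle a_{e}^{\perp}(se)\eta,\eta\rangle$ not only through the integrand but also through the invariant measure $\mu_{e}^{s}$, and the correct derivative is $\int\bigl[\langle a_{*}\eta,\eta\rangle+\mathrm{tr}\bigl(a_{*}D^{2}_{e}u_{\eta}\bigr)\bigr]\,d\mu_{e}^{s}$, where $u_{\eta}$ is the corrector of \eqref{E: exotic cell problem} with $f=\langle a\eta,\eta\rangle$. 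The measure-response term $\int\mathrm{tr}(a_{*}D^{2}_{e}u_{\eta})\,d\mu_{e}^{s}$ is of the same order as the direct term and can cancel it; calling it ``benign averaging'' assumes away the whole difficulty. The paper's proof confronts this head on: it first perturbs $a$ so that $D^{2}_{e}u_{\eta_{n}}\neq D^{2}_{e}u_{\eta_{m}}$ (which can be done explicitly because equality forces $\langle[a-a_{e}^{\perp}]\eta_{n},\eta_{n}\rangle-\langle[a-a_{e}^{\perp}]\eta_{m},\eta_{m}\rangle$ to vary only in the $e$ direction), and then chooses $a_{*}$ adapted to the \emph{difference of correctors} at two distinct heights $\langle y_{1},e\rangle\neq\langle y_{2},e\rangle$ so that the proportionality constant is pushed in opposite directions at the two heights. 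Without some substitute for this two-step linearization, your density claim is not established.

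A further, smaller omission: passing from (a) to (b) is not automatic, since distinct measures $\tilde{\mu}_{e}^{\eta_{n}}\neq\tilde{\mu}_{e}^{\eta_{m}}$ do not imply distinct averages $\tilde{a}_{e}^{\eta_{n}}\neq\tilde{a}_{e}^{\eta_{m}}$; the paper needs a separate density argument (perturbing $a\mapsto(1+hm)^{-1}a$ with $m$ varying only in the $e$ direction, and using the already-established non-proportionality to make $\int m\,d\tilde{\mu}_{e}^{\eta_{n}}\neq\int m\,d\tilde{\mu}_{e}^{\eta_{m}}$), whereas your ``parallel Baire argument'' is only asserted. Your openness discussion and the final deduction of $\overline{F}^{*}\neq\overline{F}_{*}$ from two distinct directional limits are fine in spirit, but you should note that the distinct limits must differ after conjugation by $\mathrm{Id}-e\otimes e$ for the trace against $\tilde{X}_{e}$ to see them, which the paper's scalar-factor perturbation guarantees automatically.
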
  
	

\subsection{Effective Mobility and Linear Response}  Finally, we discuss the relationship between the effective mobility $\overline{m}$ in \eqref{E: quasilinear} and linear response.  Specifically, given $e \in S^{d-1}$ and defining $A$ as in \eqref{E: structure condition}, we are interested in the forced motion
	\begin{equation} \label{E: quasilinear forced}
		\left\{ \begin{array}{r l}
			m(\epsilon^{-1} x, \widehat{Du_{e}^{\epsilon}}) u^{\epsilon}_{e,t} - \text{tr} \left( A(\epsilon^{-1} x, \widehat{Du_{e}^{\epsilon}}) D^{2} u_{e}^{\epsilon} \right) - \alpha \|Du_{e}^{\epsilon}\| = 0 & \text{in} \, \, \mathbb{R}^{d} \times (0,\infty), \\
			u_{e}^{\epsilon}(x,0) = \langle x, e \rangle & \text{if} \, \, x \in \mathbb{R}^{d}.
		\end{array} \right.
	\end{equation}

To start with, we prove homogenization of \eqref{E: quasilinear forced}:

	\begin{theorem} \label{T: homogenization forced planes} Fix $d \geq 2$.  If $a$ and $m$ satisfy the assumptions of Theorem \ref{T: quasilinear}, then there is an $\overline{m}_{\text{pl}} : S^{d-1} \to (0,\infty)$ such that if $e \in S^{d-1}$ and $(u_{e}^{\epsilon})_{\epsilon > 0}$ are the solutions of \eqref{E: quasilinear forced}, then 
		\begin{equation*}
			\lim_{\epsilon \to 0^{+}} u_{e}^{\epsilon}(x,t) = \langle x,e \rangle + \alpha \overline{m}_{\text{pl}}(e)^{-1} t \quad \text{locally uniformly in} \, \, \mathbb{R}^{d} \times [0,\infty).
		\end{equation*}
	\end{theorem}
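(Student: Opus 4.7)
The strategy is to produce plane-like super- and subsolutions of \eqref{E: quasilinear forced} and invoke the comparison principle. For each $\delta > 0$ and $\epsilon > 0$ small, the aim is to find bounded $w^{\pm}_{\delta,\epsilon} : \mathbb{T}^{d} \to \mathbb{R}$ and constants $c^{\pm}_{\delta}(e) \in (0,\infty)$ such that
\[
\Psi^{\pm}_{\delta,\epsilon}(x,t) = \langle x, e \rangle + \epsilon \, w^{\pm}_{\delta,\epsilon}(\epsilon^{-1} x) + c^{\pm}_{\delta}(e) \, t
\]
is a classical super-/subsolution of the PDE in \eqref{E: quasilinear forced}. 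Since $\Psi^{\pm}_{\delta,\epsilon}(x,0) - \langle x,e\rangle = O(\epsilon)$ uniformly, comparison then sandwiches $u^{\epsilon}_{e}(x,t) - \langle x,e\rangle$ between $c^{-}_{\delta}(e) t - C\epsilon$ and $c^{+}_{\delta}(e) t + C\epsilon$, and the mobility is identified via $\alpha \overline{m}_{\text{pl}}(e)^{-1} = \lim_{\delta \to 0^{+}} c^{\pm}_{\delta}(e)$, both one-sided limits agreeing.

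Substituting the ansatz into \eqref{E: quasilinear forced} and multiplying by $\epsilon$, the condition on $w := w^{\pm}_{\delta,\epsilon}$ and $c := c^{\pm}_{\delta}(e)$ reduces to a singularly perturbed cell problem on the torus:
\[
\epsilon \, m(y, \widehat{e + Dw}) \, c - \text{tr}\bigl( A(y, \widehat{e + Dw}) D^{2} w \bigr) - \epsilon \alpha \, \|e + Dw\| \, \geq \, 0 \quad (\text{or } \leq 0) \quad \text{on } \mathbb{T}^{d}.
\]
As $\epsilon \to 0^{+}$ with $w$ bounded, this is a perturbation of the linear degenerate cell problem
\[
\text{tr}\bigl( A(y,e) D^{2} w \bigr) = m(y,e) \, c - \alpha \quad \text{on } \mathbb{T}^{d},
\]
which falls within the framework of Section \ref{S: approximate correctors}. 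Its solvability is controlled by the invariant measures: one needs $\int \bigl[ m(y,e) c - \alpha \bigr] \, d\mu(y) = 0$ for every $\mu \in \mathscr{I}^{a}_{e}$. When $e \notin \mathbb{R}\mathbb{Z}^{d}$, Theorem \ref{T: invariant measures}(i) furnishes the unique invariant measure $\bar\mu_{e}$, which immediately yields $c = \alpha / \overline{m}(e)$; the approximate correctors $w^{\pm}_{\delta,\epsilon}$ are then built along the lines of Section \ref{S: approximate correctors}, with the $\delta$-slack absorbing the quasilinear error and the defect from using only an approximate corrector. In particular, $\overline{m}_{\text{pl}}(e) = \overline{m}(e)$ for $e \notin \mathbb{R}\mathbb{Z}^{d}$.

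The main obstacle is the rational case $e \in \mathbb{R}\mathbb{Z}^{d}$. Here Theorem \ref{T: invariant measures}(ii) produces the one-parameter family $\{\mu^{s}_{e}\}_{s \in [0,r_{e})}$, and a single constant $c$ cannot satisfy the continuum of solvability conditions $c = \alpha / \int m(y,e) \, d\mu^{s}_{e}(y)$ simultaneously unless $m$ is constant along $e$. Exact plane-like traveling solutions therefore do not exist, and the construction must proceed via $\epsilon$-dependent approximate correctors that exploit the foliation $\mathbb{T}^{d} = \bigcup_{s \in [0,r_{e})} \mathbb{T}^{d-1}_{e}(s)$: on each leaf the lower-dimensional ergodic problem selects a layer-speed, and the leaves are stitched together in the $e$-direction with an inter-layer defect that is absorbed by $\delta$ at the cost of letting $\epsilon \to 0^+$ more slowly. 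The same construction simultaneously defines $\overline{m}_{\text{pl}}(e)$ at rational $e$ as an explicit functional of the layer quantities $\bigl\{\int m(y,e) \, d\mu^{s}_{e}(y)\bigr\}_{s \in [0,r_e)}$, which in general differs from $\overline{m}(e)$ — consistent with the fact that the latter is not even continuous across rational directions by Corollary \ref{C: generic}. This stitching is a geometric-flow analogue of the oscillating boundary-layer techniques referenced in the introduction and is expected to parallel the analysis carried out in Section \ref{S: discontinuity}.
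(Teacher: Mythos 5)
Your irrational-direction argument is essentially the paper's: plane-like super/subsolutions $\langle x,e\rangle + \beta\,\overline{m}(e)^{-1}\bigl(\epsilon^{2}V^{\delta(\epsilon)}(\epsilon^{-1}x)+t\bigr)$ built from penalized/approximate correctors, with a strict slack $\beta\neq\alpha$ absorbing the $o(1)$ errors, then $\beta\to\alpha$; this correctly gives $\overline{m}_{\text{pl}}(e)=\overline{m}(e)$ off $\mathbb{R}\mathbb{Z}^{d}$. You also correctly identify the rational obstruction: the continuum of solvability conditions attached to $\{\mu^{s}_{e}\}_{s\in[0,r_{e})}$ cannot be met by a single constant speed with a bounded corrector alone.

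The gap is in your resolution of the rational case. ``Stitching the leaves with an inter-layer defect absorbed by $\delta$ at the cost of letting $\epsilon\to 0^{+}$ more slowly'' cannot work: after the transversal corrector $\tilde{V}_{e}$ (which, by Proposition \ref{P: rational correctors}, is an \emph{exact} $C^{2,\alpha}$ solution of \eqref{E: oscillating cell problem}, not an approximate one) removes the in-leaf fluctuations, what remains is the $O(1)$-amplitude oscillating coefficient $m^{\perp}_{e}(\langle x,e\rangle e)$ multiplying $u_{t}$; the discrepancy between leaf speeds is not small in any parameter, so no vanishing penalization can absorb it. The missing idea is a second, genuinely one-dimensional cell problem: one must add a graph correction $\epsilon P_{e}(\epsilon^{-1}\langle x,e\rangle)$ of amplitude $\epsilon$ but gradient of order one, where $P_{e}$ is the $r_{e}$-periodic pulsating-wave profile solving
\begin{equation*}
\overline{m}_{\text{pl}}(e)^{-1} m^{\perp}_{e}(se) - |1 + P_{e}'(s)| = 0 ,
\end{equation*}
i.e.\ the cell problem for the averaged equation \eqref{E: averaged equation}. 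The modulation of $\|Du\|$ by $1+P_{e}'$ is exactly what lets the constant forcing $\alpha\|Du\|$ balance the oscillating mobility $m^{\perp}_{e}$, and the periodicity (solvability) of $P_{e}$ is what pins down the speed, giving $\overline{m}_{\text{pl}}(e)=r_{e}^{-1}\int_{0}^{r_{e}}\int_{\mathbb{T}^{d}} m(y,e)\,\mu^{s}_{e}(dy)\,ds$ as in \eqref{E: linear response} --- a specific functional you leave unidentified. With $\tilde{V}_{e}$ and $P_{e}$ in hand, the paper's ansatz $\langle x,e\rangle + \epsilon P_{e}(\epsilon^{-1}\langle x,e\rangle) + \alpha^{\pm}_{\epsilon}\,\overline{m}_{\text{pl}}(e)^{-1}(\epsilon^{2}\tilde{V}_{e}(\epsilon^{-1}x)+t)$ with $\alpha^{\pm}_{\epsilon}=\alpha\pm C\epsilon$ gives exact sub/supersolutions (Proposition \ref{P: rational traveling}) and even a rate (Proposition \ref{P: good estimates}); so, contrary to your description, the rational case is handled by exact correctors plus an $O(\epsilon)$ speed adjustment, not by slowing down $\epsilon$, and the analogy with the oscillating-boundary-layer machinery of Section \ref{S: discontinuity} is not the relevant mechanism here.
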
  
	
When $d = 2$, $\overline{m}_{\text{pl}}^{\, -1}$ is precisely the derivative of the front speeds obtained by Caffarelli and Monneau \cite{caffarelli monneau}.  Recall that if $e \in S^{1}$ and $\alpha \in \mathbb{R}$ and if $u^{e,\alpha}$ is the solution of the forced problem
	\begin{equation} \label{E: hyperbolic problem}
		\left\{ \begin{array}{r l}
			m(y,\widehat{Du^{e,\alpha}}) u^{e,\alpha}_{t}- \text{tr} \left(A(y,\widehat{Du^{e,\alpha}}) D^{2}u^{e,\alpha}\right) - \alpha \|Du^{e,\alpha}\| = 0 & \text{in} \, \, \mathbb{R}^{2} \times (0,\infty), \\
			u^{e,\alpha}(x) = \langle x,e \rangle & \text{if} \, \, x \in \mathbb{R}^{2},
		\end{array} \right.
	\end{equation}
then, by \cite{caffarelli monneau}, there is a $\lambda_{e}(\alpha) \in \mathbb{R}$ such that 
	\begin{equation*}
		\lim_{R \to \infty} R^{-1} u^{e,\alpha}(Rx,Rt) = \langle x,e \rangle + \lambda_{e}(\alpha) t \quad \text{locally uniformly in} \, \, \mathbb{R}^{2} \times [0,\infty).
	\end{equation*}
The derivative of this function is $\overline{m}_{\text{pl}}^{\, -1}$:

	\begin{corollary} \label{C: derivatives} If $d = 2$ and $e \in S^{1}$, then $\overline{m}_{\text{pl}}(e)^{-1} = \lim_{\alpha \to 0} \alpha^{-1} \lambda_{e}(\alpha)$.  \end{corollary}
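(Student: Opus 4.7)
The strategy is to exploit a scaling identity between equations \eqref{E: quasilinear forced} and \eqref{E: hyperbolic problem}. Since $m$ and $A$ depend on the gradient only through $\widehat{Du}$ and the forcing is one-homogeneous in $Du$, a direct substitution shows that, if $u^{e, \alpha_0}$ solves \eqref{E: hyperbolic problem}, then
\begin{equation*}
v^\epsilon(x, t) := \epsilon\, u^{e, \alpha_0}\!\left(\epsilon^{-1} x,\, \epsilon^{-2} t\right)
\end{equation*}
solves \eqref{E: quasilinear forced} if and only if $\alpha_0 = \epsilon \alpha$, and it preserves the initial datum $\langle x, e \rangle$. By uniqueness,
\begin{equation*}
u_e^\epsilon(x, t) = \epsilon\, u^{e, \epsilon \alpha}\!\left(\epsilon^{-1} x,\, \epsilon^{-2} t\right).
\end{equation*}
The left-hand side is controlled by Theorem \ref{T: homogenization forced planes}, which gives $u_e^\epsilon \to \langle x, e\rangle + \alpha\,\overline{m}_{\text{pl}}(e)^{-1} t$; the right-hand side is controlled, via \cite{caffarelli monneau}, by the speed $\lambda_e(\epsilon \alpha)$. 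Matching the two asymptotics will yield the claim.

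To make the right-hand side quantitative, I would use the fact that in dimension two the Caffarelli-Monneau construction produces \emph{planar} traveling waves with bounded correctors: for every sufficiently small $|\alpha_0|$ there exist $\lambda_e(\alpha_0) \in \mathbb{R}$ and $w_{\alpha_0} \in L^\infty(\mathbb{R}^2)$ so that
\begin{equation*}
\Phi_{\alpha_0}(y, s) := \langle y, e \rangle + \lambda_e(\alpha_0)\, s + w_{\alpha_0}\!\bigl(y - \lambda_e(\alpha_0)\, s\, e\bigr)
\end{equation*}
is a viscosity solution of the PDE in \eqref{E: hyperbolic problem}, with $\sup_{|\alpha_0| \leq \alpha_\ast} \|w_{\alpha_0}\|_{L^\infty} < \infty$ for some $\alpha_\ast > 0$. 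Since $u^{e, \alpha_0}$ and $\Phi_{\alpha_0}$ share initial data modulo the bounded function $w_{\alpha_0}$, the comparison principle yields $\sup_{(y, s)} |u^{e, \alpha_0}(y, s) - \langle y, e \rangle - \lambda_e(\alpha_0) s| \leq C$ uniformly for $|\alpha_0| \leq \alpha_\ast$. Substituting $\alpha_0 = \epsilon \alpha$, $y = \epsilon^{-1} x$, $s = \epsilon^{-2} t$ and multiplying by $\epsilon$ yields
\begin{equation*}
\bigl| u_e^\epsilon(x, t) - \langle x, e \rangle - \epsilon^{-1} \lambda_e(\epsilon \alpha)\, t \bigr| \leq C \epsilon.
\end{equation*}
Combined with the limit in Theorem \ref{T: homogenization forced planes}, this forces $\epsilon^{-1} \lambda_e(\epsilon \alpha) \to \alpha\, \overline{m}_{\text{pl}}(e)^{-1}$ as $\epsilon \to 0^+$, i.e.\ $\alpha_0^{-1} \lambda_e(\alpha_0) \to \overline{m}_{\text{pl}}(e)^{-1}$ along $\alpha_0 \to 0^+$; repeating with $\alpha < 0$ (equivalently, reversing $e$) handles the left-hand limit.

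The hard part will be the uniform bounded-corrector step. The planar corrector $w_{\alpha_0}$ solves a cell-type equation that degenerates as $\alpha_0 \to 0$: at $\alpha_0 = 0$, the stationary-planes assumption makes the problem trivial with $\lambda_e(0) = 0$, so I need to extract a uniform $L^\infty$ estimate on $w_{\alpha_0}$ in a neighborhood of $\alpha_0 = 0$. In dimension two this should follow from the a priori bounds in \cite{caffarelli monneau} together with a compactness/stability argument for the Caffarelli-Monneau cell problem in the forcing parameter, exploiting the uniform ellipticity of $A$ and the smoothness of the coefficients. Once that uniform bound is in hand, the rest of the proof is a soft consequence of comparison and the homogenization Theorem \ref{T: homogenization forced planes}.
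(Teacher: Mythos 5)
Your rescaling identity $u^{\epsilon}_{e}(x,t)=\epsilon\,u^{e,\epsilon\alpha}(\epsilon^{-1}x,\epsilon^{-2}t)$ and the plan of playing the Caffarelli--Monneau front against Theorem \ref{T: homogenization forced planes} are the right ingredients, but as written your proof has a genuine gap at exactly the step you flag: the uniform bound $\sup_{|\alpha_{0}|\le\alpha_{*}}\|w_{\alpha_{0}}\|_{L^{\infty}}<\infty$ (you need at least $\|w_{\alpha_{0}}\|_{L^{\infty}}=o(|\alpha_{0}|^{-1})$ so that $\epsilon\|w_{\epsilon\alpha}\|_{L^{\infty}}\to 0$). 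This is not contained in \cite{caffarelli monneau} as cited, and it is not a soft compactness/stability statement. Dividing the corrector equation by $\alpha_{0}$ shows that, formally, $P_{e,\alpha_{0}}\approx\alpha_{0}\,\overline{m}(e)^{-1}\tilde{V}_{e}$ where $\tilde{V}_{e}$ solves the cell problem \eqref{E: oscillating cell problem}; for irrational $e$ without a Diophantine condition a bounded solution of \eqref{E: oscillating cell problem} need not exist (see the remarks after Proposition \ref{P: irrational traveling} and Appendix \ref{S: diophantine}, Proposition \ref{P: smooth correctors}), so the size of the corrector as $\alpha_{0}\to 0$ is governed by small-divisor information about $e$ — precisely the quantitative input the paper's framework is built to avoid, and possibly unavailable for Liouville-type directions. (A minor structural point: the waves here are pulsating, of the form $\langle y,e\rangle+P_{e,\alpha_{0}}(y)+\lambda_{e}(\alpha_{0})t$ with $P_{e,\alpha_{0}}\in C(\mathbb{T}^{2})$, not traveling profiles $w_{\alpha_{0}}(y-\lambda_{e}(\alpha_{0})t\,e)$, though this does not affect the comparison.)

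The fix, which is the paper's proof, is to realize that you only need to compare \emph{speeds}, not to expand $u^{\epsilon}_{e}$ to order $\epsilon$, and for that no uniformity in $\alpha_{0}$ is required. Fix $\alpha=1$ and $\beta>1$, and compare, at each fixed $\epsilon$, the rescaled pulsating wave $\epsilon u^{e,\epsilon}(\epsilon^{-1}x,\epsilon^{-2}t)=\langle x,e\rangle+\epsilon P_{e,\epsilon}(\epsilon^{-1}x)+\epsilon^{-1}\lambda_{e}(\epsilon)t$ with the explicit supersolution of \eqref{E: quasilinear forced} built from the penalized corrector, $v^{\epsilon}(x,t)=\langle x,e\rangle+\beta\overline{m}(e)^{-1}\bigl(\epsilon^{2}V^{\delta(\epsilon)}_{e}(\epsilon^{-1}x)+t\bigr)$ with $\delta(\epsilon)=\epsilon^{1/4}$ (or with the exact sub/supersolutions of Propositions \ref{P: rational traveling} and \ref{P: irrational traveling} when available, covering rational $e$). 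The comparison principle gives an inequality valid for all $t>0$; dividing by $t$ and sending $t\to\infty$ at fixed $\epsilon$ kills both correctors regardless of how large $\|P_{e,\epsilon}\|_{L^{\infty}}$ is, yielding $\epsilon^{-1}\lambda_{e}(\epsilon)\le\beta\,\overline{m}_{\text{pl}}(e)^{-1}$ using only the boundedness of each individual corrector, which \cite{caffarelli monneau} does provide. Sending $\epsilon\to 0^{+}$, then $\beta\to 1^{+}$, arguing symmetrically from below, and repeating with $\alpha=-1$ for the left-hand limit gives the corollary without any uniform corrector estimate.
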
  

Notice that, up to a parabolic rescaling, the solution $u^{\epsilon}_{e}$ of \eqref{E: quasilinear forced} equals $u^{e,\alpha \epsilon}$.  As discussed by Spohn \cite{spohn paper} and Bellettini, Butt\`{a}, and Presutti \cite{bellettini butta presutti}, this is precisely the scaling where the forcing and the curvature are evenly matched, and the limiting behavior of $u^{\epsilon}$ should capture the linear response of \eqref{E: quasilinear} to small perturbations.  In light of previous works, it is natural to expect that $\overline{m}_{\text{pl}} = \overline{m}$.  

It turns out these functions only coincide in irrational directions, even when $d = 2$.  To start with, we obtain an expression for $\overline{m}_{\text{pl}}$:

	\begin{theorem} \label{T: linear response formula} Assume that $d \geq 2$ and $a$ and $m$ satisfy the assumptions of Theorem \ref{T: quasilinear}.  The following statements hold:
	\begin{itemize}
		\item[(i)]  If $e \in S^{d-1} \setminus \mathbb{R} \mathbb{Z}^{d}$, then $\overline{m}_{\text{pl}}(e) = \overline{m}(e)$.  
		\item[(ii)]  If $e \in S^{d-1} \cap \mathbb{R} \mathbb{Z}^{d}$ and $\mu_{e}$ is the function defined in Theorem \ref{T: invariant measures}, then 
		\begin{equation} \label{E: linear response}
			\overline{m}_{\text{pl}}(e) = r_{e}^{-1} \int_{0}^{r_{e}} \int_{\mathbb{T}^{d}} m(y,e) \, \mu^{s}_{e}(dy) \, ds.
		\end{equation}
	\end{itemize}
	\end{theorem}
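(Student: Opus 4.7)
The proof splits according to whether $e$ is irrational or rational, mirroring the two cases of Theorem \ref{T: invariant measures}.

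For part (i), I would carry out a direct corrector argument. Substituting the ansatz $u^{\epsilon}(x,t) = \langle x,e\rangle + \lambda t + \epsilon^{2} v(\epsilon^{-1}x)$ into \eqref{E: quasilinear forced} and collecting leading-order terms as $\epsilon \to 0^{+}$ yields the cell problem
\begin{equation*}
\text{tr}\bigl(a(y,e)\, D^{2}_{e} v(y)\bigr) = \lambda\, m(y,e) - \alpha \quad \text{in } \mathbb{T}^{d}.
\end{equation*}
By Theorem \ref{T: invariant measures}(i), the invariant measure $\bar{\mu}_{e}$ is unique for irrational $e$, so a Fredholm-alternative argument (in the framework of the elliptic theory of \cite{caffarelli souganidis wang}) produces a $C^{2,\alpha}$ solution $v$ if and only if the right-hand side integrates to zero against $\bar{\mu}_{e}$, forcing $\lambda = \alpha/\overline{m}(e)$. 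Using $v$ one readily builds approximate super- and sub-solutions of \eqref{E: quasilinear forced} (e.g.\ by tilting $\lambda$ by an $O(\epsilon)$ time-linear correction), and the comparison principle combined with Theorem \ref{T: homogenization forced planes} yields $\overline{m}_{\text{pl}}(e) = \overline{m}(e)$.

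For part (ii), the same cell problem is generically unsolvable because the family $\{\mu_{e}^{s}\}_{s \in [0,r_{e})}$ imposes a continuum of incompatible solvability conditions. Instead, following the pulsating-wave approach hinted at by Proposition \ref{P: rational traveling}, I would interpret
\begin{equation*}
c(s) := \alpha \left( \int_{\mathbb{T}^{d}} m(y,e)\, \mu_{e}^{s}(dy) \right)^{-1}
\end{equation*}
as the instantaneous planar speed of the interface when it sits at height $s$ in the sub-torus decomposition \eqref{E: subtori}: the restricted cell problem $\text{tr}(a(\cdot,e)\, D^{2}_{e} v_{s}) = c(s)\, m(\cdot,e) - \alpha$ on $\mathbb{T}^{d-1}_{e}(s)$ is solvable precisely because $c(s)$ is tuned against $\mu_{e}^{s}$. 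The height $h(t)$ of the front should then solve the ODE $h' = c(h)$, so $t = \int_{0}^{h(t)} c(s)^{-1}\, ds$, and the effective speed $c^{\star} = \lim_{t \to \infty} h(t)/t$ evaluates to
\begin{equation*}
c^{\star} = \frac{r_{e}}{\int_{0}^{r_{e}} c(s)^{-1}\, ds} = \frac{\alpha\, r_{e}}{\int_{0}^{r_{e}} \int_{\mathbb{T}^{d}} m(y,e)\, \mu_{e}^{s}(dy)\, ds},
\end{equation*}
giving $\overline{m}_{\text{pl}}(e) = \alpha/c^{\star}$ as in \eqref{E: linear response}.

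The main obstacle is turning this heuristic into a rigorous sub-/super-solution argument. The restricted correctors $v_{s}$ must be glued into a genuine sub- or super-solution of \eqref{E: quasilinear forced}, but the operator $\text{tr}(A(y,e)\, D^{2} \cdot)$ is degenerate in the $e$-direction, so discontinuities in $s \mapsto v_{s}$ cannot be smoothed away by the elliptic terms. My plan is to first mollify $s \mapsto v_{s}$ using the regularity of $s \mapsto \mu_{e}^{s}$ from Theorem \ref{T: invariant measures}(ii) together with the $C^{2,\alpha}$-smoothness of $a$ and $m$, and then incorporate a nonlinear phase $H$ with $H'(t) = c(h(t))$ into the profile $\langle x,e\rangle + H(t) + \epsilon^{2} v_{\langle x,e\rangle + H(t)}(\epsilon^{-1}x)$, so that the cell problem is satisfied height by height as the front advances. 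A Taylor expansion in $\epsilon$ should produce an $o(1)$ error absorbable by an $O(\epsilon)$ time-linear shift, after which comparison with $u^{\epsilon}_{e}$ and Theorem \ref{T: homogenization forced planes} conclude part (ii).
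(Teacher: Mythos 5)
Your part (i) contains a genuine gap: the claimed Fredholm alternative does not apply. The cell problem $\mathrm{tr}(a(y,e)D^{2}_{e}v)=\lambda m(y,e)-\alpha$ on $\mathbb{T}^{d}$ is degenerate in the $e$ direction, and for irrational $e$ each leaf of the foliation is dense, so the problem is effectively a uniformly elliptic equation with \emph{quasi-periodic} coefficients on $\langle e\rangle^{\perp}$; there is a loss of compactness and no Fredholm theory, and the solvability condition $\int(\lambda m-\alpha)\,d\bar{\mu}_{e}=0$ is necessary but not sufficient for the existence of a bounded (let alone $C^{2,\alpha}$) corrector. Indeed, the paper's Appendix (Proposition \ref{P: smooth correctors}) produces exact correctors only when $a$ is constant, $m$ has extra Sobolev regularity, and $e$ satisfies a Diophantine condition; for general irrational $e$ no exact corrector is available. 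The paper's proof instead uses the penalized correctors $V^{\delta}$ of Proposition \ref{P: approximate correctors linear} with a coupling $\delta=\delta(\epsilon)$ (there $\delta(\epsilon)=\epsilon^{1/4}$), builds super- and subsolutions with slightly perturbed speeds $\beta\gtrless\alpha$, and sends $\epsilon\to0^{+}$ and then $\beta\to\alpha$; your construction must be reworked along these lines, since "tilting $\lambda$ by $O(\epsilon)$" presupposes the exact corrector you do not have.

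For part (ii) your heuristic is essentially the paper's: the leaf-wise solvability condition produces the height-dependent speed, and the harmonic-mean computation for $h'=c(h)$ is exactly the content of the pulsating wave $P_{e}$ of Lemma \ref{L: rotation number}, yielding \eqref{E: linear response}. The gap is in the step you flag but do not resolve: gluing the leaf-wise correctors. Mollifying $s\mapsto v_{s}$ does not address the real issue, which is the choice of the per-leaf additive normalizations (the constants $v_{s}(0)$) together with periodicity/compatibility conditions so that the glued function is a single $C^{2,\alpha}(\mathbb{T}^{d})$ corrector whose derivatives in the $e$ direction are controlled — this is precisely the delicate content of Proposition \ref{P: rational correctors}, and without it the perturbed test function argument cannot absorb the $\epsilon^{-1}$-sized Hessian terms. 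Moreover, your proposed profile $\langle x,e\rangle+H(t)+\epsilon^{2}v_{\langle x,e\rangle+H(t)}(\epsilon^{-1}x)$ is scale-inconsistent: the corrector indexed by the macroscopic height $\langle x,e\rangle+H(t)$ is evaluated at the fast point $\epsilon^{-1}x$, which lies on the leaf of fast height $\epsilon^{-1}\langle x,e\rangle$, so the leaf-wise cell equation is not actually satisfied there. The paper avoids this by using the fixed glued corrector $\tilde{V}_{e}$ (leaf index equal to the fast height) plus a separate $O(\epsilon)$-amplitude phase correction $\epsilon P_{e}(\epsilon^{-1}\langle x,e\rangle)$ and speeds $\alpha^{\pm}_{\epsilon}=\alpha\pm C\epsilon$, as in Proposition \ref{P: rational traveling}; recasting your time-reparametrization $H$ in that pulsating-wave form is what makes the Taylor expansion close.
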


In view of the formula for $\overline{m}_{\text{pl}}$ in rational directions, a proof similar to that of Corollary \ref{C: generic} yields the following:

	\begin{corollary} \label{C: bad mobility} If $d \geq 2$, then there is a residual set $\mathcal{A}_{d} \subseteq C^{2,\alpha}(\mathbb{T}^{d}; \mathcal{S}_{d}(\lambda,\Lambda)) \times C^{2,\alpha}(\mathbb{T}^{d} ; (0,\infty))$ such that if $m$ and $a$ are independent of $e$ and $(a,m) \in \mathcal{A}_{d}$, then, for each $e \in S^{d-1} \cap \mathbb{R} \mathbb{Z}^{d}$, the following hold:
		\begin{itemize}
			\item[(i)] There is an $\eta \in S^{d-1} \cap \langle e \rangle^{\perp}$ such that $\overline{m}_{\text{pl}}(e) \neq \tilde{m}_{e}^{\eta}$.  
			\item[(ii)] If $d \geq 3$, then $\overline{m} : S^{d-1} \setminus \mathbb{R} \mathbb{Z}^{d} \to (0,\infty)$ does not have a limit at $e$.
		\end{itemize}
	In particular, if $(a,m) \in \mathcal{A}_{d}$, then $\overline{m}_{\text{pl}}$ is not a continuous function.
	\end{corollary}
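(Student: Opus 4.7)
The plan is to follow the Baire-category strategy used in Corollary \ref{C: generic}, now applied jointly to the pair $(a,m)$ and to the two quantities $\overline{m}_{\text{pl}}(e)$ and $\tilde{m}_e^{\eta} := \int_{\mathbb{T}^d} m \, d\tilde{\mu}_e^{\eta}$. The essential observation is the following representation, obtained by combining Theorem \ref{T: linear response formula}(ii) with Theorem \ref{T: limiting measures}. Setting $M(s) := \int_{\mathbb{T}^d} m(y)\, \mu_e^s(dy)$ and $W_{\eta}(s) := \langle a_e^{\perp}(se)\eta,\eta\rangle^{-1}$, we have
\begin{equation*}
\overline{m}_{\text{pl}}(e) = r_e^{-1} \int_0^{r_e} M(s)\, ds, \qquad \tilde{m}_e^{\eta} = \frac{\int_0^{r_e} W_{\eta}(s) M(s)\, ds}{\int_0^{r_e} W_{\eta}(s)\, ds}.
\end{equation*}
Thus $\overline{m}_{\text{pl}}(e)$ is the uniform average of $M$ along the fibration, while $\tilde{m}_e^{\eta}$ is a weighted average with weight determined by $a$ and by $\eta$; these will coincide only in the degenerate cases where $M$ is constant in $s$ or where the weight $W_{\eta}$ happens to be constant.

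For each rational $e \in S^{d-1} \cap \mathbb{RZ}^d$ (a countable set) and each $n \in \mathbb{N}$, I would define an open subset $U_{e,n} \subseteq C^{2,\alpha}(\mathbb{T}^d;\mathcal{S}_d(\lambda,\Lambda)) \times C^{2,\alpha}(\mathbb{T}^d;(0,\infty))$ consisting of pairs $(a,m)$ such that (i) the oscillation of $M$ on $[0,r_e)$ exceeds $1/n$, and, when $d \geq 3$, (ii) the image of $\eta \mapsto \tilde{m}_e^{\eta}$ contains at least $n$ distinct values. Openness of $U_{e,n}$ follows because $(a,m) \mapsto \mu_e^s$ is continuous in the $C^{2,\alpha}$-topology in each fiber (using unique solvability on the sub-torus $\mathbb{T}^{d-1}_e(s)$ together with Schauder estimates), so $M$, $W_{\eta}$, and the resulting weighted averages all depend continuously on $(a,m)$. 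The set $\mathcal{A}_d := \bigcap_{e,n} U_{e,n}$ is then residual provided each $U_{e,n}$ is also dense, which is the crux of the argument.

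The main obstacle is density, and it naturally splits into two pieces. For the $M$-oscillation condition, given any $(a_0,m_0)$, I would perturb $m_0$ by a small $C^{2,\alpha}$ bump concentrated near a single sub-torus $\mathbb{T}^{d-1}_e(s_0)$; since $\mu_e^{s_0}$ is absolutely continuous with non-trivial density on that fiber (Theorem \ref{T: invariant measures}(ii)), this changes $M$ near $s_0$ without altering it elsewhere, producing oscillation of arbitrary size. For the multi-valued directional limit condition, I would perturb $a_0$ exactly as in the density argument behind Corollary \ref{C: generic}, ensuring that the family of quadratic forms $\eta \mapsto \langle a_e^{\perp}(se)\eta,\eta\rangle$ genuinely varies in $s$ across many independent directions $\eta$; once $W_{\eta}$ varies non-trivially with both $s$ and $\eta$, and $M$ is non-constant, the weighted averages $\tilde{m}_e^{\eta}$ take infinitely many values. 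Combining the two perturbations gives density, and the two conclusions of the corollary follow at once: part (i) because the uniform average $\overline{m}_{\text{pl}}(e)$ of a non-constant $M$ cannot equal every $W_{\eta}$-weighted average, and part (ii) because the infinitely many values $\{\tilde{m}_e^{\eta}\}$ are precisely the directional limits of $\overline{m}(e') = \int m\, d\bar{\mu}_{e'}$ along the corresponding sequences $e' \to e$, as follows from Theorem \ref{T: limiting measures}.
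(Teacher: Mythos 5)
Your overall strategy (Baire category over the countably many rational $e$, with the two representations $\overline{m}_{\text{pl}}(e)=r_e^{-1}\int_0^{r_e}M(s)\,ds$ and $\tilde{m}_e^{\eta}=\bigl(\int_0^{r_e}W_\eta\bigr)^{-1}\int_0^{r_e}W_\eta M$ coming from Theorem \ref{T: linear response formula}(ii) and Theorem \ref{T: limiting measures}) is exactly the route the paper intends, namely a rerun of the proof of Corollary \ref{C: generic}. But there is a genuine gap in how you set up the open sets, and it makes the argument fail outright in the case $d=2$ that the corollary explicitly covers. Your set $U_{e,n}$ is defined (for $d=2$, solely) by the requirement that $M(s)=\int m\,d\mu_e^s$ have oscillation larger than $1/n$, and your concluding sentence asserts that ``the uniform average of a non-constant $M$ cannot equal every $W_\eta$-weighted average.'' That implication is false: take $a\equiv\mathrm{Id}$ (or any $a$ for which $a_e^{\perp}$ is constant in $s$). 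Then every $\mu_e^s$ is the normalized surface measure on its fiber, $W_\eta$ is constant in $s$, and $\tilde{m}_e^{\eta}=\overline{m}_{\text{pl}}(e)=\int_{\mathbb{T}^d}m$ for \emph{every} $m$, no matter how much $M$ oscillates. Such pairs $(\mathrm{Id},m)$ belong to your $U_{e,n}$ for all $n$ (for generic $m$), hence to your residual set $\mathcal{A}_d$, and conclusion (i) fails for them. For $d\geq 3$ your second condition (at least $n$ distinct values of $\eta\mapsto\tilde{m}_e^{\eta}$) happens to rescue conclusion (i), but not for the reason you give.

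The fix is to define the open sets by the conclusions themselves, e.g.\ $U_{e,n}=\{(a,m)\,:\,\exists\,\eta\ \text{with}\ |\overline{m}_{\text{pl}}(e)-\tilde{m}_e^{\eta}|>1/n\}$ (and, for $d\geq3$, the multi-value condition), so that membership in the intersection literally yields (i) and (ii); openness follows as you say from continuity of $(a,m)\mapsto(\mu_e^s,a_e^{\perp})$. Density then requires a \emph{joint} perturbation: you must first perturb $a$ (as in Steps 1--2 of the proof of Corollary \ref{C: generic}) so that $s\mapsto\langle a_e^{\perp}(se)\eta,\eta\rangle$ is genuinely non-constant, and then perturb $m$ (a bump localized near one fiber, exactly as you propose, or an $m$ varying only in the $e$ direction as in Step 3 of that proof) so that $M-\bar{M}$ is not $L^2$-orthogonal to $W_\eta-\bar{W}_\eta$; non-constancy of $M$ and of $W_\eta$ alone does not force $\int_0^{r_e}W_\eta(M-\bar{M})\,ds\neq0$, nor does distinctness of the measures $\tilde{\mu}_e^{\eta_n}$ by itself force the numbers $\int m\,d\tilde{\mu}_e^{\eta_n}$ to be distinct for the given $m$. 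With the open sets redefined and this two-step perturbation spelled out, your argument matches the paper's intended proof; the identification of the values $\tilde{m}_e^{\eta}$ with the directional limits of $\overline{m}$, which you use for (ii), is indeed supplied by the last proposition of Section \ref{S: discontinuity}.
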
    
	
\subsection{Strategy of Proof}  Let us briefly review the main elements of the proof of Theorems \ref{T: level set PDE}, \ref{T: quasilinear}, and \ref{T: homogenization forced planes}.  Inspired by the approach of Barles and Souganidis \cite{barles souganidis} and Barles, Cesaroni, and Novaga \cite{barles cesaroni novaga}, we employ the ansatz
	\begin{equation*}
		u^{\epsilon}(x,t) = \bar{u}(x,t) + \epsilon^{2} V(\epsilon^{-1}x) + \dots
	\end{equation*}
which leads to the following cell problems for correctors $V$ depending on the normal vector $e \in S^{d-1}$ and second fundamental form $X \in \mathcal{S}_{d}$:
	\begin{equation} \label{E: cell problem}
		- F(e, X + D^{2} V, y) = - \overline{F}(e,X) \quad \text{if} \, \, \mathbb{T}^{d}.
	\end{equation}
In view of \eqref{A: strong degenerate ellipticity}, this is a degenerate elliptic PDE.  The analysis below shows that it is ill-posed when $e \in \mathbb{R} \mathbb{Z}^{d}$, which presents a significant obstacle in what follows.

Nonetheless, in \emph{irrational directions}, that is, if $e \notin \mathbb{R} \mathbb{Z}^{d}$, it turns out that if we let $(V^{\delta})_{\delta > 0}$ be the solutions of the penalized cell problem 
	\begin{equation} \label{E: penalized cell problem}
		\delta V^{\delta} - F(e,X + D^{2} V^{\delta},y) = 0 \quad \text{in} \, \, \mathbb{T}^{d},
	\end{equation}
then there is an $\overline{F}(e,X) \in \mathbb{R}$ such that
	\begin{equation} \label{E: ergodic behavior}
		\overline{F}(e,X) = \lim_{\delta \to 0^{+}} \delta V^{\delta} \quad \text{uniformly in} \, \, \mathbb{T}^{d}.
	\end{equation}
As we show below, this can be seen by rewriting \eqref{E: penalized cell problem} as a one-parameter family of uniformly elliptic $(d-  1)$-dimensional problems and applying homogenization results for almost periodic, uniformly elliptic operators in nondivergence form.

Once \eqref{E: ergodic behavior} is proved, we construct approximate correctors of \eqref{E: penalized cell problem} following a well-worn approach in homogenization theory.  The approximate correctors are then used to show that the solutions $(u^{\epsilon})_{\epsilon > 0}$ behave appropriately wherever the limiting level set has an irrational normal.

That leaves the question of whether or not a function that solves a level set PDE at points where its normal vector is irrational is actually a genuine solution.  In this case, as long as we are studying the unforced problems \eqref{E: level set PDE} or \eqref{E: quasilinear}, the answer is yes and follows from the results of the companion paper \cite{allen cahn mobility}, as shown below. 

Two problems remain.  First, it is not at all clear that $\overline{F}$ extends to a continuous function.  Toward that end, we adapt ideas from the study of oscillating boundary value problems to characterize all possible limits of $\overline{F}$ at rational directions.  When $d = 2$, it \emph{is} continuous and, thus, the usual comparison principle for level set PDE applies and implies homogenization.  In the setting of Theorem \ref{T: quasilinear}, however, the operator is generically discontinuous in higher dimensions, necessitating an extension of the comparison principle to more exotic, discontinuous operators.

The remaining issue is the homogenization of the forced motion \eqref{E: quasilinear forced}.  Here additional ideas are needed to treat rational directions.  If $e \in \mathbb{R} \mathbb{Z}^{d}$ and we let $(V^{\delta})_{\delta > 0}$ be the solutions of 
	\begin{equation} \label{E: bad cell problem}
		m(y,e) + \delta V^{\delta} - \text{tr} \left( A(y,e) D^{2} V^{\delta} \right) = 0 \quad \text{in} \, \, \mathbb{T}^{d},
	\end{equation}
then there is an $m^{\perp}_{e} \in C(\mathbb{T}^{d})$ varying only in the $e$ direction such that
	\begin{equation*}
		m^{\perp}_{e} = \lim_{\delta \to 0^{+}} (-\delta V^{\delta}) \quad \text{uniformly in} \, \, \mathbb{T}^{d}.
	\end{equation*}
In general, $m^{\perp}_{e}$ is not a constant function.  The reason is that, at the level of \eqref{E: quasilinear forced}, sending $\delta \to 0^{+}$ in \eqref{E: bad cell problem} only averages the fluctuations of the front relative to a moving reference plane.  As we show below, the height of this plane oscillates around its mean and needs to be corrected through another, one-dimensional cell problem.

\section{Examples}  In this section, we discuss a few examples of interest that fit into the assumptions of either Theorem \ref{T: level set PDE} or Theorem \ref{T: quasilinear}.  

\subsection{Anisotropic Curvature Flows with Periodic Mobilities} \label{S: anisotropic curvature flows}  Let $\varphi : \mathbb{R}^{d} \to [0,\infty)$ be a $C^{2}$ Finsler norm, that is, a convex, positively one-homogeneous function that is positive and $C^{2}$ in $\mathbb{R}^{d} \setminus \{0\}$.  Given a mobility coefficient $m$ as above, the equation	
	\begin{equation} \label{E: anisotropic curvature flow}
		m(y,\widehat{Du}) u_{t} - \text{tr} \left(D^{2}\varphi(\widehat{Du}) D^{2} u \right) = 0 \quad \text{in} \, \, \mathbb{R}^{d} \times (0,\infty)
	\end{equation}
is referred to as an anisotropic curvature flow.  Formally, this is the gradient flow of the anisotropic perimeter determined by $\varphi$ with respect to the $L^{2}$-Riemannian metric determined by $m$ (cf.\ \cite{taylor cahn}).  The geometric flows associated with these equations are of interest in materials science.  

In this setting, our results show that, at large scales, the mobility gets averaged:
	
	\begin{prop} \label{P: anisotropic curvature} Suppose that $\varphi$ is a $C^{2}$ Finsler norm that is uniformly convex in the following sense:
		\begin{equation*}
			\lambda (\text{Id} - e \otimes e) \leq D^{2} \varphi(e) \leq \Lambda (\text{Id} - e \otimes e) \quad \text{if} \, \, e \in S^{d-1}.
		\end{equation*} 
	If $m$ satisfies the assumptions of Theorem \ref{T: quasilinear} and $A(y,e) = D^{2}\varphi(e)$ for all $e \in S^{d-1}$, then the solutions $(u^{\epsilon})_{\epsilon > 0}$ of \eqref{E: quasilinear} converge locally uniformly to the solution $\bar{u}$ of the averaged equation
		\begin{equation*}
			\left\{ \begin{array}{r l}
				\overline{m}(\widehat{D\bar{u}}) \bar{u}_{t} - \text{tr} \left( D^{2}\varphi(\widehat{D\bar{u}}) D^{2} \bar{u}\right) = 0 & \text{in} \, \, \mathbb{R}^{d} \times (0,\infty), \\
				\bar{u} = u_{0} & \text{on} \, \, \mathbb{R}^{d} \times \{0\}.
			\end{array} \right.
		\end{equation*}
	Here $\overline{m}(e) = \int_{\mathbb{T}^{d}} m(y,e) \, dy$.  Furthermore, in this case, $\overline{m}_{\text{pl}} = \overline{m}$ in $S^{d-1}$.  \end{prop}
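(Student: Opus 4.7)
The plan hinges on the fact that when $a(y,e) = D^{2}\varphi(e)$ is independent of $y$, the sets $\mathscr{I}^{a}_{e}$ of invariant measures from Theorem \ref{T: invariant measures} can be identified explicitly. Indeed, for any $\psi \in C^{\infty}(\mathbb{T}^{d})$ and any $e \in S^{d-1}$, integrating by parts over the torus gives
\begin{equation*}
\int_{\mathbb{T}^{d}} \text{tr}\left( D^{2}\varphi(e) \, D^{2}_{e} \psi(y) \right) dy = \text{tr}\left( D^{2}\varphi(e) \int_{\mathbb{T}^{d}} D^{2}_{e} \psi(y) \, dy \right) = 0,
\end{equation*}
so Lebesgue measure $\mathcal{L}^{d}$ belongs to $\mathscr{I}^{a}_{e}$ for every $e \in S^{d-1}$. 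When $e \notin \mathbb{R}\mathbb{Z}^{d}$, the uniqueness clause of Theorem \ref{T: invariant measures}(i) forces $\bar{\mu}_{e} = \mathcal{L}^{d}$, and formula \eqref{E: effective a and m} then yields $\bar{a}(e) = D^{2}\varphi(e)$ and $\overline{m}(e) = \int_{\mathbb{T}^{d}} m(y,e)\,dy$.

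Since $\varphi$ is $C^{2}$ off the origin and $m$ is continuous, these expressions extend continuously from $S^{d-1}\setminus \mathbb{R}\mathbb{Z}^{d}$ to all of $S^{d-1}$. Consequently the effective nonlinearity $\overline{F}$ of \eqref{E: homogenized coefficient linear} admits a continuous extension to $(\mathbb{R}^{d}\setminus\{0\})\times \mathcal{S}_{d}$, so $\overline{F}^{*} = \overline{F}_{*}$. Combining this equality with Theorem \ref{T: quasilinear} and the standard comparison principle for the limiting geometric equation yields $\bar{u}^{*} \leq \bar{u}_{*}$, and hence locally uniform convergence $u^{\epsilon} \to \bar{u}$ to the unique viscosity solution of the averaged equation.

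For the identity $\overline{m}_{\text{pl}} = \overline{m}$, Theorem \ref{T: linear response formula}(i) handles the case $e \notin \mathbb{R}\mathbb{Z}^{d}$. For rational $e \in \mathbb{R}\mathbb{Z}^{d}$, the operator $\varphi \mapsto \text{tr}(D^{2}\varphi(e) D^{2}_{e} \cdot)$ has constant coefficients and leaves each sub-torus $\mathbb{T}^{d-1}_{e}(s)$ invariant, so the same integration-by-parts argument as above shows that the unique element $\mu^{s}_{e}$ of Theorem \ref{T: invariant measures}(ii) supported on $\mathbb{T}^{d-1}_{e}(s)$ is the normalized surface measure on that slice. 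Because all leaves of the foliation have the same surface area, Fubini's theorem gives
\begin{equation*}
r_{e}^{-1}\int_{0}^{r_{e}} \int_{\mathbb{T}^{d}} m(y,e) \, \mu^{s}_{e}(dy) \, ds = \int_{\mathbb{T}^{d}} m(y,e) \, dy = \overline{m}(e),
\end{equation*}
which together with \eqref{E: linear response} proves $\overline{m}_{\text{pl}}(e) = \overline{m}(e)$. The only step requiring any care is verifying the normalization in the disintegration along the foliation, and this is elementary here because the leaves are translates of a single hyperplane section of $\mathbb{T}^{d}$.
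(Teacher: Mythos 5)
Your proposal is correct and fills in exactly the details the paper leaves to the reader: the paper's proof is the one-line observation that $a(y,e)=D^{2}\varphi(e)$ is independent of $y$ together with Theorems \ref{T: quasilinear} and \ref{T: limiting measures}, and your explicit identification of the invariant measures ($\bar{\mu}_{e}=\mathcal{L}^{d}$ for irrational $e$, normalized slice measures $\mu^{s}_{e}$ for rational $e$) is the natural way to carry this out, yielding the continuity of $\bar{a}$, $\overline{m}$ and hence $\overline{F}^{*}=\overline{F}_{*}$, plus the formula \eqref{E: linear response} computation. The only step you gloss over is the barrier argument showing $\bar{u}^{*}=\bar{u}_{*}=u_{0}$ on $\mathbb{R}^{d}\times\{0\}$ before invoking comparison, which is standard and also treated only briefly in the paper's proof of Theorem \ref{T: level set PDE}.
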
  
	
A few remarks are in order:
	
\begin{remark} It is interesting to note that the effective mobility $\overline{m}_{\text{pl}}$ is continuous in Proposition \ref{P: anisotropic curvature}.  By contrast, given a positive function $c \in C^{0,1}(\mathbb{T}^{d})$, the structurally similar forced problem
	\begin{equation*}
		\left\{ \begin{array}{r l}
			u^{\epsilon}_{t} - \text{tr} \left( D^{2} \varphi(\widehat{Du^{\epsilon}}) D^{2} u^{\epsilon} \right) - c(\epsilon^{-1} x) \|Du^{\epsilon}\| = 0 & \text{in} \, \, \mathbb{R}^{d} \times (0,\infty), \\
			u^{\epsilon}(x,0) = \langle x,e \rangle & \text{on} \, \, \mathbb{R}^{d} \times (0,\infty),
		\end{array} \right.
	\end{equation*}
also homogenizes with $u^{\epsilon}(x,t) \to \langle x,e \rangle + \bar{c}(e) t$ in the limit.  (This was proved when $d = 2$ and $\varphi = \|\cdot\|$ in \cite{cesaroni novaga valdinoci} and follows by arguing as in Section \ref{S: linear response} below in general.)  However, as in \cite{cesaroni novaga valdinoci}, one finds that $\bar{c}$ is generically discontinuous.  Put slightly differently, pathologies arise if we replace constant forcing by periodic forcing in \eqref{E: quasilinear forced}.
\end{remark}

\begin{remark} Let us observe that by exploiting the regularity theory of the Laplacian and using approximation arguments as in \cite{allen cahn mobility}, it is not hard to show that Proposition \ref{P: anisotropic curvature} still holds if \eqref{A: m smooth} is dropped. \end{remark}

\begin{remark}  Of course, it would also be interesting to analyze the case when $\varphi$ depends on $y$ as well as $e$.  In that case, to get an anisotropic curvature flow, \eqref{E: anisotropic curvature flow} should include another term involving $D\varphi$ and no longer fits into the framework of \eqref{E: quasilinear}.  The homogenization of such equations, which are the natural divergence form analogue of \eqref{E: quasilinear}, is largely open.  However, see \cite{barles cesaroni novaga} for results that apply to the case where $\varphi(y,e) = 1 + \langle \psi(y),e \rangle$ for some vector field $\psi$ with $\|\psi\|_{L^{\infty}(\mathbb{T}^{d})} < 1$.  \end{remark}

\begin{remark}  In general, the operator $\overline{F}$ obtained in Theorem \ref{T: quasilinear} extends continuously to $(\mathbb{R}^{d} \setminus \{0\}) \times \mathcal{S}_{d}$ whenever $a$ is independent of the spatial variable, as in the last proposition.  Another (actually equivalent) case is when $a(y,e) = \tilde{a}(y,e) \text{Id}$ for some positive function $\tilde{a} \in C^{0,1}(\mathbb{T}^{d})$.  In this case, $\bar{a}(e)$ equals the harmonic mean of $\tilde{a}(\cdot,e)$, consistent with \cite[Theorem C]{chen lou}.  \end{remark}  

\subsection{Fully Nonlinear Operators in Dimension Two}  Notice that if $F$ satisfies assumptions (i)-(iv) of Theorem \ref{T: level set PDE} and also is Lipschitz continuous in the following sense
	\begin{equation} \label{E: standard lipschitz}
		|F(p,X,y) - F(p,X,y')| \leq C \|\tilde{X}_{e}\| \|y - y'\| \quad \text{if} \, \, (p,X) \in (\mathbb{R}^{d} \setminus \{0\}) \times \mathcal{S}_{d}, \, \, y, y' \in \mathbb{T}^{d},
	\end{equation}
then we can argue as in \cite{ishii lions} to see that $F$ also satisfies (v).  

A natural class of fully nonlinear operators satisfying these assumptions take the following form:
	\begin{equation*}
		F(p,X,y) = \sup_{\alpha \in \mathcal{A}} \inf_{\beta \in \mathscr{B}} \text{tr} \left( \left(\text{Id} - \hat{p} \otimes \hat{p} \right) a_{\alpha,\beta}(y,\hat{p}) \left(\text{Id} - e \otimes e\right) X \right).	
	\end{equation*}
Here we assume that the matrix fields $\{a_{\alpha,\beta} \, \mid \, \alpha \in \mathcal{A}, \, \, \beta \in \mathscr{B} \}$ map $\mathbb{T}^{d} \times S^{d-1}$ continuously into $\mathcal{S}_{d}(\lambda,\Lambda)$ and that there is an $L > 0$ such that, for each $(\alpha,\beta) \in \mathcal{A} \times \mathscr{B}$, we have
	\begin{equation*}
		\sup \left\{ \frac{\|a_{\alpha,\beta}(y,e) - a_{\alpha,\beta}(y',e)\|}{\|y - y'\|} \, \mid \, (y,e), (y',e) \in \mathbb{T}^{d} \times S^{d-1}, \, \, y \neq y' \right\} \leq L.
	\end{equation*}

When $\mathcal{A}$ is a singleton so that $F$ is concave, such operators describe stochastic target problems, which are of interest in financial mathematics (see \cite{soner touzi}, \cite{cardaliaguet probabilistic}).

\section{Approximate Correctors}  \label{S: approximate correctors}

In this section, we construct approximate correctors that will be used in the analysis of the asymptotics of \eqref{E: level set PDE} and \eqref{E: quasilinear}.  The use of approximate correctors in homogenization is by now standard (cf.\ \cite{lions souganidis} and \cite{caffarelli souganidis wang}).  The main difficulty here is the operators of interest are degenerate elliptic and, therefore, as we will see below, the cell problem is not well behaved in rational directions.

The result we need concerning approximate correctors is stated next.  The majority of this section is devoted to its proof.

\begin{theorem} \label{T: approximate correctors} If $F$ satisfies the assumptions of Theorem \ref{T: level set PDE} or $F$ takes the form
	\begin{equation*}
		F(p,X,y) = m(y,\hat{p})^{-1} \text{tr}(a(y,\hat{p})\tilde{X}_{\hat{p}})
	\end{equation*} 
with $m$ and $a$ satisfying the assumptions of Theorem \ref{T: quasilinear}, then, for each $(e,X) \in S^{d-1} \times \mathcal{S}_{d}$, there is a unique continuous function $F^{\perp}_{e}(X,\cdot) : \mathbb{T}^{d} \to \mathbb{R}$ varying only in the $e$ direction such that, for each $\nu > 0$, there is a (non-unique) $V^{\nu} \in C^{2}(\mathbb{T}^{d})$ satisfying
	\begin{equation} \label{E: approximate correctors}
		- \nu \leq F^{\perp}_{e}(X,\langle y,e \rangle e) - F(e,\tilde{X}_{e} + D^{2}_{e} V^{\nu}, y) \leq \nu \quad \text{in} \, \, \mathbb{T}^{d}.
	\end{equation}

If $e \in S^{d-1} \setminus \mathbb{R} \mathbb{Z}^{d}$, then there is a constant $\overline{F}(e,X)$ such that $F_{e}^{\perp}(X,\cdot) \equiv \overline{F}(e,X)$ in $\mathbb{T}^{d}$.  Furthermore, $\overline{F}(e,\cdot)$ is continuous, it satisfies \eqref{A: strong degenerate ellipticity}, $|\overline{F}(e,X)| \leq \Lambda \|X\|$, and 
	\begin{equation*}
		\overline{F}(e,X) = \overline{F}(e,\tilde{X}_{e}) \quad \text{if} \, \, X \in  \mathcal{S}_{d}.
	\end{equation*}  
\end{theorem}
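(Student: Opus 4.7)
The plan is to identify $F^\perp_e(X, \cdot)$ as the uniform limit of $\delta V^\delta$ as $\delta \to 0^+$, where $V^\delta \in C(\mathbb{T}^d)$ is the unique continuous solution of the $\delta$-penalized cell problem
\begin{equation*}
\delta V^\delta - F(e, \tilde{X}_e + D^2_e V^\delta, y) = 0 \quad \text{in } \mathbb{T}^d.
\end{equation*}
Existence and uniqueness of $V^\delta$ will follow from Perron's method combined with the comparison principle, which holds here because the operator is tangentially elliptic and the $\delta$-term provides strict monotonicity in the zeroth order. Comparison against the constant sub- and supersolutions $\pm \delta^{-1}\Lambda\|\tilde{X}_e\|$ will give the uniform bound $\|\delta V^\delta\|_\infty \leq \Lambda\|\tilde{X}_e\|$, which in turn implies the estimate $|\overline{F}(e, X)| \leq \Lambda\|X\|$ in the irrational case.

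The essential observation is that only the tangential Hessian $D^2_e V$ enters the cell problem, so the equation foliates into uniformly elliptic equations on the slices $\{y : \langle y, e\rangle = s\}$. When $e \in \mathbb{R}\mathbb{Z}^d$, each slice is the subtorus $\mathbb{T}^{d-1}_e(s)$ and standard periodic homogenization will yield a unique constant $c_e(X, s)$ with $\delta V^\delta|_{\mathbb{T}^{d-1}_e(s)} \to c_e(X, s)$; continuity of the coefficients in $y$ makes $c_e(X, \cdot)$ continuous in $s$, and this defines the $r_e$-periodic function $F^\perp_e(X, \cdot)$. When $e \notin \mathbb{R}\mathbb{Z}^d$, the slice is dense in $\mathbb{T}^d$ and, after lifting to $\mathbb{R}^{d-1}$, the restricted problem has almost-periodic coefficients; the homogenization results of Caffarelli--Souganidis \cite{caffarelli souganidis} (and \cite{caffarelli souganidis wang} in the convex/concave case) will then produce a single constant $\overline{F}(e, X)$ such that $\delta V^\delta \to \overline{F}(e, X)$ uniformly, with density forcing the limit to be this constant on all of $\mathbb{T}^d$.

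Given $\nu > 0$, I will set $V^\nu = V^\delta$ for $\delta$ small enough that $\|\delta V^\delta - F^\perp_e(X, \langle \cdot, e\rangle e)\|_\infty \leq \nu$, which yields \eqref{E: approximate correctors}. The main technical obstacle is establishing the $C^2$ regularity of $V^\nu$: uniform ellipticity on each slice supplies $C^{2,\alpha}$ Schauder control in tangential directions, and combining this with the $C^{2,\alpha}$ smoothness of $m, a$ in $y$ and the strict monotonicity from the $\delta$-term controls the second derivatives in the transverse $e$ direction as well. This is precisely why the regularity hypotheses \eqref{A: a C2} and \eqref{A: m smooth} are imposed in Theorem \ref{T: quasilinear}. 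When $d = 2$, the slices are one-dimensional, the cell problem reduces to an ODE, and a direct regularization of $F$ produces $C^2$ approximate correctors under the weaker hypotheses of Theorem \ref{T: level set PDE}.

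The remaining properties of $\overline{F}$ in irrational directions are then obtained routinely: continuity in $X$, the ellipticity bounds \eqref{A: strong degenerate ellipticity}, and $|\overline{F}(e, X)| \leq \Lambda\|X\|$ all follow by applying comparison to the penalized problems with differing data $X, X'$ and sending $\delta \to 0^+$, while $\overline{F}(e, X) = \overline{F}(e, \tilde{X}_e)$ holds because only $\tilde{X}_e$ enters the cell problem. The hardest step is applying the almost-periodic homogenization machinery of \cite{caffarelli souganidis} to the lifted slice problem and extracting genuine $C^2$ correctors --- the $C^{1,\alpha}$ regularity supplied directly by that theory is not enough for the perturbed test function arguments used elsewhere in the paper.
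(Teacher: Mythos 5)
Your proposal follows essentially the same route as the paper: slice the penalized cell problem along $\langle e \rangle^{\perp}$, invoke the Caffarelli--Souganidis (quasi-)periodic/almost periodic homogenization results to obtain the ergodic limit $F^{\perp}_{e}$ (constant in irrational directions by density of the leaves), take $V^{\nu}$ to be a penalized corrector with $\delta$ small, obtain its $C^{2}$ regularity via Schauder/transverse difference-quotient arguments in the quasilinear case and via regularization of $F$ when $d=2$, and read off the properties of $\overline{F}$ from comparison. The only step you leave implicit is the error control when passing from the regularized operator $F^{\mu}$ back to $F$ (the paper does this with the comparison principle for ergodic constants), but this is the natural completion of the argument you sketch.
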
 

%

Following the proof of Theorem \ref{T: approximate correctors}, we describe the relation between the functions $\{F_{e}^{\perp}\}_{e \in S^{d-1}}$ and $\overline{F}$ and the invariant measures of Theorem \ref{T: invariant measures}.  Finally, at the end of the section, we prove results specific to the rational case that are used in the proofs of Theorem \ref{T: homogenization forced planes} and Corollary \ref{C: generic}. 

In Appendix \ref{S: diophantine}, we discuss sufficient conditions for the existence of a corrector (i.e.\ a solution of \eqref{E: approximate correctors} with $\nu = 0$) in the case when $a$ is independent of the spatial variable.

In this section and elsewhere in the paper, we will extensively use the differential operator $D^{2}_{e}$ defined by its action on smooth functions $\varphi$ in $\mathbb{R}^{d}$ by
	\begin{equation} \label{E: tangential derivative}
		D^{2}_{e}\varphi = (\text{Id} - e \otimes e) D^{2} \varphi (\text{Id} - e \otimes e).
	\end{equation}

\subsection{Cell Problem and Ergodic Constant}  To start with, we establish the existence of penalized correctors and discuss their convergence as $\delta \to 0^{+}$.  

	\begin{theorem}  \label{T: cell problem ergodic constant} Given $e \in S^{d-1}$, $X \in \mathcal{S}_{d}$, and $\delta > 0$, there is a unique $V^{\delta} \in C(\mathbb{T}^{d})$ solving the penalized cell problem 
		\begin{equation} \label{E: penalized cell problem second instance}
			\delta V^{\delta} - F(e,X + D^{2}V^{\delta},y) = 0 \quad \text{in} \, \, \mathbb{T}^{d}.
		\end{equation}
	Furthermore, there is a function $F^{\perp}_{e}(X,\cdot) : \mathbb{T}^{d} \to \mathbb{R}$ varying only in the $e$ direction such that
		\begin{equation} \label{E: uniform convergence penalized correctors}
			\lim_{\delta \to 0^{+}} \sup \left\{ |\delta V^{\delta}(y) + F^{\perp}_{e}(X,y)| \, \mid \, y \in \mathbb{T}^{d} \right\} = 0.
		\end{equation}
	In particular, if $e \notin \mathbb{R} \mathbb{Z}^{d}$, then there is a constant $\overline{F}(e,X) \in \mathbb{R}$ such that $F^{\perp}_{e}(X,\cdot) \equiv \overline{F}(e,X)$ in $\mathbb{T}^{d}$.  
	\end{theorem}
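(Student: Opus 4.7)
My plan is to first construct $V^{\delta}$ via Perron's method, then exploit the geometric structure of $F$ to reduce the penalized cell problem to a parametrized family of uniformly elliptic $(d-1)$-dimensional problems on the affine slices perpendicular to $e$. The asymptotic behavior as $\delta \to 0^{+}$ will then be analyzed via half-relaxed limits and a contradiction argument invoking periodic or almost periodic homogenization on slices, depending on whether $e$ is rational or irrational.

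First, the geometric property (assumption (i) in Theorem \ref{T: level set PDE}, or the explicit quasilinear form) implies that $F(e, X + D^{2} V, y) = F(e, \tilde{X}_{e} + D^{2}_{e} V, y)$, so the penalized cell problem \eqref{E: penalized cell problem second instance} reduces to
\begin{equation*}
\delta V^{\delta} - F(e, \tilde{X}_{e} + D^{2}_{e} V^{\delta}, y) = 0 \quad \text{in} \, \, \mathbb{T}^{d}.
\end{equation*}
Existence of a viscosity solution then follows from Perron's method with the constants $\pm \delta^{-1} \sup_{y} |F(e, \tilde{X}_{e}, y)|$ (finite by continuity of $F$) as sub- and supersolutions. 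Uniqueness and the uniform bound $\|\delta V^{\delta}\|_{\infty} \leq \sup_{y} |F(e, \tilde{X}_{e}, y)|$ follow from the comparison principle, i.e.\ from assumption (v) in the fully nonlinear case and from direct estimates in the quasilinear case, given the regularity of $a$ and $m$.

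To pass to the limit, I would introduce the half-relaxed limits $w^{*} = \limsup\nolimits^{*} (-\delta V^{\delta})$ and $w_{*} = \liminf\nolimits_{*}(-\delta V^{\delta})$, which are a priori bounded on $\mathbb{T}^{d}$. Since the reduced operator involves only the tangential Hessian $D^{2}_{e}$, it is uniformly elliptic in the $(d-1)$ directions perpendicular to $e$ and genuinely degenerate in the $e$ direction, so it can be viewed as a parametrized family of elliptic equations on the affine slices $\{y : \langle y, e\rangle = s\}$. When $e \in \mathbb{R}\mathbb{Z}^{d}$, each slice descends mod $\mathbb{Z}^{d}$ to a compact $(d-1)$-subtorus $\mathbb{T}^{d-1}_{e}(s)$, and the standard periodic homogenization theorem for uniformly elliptic equations in nondivergence form supplies a slice-dependent ergodic constant $F^{\perp}_{e}(X, s)$. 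When $e \notin \mathbb{R}\mathbb{Z}^{d}$, a single slice is dense in $\mathbb{T}^{d}$, so the restriction of the coefficient data to it is almost periodic, and the almost periodic homogenization results of Caffarelli-Souganidis-Wang \cite{caffarelli souganidis wang} and Caffarelli-Souganidis \cite{caffarelli souganidis} yield a single ergodic constant $\overline{F}(e, X)$.

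The main obstacle is proving $w^{*} = w_{*} = F^{\perp}_{e}(X, \cdot)$, i.e.\ upgrading compactness to genuine uniform convergence, and showing that the common limit depends on $y$ only through $\langle y, e \rangle$. I would argue by contradiction: at any separating point, construct a smooth test function depending only on $\langle y, e \rangle$ and use the ergodic constant supplied by the slicewise homogenization to derive incompatible viscosity inequalities. The rational case requires extra care, because distinct slices may produce distinct ergodic constants, so the uniqueness argument must be carried out slice by slice, with continuity in the slice parameter $s$ obtained from stability of the ergodic constant under variation of the coefficients. Finally, when $e \notin \mathbb{R}\mathbb{Z}^{d}$, uniqueness of the almost periodic ergodic constant forces $F^{\perp}_{e}(X, \cdot) \equiv \overline{F}(e, X)$ to be constant on all of $\mathbb{T}^{d}$, completing the proof.
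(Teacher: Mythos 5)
Your skeleton matches the paper's up to the crucial limit step: existence and uniqueness of $V^{\delta}$ by Perron plus comparison (note, though, that assumption (v) is formulated for the original equation in $\mathbb{R}^{d}$ and is not applied directly to the degenerate cell problem; the paper proves comparison by restricting to the slices $y + \langle e \rangle^{\perp}$, where the operator is uniformly elliptic and can be bounded by a Pucci extremal operator), and the definition of $F^{\perp}_{e}(X,y)$ as the ergodic constant of the slice problem with (quasi-)periodic coefficients, supplied by \cite{caffarelli souganidis}, is exactly the paper's.

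The genuine gap is in your proposed identification $w^{*} = w_{*} = F^{\perp}_{e}(X,\cdot)$ by a contradiction argument with smooth test functions depending only on $\langle y, e \rangle$. Any such $\varphi$ satisfies $D^{2}_{e}\varphi \equiv 0$, so at a touching point the viscosity inequality for \eqref{E: penalized cell problem second instance} degenerates to information about $F(e,\tilde{X}_{e},y)$ alone and carries no trace of the slice ergodic constant; the constant can only be brought in by perturbing $\varphi$ with (approximate) correctors of the slice problem, and those live on the non-compact plane $\langle e \rangle^{\perp}$, are not $\mathbb{Z}^{d}$-periodic, and for irrational $e$ exist only approximately, so one must deal with attainment of maxima of the perturbed function, the corrector error, and --- in the rational case --- equicontinuity of $\delta V^{\delta}$ across neighboring slices, since the half-relaxed limits mix slices while the slice problems decouple. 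None of this appears in your sketch, and ``stability of the ergodic constant under variation of the coefficients'' only yields continuity of $s \mapsto F^{\perp}_{e}(X,se)$, not the identification of $w^{*},w_{*}$ nor the uniform convergence \eqref{E: uniform convergence penalized correctors}. The paper's proof avoids the whole issue: for $e \in \mathbb{R}\mathbb{Z}^{d}$ the slice problems are genuinely periodic homogenization problems whose rate of convergence of $\delta \tilde{V}^{\delta}_{y}$ to $-F^{\perp}_{e}(X,y)$ is uniform in $y$ (arguing as in \cite{caffarelli souganidis}), which gives \eqref{E: uniform convergence penalized correctors} directly; for $e \notin \mathbb{R}\mathbb{Z}^{d}$ a single slice is dense, so for each fixed $\delta$ the supremum of $|\delta V^{\delta} + F^{\perp}_{e}(X,y')|$ over $\mathbb{T}^{d}$ equals the supremum over that one slice, and uniform convergence together with constancy of $F^{\perp}_{e}(X,\cdot)$ follow at once. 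To repair your argument you would either have to carry out the corrector-perturbation scheme with the issues above resolved, or replace that step with the rate (rational) and density (irrational) arguments.
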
  
	
		\begin{proof}  The existence and uniqueness of $V^{\delta} \in C(\mathbb{T}^{d})$ is proved in Appendix \ref{A: technical lemmata}.
		
		Given $y \in \mathbb{T}^{d}$, define $\tilde{V}^{\delta}_{y} : \langle e \rangle^{\perp} \to \mathbb{R}$ by 
			\begin{equation} \label{E: slices}
				\tilde{V}^{\delta}_{y}(x') = V^{\delta}(y + x').
			\end{equation}
		A perturbation argument (cf.\ \cite[Appendix B]{rates on networks}) shows that, no matter the choice of $y$, the function $\tilde{V}^{\delta}_{y}$ is solution of
			\begin{equation}
				\delta \tilde{V}^{\delta}_{y} - F(e, \tilde{X}_{e} + D^{2}_{e}\tilde{V}^{\delta}_{y}, y + x') = 0 \quad \text{in} \, \, \langle e \rangle^{\perp}.  \quad \label{E: slicing equation}
			\end{equation}
		(Here $D^{2}_{e}$ is given by \eqref{E: tangential derivative}.)  This is the penalized cell problem associated with a uniformly elliptic operator with quasi-periodic coefficients.  Therefore, by \cite[Lemma 9.1]{caffarelli souganidis}, there is a constant $F_{e}^{\perp}(X,y) \in \mathbb{R}$ such that
			\begin{equation*}
				\lim_{\delta \to 0^{+}} \sup \left\{ |\tilde{V}^{\delta}_{y}(x') + F_{e}^{\perp}(X,y)| \, \mid \, x' \in \langle e \rangle^{\perp} \right\} = 0.
			\end{equation*}
			
		Notice that $y \mapsto F_{e}^{\perp}(X,y)$ is well-defined and only varies in the $e$ direction.  Indeed, if $k \in \mathbb{Z}^{d}$, then $\tilde{V}^{\delta}_{y + k} = \tilde{V}^{\delta}_{y}$ so $F_{e}^{\perp}(X,y + k) = F_{e}^{\perp}(X,y)$.  To see that $F_{e}^{\perp}(X,y) = F_{e}^{\perp}(X,\langle y,e \rangle e)$, observe that
			\begin{equation*}
				- F_{e}^{\perp}(X,y) = \lim_{\delta \to 0^{+}} \delta \tilde{V}^{\delta}_{y}(0) = \lim_{\delta \to 0^{+}} \delta \tilde{V}^{\delta}_{\langle y,e \rangle e}(y - \langle y,e \rangle e) = - F_{e}^{\perp}(X,\langle y, e \rangle e).
			\end{equation*}
		This proves $F_{e}^{\perp}(X,\cdot)$ varies only in the $e$ direction.
		
	It remains to prove the uniform convergence of $(\delta V^{\delta})_{\delta > 0}$.  If $e \in \mathbb{R} \mathbb{Z}^{d}$, then \eqref{E: slicing equation} is a periodic homogenization problem in $\langle e \rangle^{\perp}$.  (Here the coefficients are invariant under the group $M_{e} = \mathbb{Z}^{d} \cap \langle e \rangle^{\perp}$, which is a rank $(d - 1)$-subgroup of $\langle e \rangle^{\perp}$.)  Thus, arguing as in \cite[Section 8]{caffarelli souganidis}, we see that the rate of convergence of $(\delta \tilde{V}^{\delta}_{y})_{\delta > 0}$ is uniform with respect to $y$.  From this, it is immediate that $(\delta V^{\delta})_{\delta > 0}$ converges uniformly in $\mathbb{T}^{d}$.  
	
	On the other hand, if $e \notin \mathbb{R} \mathbb{Z}^{d}$, then the density of each leaf of the foliation $\{\mathbb{T}^{d-1}_{e}(r)\}_{r \in [0,r_{e})}$ implies that, for each $y' \in \mathbb{T}^{d}$,
		\begin{equation*}
			\sup \left\{ |\delta V^{\delta}(y) + F_{e}^{\perp}(X,y')| \, \mid \, y \in \mathbb{T}^{d} \right\} = \sup \left\{ |\delta \tilde{V}^{\delta}_{y'}(x') + F_{e}^{\perp}(X,y')| \, \mid \, x' \in \langle e \rangle^{\perp} \right\}.
		\end{equation*}	
	Thus, $\lim_{\delta \to 0^{+}} \delta V^{\delta}(y) = -F_{e}^{\perp}(X,y')$ uniformly in $\mathbb{T}^{d}$, no matter the choice of $y'$.  Note that this shows $F_{e}^{\perp}(X,\cdot)$ is a constant in this case.  
		\end{proof}  
		
Now we prove the properties of $\overline{F}(e,\cdot)$ claimed in Theorem \ref{T: approximate correctors}:

	\begin{corollary} \label{C: basic properties} $\overline{F}(e,\cdot) : \mathcal{S}_{d} \to \mathbb{R}$ is continuous, it satisfies \eqref{A: strong degenerate ellipticity}, $|\overline{F}(e,X)| \leq \Lambda \|X\|$, and 
		\begin{equation*}
			\overline{F}(e,X) = \overline{F}(e,\tilde{X}_{e}) \quad \text{if} \, \, X \in \mathcal{S}_{d}.
		\end{equation*}  
	\end{corollary}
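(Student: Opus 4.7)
The plan is to establish each of the four claimed properties of $\overline{F}(e, \cdot)$ by exploiting the penalized correctors $V^{\delta}$ from Theorem \ref{T: cell problem ergodic constant} and the comparison principle for \eqref{E: penalized cell problem second instance}.

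First, I deduce the ellipticity bound \eqref{A: strong degenerate ellipticity} for $\overline{F}$ from the analogous bound on $F$. Fixing $X \in \mathcal{S}_{d}$ and $Y \geq 0$, let $V^{\delta}_{1}$ and $V^{\delta}_{2}$ denote the penalized correctors for $X$ and $X + Y$ respectively. A direct verification using assumption (iii) shows that $V^{\delta}_{1} + \delta^{-1} \Lambda \|\tilde{Y}_{e}\|$ is a supersolution of the cell problem at $X + Y$, while $V^{\delta}_{1} + \delta^{-1} \lambda \|\tilde{Y}_{e}\|$ is a subsolution. The comparison principle, established in Appendix \ref{A: technical lemmata} to obtain uniqueness in Theorem \ref{T: cell problem ergodic constant}, then gives
\[ \lambda \|\tilde{Y}_{e}\| \leq \delta V^{\delta}_{2} - \delta V^{\delta}_{1} \leq \Lambda \|\tilde{Y}_{e}\|, \]
and passing to $\delta \to 0^{+}$ via \eqref{E: uniform convergence penalized correctors} yields ellipticity of $\overline{F}(e, \cdot)$ in irrational directions.

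For the invariance $\overline{F}(e, X) = \overline{F}(e, \tilde{X}_{e})$, I invoke the slicing reduction already carried out in the proof of Theorem \ref{T: cell problem ergodic constant}: on any slice, $\tilde{V}^{\delta}_{y}$ solves \eqref{E: slicing equation}, whose second argument of $F$ is $\tilde{X}_{e}$ rather than $X$. Since $F^{\perp}_{e}(X, \cdot)$ is extracted from the ergodic constant of this sliced PDE, it depends on $X$ only through $\tilde{X}_{e}$, which delivers the claim in the irrational case.

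Continuity of $\overline{F}(e, \cdot)$ and the bound $|\overline{F}(e, X)| \leq \Lambda \|X\|$ then follow from the ellipticity bound by a standard decomposition. For continuity, I decompose $\tilde{X}_{e} - \tilde{Y}_{e}$ into its spectral positive and negative parts---both of which remain tangential to $e$ because $\tilde{X}_{e}$ and $\tilde{Y}_{e}$ annihilate $e$---and apply ellipticity to each increment, yielding the Lipschitz bound $|\overline{F}(e, X) - \overline{F}(e, Y)| \leq \Lambda \|\tilde{X}_{e} - \tilde{Y}_{e}\|$. For the supremum bound, the identity $F(e, 0, y) = 0$ (by the stationary planes assumption \eqref{E: stationary planes} in the first setting, or by the explicit formula for $F$ in the quasilinear setting) shows $V^{\delta} \equiv 0$ solves the cell problem at $X = 0$, hence $\overline{F}(e, 0) = 0$; combining this with the decomposition $\tilde{X}_{e} = (\tilde{X}_{e})_{+} - (\tilde{X}_{e})_{-}$ and the invariance just proved gives $|\overline{F}(e, X)| \leq \Lambda \|\tilde{X}_{e}\| \leq \Lambda \|X\|$. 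The main point of care is the opening step, where the comparison principle must be invoked for translated penalized correctors; once this is in place, the remaining properties follow essentially algebraically.
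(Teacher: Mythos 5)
Your proposal is correct, but it takes a more hands-on route than the paper. The paper's proof is essentially one line: by the slicing in the proof of Theorem \ref{T: cell problem ergodic constant}, $\tilde{X}_{e} \mapsto \overline{F}(e,\tilde{X}_{e})$ is the homogenized operator of a uniformly elliptic, almost periodic Dirichlet problem in $\langle e \rangle^{\perp}$, so continuity, ellipticity, the bound $|\overline{F}(e,X)| \leq \Lambda\|X\|$, and the dependence only on $\tilde{X}_{e}$ are all inherited from the general results of Caffarelli--Souganidis--Wang. You instead re-derive these properties directly at the level of the penalized cell problem: translating the corrector for $X$ by $\delta^{-1}\lambda\|\tilde{Y}_{e}\|$ and $\delta^{-1}\Lambda\|\tilde{Y}_{e}\|$ produces sub- and supersolutions at $X+Y$, and the comparison principle of Proposition \ref{P: cell problem wellposed} (together with Lemma \ref{L: comparison ergodic constants} for the ergodic constants) gives $\lambda\|\tilde{Y}_{e}\| \leq \delta(V^{\delta}_{2}-V^{\delta}_{1}) \leq \Lambda\|\tilde{Y}_{e}\|$; the invariance $\overline{F}(e,X)=\overline{F}(e,\tilde{X}_{e})$ comes from the fact that the sliced equation \eqref{E: slicing equation} sees only $\tilde{X}_{e}$; and Lipschitz continuity plus the sup bound follow from ellipticity via the spectral decomposition of $\tilde{X}_{e}-\tilde{Y}_{e}$ (whose positive and negative parts indeed annihilate $e$) together with $\overline{F}(e,0)=0$ from \eqref{E: stationary planes}. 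All of these steps check out in the viscosity framework, so your argument is a legitimate, more self-contained alternative that avoids invoking the CSW structure theory, at the cost of redoing comparison arguments the citation would give for free. One caution: when you pass to the limit you should use the sign convention of \eqref{E: ergodic behavior} and Proposition \ref{P: approximate correctors linear}, namely $\overline{F}(e,X)=\lim_{\delta\to 0^{+}}\delta V^{\delta}$; the formula \eqref{E: uniform convergence penalized correctors} as printed carries the opposite sign (an inconsistency internal to the paper), and taking it literally would reverse your ellipticity inequalities.
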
  
	
		\begin{proof}  By the proof of Theorem \ref{T: cell problem ergodic constant}, $\tilde{X}_{e} \mapsto \overline{F}(e,\tilde{X}_{e})$ is the homogenized operator associated with the (e.g.\ Dirichlet) homogenization problem
			\begin{equation*}
				\left\{ \begin{array}{r l}
					- F(e,D^{2}_{e} U^{\epsilon},\epsilon^{-1} x') = 0 & \text{in} \, \, \langle e \rangle^{\perp} \\
					U^{\epsilon} = G & \text{on} \, \, \langle e \rangle^{\perp}
				\end{array} \right.
			\end{equation*}
		Therefore, the claims follow directly from \cite{caffarelli souganidis wang}.\end{proof}  
		
Finally, we define the homogenized operator $\overline{F} : (\mathbb{R}^{d} \setminus \mathbb{R} \mathbb{Z}^{d}) \times \mathcal{S}_{d} \to \mathbb{R}$ by
	\begin{equation} \label{E: homogenized operator}
		\overline{F}(p,X) = \|p\| \overline{F}(\hat{p},\|p\|^{-1} X).
	\end{equation}
		
%
	
\subsection{Regularity Estimates}  We now turn to the regularity of the penalized correctors $(V^{\delta})_{\delta > 0}$ when the operator $F$ is sufficiently regular.  To start with, we consider the two dimensional case since it benefits from special structure.  We then turn to the operators of Theorem \ref{T: quasilinear}, which admit more-or-less smooth penalized correctors due to the strong assumptions imposed on the matrix field $a$.

Here is the result when $d =2$:

	\begin{prop}  Let $d = 2$ and assume that $F(e,\cdot) : \mathcal{S}_{d} \times \mathbb{T}^{d} \to \mathbb{R}$ is twice continuously differentiable.  Given $\delta > 0$, $e \in S^{1}$, and $X \in \mathcal{S}_{2}$, the penalized corrector $V^{\delta}$ solving \eqref{E: penalized cell problem second instance} is in $C^{2}(\mathbb{T}^{2})$.  \end{prop}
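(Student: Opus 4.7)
The essential simplification in $d = 2$ is that $\langle e \rangle^{\perp}$ is one-dimensional, so the tangential Hessian $D^{2}_{e} V^{\delta}$ reduces to the single scalar $\partial_{\eta\eta}^{2} V^{\delta}$ for $\eta \perp e$. My plan is to exploit this by first reducing the cell problem to a scalar quasilinear equation, then handling the two coordinate directions separately. Using the geometric property (i) to discard the $X_{ee}$-component, the strong degenerate ellipticity (iii), and the $C^{2}$ hypothesis, the map $z \mapsto F(e, \tilde{X}_{e} + z\, \eta \otimes \eta, y)$ is strictly increasing and $C^{2}$ with derivative in $[\lambda, \Lambda]$. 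Mimicking the perturbation argument of Theorem \ref{T: cell problem ergodic constant} to absorb the mixed second-derivative component, the implicit function theorem would rewrite \eqref{E: penalized cell problem second instance} in the equivalent form
\begin{equation*}
\partial_{\eta\eta}^{2} V^{\delta}(y) = H\bigl(V^{\delta}(y), y\bigr) \quad \text{in } \mathbb{T}^{2},
\end{equation*}
where $H \in C^{2}(\mathbb{R} \times \mathbb{T}^{2})$ and $\partial_{v} H = \delta / \partial_{z} F > 0$.

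First I would obtain $C^{2,\alpha}$ regularity along $\eta$-slices. Fixing $y_{0} \in \mathbb{T}^{2}$, the restriction $v(t) := V^{\delta}(y_{0} + t\eta)$ solves the 1D second-order ODE $\ddot{v}(t) = H(v(t), y_{0} + t\eta)$; since $V^{\delta}$ lies in $L^{\infty}$ by Theorem \ref{T: cell problem ergodic constant} and $H$ is $C^{2}$, classical one-dimensional Schauder theory gives $v \in C^{2,\alpha}_{\mathrm{loc}}(\mathbb{R})$ with bounds uniform in $y_{0}$. In particular, $\partial_{\eta\eta}^{2} V^{\delta}$ exists as a bounded $C^{\alpha}$ function along every $\eta$-slice.

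The main obstacle, and the reason the $d = 2$ hypothesis is essential, is regularity in the degenerate $e$-direction. To handle it I would run a difference-quotient argument. Setting $V^{\delta}_{h}(y) := h^{-1}(V^{\delta}(y + he) - V^{\delta}(y))$ and subtracting the cell problem at $y$ from the one at $y + he$, the mean value theorem in the matrix and spatial arguments of $F$ would show that $V^{\delta}_{h}$ satisfies a linear penalized equation of the form
\begin{equation*}
\delta V^{\delta}_{h}(y) - a_{h}(y)\, \partial_{\eta\eta}^{2} V^{\delta}_{h}(y) - b_{h}(y) = 0 \quad \text{in } \mathbb{T}^{2},
\end{equation*}
where $a_{h}(y) \in [\lambda, \Lambda]$ uniformly (via $\partial_{z} F$) and $\|b_{h}\|_{L^{\infty}}$ is uniformly bounded in $h$ (from the Lipschitz regularity of $F$ in $y$ together with the $L^{\infty}$-bound on $\partial_{\eta\eta}^{2} V^{\delta}$ from the first step). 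The penalty $\delta V^{\delta}_{h}$ and the maximum principle would then give a uniform $L^{\infty}$-bound on $V^{\delta}_{h}$; the slice Schauder estimates of the first step would yield equicontinuity; and the unique bounded solvability of the limiting linear penalized equation on $\mathbb{T}^{2}$ would identify any subsequential limit as the unique $W \in C(\mathbb{T}^{2})$ solving $\delta W - a\, \partial_{\eta\eta}^{2} W - b = 0$. Hence $V^{\delta}_{h} \to W = \partial_{e} V^{\delta}$ in $C(\mathbb{T}^{2})$.

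Finally I would iterate: applying the first-step slice-ODE argument and a second round of the difference-quotient argument to the linear equation satisfied by $\partial_{e} V^{\delta}$ would give continuity of $\partial_{e\eta}^{2} V^{\delta}$ and $\partial_{ee}^{2} V^{\delta}$, respectively. Combined with the earlier $\eta$-regularity, this would exhibit all second partials of $V^{\delta}$ as continuous functions on $\mathbb{T}^{2}$, giving $V^{\delta} \in C^{2}(\mathbb{T}^{2})$. The decisive use of $d = 2$ is that the tangential Hessian is a single scalar, reducing the cell problem to a scalar quasilinear PDE uniformly elliptic in one direction, whose linearized slice problems are 1D elliptic ODEs amenable to classical regularity theory.
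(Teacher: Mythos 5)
Your overall route is essentially the paper's: regularity along the one-dimensional slices of $\langle e\rangle^{\perp}$, then difference quotients in the $e$-direction exploiting the linearized, penalized equation (uniformly elliptic in the $\eta$-direction, with the zeroth-order term $\delta$), then a second round for the second derivatives. The one genuine difference is your inversion step: writing $\partial^{2}_{\eta\eta}V^{\delta}=H(V^{\delta},y)$ with $H$ the $C^{2}$ inverse of $z\mapsto F(e,\tilde{X}_{e}+z\,\eta\otimes\eta,y)$ makes the continuity of the tangential second derivative on all of $\mathbb{T}^{2}$ immediate and replaces the paper's appeal to Krylov--Safonov on the slices by elementary ODE arguments; this is a legitimate simplification specific to $d=2$. (It silently uses that $F(e,\cdot,y)$ is insensitive to the mixed components $\langle \cdot\, e,\eta\rangle$; this does follow from (i) and (iii), e.g.\ by adding $t(\eta\otimes e+e\otimes\eta)+r\,\eta\otimes\eta+t^{2}r^{-1}e\otimes e\geq 0$ and letting $r\to 0^{+}$, or from the slicing argument you cite.)

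The soft spot is the compactness step for the difference quotients: you assert that the slice estimates yield equicontinuity of $\{V^{\delta}_{h}\}_{h}$ on $\mathbb{T}^{2}$, but those estimates only control oscillation in the $\eta$-direction; equicontinuity in the $e$-direction of $V^{\delta}_{h}$ is a statement about second differences of $V^{\delta}$ in $e$ and is essentially what you are trying to prove, so Arzel\`{a}--Ascoli on the torus is not available as written. The fix is to argue slice by slice, as the paper implicitly does: on each line $y_{0}+\mathbb{R}\eta$ the quotients are precompact in $C^{2}_{\mathrm{loc}}$, every limit solves the linear penalized equation $\delta W-a\,\partial^{2}_{\eta\eta}W=b$ on that line, and bounded solutions of this one-dimensional uniformly elliptic problem with $\delta>0$ are unique, so the whole family converges and defines $W=\partial_{e}V^{\delta}$; continuity of $W$ in the $e$-direction then follows by comparing $W(\cdot+\tau e)$ with $W$ on a common line, which gives $\|W(\cdot+\tau e)-W\|_{\infty}\leq \delta^{-1}\left(\|a(\cdot+\tau e)-a\|_{\infty}\,\|\partial^{2}_{\eta\eta}W\|_{\infty}+\|b(\cdot+\tau e)-b\|_{\infty}\right)$, and $a,b$ are uniformly continuous precisely because $\partial^{2}_{\eta\eta}V^{\delta}=H(V^{\delta},\cdot)$ is continuous. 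The same remark applies to your final iteration; with that repair your argument goes through and matches the paper's conclusion, including the $\delta^{-1}$-type growth of the bound on $\partial^{2}_{ee}V^{\delta}$.
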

	
		\begin{proof}  Let $\{\tilde{V}^{\delta}_{y}\}_{y \in \mathbb{T}^{2}}$ be the functions defined in \eqref{E: slices}.  Since $|F(e,X,y)| \leq \Lambda \|X\|$, it follows that $\|V^{\delta}\|_{L^{\infty}(\mathbb{T}^{2})} \leq \Lambda \delta^{-1} \|X\|$.  From this, a straightforward computation involving \eqref{A: strong degenerate ellipticity} (and the fact that $\langle e \rangle^{\perp}$ is one-dimensional) shows that 
			\begin{equation*}
				-\frac{2 \Lambda \|X\|}{\lambda} \leq -D^{2}_{e} \tilde{V}_{y}^{\delta} \leq \frac{2 \Lambda \|X\|}{\lambda} \quad \text{in the viscosity sense in} \, \, \langle e \rangle^{\perp}.
			\end{equation*}
		Hence $\{\tilde{V}^{\delta}_{y}\}_{y \in \mathbb{T}^{2}}$ are bounded in $C^{1,1}$.  The equation uniquely and continuously determines the functions $\{D^{2}_{e}\tilde{V}^{\delta}_{y}\}_{y \in \mathbb{T}^{2}}$ by \eqref{A: strong degenerate ellipticity} so actually we are working with $C^{2}$ functions.  
		
		Manipulating difference quotients and invoking assumptions \eqref{A: strong degenerate ellipticity} and the $C^{2}$ assumption on $F$, we apply the Krylov-Safonov Theorem to find that $\{\tilde{V}^{\delta}_{y}\}_{y \in \mathbb{T}^{2}} \subseteq C^{2,\alpha}(\langle e \rangle^{\perp})$.  
		
		Next, we show that $y \mapsto \tilde{V}^{\delta}_{y}$ is $C^{2,\alpha}$.  Fix $y' \in \langle e \rangle^{\perp}$.  Using difference quotients again, we see that the function $\frac{\partial \tilde{V}^{\delta}_{y}}{\partial y_{e}}$ given by 
			\begin{equation*}
				\frac{\partial \tilde{V}^{\delta}_{y}}{\partial y_{e}}(x') = \lim_{h \to 0} \frac{\tilde{V}^{\delta}_{y + h e}(x') - \tilde{V}^{\delta}_{y}(x')}{h} \quad (x' \in \langle e \rangle^{\perp})
			\end{equation*}
		is solution of the uniformly elliptic linear PDE
			\begin{equation*}
				\delta \frac{\partial \tilde{V}^{\delta}_{y}}{\partial y_{e}} - D_{A}F(e,\tilde{X}_{e} + D^{2}_{e} \tilde{V}^{\delta}_{y},y + x') : D^{2}_{e} \left(\frac{\partial \tilde{V}^{\delta}_{y}}{\partial y_{e}}\right) - \langle D_{y} F(e,\tilde{X}_{e} + D^{2}_{e}\tilde{V}^{\delta}_{y}, y + \tilde{x}), e \rangle = 0 \quad \text{in} \, \, \langle e \rangle^{\perp}.
			\end{equation*}
		Thus, $\frac{\partial \tilde{V}^{\delta}_{y}}{\partial y_{e}}$ is $C^{2}$ in $\langle e \rangle^{\perp}$ and the uniform ellipticity gives a constant $C > 0$ such that
			\begin{align*}
				\sup \left\{ \delta \left\| \frac{\partial \tilde{V}^{\delta}_{y}}{\partial y_{e}} \right\|_{L^{\infty}(\langle e \rangle^{\perp})} + \left\| D^{2} \left( \frac{\partial \tilde{V}^{\delta}_{y}}{\partial y_{e}} \right) \right\|_{L^{\infty}(\langle e \rangle^{\perp})}  \, \mid \, y \in \mathbb{T}^{2} \right\}  \leq C.
			\end{align*}
		Differentiating again, we find that the second derivative $\frac{\partial^{2} \tilde{V}^{\delta}_{y}}{\partial y_{e}^{2}}$ (defined anlogously) is $C^{2}$ and satisfies
			\begin{equation*}
				\sup \left\{ \delta \left\| \frac{\partial^{2} \tilde{V}^{\delta}_{y}}{\partial y_{e}^{2}} \right\|_{C(\langle e \rangle^{\perp})} + \left\| D^{2} \left( \frac{\partial^{2} \tilde{V}^{\delta}_{y}}{\partial y_{e}^{2}} \right) \right\|_{L^{\infty}(\langle e \rangle^{\perp})}  \, \mid \, y \in \mathbb{T}^{2} \right\}  \leq C \delta^{-1}.
			\end{equation*}
			
		Finally, observe that if $y \in \mathbb{T}^{d}$ and $e' \in S^{d-1} \cap \langle e \rangle^{\perp}$, then
			\begin{align*}
				D^{2}_{e} V^{\delta}(y) = D^{2}_{e}\tilde{V}^{\delta}_{y}(0), &\quad \langle D^{2} V^{\delta}(y) e, e \rangle = \frac{\partial^{2} \tilde{V}^{\delta}_{y}}{\partial y_{e}^{2}}(0), \\ 
				\langle D^{2}V^{\delta}(y) e, e' \rangle &= \left \langle D \left(\frac{\partial \tilde{V}^{\delta}_{y}}{\partial y_{e}} \right)(0), e' \right \rangle.
			\end{align*}
		Therefore, the previous considerations show that $V^{\delta} \in C^{2}(\mathbb{T}^{d})$.  
		\end{proof}    
		
Finally, in the quasi-linear set-up, we have

	\begin{prop} \label{P: quasilinear regularity} If $a$ and $m$ satisfy the assumptions of Theorem \ref{T: quasilinear} and $F(p,X,y) = m(y,\hat{p})^{-1} \text{tr}(a(y,\hat{p})\tilde{X}_{\hat{p}})$, then, for each $\delta > 0$, the penalized corrector $V^{\delta}$ of \eqref{E: penalized cell problem second instance} satisfies $V^{\delta} \in C^{2,\alpha}(\mathbb{T}^{d})$.  \end{prop}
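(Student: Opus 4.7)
The plan is to adapt the slicewise argument from the $d=2$ proposition to higher dimensions, exploiting the fact that once $e \in S^{d-1}$ is fixed the quasi-linear equation is linear in the Hessian and the degeneracy of $D^2_e$ disappears on each slice orthogonal to $e$. Consequently, after slicing, the cell problem becomes a uniformly elliptic linear PDE to which classical Schauder theory applies.

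First I would set up the slicewise problem. For each $y_0 \in \mathbb{T}^d$, define $\tilde V_{y_0}^{\delta} : \langle e \rangle^{\perp} \to \mathbb{R}$ by $\tilde V_{y_0}^{\delta}(x') = V^{\delta}(y_0 + x')$ as in \eqref{E: slices}. Since $D^2_e V^{\delta}(y_0 + x')$ depends only on derivatives of $V^{\delta}$ in the directions tangent to the slice, $\tilde V_{y_0}^{\delta}$ solves the uniformly elliptic linear PDE
$$\delta \tilde V_{y_0}^{\delta}(x') - m(y_0+x',e)^{-1}\operatorname{tr}\!\bigl(a(y_0+x',e)(\tilde X_e + D^2 \tilde V_{y_0}^{\delta}(x'))\bigr) = 0 \quad \text{in } \langle e \rangle^{\perp},$$
where $D^2$ now denotes the Hessian in the $(d-1)$-dimensional space $\langle e \rangle^{\perp}$. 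The maximum principle gives $\|V^{\delta}\|_{L^{\infty}(\mathbb{T}^d)} \leq C\delta^{-1}\|X\|$, and, combined with the $C^{2,\alpha}$ bounds \eqref{A: a C2} and \eqref{A: m smooth} on the coefficients, interior Schauder estimates on unit balls yield $\|\tilde V_{y_0}^{\delta}\|_{C^{2,\alpha}(\langle e \rangle^{\perp})} \leq C(\delta,\|X\|)$ uniformly in $y_0$.

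To promote this to a global $C^{2,\alpha}$ bound I would follow the difference-quotient scheme of the $d=2$ proof. For small $h \neq 0$, the quotient $h^{-1}(\tilde V_{y_0+he}^{\delta} - \tilde V_{y_0}^{\delta})$ satisfies a uniformly elliptic linear PDE on $\langle e \rangle^{\perp}$ whose coefficients and source term, by \eqref{A: a C2}, \eqref{A: m smooth}, and the slicewise $C^{2,\alpha}$ bound above, are uniformly controlled in $C^{\alpha}$ independently of $h$. Schauder estimates then give uniform $C^{2,\alpha}$ bounds on these quotients, so that, passing to the limit $h \to 0$, the derivative $\partial V^{\delta}/\partial y_e$ is $C^{2,\alpha}$ on each slice with uniform bounds. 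Applying the same argument once more produces $\partial^2 V^{\delta}/\partial y_e^2$ with a uniform $C^{\alpha}$ slicewise bound.

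To conclude, at any $y \in \mathbb{T}^d$ every entry of $D^2 V^{\delta}(y)$ is captured by the slice identities
$$D^2_e V^{\delta}(y) = D^2_{x'} \tilde V_y^{\delta}(0), \quad \langle D^2 V^{\delta}(y) e, e' \rangle = \langle D_{x'}(\partial \tilde V_y^{\delta}/\partial y_e)(0), e' \rangle, \quad \langle D^2 V^{\delta}(y) e, e \rangle = \partial^2 \tilde V_y^{\delta}/\partial y_e^2(0),$$
for $e' \in S^{d-1} \cap \langle e \rangle^{\perp}$, each of which the preceding steps place in $C^{\alpha}$ in $y$. Combining these gives $V^{\delta} \in C^{2,\alpha}(\mathbb{T}^d)$. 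The main technical point is justifying the differentiation in $y_e$; this is precisely where the full $C^{2,\alpha}$ hypothesis on $a$ and $m$ is indispensable, since a merely Lipschitz hypothesis would leave the source term of the once-differentiated equation only in $L^{\infty}$, preventing a second differentiation.
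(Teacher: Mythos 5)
Your proposal is correct and follows essentially the same route as the paper: the paper's proof of Proposition \ref{P: quasilinear regularity} simply repeats the slicewise/difference-quotient scheme of the two-dimensional proposition, replacing Krylov--Safonov by linear Schauder estimates exactly as you do. The level of detail you give (uniform slicewise Schauder bounds, two differentiations in the $e$ direction enabled by the $C^{2,\alpha}$ hypotheses on $a$ and $m$, and reassembly of the full Hessian from the slice identities) matches and slightly expands the paper's sketch.
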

	
		\begin{proof}  The proof proceeds as in the last proposition.  To obtain $C^{2,\alpha}$ estimates, we use Schauder estimates for linear elliptic equations (cf.\ \cite[Chapter 6]{gilbarg trudinger} or \cite[Chapter 8]{caffarelli cabre}).  \end{proof}  
		
\subsection{Approximate Correctors}  This section is devoted to the proof of Theorem \ref{T: approximate correctors}.  In general, in the setting of Theorem \ref{T: level set PDE}, if $F$ is not twice differentiable, we approximate it by a family of regularized operators $(F^{\mu})_{\mu > 0}$ that are.  This is made precise in the next result:

	\begin{prop} \label{P: approximation} Given $e \in S^{1}$ and $F$ satisfying the assumptions of Theorem \ref{T: level set PDE}, there is a family $(F_{e}^{\mu})_{\mu > 0}$ of operators, twice continuously differentiable in $\mathcal{S}_{2} \times \mathbb{T}^{2}$, such that, for each $\mu > 0$,
		\begin{align}
			\lambda \|\tilde{N}^{+}_{e}\| - \Lambda \|\tilde{N}_{e}^{-}\| \leq F^{\mu}_{e}(M + N,x) - F^{\mu}_{e}(M,x) \leq \Lambda \|\tilde{N}_{e}^{+}\| - \lambda \|\tilde{N}_{e}^{-}\| \quad \text{if} \, \, M,N \in \mathcal{S}_{2}, \\
			|F_{e}^{\mu}(M,y') - F_{e}^{\mu}(M,y)| \leq C(1 + \mu + \|M\|)\|x' - x\| \quad \text{if} \, \, M \in \mathcal{S}_{2}, \, \, y, y' \in \mathbb{T}^{2}, \\
			|F_{e}^{\mu}(M,y) - F(e,M,y)| \leq \Lambda \mu + \omega^{e}_{1 + \|M\|}(\mu)\quad \text{if} \, \, (M,y) \in \mathcal{S}_{2} \times \mathbb{T}^{2}.
		\end{align}
	Here, for each $R > 0$, $\omega_{R}^{e}$ is the modulus of $F$ in $\{e\} \times B(0,R) \times \mathbb{T}^{d}$, that is,
		\begin{equation*}
			\omega_{R}^{e}(\delta) = \sup \left\{ |F(e,X,y) - F(e,X',y')| \, \mid \, (X,y), (X',y') \in B(0,R) \times \mathbb{T}^{d}, \, \, \|y - y'\| \leq \delta \right\}.
		\end{equation*}
	\end{prop}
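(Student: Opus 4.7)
The plan is to construct $F_e^\mu$ by a standard double mollification of $F(e, \cdot, \cdot)$ simultaneously in the Hessian variable $M \in \mathcal{S}_2$ and the spatial variable $y \in \mathbb{T}^2$. Fix nonnegative mollifiers $\rho \in C_c^\infty(\mathcal{S}_2)$ and $\phi \in C_c^\infty(\mathbb{R}^2)$ of unit mass, supported in the unit ball, and set
$$F_e^\mu(M, y) := \int_{\mathcal{S}_2} \int_{\mathbb{R}^2} F(e, M - \mu N, y - \mu z)\, \rho(N)\,\phi(z)\, dN\, dz.$$
Transferring derivatives onto $\rho$ and $\phi$ produces a $C^\infty$ function of $(M, y)$, which is stronger than the required $C^2$.

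The degenerate ellipticity is inherited from (iii) because the stated bound is linear in the perturbation: applying (iii) to the pointwise difference $F(e, M + P - \mu N, y - \mu z) - F(e, M - \mu N, y - \mu z)$ with $P = P^+ - P^-$, and then integrating against $\rho(N)\phi(z)\, dN\, dz$, yields precisely the two-sided bound $\lambda \|\tilde P^+_e\| - \Lambda \|\tilde P^-_e\| \leq F_e^\mu(M+P, y) - F_e^\mu(M, y) \leq \Lambda \|\tilde P^+_e\| - \lambda \|\tilde P^-_e\|$ required in the statement.

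For the consistency estimate, I would split
$$F_e^\mu(M,y) - F(e,M,y) = \iint [F(e, M - \mu N, y - \mu z) - F(e, M, y - \mu z)]\, \rho\phi + \iint [F(e, M, y - \mu z) - F(e, M, y)]\, \rho\phi.$$
By (iii) applied to the matrix perturbation $-\mu N$, the first integrand is bounded by $\Lambda \mu \|N\|$, hence by $\Lambda \mu$ after integration against $\rho$; by (iv) and the definition of the modulus, the second is bounded by $\omega^e_{\|M\|}(\mu) \le \omega^e_{1 + \|M\|}(\mu)$. Summing gives the claimed error $\Lambda \mu + \omega^e_{1 + \|M\|}(\mu)$.

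For the spatial Lipschitz bound, I would change variables $w = \mu z$ to write $F_e^\mu(M, y) = \mu^{-2} \iint F(e, M - \mu N, w)\, \rho(N)\, \phi((y-w)/\mu)\, dN\, dw$, then differentiate onto $\phi$. This produces factors involving the mollifier scale $\mu$ together with the a priori bound $|F(e, M, y)| \leq \Lambda \|M\|$, itself a consequence of combining the stationary planes condition (ii) with (iii) applied to the decomposition $M = M^+ - M^-$ about $0$; the resulting estimate is of the type claimed, up to the precise dependence on $\mu$ coming from the mollifier norms. The only point requiring any care is ensuring that the ellipticity structure of (iii) — which only controls the $e$-tangential part $\tilde{\cdot}_e$ of perturbations — survives mollification. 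This works because the bounds in (iii) are pointwise affine in the perturbation and depend only on the perturbation itself (not on the base point), so averaging over the perturbations $(\mu N, \mu z)$ against a probability measure simply averages the bounds.
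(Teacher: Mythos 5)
Your construction---mollifying $(M,y)\mapsto F(e,M,y)$ at scale $\mu$ in both the Hessian and the spatial variable, with the ellipticity inherited by splitting the perturbation into $N^{+}-N^{-}$ and using that the bounds in (iii) depend only on the perturbation, and with the error split giving $\Lambda\mu+\omega^{e}_{1+\|M\|}(\mu)$---is exactly the route the paper intends: its ``proof'' of Proposition \ref{P: approximation} consists of the remark that it can be proved by mollifying the operator, with the details left to the reader, and your argument supplies those details correctly. The one caveat, which you yourself flag, is that the spatial Lipschitz constant produced by mollification is of order $\mu^{-1}(\mu+\|M\|)$ rather than the stated $C(1+\mu+\|M\|)$; since $F$ is only assumed continuous in $y$, a $\mu$-dependent Lipschitz constant is all one can expect, and it is all that is used later (regularity of the penalized correctors at fixed $\mu$), so this discrepancy is harmless.
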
  
	
	Proposition \ref{P: approximation} can be proved by mollifying the operator $(X,y) \mapsto F(e,X,y)$.  The details are left to the interested reader.
	
	\begin{proof}[Proof of Theorem \ref{T: approximate correctors}]  We will assume that $F$ satisfies the assumptions of Theorem \ref{T: level set PDE}.  If instead $F$ satisfies the assumptions of Theorem \ref{T: quasilinear}, then Proposition \ref{P: quasilinear regularity} shows that the penalized correctors of Theorem \ref{T: cell problem ergodic constant} are $C^{2}$ so there is no need to regularize.
	
	Fix $\nu > 0$.  Let $\delta, \mu > 0$ be free variables and let $V^{\delta,\mu} \in C^{2}(\mathbb{T}^{2})$ be the solution of \eqref{E: penalized cell problem second instance} with $F$ replaced by $F^{\mu}$.  Recall that
		\begin{equation} \label{E: Hessian estimate}
			\|D^{2}_{e}V^{\delta}\|_{L^{\infty}(\mathbb{T}^{2})} \leq \frac{2 \Lambda \|X\|}{\lambda}.
		\end{equation}
	Thus, if $y \in \mathbb{T}^{2}$, then
		\begin{align*}
			|\overline{F}(e,X) - F(e,\tilde{X}_{e} + D^{2}_{e} V^{\delta},y)| &\leq (I) + (II) + (III),
		\end{align*}
	where
		\begin{align*}
			(I) &= \|\overline{F}^{\mu}(e,X) + \delta V^{\delta}\|_{L^{\infty}(\mathbb{T}^{2})}, \\
			(II) &= |F(e,\tilde{X}_{e} + D^{2}_{e}V^{\delta},y) - F^{\mu}_{e}(\tilde{X}_{e} + D^{2}_{e} V^{\delta},y)| \\
				&\leq \Lambda \mu + \omega^{e}_{1 + 2 \lambda^{-1} \Lambda \|X\|}(\mu),  \\
			(III) &= |\overline{F}(e,X) - \overline{F}^{\mu}(e,X)|.
		\end{align*}
	
	Using Lemma \ref{L: comparison ergodic constants} below and \eqref{E: Hessian estimate}, it is not hard to show that 
		\begin{equation*}
			|\overline{F}(e,X) - \overline{F}^{\mu}(e,X)| \leq  \|\overline{F}^{\mu}(e,X) + \delta V^{\delta}\|_{L^{\infty}(\mathbb{T}^{2})} + \Lambda \mu + \omega^{e}_{1 + (1 + 2 \lambda^{-1} \Lambda)\|X\|)}(\mu).
		\end{equation*}
	Indeed, the approximate corrector $V^{\delta,\mu}$ satisfies
		\begin{equation*}
			\delta V^{\delta,\mu} - F^{\mu}(e,\tilde{X}_{e} + D^{2}_{e} V^{\delta,\mu}) = 0 \quad \text{in} \, \, \mathbb{T}^{2}.
		\end{equation*}
	Thus, \eqref{E: Hessian estimate} and the choice of $F^{\mu}$ yield
		\begin{equation*}
			| -F(e,\tilde{X}_{e} + D^{2}_{e} V^{\delta,\mu}) + \overline{F}^{\mu}(e,X)| \leq \|\overline{F}^{\mu}(e,X) + \delta V^{\delta}\|_{L^{\infty}(\mathbb{T}^{2})} + \Lambda \mu + \omega^{e}_{1 + (1 + 2 \lambda^{-1} \Lambda)\|X\|}(\mu) \quad \text{in} \, \, \mathbb{T}^{2}.
		\end{equation*}
	Hence, by Lemma \ref{L: comparison ergodic constants}, we have
		\begin{equation*}
			(III) = |\overline{F}(e,X) - \overline{F}^{\mu}(e,X)| \leq \|\overline{F}^{\mu}(e,X) + \delta V^{\delta}\|_{L^{\infty}(\mathbb{T}^{2})} + \Lambda \mu + \omega^{e}_{1 + (1 + 2 \Lambda)\|X\|}(\mu).
		\end{equation*}

	We conclude that if we first choose $\mu > 0$ small enough that 
		\begin{equation*}
			\Lambda \mu + \omega^{e}_{1 + (1 + 2 \lambda^{-1} \Lambda)\|X\|)}(\mu) \leq \frac{\nu}{3}
		\end{equation*}
	and then choose $\delta > 0$ so small that $ \|\overline{F}^{\mu}(e,X) + \delta V^{\delta}\|_{L^{\infty}(\mathbb{T}^{2})} \leq \nu/3$, then we obtain
		\begin{equation*}
			- \nu \leq \overline{F}(e,X) - F(e,\tilde{X}_{e} + D^{2}V^{\delta}_{e}, y ) \leq \nu \quad \text{in} \, \, \mathbb{T}^{2}.
		\end{equation*}  
	\end{proof}

\subsection{Invariant Measures} \label{S: invariant measures} This section makes the connection between the homogenized operators $\overline{F}$ and oscillating functions $F^{\perp}_{e}$ obtained in Theorem \ref{T: approximate correctors} and the invariant measures of Theorem \ref{T: invariant measures}.  

To start with, we give the basic existence result for invariant measures:

	\begin{prop} \label{P: existence invariant measures}   Assume that $a$ satisfies the assumptions of Theorem \ref{T: quasilinear}.  For each $e \in S^{d-1}$, $\mathscr{I}^{a}_{e}$ is non-empty.  Furthermore, if $e \in \mathbb{R} \mathbb{Z}^{d}$, then there is an $r_{e}$-periodic function $\mu_{e} : \mathbb{R} \to \mathscr{I}^{a}_{e}$, $\mu_{e} : s \mapsto \mu_{e}^{s}$, such that, for each $r \in [0,r_{e})$, the support of $\mu_{e}$ equals $\mathbb{T}^{d-1}_{e}(r)$ and $\mu_{e}^{s} \ll \mathcal{H}^{d-1} \restriction_{\mathbb{T}^{d-1}_{e}(r)}$.
	\end{prop}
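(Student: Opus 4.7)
My plan is to construct the invariant measures leaf-by-leaf, exploiting the fact that the generator $L_e\varphi=\text{tr}(a(y,e)D^2_e\varphi)$ of the diffusion \eqref{E: SDE} has a trivial component in the $e$-direction. For non-emptiness of $\mathscr{I}^{a}_{e}$ at an arbitrary $e\in S^{d-1}$, I would invoke Krylov-Bogoliubov: since $\mathbb{T}^d$ is compact, for any $y_0\in\mathbb{T}^d$ the time averages $T^{-1}\int_0^T P^{e,*}_t\delta_{y_0}\,dt$ of the semigroup associated with \eqref{E: SDE} form a tight family in $\mathscr{P}(\mathbb{T}^d)$, and any weak-$*$ limit as $T\to\infty$ lies in $\mathscr{I}^{a}_{e}$.

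For the rational case $e\in\mathbb{R}\mathbb{Z}^d$, the starting point is the observation that $(\text{Id}-e\otimes e)\sqrt{a(y,e)}(\text{Id}-e\otimes e)$ has $e$ in its kernel, so $\langle X^{e}_{t},e\rangle$ is conserved modulo $r_{e}\mathbb{Z}$. Hence, for each $s\in[0,r_e)$, the leaf $\mathbb{T}^{d-1}_{e}(s)$ is invariant under the flow, and I can restrict the dynamics to it. After identifying $\mathbb{T}^{d-1}_{e}(s)$ with a standard $(d-1)$-torus via the quotient of $\langle e\rangle^{\perp}$ by the rank-$(d-1)$ lattice $\mathbb{Z}^{d}\cap\langle e\rangle^{\perp}$ (which is where the rationality of $e$ is used), the restricted generator becomes $\varphi\mapsto \text{tr}(\tilde{a}_{e}(y)D^{2}\varphi)$, where $\tilde{a}_{e}$ is the restriction of $(\text{Id}-e\otimes e)a(\cdot,e)(\text{Id}-e\otimes e)$ to $\langle e\rangle^{\perp}$. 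By assumption \eqref{A: a continuous}, this matrix field is uniformly elliptic in dimension $d-1$, and by \eqref{A: a C2} it is $C^{2,\alpha}$.

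With this reduction in place, I would invoke standard theory for uniformly elliptic operators on the compact manifold $\mathbb{T}^{d-1}_{e}(s)$: the formal adjoint equation $\partial_{ij}(\tilde{a}^{ij}_{e}\rho)=0$ on the leaf admits a unique smooth positive solution of unit mass. This density yields an invariant probability measure $\mu_{e}^{s}\in\mathscr{I}^{a}_{e}$ satisfying $\mu_{e}^{s}\ll\mathcal{H}^{d-1}\restriction_{\mathbb{T}^{d-1}_{e}(s)}$ with full support. Periodicity of $s\mapsto\mu_{e}^{s}$ is automatic since $\mathbb{T}^{d-1}_{e}(s+r_{e})=\mathbb{T}^{d-1}_{e}(s)$. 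The main technical point to verify is that the identification of each leaf with the $(d-1)$-torus and the corresponding transfer of the ellipticity and regularity hypotheses on $a$ are carried out coherently across the family of leaves; this is essentially a bookkeeping exercise with coordinate charts, but care is needed because for rational $e$ the leaf is generally not aligned with the coordinate axes.
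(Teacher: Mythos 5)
Your argument is correct, and the rational case is handled exactly as in the paper: for $e \in \mathbb{R}\mathbb{Z}^{d}$ the paper also observes that $\langle X^{e}_{t},e\rangle$ is conserved, restricts the diffusion to each leaf $\mathbb{T}^{d-1}_{e}(r)$ (identified with a $(d-1)$-torus via the rank-$(d-1)$ lattice $\mathbb{Z}^{d}\cap\langle e\rangle^{\perp}$), and invokes the classical theory of uniformly nondegenerate diffusions on a compact torus to get the unique invariant measure $\mu^{r}_{e}$ with full support and density with respect to $\mathcal{H}^{d-1}$; your adjoint-equation formulation is just a restatement of that citation. Where you genuinely diverge is the non-emptiness of $\mathscr{I}^{a}_{e}$ for general (in particular irrational) $e$: you use Krylov--Bogoliubov time averages of the semigroup of \eqref{E: SDE}, whereas the paper takes a sequence of rational directions $e_{n}\to e$, extracts a weak-$*$ limit of the leaf measures $\mu^{0}_{e_{n}}$ by Banach--Alaoglu, checks the limit lies in $\mathscr{I}^{a}_{e}$, and then uses the equidistribution Lemma \ref{L: equidistribution} to conclude the limit is absolutely continuous with respect to $\mathcal{L}^{d}$. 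Your route is more self-contained (no equidistribution input) and suffices for the statement as written — note only that it presupposes a well-posed Feller dynamics for the degenerate SDE, which is fine here since \eqref{A: a lip} makes the diffusion coefficient Lipschitz, or can be bypassed entirely by averaging expected occupation measures via It\^{o}'s formula. What it does not deliver is the by-product $\mu\ll\mathcal{L}^{d}$ in irrational directions, which the paper's construction provides and then exploits in the proof of Theorem \ref{T: invariant measures}(i); if you follow your route you would need to recover that absolute continuity separately at that later stage.
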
  
	
		\begin{proof}  First, assume that $e \in \mathbb{R} \mathbb{Z}^{d}$.  Given $r \in [0,r_{e})$, the trajectories of the SDE \eqref{E: SDE} with $\langle X^{e}_{0}, e \rangle = r$ satisfy $\langle X^{e}_{t},e \rangle = r$ for all $t > 0$.  Hence, as long as we restrict to initial distributions concentrated in $\mathbb{T}^{d-1}_{e}(r)$, we can consider \eqref{E: SDE} as a process in $\mathbb{T}^{d-1}_{e}(r)$.  Restricted to $\mathbb{T}^{d-1}_{e}(r)$, \eqref{E: SDE} is uniformly non-degenerate.  Therefore, it has a unique invariant measure $\mu_{e}^{r}$ that has full support in $\mathbb{T}^{d-1}_{e}(r)$ and $\mu_{e}^{r} \ll \mathcal{H}^{d-1} \restriction_{\mathbb{T}^{d-1}_{e}(r)}$ (cf.\ \cite{asymptotic analysis} or \cite{varadhan}).
		
		If $e \notin \mathbb{R} \mathbb{Z}^{d}$, then there is a sequence $(e_{n})_{n \in \mathbb{N}} \subseteq S^{d-1} \cap \mathbb{R} \mathbb{Z}^{d}$ such that $e = \lim_{n \to \infty} e_{n}$.  By the Banach-Alaoglu Theorem, $(\mu_{e_{n}}^{0})_{n \in \mathbb{N}}$ has a weak-$*$ accumulation point $\mu$.  A quick argument shows that $\mu \in \mathscr{I}^{a}_{e}$ necessarily holds.  Further, Lemma \ref{L: equidistribution} in the appendix shows that $\mu \ll \mathcal{L}^{d}$.      \end{proof}  
		
We still need to understand the structure of the sets $\{\mathscr{I}^{a}_{e}\}_{e \in S^{d-1}}$.  Here and in Section \ref{S: linear response} it will be convenient to use the following variant of Theorem \ref{T: approximate correctors}:

	\begin{prop} \label{P: approximate correctors linear}  If $a$ satisfies the assumptions of Theorem \ref{T: quasilinear} and $f \in C(\mathbb{T}^{d})$, then, for each $e \in S^{d-1}$ and $\delta > 0$, there is a unique $V^{\delta} \in C(\mathbb{T}^{d})$ solving the degenerate elliptic PDE:
		\begin{equation} \label{E: penalized cell problem linear}
			\delta V^{\delta} - \text{tr} (a(y,e) D^{2}_{e} V^{\delta}) = f(y) \quad \text{in} \, \, \mathbb{T}^{d}.
		\end{equation}
	Furthermore, there is an $f^{\perp}_{e} \in C(\mathbb{T}^{d})$ varying only in the $e$ direction such that 
		\begin{equation*}
			\lim_{\delta \to 0^{+}} \|\delta V^{\delta} - f^{\perp}_{e}\|_{L^{\infty}(\mathbb{T}^{d})} = 0.
		\end{equation*}
	If $e \notin \mathbb{R} \mathbb{Z}^{d}$, then there is a constant $\bar{f}(e) \in \mathbb{R}$ such that $f^{\perp}_{e} \equiv \bar{f}(e)$.
	
	Finally, if $f \in C^{2,\alpha}(\mathbb{T}^{d})$, then $V^{\delta} \in C^{2,\alpha}(\mathbb{T}^{d})$.  
	\end{prop}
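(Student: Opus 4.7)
The plan is to mimic, with only cosmetic changes, the strategy developed for Theorem~\ref{T: cell problem ergodic constant} and Proposition~\ref{P: quasilinear regularity}, treating \eqref{E: penalized cell problem linear} as the linear cell problem for the uniformly elliptic, $y$-dependent operator $\mathcal{F}(X,y) := -\text{tr}(a(y,e)\tilde{X}_e) - f(y)$. Existence and uniqueness of $V^\delta \in C(\mathbb{T}^d)$ will be obtained by Perron's method combined with the comparison principle for degenerate linear PDEs on $\mathbb{T}^d$, as in Appendix~\ref{A: technical lemmata}; the additive forcing $f$ cancels in the difference of a subsolution and a supersolution and so does not interfere with comparison. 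For the ergodic analysis, I fix $y \in \mathbb{T}^d$ and slice by setting $\tilde V^\delta_y(x') := V^\delta(y + x')$ for $x' \in \langle e \rangle^\perp$. The same perturbation argument used in the proof of Theorem~\ref{T: cell problem ergodic constant} (cf.\ \cite[Appendix B]{rates on networks}) shows that $\tilde V^\delta_y$ is the unique bounded viscosity solution of the uniformly elliptic $(d-1)$-dimensional equation
\begin{equation*}
\delta \tilde V^\delta_y - \text{tr}\!\left( a(y + x',e) D^2_e \tilde V^\delta_y \right) = f(y + x') \quad \text{in} \, \, \langle e \rangle^\perp,
\end{equation*}
whose coefficients and forcing are quasi-periodic in $x'$.

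Next I invoke \cite[Lemma~9.1]{caffarelli souganidis} (applied to the uniformly elliptic, almost-periodic operator $\mathcal{F}$ above) to produce, for each fixed $y$, a constant $f^\perp_e(y) \in \mathbb{R}$ such that $\delta \tilde V^\delta_y \to -f^\perp_e(y)$ uniformly in $\langle e \rangle^\perp$ as $\delta \to 0^+$. The demonstration that $f^\perp_e$ depends only on $\langle y,e \rangle$ is lifted verbatim from Theorem~\ref{T: cell problem ergodic constant}: $\mathbb{Z}^d$-periodicity gives $\tilde V^\delta_{y + k} = \tilde V^\delta_y$ for $k \in \mathbb{Z}^d$, while translating the base point by $x_0' \in \langle e \rangle^\perp$ gives $\tilde V^\delta_{y + x_0'}(x') = \tilde V^\delta_y(x' + x_0')$, so the ergodic constant is invariant along leaves. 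Continuity of $f^\perp_e$ on $\mathbb{T}^d$ is inherited from the uniform limit of the continuous functions $-\delta V^\delta$. The promotion from slicewise to global uniform convergence follows the same dichotomy as Theorem~\ref{T: cell problem ergodic constant}: if $e \in \mathbb{R}\mathbb{Z}^d$, the sliced problem is genuinely periodic with respect to the rank-$(d-1)$ sublattice $M_e = \mathbb{Z}^d \cap \langle e \rangle^\perp$, and \cite[Section~8]{caffarelli souganidis} supplies a rate of convergence uniform in the base point $y$; if $e \notin \mathbb{R}\mathbb{Z}^d$, density of every leaf $\mathbb{T}^{d-1}_e(r)$ in $\mathbb{T}^d$ allows one to convert $\sup_{x' \in \langle e \rangle^\perp} |\delta \tilde V^\delta_{y'}(x') + f^\perp_e(y')|$ into $\sup_{y \in \mathbb{T}^d} |\delta V^\delta(y) + f^\perp_e(y')|$, which simultaneously forces $f^\perp_e$ to be constant and yields uniform convergence to $\bar f(e)$.

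For the regularity claim, I follow Proposition~\ref{P: quasilinear regularity} nearly line for line: when $f \in C^{2,\alpha}(\mathbb{T}^d)$, linear Schauder estimates (cf.\ \cite[Chapter~6]{gilbarg trudinger}) applied slicewise yield $C^{2,\alpha}$ control of $\tilde V^\delta_y$ uniform in $y$, and differentiating the sliced equation once and then twice in the $e$ direction gives linear uniformly elliptic PDEs in $\langle e \rangle^\perp$ for $\partial_e \tilde V^\delta_y$ and $\partial_e^2 \tilde V^\delta_y$ whose right-hand sides are $C^{0,\alpha}$ in the slice variable by \eqref{A: a C2} and the hypothesis on $f$. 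Assembling the tangential Hessian, the mixed $e$-derivative, and the pure second $e$-derivative precisely as in the final display of the two-dimensional regularity proposition produces $V^\delta \in C^{2,\alpha}(\mathbb{T}^d)$. The only step that warrants scrutiny, and the one I view as the main technical point rather than a deep obstacle, is the invocation of \cite[Lemma~9.1]{caffarelli souganidis}: that result is typically phrased for homogeneous operators, so one must verify (or record) the routine extension to an almost-periodic, uniformly elliptic operator $\mathcal{F}(X,y)$ with continuous $y$-dependence, so that the continuous forcing $f$ may be absorbed into $\mathcal{F}$ without leaving the black box.
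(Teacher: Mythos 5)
Your proposal is correct and follows essentially the same route the paper intends: the paper states Proposition~\ref{P: approximate correctors linear} as a variant of Theorem~\ref{T: approximate correctors}, whose proof rests on exactly the slicing argument, the appeal to \cite[Lemma 9.1]{caffarelli souganidis} and \cite[Section 8]{caffarelli souganidis}, the rational/irrational dichotomy, and the Schauder-based regularity of Proposition~\ref{P: quasilinear regularity} that you reproduce. Your closing caveat about absorbing the continuous forcing $f$ into the almost-periodic operator is the same (routine) adaptation the paper itself relies on, so no further work is needed.
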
  

This readily leads to the proof of Theorem \ref{T: invariant measures}.  In addition, we will show that the operator $F^{\perp}_{e}$ of Theorem \ref{T: approximate correctors} is determined in this case by the formula
	\begin{equation} \label{E: oscillating function linear}
		F^{\perp}_{e}(X,y) = m^{\perp}_{e}(y)^{-1} \text{tr} \left(a^{\perp}_{e}(y) \tilde{X}_{e}\right),
	\end{equation}
where $a^{\perp}_{e}$ is given by \eqref{E: oscillating tensor} and $m^{\perp}_{e}$ is defined analogously.

	\begin{proof}[Proof of Theorem \ref{T: invariant measures}]  We start with (i).  Assume that $e \notin \mathbb{R} \mathbb{Z}^{d}$.  By the Riesz Representation Theorem, we can define a Borel probability measure $\bar{\mu}_{e}$ in $\mathbb{T}^{d}$ by 
		\begin{equation*}
			\int_{\mathbb{T}^{d}} f(y) \bar{\mu}_{e}(dy) = \bar{f}(e),
		\end{equation*}
	where $\bar{f}(e)$ is the constant in Theorem \ref{P: approximate correctors linear}.  We claim that $\mathscr{I}^{a}_{e} = \{\bar{\mu}_{e}\}$.  
	
	Indeed, if $\mu \in \mathscr{I}^{a}_{e}$, $f \in C^{2,\alpha}(\mathbb{T}^{d})$, and $V^{\delta}$ is the associated solution of \eqref{E: penalized cell problem linear}, then
		\begin{equation*}
			\int_{\mathbb{T}^{d}} \delta V^{\delta}(y) \, \mu(dy) = \int_{\mathbb{T}^{d}} f(y) \, \mu(dy).
		\end{equation*}
	Sending $\delta \to 0^{+}$, we find $\int_{\mathbb{T}^{d}} f(y) \, \mu(dy) = \bar{f}(e)$.  This proves $\mu = \bar{\mu}_{e}$ by definition.  Since $\mathscr{I}^{a}_{e}$ is non-empty by Proposition \ref{P: existence invariant measures}, it also shows that $\bar{\mu}_{e} \in \mathscr{I}^{a}_{e}$.
	
	Next, we turn to (ii).  It remains to show that $\mathscr{I}^{a}_{e}$ is the closed convex hull of $\{\mu_{e}^{s} \, \mid \, s \in \mathbb{R}\}$.  By the Krein-Milman Theorem, we only have to prove that the latter equals the set of extreme points.  
		
		First, notice that if $f \in C(\mathbb{T}^{d})$, $(V^{\delta})_{\delta > 0}$ are the associated solutions of \eqref{E: penalized cell problem linear}, and $s \in [0,r_{e})$, then Proposition \ref{P: approximate correctors linear} and the definition of $\mathscr{I}^{a}_{e}$ give
			\begin{align*}
				\int_{\mathbb{T}^{d}} f(y) \, \mu_{e}^{s}(dy) &= \lim_{\delta \to 0^{+}} \int_{\mathbb{T}^{d}} \left(\delta V^{\delta}(y) - \text{tr} \left(A(y,e) D^{2}V^{\delta}(y)\right) \right) \, \mu_{e}^{s}(dy) \\
					&= \lim_{\delta \to 0^{+}} \int_{\mathbb{T}^{d}} \delta V^{\delta}(y) \, \mu_{e}^{s}(dy) = f_{e}^{\perp}(se).
			\end{align*}
		
		Now assume $\mu$ is an extreme point of $\mathcal{I}_{e}$.  Repeating the previous computation with $\mu^{s}_{e}$ replaced by $\mu$, we find, for each $f \in C(\mathbb{T}^{d})$, 
			\begin{equation*}
				\int_{\mathbb{T}^{d}} f(y) \, \mu(dy) = \int_{\mathbb{T}^{d}} f_{e}^{\perp}(\langle y,e \rangle e) \, \mu(dy) = \int_{\mathbb{T}^{d}} \left(\int_{\mathbb{T}^{d}} f(y') \, \mu_{e}^{\langle y,e \rangle}(dy') \right) \, \mu(dy).
			\end{equation*}
		In particular, letting $\nu \in \mathscr{P}(r_{e} \mathbb{T})$ be the push-forward of $\mu$ onto $r_{e} \mathbb{T}$ given by 
			\begin{equation*}
				\nu(A) = \mu(\{y \in \mathbb{T}^{d} \, \mid \, \langle y,e \rangle \in A\}),
			\end{equation*}
		we obtain $\mu = \int_{r_{e} \mathbb{T}} \mu_{e}^{s} \, \nu(ds)$.
		Notice that if $A \subseteq r_{e} \mathbb{T}$ and $\nu(A) > 0$, then 
			\begin{equation*}
				\mu = \nu(A) \fint_{A} \mu_{e}^{s} \, \nu(ds) + (1 - \nu(A)) \fint_{A^{c}} \mu_{e}^{s} \, \nu(ds).
			\end{equation*}
		Hence, since $\mu$ is an extreme point of $\mathcal{I}_{e}$, either $\nu(A) = 1$ or $\nu(A) = 0$.  From this, an elementary argument shows that $\nu = \delta_{s'}$ for some $s' \in [0,r_{e})$.  In particular, $\mu = \mu_{e}^{s'}$.  This proves $\{\mu_{e}^{s} \, \mid \, s \in \mathbb{R}\}$ equals the set of extreme points of $\mathscr{I}^{a}_{e}$.
		
		Finally, we prove (iii).  If $m \equiv 1$ in Theorem \ref{T: approximate correctors}, $\delta > 0$, and $X \in \mathcal{S}_{d}$, then the solution $V^{\delta}$ of \eqref{E: penalized cell problem second instance} solves \eqref{E: penalized cell problem linear} with $f(y) = \text{tr} (a(y,e) X)$.  Hence 
			\begin{align*}
				\overline{F}(e,X) &= \int_{\mathbb{T}^{d}} \text{tr} (a(y,e)X) \, \bar{\mu}_{e}(dy) \quad \text{if} \, \, e \notin \mathbb{R} \mathbb{Z}^{d}, \\
				F^{\perp}_{e}(X,y) &= \int_{\mathbb{T}^{d}} \text{tr}(a(y',e) X) \, \mu_{e}^{\langle y,e \rangle}(dy') \quad \text{if} \, \, e \in \mathbb{R} \mathbb{Z}^{d}.
			\end{align*}
		This shows that the operator $\overline{F}$ is given by \eqref{E: homogenized coefficient linear} with $\bar{a}$ and $a^{\perp}_{e}$ as in \eqref{E: effective a and m} and \eqref{E: oscillating tensor}.  
		
		When $m \not \equiv 1$, the solution $V^{\delta}$ of \eqref{E: penalized cell problem second instance} is also solution of
			\begin{equation*}
				\delta m(y,e) V^{\delta} - \text{tr} \left(a(y,e) D^{2}_{e} V^{\delta}\right) = \text{tr} \left(a(y,e) X \right) \quad \text{in} \, \, \mathbb{T}^{d}.
			\end{equation*}
		Given $\epsilon > 0$, for small enough $\delta$, this yields
			\begin{equation*}
				|F^{\perp}_{e}(X,y) m(y,e) - \text{tr} \left(a(y,e) D^{2}_{e} V^{\delta} \right) - \text{tr} \left(a(y,e) X \right)| \leq \epsilon \quad \text{in} \, \, \mathbb{T}^{d}.
			\end{equation*}
		When $e \in \mathbb{R} \mathbb{Z}^{d}$, we integrate with respect to $\mu_{e}^{s}$ for a given $s \in [0,r_{e})$ to find
			\begin{equation*}
				\left|F^{\perp}_{e}(X,se) \int_{\mathbb{T}^{d}} m(y',e) \, \mu_{e}^{s}(dy') - \text{tr} \left( a^{\perp}_{e}(se) X\right) \right| \leq \epsilon.
			\end{equation*}
		The arbitrariness of $\epsilon$ implies \eqref{E: oscillating function linear}.  A similar computation applies in the case that $e \notin \mathbb{R} \mathbb{Z}^{d}$, giving \eqref{E: homogenized coefficient linear}.    \end{proof}  
		
\subsection{Correctors in Rational Directions} \label{S: rational correctors}  Finally, we build correctors of the cell problem when $e \in \mathbb{R} \mathbb{Z}^{d}$ and study the regularity of the oscillating function $f_{e}^{\perp}$.  These will be convenient in the analysis of the forced problem \eqref{E: quasilinear forced} as well as the proof of Corollary \ref{C: generic} concerning generic discontinuities.

This section is devoted to the study of the following cell problem:
	\begin{equation} \label{E: exotic cell problem}
		- \text{tr} \left( a(y,e) D^{2}_{e} \tilde{V}_{e} \right) = f(y) - f_{e}^{\perp}(\langle y,e \rangle e) \quad \text{in} \, \, \mathbb{T}^{d}.
	\end{equation}
Notice that if a solution exists, then $f_{e}^{\perp}$ is necessarily as in Proposition \ref{P: approximate correctors linear}.  

	\begin{prop} \label{P: rational correctors}  If $a$ satisfies the assumptions of Theorem \ref{T: quasilinear} and $f \in C^{2,\alpha}(\mathbb{T}^{d})$, then there is a solution $V_{e} \in C^{2,\alpha}(\mathbb{T}^{d})$ of \eqref{E: exotic cell problem} and $f_{e}^{\perp} \in C^{2,\alpha}(\mathbb{T}^{d})$.  Further, if $\overline{V}_{e} \in C^{2,\alpha}(\mathbb{T}^{d})$ is any other solution of \eqref{E: exotic cell problem}, then $V_{e} - \overline{V}_{e}$ varies only in the $e$ direction.
	\end{prop}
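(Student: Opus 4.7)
The plan is to solve the cell problem one sub-torus at a time, exploiting the fact that the operator is uniformly elliptic on each leaf of the foliation $\{\mathbb{T}^{d-1}_e(s)\}$, and then to glue the resulting family of solutions into a globally $C^{2,\alpha}$ function using parameter-dependent elliptic theory.

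First, I would fix $e \in \mathbb{R}\mathbb{Z}^{d}$ and use the decomposition $\mathbb{T}^{d} = \bigcup_{s \in [0,r_{e})} \mathbb{T}^{d-1}_{e}(s)$. Restricted to a single sub-torus $\mathbb{T}^{d-1}_{e}(s)$, the operator $\operatorname{tr}(a(\cdot,e) D^{2}_{e} \cdot)$ is a uniformly elliptic operator in $(d-1)$ dimensions with $C^{2,\alpha}$ coefficients. Proposition \ref{P: existence invariant measures} identifies $\mu^{s}_{e}$ as the unique invariant measure of the associated diffusion on $\mathbb{T}^{d-1}_{e}(s)$, and standard Fredholm theory on a compact manifold without boundary forces the solvability condition
\begin{equation*}
f^{\perp}_{e}(se) \;=\; \int_{\mathbb{T}^{d}} f(y)\, \mu^{s}_{e}(dy),
\end{equation*}
under which there is a unique mean-zero solution $V^{s}$ on $\mathbb{T}^{d-1}_{e}(s)$. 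Schauder estimates with $C^{2,\alpha}$ data give $V^{s} \in C^{2,\alpha}(\mathbb{T}^{d-1}_{e}(s))$. The computation carried out in the proof of Theorem \ref{T: invariant measures} shows that $\int f \, d\mu^{s}_{e}$ coincides with the oscillating ergodic function supplied by Proposition \ref{P: approximate correctors linear}, so this $f^{\perp}_{e}$ agrees with the one appearing in the statement.

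To assemble the slice-wise solutions into a $C^{2,\alpha}$ function on $\mathbb{T}^{d}$, I would parametrize by setting $W^{s}(y') = V^{s}(y' + se)$ for $y' \in \mathbb{T}^{d-1}_{e}(0)$. The family $W^{s}$ then satisfies
\begin{equation*}
-\operatorname{tr}\bigl(a(y'+se,e)\, D^{2}_{e} W^{s}(y')\bigr) \;=\; f(y'+se) - f^{\perp}_{e}(se) \quad \text{in } \mathbb{T}^{d-1}_{e}(0),
\end{equation*}
a uniformly elliptic PDE on a fixed compact manifold whose coefficients and right-hand side depend on the scalar parameter $s$ with $C^{2,\alpha}$ regularity (by \eqref{A: a C2}, \eqref{A: a lip}, and the hypothesis on $f$). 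Formally differentiating in $s$ and closing each resulting equation with the mean-zero selection $\int (\partial^{k}_{s} W^{s})\, d\mu^{s}_{e} = 0$ produces a hierarchy of uniformly elliptic problems for $\partial_{s} W^{s}$ and $\partial^{2}_{s} W^{s}$ with $C^{\alpha}$ right-hand sides; Schauder estimates yield that $s \mapsto W^{s}$ is $C^{2,\alpha}$ as a map into $C^{2,\alpha}(\mathbb{T}^{d-1}_{e}(0))$. Unpacking, the function $V_{e}(y) = W^{\langle y,e \rangle}(y - \langle y,e \rangle e)$ belongs to $C^{2,\alpha}(\mathbb{T}^{d})$, and simultaneously $s \mapsto f^{\perp}_{e}(se)$ is $C^{2,\alpha}$.

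For the uniqueness statement, if $\overline{V}_{e}$ is a second $C^{2,\alpha}$ solution, then $W = V_{e} - \overline{V}_{e}$ satisfies $\operatorname{tr}(a(y,e) D^{2}_{e} W) = 0$ in $\mathbb{T}^{d}$, i.e., $W$ restricted to each $\mathbb{T}^{d-1}_{e}(s)$ is a $C^{2,\alpha}$ solution of a uniformly elliptic equation with zero right-hand side on a compact manifold without boundary; the strong maximum principle forces $W$ to be constant on each sub-torus, so $W$ depends only on $\langle y,e \rangle$, as required. The main obstacle is the gluing step: slice-wise Schauder regularity gives no control in the $e$ direction because the operator is completely degenerate there, and the $C^{2,\alpha}$ regularity across sub-tori has to be recovered from the way the coefficients, the forcing, and the solvability condition move with $s$. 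The hypotheses \eqref{A: a C2} and $f \in C^{2,\alpha}$ are exactly what permit two $s$-derivatives of $W^{s}$, and the invariant-measure solvability condition is what uniquely closes each differentiated equation.
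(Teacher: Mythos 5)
Your proposal follows essentially the same architecture as the paper's proof: foliate $\mathbb{T}^{d}$ into the sub-tori $\mathbb{T}^{d-1}_{e}(s)$, solve the uniformly elliptic slice problems with the Fredholm solvability condition $f_{e}^{\perp}(se)=\int f\,d\mu_{e}^{s}$ (which indeed matches the ergodic function of Proposition \ref{P: approximate correctors linear}), recover regularity in the $e$ direction by differentiating the slice family twice in $s$, and obtain the final uniqueness statement from the strong maximum principle on each leaf. The uniqueness argument and the identification of $f_{e}^{\perp}$ are fine. The gap is precisely in the step you yourself flag as the main obstacle: the passage from ``formally differentiating in $s$'' to genuine $C^{2,\alpha}$ dependence of $s\mapsto W^{s}$ is asserted rather than proven, and the way you propose to close the hierarchy would fail as stated. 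Once you have fixed a normalization of $W^{s}$ for every $s$ (you take the mean-zero selection against $\mu_{e}^{s}$), the normalization of $\partial_{s}W^{s}$ is no longer free: it is whatever results from differentiating $s\mapsto\int W^{s}\,d\mu_{e}^{s}$, and this involves the $s$-derivative of the density of $\mu_{e}^{s}$. Imposing $\int\partial_{s}^{k}W^{s}\,d\mu_{e}^{s}=0$ for $k\geq 1$ therefore selects, in general, a function that is \emph{not} the derivative of your family; and if you instead use the correct inherited normalization, you need differentiability of $s\mapsto\mu_{e}^{s}$, which you have not established and which is of the same order of difficulty as the statement being proven.

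Relatedly, ``Schauder estimates yield that $s\mapsto W^{s}$ is $C^{2,\alpha}$'' is not automatic, because the differentiated equation contains the unknown derivative of $f_{e}^{\perp}$ as its solvability constant: one must show \emph{simultaneously} that the difference quotients of $f_{e}^{\perp}(se)$ and of $W^{s}$ converge. This is exactly where the paper spends its effort: it normalizes by the value at a point, $\tilde V_{se}(0)=\varphi(s)$, with $\varphi$ chosen to satisfy compatibility conditions at $s=0$ and $s=r_{e}$ (the family is only periodic up to the shift $W^{s+r_{e}}(\cdot)=W^{s}(\cdot+r_{e}e)$, so matching across the period must be arranged); it first proves a Lipschitz bound on $f_{e}^{\perp}$ via Lemma \ref{L: comparison ergodic constants}, then controls the difference quotients $\tilde W^{h}_{se}$ with Krylov--Safonov, and identifies their limit --- and hence $\partial_{s}f_{e}^{\perp}$ --- again through Lemma \ref{L: comparison ergodic constants}, before repeating the scheme for the second derivative. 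Your outline can be repaired along these lines (for instance, normalize against a fixed, $s$-independent functional such as the value at a point or the $\mathcal{H}^{d-1}$-mean over the reference slice, and then run the difference-quotient argument for both $W^{s}$ and $f_{e}^{\perp}$), but as written the differentiability-in-$s$ step, which is the heart of the proposition, is not established.
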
  
	
Note that this implies $a^{\perp}_{e}$ and $m^{\perp}_{e}$ of \eqref{E: oscillating function linear} are both $C^{2,\alpha}$.
	
	\begin{proof}[Proof of Proposition \ref{P: rational correctors}]  To start with, let $\varphi \in C^{2,\alpha}([0,r_{e}])$ be a function with $\varphi(0) = 0$.  For each $s \in [0,r_{e})$, let $\tilde{V}_{se} : \mathbb{T}^{d-1}_{e}(0) \to \mathbb{R}$ be the solution of the cell problem
			\begin{equation*}
				\left\{ \begin{array}{r l}
					 - \text{tr} \left( a(se + x') D^{2}_{e}\tilde{V}_{se} \right) = f(se + x') - f_{e}^{\perp}(se) & \text{in} \, \, \mathbb{T}^{d-1}_{e}(0) \\
					\tilde{V}_{se}(0) = \varphi(s)
				\end{array} \right.
			\end{equation*}
		Notice that $\tilde{V}_{se}$ is unique.
		
		\textbf{Step 1: Preliminary regularity of $f_{e}^{\perp}$} 
		
		We claim that there is a constant $C > 0$ such that
			\begin{equation*}
				|f_{e}^{\perp}((s + h)e) - f_{e}^{\perp}(se)| \leq C |h| \quad \text{if} \, \, s, h \in \mathbb{R}.
			\end{equation*}	
		
		Indeed, first, recall (cf.\ \cite[Lemma 3.1]{evans} or \cite[Lemma 2.1]{camilli marchi}) that there is a $C_{*} > 0$ depending only on $\lambda, \Lambda$, and $\|f\|_{L^{\infty}(\mathbb{T}^{d})}$ such that 
			\begin{equation*}
				\sup \left\{ \|\tilde{V}_{se} - \tilde{V}_{se}(0)\|_{L^{\infty}(\mathbb{T}^{d-1}_{e}(0))} \, \mid \, s \in [0,r_{e}) \right\} \leq C_{*}.
			\end{equation*}
		Therefore, by Schauder estimates,
			\begin{equation*}
				C_{0} := \sup \left\{ \|D^{2}_{e} \tilde{V}_{se}\|_{L^{\infty}(\mathbb{T}^{d-1}_{e}(0))} \, \mid \, s \in [0,r_{e}) \right\} < \infty.
			\end{equation*}
		Therefore, by \eqref{A: a C2}, given $s \in [0,r_{e})$, the function $\tilde{V}_{(s + h)e}$ satisfies
			\begin{align*}
				-f(se + x') - \text{tr} \left( a(se + x') D^{2}_{e} \tilde{V}_{(s + h)e} \right) &\leq -f_{e}^{\perp}((s + h)e) + (C_{0} \|a(\cdot,e)\|_{C^{1,\alpha}(\mathbb{T}^{d})} + \|f\|_{C^{1,\alpha}(\mathbb{T}^{d})}) |h| \\
				-f(se + x') - \text{tr} \left( a(se + x') D^{2}_{e} \tilde{V}_{(s + h)e} \right) &\geq -f_{e}^{\perp}((s + h)e) - \left( C_{0} \|a(\cdot,e)\|_{C^{1,\alpha}(\mathbb{T}^{d})} + \|f\|_{C^{1,\alpha}(\mathbb{T}^{d})} \right) |h|
			\end{align*}
		From this and Lemma \ref{L: comparison ergodic constants}, we find
			\begin{equation*}
				|f_{e}^{\perp}((s + h)e) - f_{e}^{\perp}(se)| \leq \left(C_{0} \|a(\cdot,e)\|_{C^{1,\alpha}(\mathbb{T}^{d})} + \|f\|_{C^{1,\alpha}(\mathbb{T}^{d})} \right) |h|
			\end{equation*}
			
		\textbf{Step 2: Convenient extension of $\{\tilde{V}_{se}\}_{s \in [0,r_{e})}$}  
		
			At this stage, it is more-or-less inevitable to extend the function $s \mapsto \tilde{V}_{se}$ from $[0,r_{e})$ to $\mathbb{R}$.  Given $r \in \mathbb{R}$, define $\tilde{V}_{re} \in C(\mathbb{T}^{d-1}_{e}(0))$ by 
				\begin{equation*}
					\tilde{V}_{re}(x') = \tilde{V}_{(r - Nr_{e})e}(x' + Nr_{e}e) \quad \text{if} \, \, 0 \leq r - Nr_{e} < r_{e} \, \, \text{and} \, \, N \in \mathbb{Z}.
				\end{equation*}  
		
		We refine the choice of $\varphi$ to make $r \mapsto \tilde{V}_{re}$ continuous.  Since it coincides with $\varphi$, $r \mapsto \tilde{V}_{re}(0)$ is differentiable at each $r \in (0,r_{e})$.  To get a derivative at $0$ or $r_{e}$, we constrain $\varphi$ so that the following identity holds:
			\begin{equation*}
				\varphi(r_{e}) = \tilde{V}_{0}(r_{e}e).
			\end{equation*}
		Note this isn't circular since $\tilde{V}_{0}$ is determined by $\varphi(0)$ alone.  We leave it to the reader to check that now $r \mapsto \tilde{V}_{re}$ takes $\mathbb{R}$ continuously into $C(\mathbb{T}^{d-1}_{e}(0))$.
			
		\textbf{Step 3: Differentiability in $e$ direction}
		
		To get a derivative of $r \mapsto \tilde{V}_{re}$, we further refine the choice of $\varphi$.  First of all, notice that, with the Lipschitz estimate on $f_{e}^{\perp}$ in hand, we deduce that there is a $C_{1} > 0$ such that, for each $s \in [0,r_{e})$ and $h > 0$ small enough, the function $\tilde{W}^{h}_{se} = \tilde{V}_{(s + h)e} - \tilde{V}_{se}$ satisfies
			\begin{align*}
				\left\{ \begin{array}{r l}
					-C_{1} |h| \leq - \text{tr} \left( a(se + x') D^{2}_{e} \tilde{W}^{h}_{se}\right) \leq C_{1} |h| & \text{in} \, \, \mathbb{T}^{d - 1}_{e}(0) \\
					\tilde{W}^{h}_{se}(0) = \varphi(s + h) - \varphi(s)
				\end{array} \right.
			\end{align*}
		From this, the compactness of $\mathbb{T}^{d-1}_{e}(0)$, and the Krylov-Safonov Theorem, it follows that there is a $B > 0$ depending only on $C_{1}$ such that
			\begin{equation*}
				\|\tilde{W}^{h}_{se}\|_{L^{\infty}(\mathbb{T}^{d - 1}_{e}(0))} \leq B|h|.
			\end{equation*}
		
		We caim that $\tilde{W}_{se} = \lim_{h \to 0^{+}} h^{-1}\tilde{W}^{h}_{se}$ exists in $C(\mathbb{T}^{d})$ for each $s \in [0,r_{e})$.  Indeed, suppose $(h_{n})_{n \in \mathbb{N}} \subseteq (0,\infty)$ is a sequence such that $\lim_{n \to \infty} h_{n} = 0$ and $h_{n}^{-1} \tilde{W}^{h_{n}}_{se} \to \tilde{W}$ uniformly in $\mathbb{T}^{d-1}_{e}(0)$ for some $\tilde{W} \in C(\mathbb{T}^{d-1}_{e}(0))$.  Passing to a subsequence, if necessary, we can assume that there is a $\tilde{c} \in \mathbb{R}$ such that
		\begin{equation*}
			\tilde{c} = \lim_{n \to \infty} \frac{f_{e}^{\perp}((s + h_{n})e) - f_{e}^{\perp}(se)}{h_{n}}.
		\end{equation*}  
	It follows that $\tilde{W}$ is a viscosity solution of
			\begin{equation*}
				-\langle Df(se + x'), e \rangle - \text{tr} \left(a(se + x',e) D^{2}_{e}\tilde{W} \right) = -\tilde{c} + \text{tr} \left( \langle D_{y}a(se + x',e), e \rangle D^{2}_{e}\tilde{V}_{se} \right) \quad \text{in} \, \, \mathbb{T}^{d - 1}_{e}(0).
			\end{equation*}
		By Lemma \ref{L: comparison ergodic constants}, $\tilde{c}$ does not depend on the sequence $(h_{n})_{n \in \mathbb{N}}$, and it follows from the normalization $\tilde{W}(0) = \varphi'(s)$ that $\tilde{W}$ is also unique.  In particular, there is a function $\tilde{W}_{se}$ such that, for each $s \in [0,r_{e})$,
			\begin{equation*}
				h^{-1}(\tilde{V}_{(s + h)e} - \tilde{V}_{se}) \to \tilde{W}_{se} \quad \text{uniformly in} \, \, \mathbb{T}^{d-1}_{e}(0) \, \, \text{as} \, \, h \to 0^{+}.
			\end{equation*}   Note that $\tilde{W}_{se}$ is the unique viscosity solution of
			\begin{equation*}
				-\langle Df(se + x'),e \rangle - \text{tr} \left( a(se + x') D^{2}_{e} \tilde{W}_{se} \right) = -\langle Df_{e}^{\perp}(se),e \rangle + \text{tr} \left( \langle D_{y}a(se + x'), e \rangle D^{2}_{e}\tilde{V}_{se} \right) \quad \text{in} \, \, \mathbb{T}^{d-1}_{e}(0).
			\end{equation*}
			
		Note that above we considered one-sided derivatives.  It is straightforward to see that, for $s \in (0,r_{e})$, the restriction $h > 0$ was unnecessary.  To get a geniune (two-sided) derivative at $0$ or $r_{e}$, however, we need to add a compatibility condition to $\varphi$.  We require the following one:
			\begin{equation} \label{E: first derivative compatibility condition}
				\varphi'(r_{e}) = \tilde{W}_{0}(r_{e} e).
			\end{equation}
		We leave it to the reader to verify that \eqref{E: first derivative compatibility condition} implies that $s \mapsto \tilde{V}_{se}$ is differentiable at each $s \in [0,r_{e}]$.  In fact, it implies that $s \mapsto \tilde{V}_{se}$ is differentiable in $\mathbb{R}$ with
			\begin{equation*}
				\lim_{h \to 0} \frac{\tilde{V}_{(s + h)e} - \tilde{V}_{se}}{h} = \tilde{W}_{(s - Nr_{e})e}(\cdot + Nr_{e} e) \quad \text{uniformly in} \, \, \mathbb{T}^{d} \quad \text{if} \, \, 0 \leq s - Nr_{e} < r_{e}.
			\end{equation*}
		Furthermore, $\|\tilde{W}_{se}\|_{L^{\infty}(\mathbb{T}^{d - 1}_{e}(0))} \leq B$ independent of $s \in \mathbb{R}$.   
		
		Finally, notice that if we define $V_{e} : \mathbb{T}^{d} \to \mathbb{R}$ by
			\begin{equation*}
				V_{e}(x) = \tilde{V}_{\langle x,e \rangle e}(x - \langle x,e \rangle e)
			\end{equation*}
		then $V_{e}$ solves \eqref{E: exotic cell problem} and 
			\begin{equation*}
				\langle DV_{e}(x), e \rangle = \tilde{W}_{\langle x,e \rangle e}(x - \langle x,e \rangle e) \quad \text{if} \, \, x \in \mathbb{T}^{d}.
			\end{equation*}
		In particular, $V_{e} \in C^{1}(\mathbb{T}^{d})$.  
		
		\textbf{Step 4: Second derivative in $e$ direction}
			Since $\tilde{W}_{se}(0) = \varphi'(s)$ for all $s \in [0,r_{e})$ by construction, by imposing compatibility conditions on $\varphi''$, we can repeat the previous arguments to show that $V_{e} \in C^{2,\alpha}(\mathbb{T}^{d})$.  (The H\"{o}lder continuity of $D^{2}f_{e}^{\perp}$ follows by arguing as in the proof that it is Lipschitz; from this, the linearity of the equation can be used to show $\langle D^{2}V_{e} e, e \rangle$ is H\"{o}lder continuous with respect to the $e$ variable.)
\end{proof}

\section{Homogenization in Irrational Directions}    \label{S: irrational directions}

In this section, we undertake the main step in the proofs of Theorems \ref{T: level set PDE} and \ref{T: quasilinear}.  We prove that the solutions $(u^{\epsilon})_{\epsilon > 0}$ are described in the limit $\epsilon \to 0^{+}$ by the homogenized equation at any contact point where the level set of the normal vector is irrational.  It turns out that once this is proved, the remaining viscosity inequalities follow directly.  

More precisely, in this section, we show that if $\bar{u}^{*} = \limsup^{*} u^{\epsilon}$ and $\bar{u}_{*} = \liminf_{*} u^{\epsilon}$, then $\bar{u}^{*}$ and $\bar{u}_{*}$ are respectively sub- and supersolutions of \eqref{E: effective operator in dimension 2} or \eqref{E: effective equation} depending on the context.

\subsection{Solutions in Irrational Directions}  Due to the difficulty analyzing \eqref{E: level set PDE} at contact points with rational normals, we are led to consider an a priori weaker notion of viscosity solution.  In what follows, we are interested in functions $\tilde{u}$ that satisfy one or more of the viscosity inequalities
	\begin{equation} \label{E: effective sub or supersolution}
		\tilde{u}_{t} - \overline{F}^{*}(D\tilde{u}, D^{2} \tilde{u}) \leq 0, \quad \tilde{u}_{t} - \overline{F}_{*}(D\tilde{u},D^{2}\tilde{u}) \geq 0 \quad \text{in} \, \, \mathbb{R}^{d} \times (0,\infty).
	\end{equation}
Here, as usual in the theory of viscosity solutions, the operators $\overline{F}^{*},\overline{F}_{*} : \mathbb{R}^{d} \times \mathcal{S}_{d} \to \mathbb{R}$ are obtained from the effective operator $\overline{F}$ of \eqref{E: homogenized operator} by the formulas
	\begin{align*}
		\overline{F}^{*}(p,X) &= \lim_{\delta \to 0^{+}} \sup \left\{ \overline{F}(p',X') \, \mid \, (p',X') \in (\mathbb{R}^{d} \setminus \mathbb{R} \mathbb{Z}^{d}) \times \mathcal{S}_{d}, \, \, \|p' - p\| + \|X' - X\| < \delta \right\}, \\
		\overline{F}_{*}(p,X) &= \lim_{\delta \to 0^{+}} \inf \left\{ \overline{F}(p',X') \, \mid \, (p',X') \in (\mathbb{R}^{d} \setminus \mathbb{R} \mathbb{Z}^{d}) \times \mathcal{S}_{d}, \, \, \|p' - p\| + \|X' - X\| < \delta \right\}.
	\end{align*}
	
The difficulties encountered in rational directions motivate the following definition:

\begin{definition} \label{D: irrational directions} Given an open set $U \subseteq \mathbb{R}^{d} \times (0,\infty)$, we say that a locally bounded, upper semi-continuous function $\tilde{u} : \mathbb{R}^{d} \times (0,\infty) \to \mathbb{R}$ is a \emph{subsolution of \eqref{E: effective equation} in irrational directions in $U$} if there is a $K > 0$ such that, given a smooth function $\varphi : \mathbb{R}^{d} \times (0,\infty) \to \mathbb{R}$ such that $\tilde{u} - \varphi$ has a strict local maximum at $(x_{0},t_{0}) \in U$, the following conditions are met:
	\begin{itemize}
		\item[(a)] If $D\varphi(x_{0},t_{0}) \in \mathbb{R}^{d} \setminus \mathbb{R} \mathbb{Z}^{d}$, then 
			\begin{equation*}
				\varphi_{t}(x_{0},t_{0}) - \overline{F}^{*}(D\varphi(x_{0},t_{0}),D^{2}\varphi(x_{0},t_{0})) \leq 0.
			\end{equation*}
		\item[(b)] If $D\varphi(x_{0},t_{0}) \in \mathbb{R} \mathbb{Z}^{d} \setminus \{0\}$ and 
			\begin{equation*}
				\left\| \left(\text{Id} - \widehat{D\varphi}(x_{0},t_{0}) \otimes \widehat{D\varphi}(x_{0},t_{0})\right) D^{2} \varphi(x_{0},t_{0}) \right\| \leq \delta \|D\varphi(x_{0},t_{0})\|,
			\end{equation*}
		then 
			\begin{equation*}
				\varphi_{t}(x_{0},t_{0}) \leq K \delta \|D\varphi(x_{0},t_{0})\|.
			\end{equation*}
		\item[(c)] If $\|D\varphi(x_{0},t_{0})\| = \|D^{2}\varphi(x_{0},t_{0})\| = 0$, then
			\begin{equation*}
				\varphi_{t}(x_{0},t_{0}) \leq 0.
			\end{equation*}
	\end{itemize}
	
Similarly, a locally bounded, lower semi-continuous function $\tilde{v} : \mathbb{R}^{d} \times (0,\infty) \to \mathbb{R}$ is a \emph{supersolution of \eqref{E: effective equation} in irrational directions in $U$} if $-\tilde{v}$ satisfies the definition of subsolution in irrational directions with $\overline{F}_{*}$ replacing $\overline{F}^{*}$ in (a).  \end{definition}

Combining this definition with the definition of viscosity solution of Barles and Georgelin \cite{barles georgelin}, we can prove that a sub- or supersolution of \eqref{E: effective equation} in irrational directions is actually a sub- or supersolution in the usual sense.  Let us first recall the definition of \cite{barles georgelin}:

\begin{definition} \label{D: barles georgelin}  Given an open set $U \subseteq \mathbb{R}^{d} \times (0,\infty)$, we say that a locally bounded, upper semi-continuous function $\tilde{u} : \mathbb{R}^{d} \times (0,\infty) \to \mathbb{R}$ is a viscosity subsolution of \eqref{E: effective equation} in $U$ if, for each smooth function $\varphi : \mathbb{R}^{d} \times (0,\infty) \to \mathbb{R}$ and each point $(x_{0},t_{0}) \in U$ such that $\tilde{u} - \varphi$ has a strict local maximum at $(x_{0},t_{0})$, the following inequalities hold:
	\begin{itemize}
		\item[(i)] If $D\varphi(x_{0},t_{0}) \neq 0$, then
			\begin{equation*}
				\varphi_{t}(x_{0},t_{0}) - \overline{F}^{*}(D\varphi(x_{0},t_{0}), D^{2} \varphi(x_{0},t_{0})) \leq 0.
			\end{equation*}
		\item[(ii)] If $\|D\varphi(x_{0},t_{0})\| = \|D^{2} \varphi(x_{0},t_{0})\| = 0$, then
			\begin{equation*}
				\varphi_{t}(x_{0},t_{0}) \leq 0.
			\end{equation*}
	\end{itemize}

Analogously, we say that a locally bounded, lower semi-continuous function $\tilde{v} : \mathbb{R}^{d} \times (0,\infty) \to \mathbb{R}$ is a viscosity supersolution of \eqref{E: effective equation} in $U$ if $-\tilde{v}$ is a viscosity subsolution in $U$ with $\overline{F}_{*}$ replacing $\overline{F}^{*}$.  A locally bounded, continuous function in $\mathbb{R}^{d} \times (0,\infty)$ is a viscosity solution of \eqref{E: effective equation} if it is both a sub- and supersolution.  \end{definition}

The fact that this definition is equivalent to the usual one is proved arguing exactly as in \cite[Proposition 2.2]{barles georgelin}.

Using Definition \ref{D: barles georgelin} and the comparison machinery for viscosity solutions of second order parabolic equations, we can prove the following:  

	\begin{theorem} \label{T: irrational directions theorem}  Given $U \subseteq \mathbb{R}^{d} \times (0,\infty)$, if $\tilde{u} : \mathbb{R}^{d} \times (0,\infty) \to \mathbb{R}$ is a subsolution (resp.\ supersolution) of \eqref{E: effective equation} in irrational directions in $U$, then it is a viscosity subsolution (resp.\ supersolution) in $U$.  \end{theorem}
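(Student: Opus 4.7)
The plan is to verify the Barles--Georgelin subsolution conditions (Definition \ref{D: barles georgelin}) at every strict local maximum $(x_0, t_0) \in U$ of $\tilde u - \varphi$, for any smooth test function $\varphi$. Write $a = \varphi_t(x_0, t_0)$, $p = D\varphi(x_0, t_0)$, and $X = D^2\varphi(x_0, t_0)$. Two of the three Barles--Georgelin cases are immediate: if $p = 0$ and $\|X\| = 0$, condition (c) of Definition \ref{D: irrational directions} is literally condition (ii) of Barles--Georgelin; if $p \in \mathbb{R}^d \setminus \mathbb{R} \mathbb{Z}^d$, condition (a) supplies the Barles--Georgelin inequality (i) directly, since $\overline{F}^*$ agrees with $\overline{F}$ at continuity points of $\overline{F}$ on the irrational set.

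The nontrivial case is $p \in \mathbb{R} \mathbb{Z}^d \setminus \{0\}$. The plan is to perturb the test function by a small linear term so that the gradient at the shifted touch point becomes irrational, apply condition (a) there, and pass to the limit using the upper semicontinuity of $\overline{F}^*$. Concretely, for small $q \in \mathbb{R}^d$ let $\varphi_q(x, t) = \varphi(x, t) + \langle q, x - x_0 \rangle$; by strictness of the original maximum there exists a local maximum $(x_q, t_q)$ of $\tilde u - \varphi_q$ with $(x_q, t_q) \to (x_0, t_0)$ as $q \to 0$, and at this point the relevant jet data converge: $\varphi_t(x_q, t_q) \to a$, $D^2 \varphi(x_q, t_q) \to X$, and $p_q := D\varphi(x_q, t_q) + q \to p$. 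If a sequence $q_n \to 0$ can be chosen so that each $p_{q_n} \notin \mathbb{R} \mathbb{Z}^d$, then applying condition (a) at each $(x_{q_n}, t_{q_n})$ and letting $n \to \infty$ yields $a \leq \overline{F}^*(p, X)$.

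The main obstacle is producing such a sequence. Since $\mathbb{R} \mathbb{Z}^d$ is a countable union of one-dimensional linear subspaces, it has vanishing $(d-1)$-dimensional Hausdorff measure for $d \geq 2$, while the family $\{p_q\}$ for $q$ on a small sphere $S_r$ concentrates in a small tubular neighborhood of the $(d-1)$-sphere $p + S_r$. A Sard--type argument should then force $\{p_q : q \in S_r\}$ to hit the irrational complement for a large set of $q$'s. To make this robust against potential discontinuity of $q \mapsto (x_q, t_q)$, I would reinforce the perturbation with a small quadratic penalty $\beta |x - x_0|^2$, which pins the maximum down to a single, measurably-varying point as a function of $q$, apply the measure-theoretic argument at fixed $\beta$, and then send $\beta \to 0$ diagonally.

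Condition (b) of Definition \ref{D: irrational directions} plays the role of a safety net in the degenerate regime. In particular, when $\|(\mathrm{Id} - \hat p \otimes \hat p) X\|$ is small, (b) directly gives $a \leq K \delta \|p\|$, which combined with the ellipticity bound $|\overline{F}^*(p, X)| \leq \Lambda \|\tilde X_{\hat p}\|$ from Theorem \ref{T: approximate correctors} delivers the Barles--Georgelin inequality in the limit $\delta \to 0$; this covers exactly the cases where the Sard argument is weakest because the displacement $(x_q, t_q) - (x_0, t_0)$ may be comparable to $|q|$. The supersolution statement is handled by symmetric arguments, replacing maxima by minima, $\overline{F}^*$ by $\overline{F}_*$, and upper by lower semicontinuity throughout.
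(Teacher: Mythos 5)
Your reductions are fine up to the crucial case: condition (c) is literally Barles--Georgelin (ii), and condition (a) of Definition \ref{D: irrational directions} gives Barles--Georgelin (i) verbatim when $D\varphi(x_{0},t_{0})\notin\mathbb{R}\mathbb{Z}^{d}$. The gap is in the case $p=D\varphi(x_{0},t_{0})\in\mathbb{R}\mathbb{Z}^{d}\setminus\{0\}$, and it is not a technical gap that a Sard-type or measurable-selection refinement can close. When you replace $\varphi$ by $\varphi_{q}=\varphi+\langle q,\cdot-x_{0}\rangle$ and touch at $(x_{q},t_{q})$, the vector $p_{q}=D\varphi(x_{q},t_{q})+q$ is necessarily a spatial supergradient of $\tilde{u}$ at $(x_{q},t_{q})$. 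The set of supergradients of $\tilde{u}$ near $(x_{0},t_{0})$ can be entirely contained in a single rational line, in which case \emph{no} choice of $q$, however generic, produces an irrational $p_{q}$: take for instance $\tilde{u}(x,t)=-|x_{1}|$, whose supergradients at every point lie in $\mathbb{R}e_{1}$; for every admissible $q$ the touching point simply slides along the ridge $\{x_{1}=0\}$ or stays on one face, and $p_{q}\in\mathbb{R}e_{1}\subseteq\mathbb{R}\mathbb{Z}^{d}$ always. The measure-zero character of $\mathbb{R}\mathbb{Z}^{d}$ is irrelevant because the map $q\mapsto p_{q}$ is not onto a neighborhood of $p$ (there is no Jensen-type surjectivity here: $\tilde{u}$ is merely upper semicontinuous, not semiconvex, and even sup-convolution would give slopes of paraboloids touching $\tilde{u}$, not the data needed to invoke condition (a)). This ridge scenario is not exotic -- facets with rational normals are exactly the expected behavior of the limit functions -- so the argument fails precisely where the theorem has content.

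Your fallback via condition (b) does not patch this, because (b) only yields information when the tangential Hessian satisfies $\|(\mathrm{Id}-\hat{p}\otimes\hat{p})D^{2}\varphi(x_{0},t_{0})(\mathrm{Id}-\hat{p}\otimes\hat{p})\|\leq\delta\|p\|$ with $\delta$ small, whereas the failure of the linear-shift mechanism is independent of the size of the tangential Hessian (in the example above one may take $\varphi=-x_{1}+\|x'\|^{2}+a(t-t_{0})$, whose tangential Hessian is far from zero, and the touching gradients still never leave $\mathbb{R}e_{1}$). So conditions (a)--(c) do not combine in the way you describe; the rational case requires test functions with curved level sets, built so that the normal direction at the forced touching point can be driven into the irrational set, together with (b) to absorb the nearly flat regime. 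This is exactly the content of the result the paper invokes: its proof of this theorem is a citation of Theorem 2 of the companion paper \cite{allen cahn mobility}, where that geometric test-function construction is carried out; your linear perturbation plus measure-theoretic selection is not a viable substitute for it.
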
  
	
		\begin{proof}  Modulo cosmetic changes, this is precisely \cite[Theorem 2]{allen cahn mobility}.  \end{proof}  
		
\subsection{Homogenization}  In view of the previous theorem, our first step in the direction of Theorems \ref{T: level set PDE} and \ref{T: quasilinear} is to show that the half-relaxed limits $\bar{u}^{*}$ and $\bar{u}_{*}$ are sub- and supersolution, respectively, of \eqref{E: effective sub or supersolution} in irrational directions.  In particular, this eliminates any non-trivial analysis of the behavior of those functions at contact points where the normal is rational.  

At contact points where the normal vector is irrational, we will use the approximate correctors of the previous section and Evans's perturbed test function method \cite{evans}.

Where rational normals are concerned, we only have to check that conditions (b) and (c) are satisfied.  As we will see, this follows directly from the assumptions on the operator $F$.  

The preceding discussion leads to the following result:

	\begin{prop} \label{P: homogenization irrational directions} If $(u^{\epsilon})_{\epsilon > 0}$ are the functions in Theorem \ref{T: level set PDE} or \ref{T: quasilinear} and $\bar{u}^{*} = \limsup^{*} u^{\epsilon}$ and $\bar{u}_{*} = \liminf_{*} u^{\epsilon}$ are the half-relaxed limits defined by 
		\begin{align*}
			\bar{u}^{*}(x,t) &= \lim_{\delta \to 0^{+}} \sup \left\{ u^{\epsilon}(y,s) \, \mid \, \epsilon + \|y - x\| + |s - t| \leq \delta \right\}, \\
			\bar{u}_{*}(x,t) &= \lim_{\delta \to 0^{+}} \inf \left\{ u^{\epsilon}(y,s) \, \mid \,  \epsilon + \|y - x\| + |s - t| \leq \delta \right\},
		\end{align*}
	then $\bar{u}^{*}$ is a subsolution of \eqref{E: effective sub or supersolution} in irrational directions in $\mathbb{R}^{d} \times (0,\infty)$ and $\bar{u}_{*}$ is a supersolution of \eqref{E: effective sub or supersolution} in irrational directions in $\mathbb{R}^{d} \times (0,\infty)$. \end{prop}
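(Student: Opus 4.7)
The plan is to verify each of the three conditions in Definition~\ref{D: irrational directions} for $\bar u^*$; the supersolution statement for $\bar u_*$ follows by the symmetric argument applied to $-u^\epsilon$. Throughout, I would fix a smooth test function $\varphi$ and a point $(x_0,t_0)$ at which $\bar u^* - \varphi$ has a strict local maximum, and use the standard half-relaxed limit machinery to produce, for each $\epsilon > 0$, a nearby local maximum point $(x_\epsilon, t_\epsilon)$ of $u^\epsilon - \varphi^\epsilon$ satisfying $(x_\epsilon, t_\epsilon) \to (x_0, t_0)$, for an appropriate perturbation $\varphi^\epsilon$ of $\varphi$.

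For condition~(a), when $p := D\varphi(x_0,t_0) \notin \mathbb{R}\mathbb{Z}^d$, I would carry out Evans's perturbed test function method with the approximate corrector $V^\nu \in C^2(\mathbb{T}^d)$ of Theorem~\ref{T: approximate correctors} corresponding to the data $(e, X) = (\hat p, D^2\varphi(x_0,t_0))$. Setting $\varphi^\epsilon(x,t) := \varphi(x,t) + \epsilon^2 V^\nu(\epsilon^{-1}x)$, the subsolution inequality for $u^\epsilon$ at $(x_\epsilon, t_\epsilon)$ becomes
$$
\varphi_t(x_\epsilon,t_\epsilon) \leq F\bigl(D\varphi(x_\epsilon,t_\epsilon) + \epsilon DV^\nu(\epsilon^{-1}x_\epsilon),\ D^2\varphi(x_\epsilon,t_\epsilon) + D^2 V^\nu(\epsilon^{-1}x_\epsilon),\ \epsilon^{-1}x_\epsilon\bigr).
$$
Invoking the geometric scaling property~(i) to rescale the gradient argument to $e$, using the consequence of~\eqref{A: strong degenerate ellipticity} that $F(e,\cdot,y)$ is insensitive to modification of the Hessian in the $e\otimes e$ direction (so only the tangential part $\tilde{X}_e$ matters), and applying the corrector inequality from Theorem~\ref{T: approximate correctors} with $F^\perp_e \equiv \overline F(e,X)$, one bounds the right-hand side by $\|p\|(\overline F(e,X) + \nu) + o(1)$ as $\epsilon \to 0^+$. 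Sending first $\epsilon \to 0$ and then $\nu \to 0^+$ yields $\varphi_t(x_0,t_0) \leq \overline F(p, D^2\varphi(x_0,t_0))$, and the passage to $\overline F^*$ is automatic by upper-semicontinuous regularization.

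Conditions~(b) and~(c) require no corrector, so I would simply take $\varphi^\epsilon = \varphi$. For~(b), at a contact point $(x_\epsilon, t_\epsilon)$ of $u^\epsilon - \varphi$, the subsolution inequality combined with the geometric scaling and the stationary-planes condition $F(e,0,y) = 0$ yields
$$
\varphi_t(x_\epsilon, t_\epsilon) \leq F\bigl(D\varphi(x_\epsilon,t_\epsilon), D^2\varphi(x_\epsilon,t_\epsilon), \epsilon^{-1}x_\epsilon\bigr) \leq \Lambda \,\bigl\|(\mathrm{Id} - \hat q \otimes \hat q)\, D^2\varphi(x_\epsilon,t_\epsilon)\, (\mathrm{Id} - \hat q \otimes \hat q)\bigr\|,
$$
with $q := D\varphi(x_\epsilon, t_\epsilon)$, by the upper ellipticity bound in~\eqref{A: strong degenerate ellipticity} applied with reference Hessian zero. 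Passing to the limit and using the smallness of the tangential Hessian of $\varphi$ at $(x_0,t_0)$ delivers~(b) with $K = \Lambda$. Condition~(c), in which $\|D\varphi(x_0,t_0)\| = \|D^2\varphi(x_0,t_0)\| = 0$, follows from the same estimate at points where $D\varphi(x_\epsilon,t_\epsilon) \neq 0$ (the tangential Hessian vanishes in the limit) and, at critical contact points, from the degenerate viscosity inequality $\varphi_t \leq 0$ that $u^\epsilon$ satisfies in the Barles--Georgelin sense.

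The main obstacle is the bookkeeping in condition~(a). The corrector $V^\nu$ is constructed for the fixed normal $e = \hat p$, whereas at the $\epsilon$-contact point the gradient $D\varphi + \epsilon DV^\nu$ only converges to $p$ without equalling it; reconciling this requires combining the geometric homogeneity~(i), the ellipticity-driven tangential reduction just described, and uniform continuity of $F$ on compact subsets of $(\mathbb{R}^d \setminus \{0\}) \times \mathcal{S}_d \times \mathbb{T}^d$. Auxiliary regularization of $F$ as in Proposition~\ref{P: approximation} (for Theorem~\ref{T: level set PDE}) or the Schauder regularity of $V^\nu$ from Proposition~\ref{P: quasilinear regularity} (for Theorem~\ref{T: quasilinear}) is what keeps these error terms controlled uniformly in the corrector parameter, making the double limit $\epsilon \to 0^+$, $\nu \to 0^+$ legitimate.
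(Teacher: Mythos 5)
Your proposal follows essentially the same route as the paper's proof: Evans's perturbed test function method with the approximate correctors of Theorem \ref{T: approximate correctors} at contact points with irrational normal, and a direct use of the viscosity inequality together with \eqref{E: stationary planes} and \eqref{A: strong degenerate ellipticity} for conditions (b) and (c), yielding $K = \Lambda$ (you argue on the subsolution side, the paper on the supersolution side, which is symmetric). The one bookkeeping correction is the normalization in (a): for the geometric scaling to match the corrector equation you should take the corrector for $X = \|D\varphi(x_{0},t_{0})\|^{-1}D^{2}\varphi(x_{0},t_{0})$ and perturb by $\epsilon^{2}\|D\varphi(x_{0},t_{0})\|\,V^{\nu}(\epsilon^{-1}x)$, as the paper does, rather than using $X = D^{2}\varphi(x_{0},t_{0})$ with amplitude $\epsilon^{2}$, since $\overline{F}(e,\cdot)$ is not one-homogeneous in general.
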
  
	
		\begin{proof}  We break the proof down into three steps, one corresponding to each condition in Definition \ref{D: irrational directions}.  Since the argument for $\bar{u}^{*}$ is analogous, we restrict attention to $\bar{u}_{*}$.  
		
		Assume that $\varphi : \mathbb{R}^{d} \times (0,\infty) \to \mathbb{R}$ is a smooth function and $(x_{0},t_{0}) \in \mathbb{R}^{d} \times (0,\infty)$ is a point where $\bar{u}_{*} - \varphi$ attains a strict local minimum.
		
		\textbf{Step 1: Irrational Normal}  
		
		Assume that $D\varphi(x_{0},t_{0}) \notin \mathbb{R} \mathbb{Z}^{d}$.  Let us argue by contradiction.  Assume that there is a $\nu > 0$ such that
	\begin{equation} \label{E: contradict homogenization}
		\varphi_{t}(x_{0},t_{0}) - \overline{F}(D\varphi(x_{0},t_{0}),D^{2}\varphi(x_{0},t_{0})) \leq -2\nu \|D\varphi(x_{0},t_{0})\|.
	\end{equation}
	
Define $e = \widehat{D\varphi}(x_{0},t_{0})$ and $X = \|D\varphi(x_{0},t_{0})\|^{-1} D^{2}\varphi(x_{0},t_{0})$.  By Theorem \ref{T: approximate correctors}, we can let $V^{\nu} \in C^{2}(\mathbb{T}^{d})$ be an approximate corrector satisfying
	\begin{equation*}
		- \nu \leq \overline{F}(e,X) - F(e,\tilde{X}_{e} + D^{2}_{e} V^{\nu},y) \leq \nu \quad \text{in} \, \, \mathbb{T}^{d}.
	\end{equation*}
	
For each $\epsilon > 0$, define $\varphi_{\epsilon}$ by 
	\begin{equation*}
		\varphi_{\epsilon}(x,t) = \varphi(x,t) + \epsilon^{2} \|D\varphi(x_{0},t_{0})\| V^{\nu}(\epsilon^{-1} x).
	\end{equation*}
Let $(x_{\epsilon},t_{\epsilon})$ be a local minimum of $u^{\epsilon} - \varphi_{\epsilon}$ close to $(x_{0},t_{0})$.  Since $(x_{0},t_{0})$ is a strict minimum, we can fix a sequence $(\epsilon_{n})_{n \in \mathbb{N}}$ such that 
	\begin{equation*}
		\lim_{n \to \infty} \epsilon_{n} = 0, \quad \lim_{n \to \infty} (x_{\epsilon_{n}},t_{\epsilon_{n}}) = (x_{0},t_{0}), \quad \inf \left\{ \|D\varphi(x_{\epsilon_{n}},t_{\epsilon_{n}})\| \, \mid \, n \in \mathbb{N} \right\} > 0.
	\end{equation*}
Invoking the equation satisfied by $u^{\epsilon_{n}}$ and using the continuity of $F$, we find, for sufficiently large $n$,
	\begin{equation*}
		\varphi_{t}(x_{\epsilon_{n}},t_{\epsilon_{n}}) - F(D\varphi(x_{\epsilon_{n}},t_{\epsilon_{n}}), D^{2}\varphi(x_{\epsilon_{n}},t_{\epsilon_{n}}) + \|D\varphi(x_{0},t_{0})\| D^{2}V^{\nu}(\epsilon_{n}^{-1} x_{\epsilon_{n}}), \epsilon_{n}^{-1} x_{\epsilon_{n}}) \geq o(1).
	\end{equation*}
The continuity and geometric properties of $F$ then yield
	\begin{align*}
		0 &\leq \varphi_{t}(x_{\epsilon_{n}},t_{\epsilon_{n}}) - F(D\varphi(x_{0},t_{0}), D^{2}_{e}\varphi(x_{\epsilon_{n}},t_{\epsilon_{n}}) + \|D\varphi(x_{0},t_{0})\| D^{2}_{e}V^{\nu}(\epsilon_{n}^{-1} x_{\epsilon_{n}}),\epsilon_{n}^{-1} x_{\epsilon_{n}}) + o(1) \\
			&\leq \varphi_{t}(x_{\epsilon_{n}},t_{\epsilon_{n}}) - \|D\varphi(x_{0},t_{0})\| F(e, \tilde{X}_{e} + D^{2}_{e} V^{\nu}(\epsilon_{n}^{-1} x_{\epsilon_{n}}), \epsilon_{n}^{-1} x_{\epsilon_{n}}) + o(1) \\
			&\leq \varphi_{t}(x_{0},t_{0}) - \|D\varphi(x_{0},t_{0})\| \overline{F}(e,\tilde{X}_{e}) + \nu \|D\varphi(x_{0},t_{0})\| + o(1).
	\end{align*}
Sending $\epsilon \to 0^{+}$ and recalling the definition of $\overline{F}$ in \eqref{E: homogenized operator}, we contradict \eqref{E: contradict homogenization}.  Since $\varphi$ was arbitrary, we conclude that $\bar{u}_{*}$ satisfies condition (a) of Definition \ref{D: irrational directions}.

	\textbf{Step 2: Rational Normal, Flat Level Set}

	Next, we assume that $D\varphi(x_{0},t_{0}) \neq 0$ and define $\delta$ by
	\begin{equation*}
		\delta = \|D\varphi(x_{0},t_{0})\|^{-1} \left\| \left( \text{Id} - \widehat{D\varphi}(x_{0},t_{0}) \otimes \widehat{D\varphi}(x_{0},t_{0}) \right) D^{2} \varphi(x_{0},t_{0}) \right\|.
	\end{equation*}
	
Using the equation directly and invoking assumptions \eqref{E: stationary planes} and \eqref{A: strong degenerate ellipticity} or \eqref{E: structure condition} and \eqref{A: a continuous}, we find $\epsilon_{n} \to 0^{+}$ and $(x_{\epsilon_{n}},t_{\epsilon_{n}}) \to (x_{0},t_{0})$ so that
	\begin{align*}
		0 &\leq \varphi_{t}(x_{\epsilon_{n}},t_{\epsilon_{n}}) - F(D\varphi(x_{\epsilon_{n}},t_{\epsilon_{n}}), D^{2}\varphi(x_{\epsilon_{n}},t_{\epsilon_{n}}), \epsilon^{-1}_{n} x_{\epsilon_{n}}), \\
			&\leq \varphi_{t}(x_{\epsilon_{n}},t_{\epsilon_{n}}) + \Lambda \|D^{2}_{e_{n}}\varphi(x_{\epsilon_{n}},t_{\epsilon_{n}})\|,
	\end{align*}
where $e_{n} = \widehat{D\varphi}(x_{\epsilon_{n}},t_{\epsilon_{n}})$.  Sending $n \to \infty$, we recover
	\begin{equation*}
		\varphi_{t}(x_{0},t_{0}) \geq -\Lambda \|D^{2}_{e}\varphi(x_{0},t_{0})\| \geq -\Lambda \delta \|D\varphi(x_{0},t_{0})\|.
	\end{equation*}
We conclude that $\bar{u}_{*}$ satisfies condition (b) in Definition \ref{D: irrational directions} with $K = \Lambda$.  

\textbf{Step 3: Vanishing Normal}

Finally, if $\|D\varphi(x_{0},t_{0})\| = \|D^{2} \varphi(x_{0},t_{0})\| = 0$, then we can find $\epsilon_{n} \to 0^{+}$ and $(x_{\epsilon_{n}},t_{\epsilon_{n}}) \to (x_{0},t_{0})$ such that
	\begin{align*}
		0 &\leq \varphi_{t}(x_{\epsilon_{n}},t_{\epsilon_{n}}) - F_{*}(D\varphi(x_{\epsilon_{n}},t_{\epsilon_{n}}), D^{2} \varphi(x_{\epsilon_{n}},t_{\epsilon_{n}}),\epsilon_{n}^{-1} x_{n}).
	\end{align*}
In the limit $n \to \infty$, we use \eqref{A: strong degenerate ellipticity} to recover $\varphi_{t}(x_{0},t_{0}) \geq 0.$
This proves $\bar{u}_{*}$ satisfies condition (c) in Definition \ref{D: irrational directions}.\end{proof}

\section{Continuity and Discontinuity of Homogenized Coefficients} \label{S: discontinuity}

In this section, we study the continuity properties of the homogenized operator $\overline{F}$ of Theorem \ref{T: level set PDE} and the effective coefficients $\overline{m}$ and $\bar{a}$ of Theorem \ref{T: quasilinear}.  The main results are Theorem \ref{T: limiting measures} and Corollary \ref{C: generic} concerning the (disc)continuity properties of $\overline{m}$ and $\overline{a}$ in the quasi-linear setting.  

In dimension two, it turns out that the homogenized operator obtained in Theorem \ref{T: level set PDE} is always continuous, a result that is stated next.  

\begin{prop} \label{P: continuity in dimension 2}  Under the assumptions of Theorem \ref{T: level set PDE}, the homogenized operator $\overline{F} : (\mathbb{R}^{2} \setminus \mathbb{R} \mathbb{Z}^{2}) \times \mathcal{S}_{2} \to \mathbb{R}$ (see \eqref{E: homogenized operator}) extends continuously to $(\mathbb{R}^{2} \setminus \{0\}) \times \mathcal{S}_{2}$.  \end{prop}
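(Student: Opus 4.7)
The plan is to verify that $\overline{F}$ is continuous at every $(p_0, X_0) \in (\mathbb{R}^2 \setminus \{0\}) \times \mathcal{S}_2$, treating the cases $\hat{p}_0 \notin \mathbb{R}\mathbb{Z}^2$ and $\hat{p}_0 \in \mathbb{R}\mathbb{Z}^2$ separately. By the scaling \eqref{E: homogenized operator}, it suffices to work on $S^1 \times \mathcal{S}_2$.

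Continuity at an irrational $e_0 \in S^1 \setminus \mathbb{R}\mathbb{Z}^2$ is relatively direct. Given $\nu > 0$ and $X_0 \in \mathcal{S}_2$, Theorem \ref{T: approximate correctors} furnishes $V^\nu \in C^2(\mathbb{T}^2)$ with $|\overline{F}(e_0, X_0) - F(e_0, \tilde{X}_{0,e_0} + D^2_{e_0} V^\nu, y)| \leq \nu$ on $\mathbb{T}^2$, together with the bound $\|D^2_{e_0} V^\nu\|_{L^\infty} \leq 2\Lambda\|X_0\|/\lambda$ from \eqref{E: Hessian estimate}. For $(e, X)$ near $(e_0, X_0)$, continuity of $F$ in $(p,X,y)$ combined with continuity of $e \mapsto \tilde{X}_e$ and $e \mapsto D^2_e V^\nu$ makes $V^\nu$ into a $2\nu$-approximate corrector for $(e, X)$. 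Applying Lemma \ref{L: comparison ergodic constants} to the perturbed cell problem (using $F^\perp_e(X,\cdot) \equiv \overline{F}(e,X)$ when $e \in S^1 \setminus \mathbb{R}\mathbb{Z}^2$) yields $|\overline{F}(e,X) - \overline{F}(e_0, X_0)| \leq 3\nu$, and $\nu \to 0$ closes this case.

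The main content is the continuous extension at a rational $e_0 \in S^1 \cap \mathbb{R}\mathbb{Z}^2$. The crucial two-dimensional feature is that $S^1 \cap \langle e_0 \rangle^\perp = \{\pm \eta_0\}$, so any sequence $(e_n, X_n) \to (e_0, X_0)$ with $e_n \in S^1 \setminus \mathbb{R}\mathbb{Z}^2$ admits, after passing to a subsequence, a direction of approach $-\eta \in \{\pm \eta_0\}$. My strategy is to identify $\lim_n \overline{F}(e_n, X_n)$ as a weighted average over $s \in [0, r_{e_0})$ of the leaf-wise function $F^\perp_{e_0}(X_0, s e_0)$ from Theorem \ref{T: cell problem ergodic constant}, and to show that the weighting is invariant under $\eta \mapsto -\eta$. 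Concretely, take penalized correctors $V^{\delta_n}_n \in C^2(\mathbb{T}^2)$ associated with $(e_n, X_n)$, with $\delta_n \to 0$ calibrated to $\|e_n - e_0\|$; the uniform bound $\|D^2_{e_n} V^{\delta_n}_n\|_{L^\infty} \leq 2\Lambda \|X_n\|/\lambda$ from \eqref{E: Hessian estimate} gives compactness in $C^{1,\alpha}(\mathbb{T}^2)$. A two-scale analysis with slow scale along $e_0$ and fast scale along $\eta_n \to \pm \eta_0$ shows that along each $1$-dimensional leaf $\mathbb{T}^{1}_{e_0}(s)$ the limiting problem is precisely the leaf-wise penalized cell equation, whose ergodic constant is $F^\perp_{e_0}(X_0, s e_0)$. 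Averaging over $s$ against a slow weight that depends on $\eta$ only through $\eta \otimes \eta$ (arising from the tangential ellipticity of $F$ at $e_0$) yields a limit independent of the sign of $\eta$, and setting $\overline{F}(e_0, X_0)$ equal to this common value produces the continuous extension.

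The main obstacle I anticipate is rigorously executing the two-scale limit in the fully nonlinear setting, where the invariant-measure description of Theorems \ref{T: invariant measures} and \ref{T: limiting measures} (proven only for quasilinear operators) is not directly available. One must extract enough regularity from the $C^2$ estimates on $V^{\delta_n}_n$ to pass to the limit inside the nonlinear term $F(e_n, X_n + D^2 V^{\delta_n}_n, \cdot)$; a careful calibration of $\delta_n$ versus $\|e_n - e_0\|$, together with leaf-wise uniform ellipticity and Krylov-Safonov on the one-dimensional leaves, should supply the required compactness. Once the averaging formula is in hand and its invariance under $\eta \mapsto -\eta$ verified, continuity of the extension on $(\mathbb{R}^2 \setminus \{0\}) \times \mathcal{S}_2$ follows from a perturbation argument analogous to the irrational case.
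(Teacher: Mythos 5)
Your treatment of irrational base points is fine in spirit (fix $\nu$, use the $C^{2}$ approximate corrector for $(e_{0},X_{0})$, then shrink the neighborhood depending on $V^{\nu}$ and invoke Lemma \ref{L: comparison ergodic constants}), and your observation that in $d=2$ the only possible approach directions are $\pm e_{0}^{\perp}$, so one only needs the limit to be invariant under $\eta\mapsto-\eta$, is exactly the reason the two--dimensional result holds. The gap is at the rational directions, and it is precisely the step you flag but do not carry out: your plan runs the analysis on the penalized correctors $V^{\delta_{n}}_{n}$ with $\delta_{n}$ ``calibrated'' to $\|e_{n}-e_{0}\|$, which amounts to commuting the limits $\delta\to0^{+}$ and $n\to\infty$. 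That commutation requires a rate for $\delta V^{\delta}_{n}\to-\overline{F}(e_{n},X)$ that is uniform in $n$, i.e.\ uniform over almost periodic problems degenerating to a periodic one, and no such rate is available; this is exactly the obstruction the paper circumvents by abandoning penalized correctors and working instead with the obstacle problems of Caffarelli--Souganidis--Wang, where the subadditive/variational structure \eqref{E: variational principle subsolutions} and the upper semicontinuity of contact-set densities (Propositions \ref{P: homogenization obstacle problem}--\ref{P: limit of mu}) let one send $n\to\infty$ first and $R\to\infty$ (the analogue of $\delta\to0^{+}$) afterwards without quantitative rates, and then identify the limit via Propositions \ref{P: homogenization and density} and \ref{P: two d resolution}. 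Your compactness claim is also unjustified as stated: the bound \eqref{E: Hessian estimate} controls only the tangential second derivative $D^{2}_{e_{n}}V^{\delta_{n}}_{n}$, not the full gradient or Hessian, and $V^{\delta_{n}}_{n}$ itself has oscillation of order $\delta_{n}^{-1}$ across the leaves (this unbounded transverse oscillation is exactly why the cell problem is ill-posed at rational directions), so no $C^{1,\alpha}(\mathbb{T}^{2})$ compactness of the correctors follows.

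A second, smaller inaccuracy: the limit at a rational $e_{0}$ is not ``a weighted average of $F^{\perp}_{e_{0}}(X_{0},se_{0})$ over $s$'' in the fully nonlinear setting. It is the ergodic constant of the one-dimensional homogenization problem \eqref{E: one dimensional problem} for the nonlinear operator $F^{\perp}_{e_{0}}$, i.e.\ the constant $\overline{G}$ for which one can solve $F^{\perp}_{e_{0}}(\tilde X_{e_{0}}+q(s)\,e_{0}^{\perp}\otimes e_{0}^{\perp},se_{0})=\overline{G}$ with $\int_{0}^{r_{e_{0}}}q(s)\,ds=0$; an explicit (harmonic-type) averaging formula only appears in the quasilinear case of Theorem \ref{T: limiting measures}. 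The sign-invariance you need does hold — replacing $e_{0}^{\perp}$ by $-e_{0}^{\perp}$ in \eqref{E: one dimensional problem} leaves the homogenized constant unchanged, which is how the paper concludes in Proposition \ref{P: two d resolution} — but without the obstacle-problem (or some substitute) argument establishing that $\overline{F}(e_{n},X_{n})$ converges to this constant at all, the proposal does not yet prove the proposition.
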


For the sake of completeness, we show how Proposition \ref{P: continuity in dimension 2} completes the proof of Theorem \ref{T: level set PDE}:

\begin{proof}[Proof of Theorem \ref{T: level set PDE}]  Let $\bar{u}^{*}$ and $\bar{u}_{*}$ be the half-relaxed limits defined in Proposition \ref{P: homogenization irrational directions}.  That proposition shows that $\bar{u}^{*}$ and $\bar{u}_{*}$ are respectively sub- and supersolution of \eqref{E: effective equation} in $\mathbb{R}^{d} \times (0,\infty)$.  Furthermore, using \eqref{A: strong degenerate ellipticity} and the assumption $u_{0} \in UC(\mathbb{R}^{d})$, we can build sub- and supersolutions  that show $\bar{u}^{*} = \bar{u}_{*} = u_{0}$ on $\mathbb{R}^{d} \times \{0\}$.  Since $\overline{F}^{*} = \overline{F}_{*}$ in $(\mathbb{R}^{2} \setminus \{0\}) \times \mathcal{S}_{2}$, the comparison principle implies $\bar{u}^{*} \leq \bar{u}_{*}$  (cf.\ Appendix \ref{A: technical lemmata}).  Therefore, $\bar{u}^{*} = \bar{u}_{*} = \bar{u}$ and we conclude that $u^{\epsilon} \to \bar{u}$ locally uniformly in $\mathbb{R}^{d} \times [0,\infty)$. \end{proof}

By contrast, in higher dimensions, the effective coefficients $\bar{a}$ and $\bar{m}$ are generically discontinuous at each rational direction on $S^{d-1}$.  Thus, the final step in the proof of Theorem \ref{T: quasilinear} will be deferred until Section \ref{S: comparison}, where we extend comparison to \eqref{E: effective equation}.  

\subsection{Strategy of Proof}  Before entering into the details, let us briefly give a heuristic explanation of the strategy of the proof and the core technical issues that arise.  The main idea of the proof and the discussion that follows are inspired by \cite{feldman kim}.

Suppose $e \in S^{d-1} \cap \mathbb{R} \mathbb{Z}^{d}$.  In the proof of Theorem \ref{T: limiting measures}, we proceed by studying the behavior of $(\bar{\mu}_{e_{n}})_{n \in \mathbb{N}}$ along a sequence of irrational directions $(e_{n})_{n \in \mathbb{N}}$ with $e_{n} \to e$ and $\frac{e_{n} - e}{\|e_{n} - e\|} \to -\eta$.

$\bar{\mu}_{e_{n}}$ captures the behavior of the diffusion $X^{e_{n}}$ of \eqref{E: SDE} after long times.  Therefore, it is natural to pass to the diffusions $X^{e_{n}}$ and consider their behavior.  Further, in light of the structure of $\mathscr{I}^{a}_{e}$, the only question is the $e$ marginal of any limit point of $(\bar{\mu}_{e_{n}})_{n \in \mathbb{N}}$, that is, we only need to study $f(\langle X^{e_{n}}_{t},e \rangle)$, where $f$ is an $r_{e}$-periodic function of one variable and $t > 0$ is large. 

We may as well assume that $\langle X^{e_{n}}_{0},e_{n} \rangle = 0$, which implies that $\langle X^{e_{n}}_{t},e_{n} \rangle = 0$ for all $t \geq 0$.  Therefore, if we write $e_{n} = \cos(\theta_{n}) e - \sin(\theta_{n}) \eta_{n}$ for some $\theta_{n} \in (-\pi,\pi]$ and $\eta_{n} \in S^{d-1} \cap \langle e \rangle^{\perp}$, we have
	\begin{equation*}
		\langle X^{e_{n}}_{t},e \rangle = \langle X^{e_{n}}_{t}, e - e_{n} \rangle = (1 - \cos(\theta_{n}) \langle X^{e_{n}}_{t}, e \rangle + O(\theta_{n}^{2}).
	\end{equation*} 
Hence to recover anything meaningful from $f(\langle X^{e_{n}}_{t},e \rangle)$, we need to wait until $\|X^{e_{n}}_{t}\| \approx \theta_{n}^{-1}$.  Since this takes a time proportional to $ \theta_{n}^{-2}$, we should study $f(\langle X^{e_{n}}_{\theta_{n}^{-2}T},e \rangle)$ in the simultaneous limit $n, T \to \infty$.

Put another way, at the level of the PDE, we would like to understand the behavior as $n \to \infty$ and $\delta \to 0^{+}$ of the penalized correctors $(V^{\delta}_{n})_{(\delta,n) \in (0,\infty) \times \mathbb{N}}$ solving
	\begin{equation*}
		\delta V^{\delta}_{n} - \text{tr} \left(a(\theta_{n}^{-1}y,e_{n}) D^{2}_{e} V^{\delta}_{n} \right) = f(\theta_{n}^{-1} \langle x,e \rangle) \quad \text{in} \, \, \langle e_{n} \rangle^{\perp}.
	\end{equation*}  
A simple homogenization argument shows that, for a fixed $\delta > 0$, $V^{\delta}_{n} \to \tilde{V}^{\delta}$ as $n \to \infty$, where $\tilde{V}^{\delta}$ is the solution of the problem
	\begin{equation*}
		\delta \tilde{V}^{\delta} - \text{tr} \left( a^{\perp}_{e}(\langle \eta, x \rangle e) D^{2}_{e} \tilde{V}^{\delta} \right) = f(\langle \eta, x \rangle e) \quad \text{in} \, \, \langle e \rangle^{\perp}.
	\end{equation*}
It turns out that extracting the limit of $\delta \tilde{V}^{\delta}$ as $\delta \to 0^{+}$ leads to the correct limit of $\int_{\mathbb{T}^{d}} f(\langle y,e \rangle) \, \bar{\mu}_{e_{n}}(dy)$.  The question is simply how to show that the limits $n \to \infty$ and $\delta \to 0^{+}$ commute.  

The argument below shows how to do this working at the level of the obstacle problems introduced in \cite{caffarelli souganidis wang} rather than the penalized correctors.  Working with penalized correctors is difficult since it requires understanding the rate at which $\delta V^{\delta}_{n}$ converges as $\delta \to 0^{+}$, independently of $n$.  We circumvent this by passing to the obstacle problem approach of \cite{caffarelli souganidis wang} and leveraging an upper semi-continuity property proved there.  In this way, it is possible to first send $n \to \infty$ and then $\delta \to 0^{+}$ without explicitly quantifying the rates of convergence of the associated almost periodic homogenization problems. 

\subsection{Preliminaries}  We start by introducing some notation that will be used later.

Henceforth let $F$ be the operator in \eqref{E: level set PDE} satisfying either the assumptions of Theorem \ref{T: level set PDE} or Theorem \ref{T: quasilinear} with $F(p,X,y) = m(y,\hat{p})^{-1} \text{tr} (a(y,\hat{p}) \tilde{X}_{\hat{p}})$.  It will be convenient to define the family of shifted operators $\{F^{x}\}_{x \in \mathbb{T}^{d}}$ by 
	\begin{equation*}
		F^{x}(p,X,y) = F(p,X,y+x).
	\end{equation*}
The reader familiar with stochastic homogenization can think of the shift $x$ like an element $\omega$ of a probability space $(\Omega,\mathbb{P})$.  In our case, $\Omega = \mathbb{T}^{d}$ and $\mathbb{P} = \mathcal{L}^{d}$, although we will also have occasion to work with the surface area measures on the sub-tori $\{\mathbb{T}^{d - 1}_{e}(0)\}_{e \in S^{d-1} \cap \mathbb{R} \mathbb{Z}^{d}}$.  

To prove Proposition \ref{P: continuity in dimension 2} and Theorem \ref{T: limiting measures}, we start with the following preliminary result:

	\begin{prop} \label{P: preliminary but main}  Fix $e \in S^{d-1} \cap \mathbb{R} \mathbb{Z}^{d}$ and $\eta \in S^{d-1} \cap \langle e \rangle^{\perp}$.  There is a function $\gamma_{*}^{e,\eta} : \mathcal{S}_{d} \to \mathbb{R}$ such that if $(e_{n})_{n \in \mathbb{N}} \subseteq S^{d-1}$ satisfies
		\begin{equation*}
			\lim_{n \to \infty}\left( \|e_{n} - e\| + \left\| \frac{e_{n} - e}{\|e_{n} - e\|} + \eta \right\| \right)= 0.
		\end{equation*}
	then
	\begin{equation*}
		\lim_{n \to \infty} \sup \left\{ |F^{\perp}_{e_{n}}(X,s) + \gamma_{*}^{e,\eta}(X)| \, \mid \, s \in \mathbb{R} \right\} = 0.
	\end{equation*}  
	\end{prop}  

The main thrust of the section is the proof of this result.  For the rest of the section, fix such an $e$, $\eta$, and $(e_{n})_{n \in \mathbb{N}}$

By assumption, we can fix $(\eta_{n})_{n \in \mathbb{N}} \subseteq S^{d-1}$ and $(\theta_{n})_{n \in \mathbb{N}} \subseteq (-\pi,\pi]$ such that
	\begin{equation*}
		e_{n} = \cos(\theta_{n}) e - \sin(\theta_{n}) \eta_{n}.
	\end{equation*}
We will assume without loss of generality that $(\theta_{n})_{n \in \mathbb{N}} \subseteq (-\pi,\pi) \setminus \{0\}$.  

For each $n \in \mathbb{N}$, let $O_{n} : \mathbb{R}^{d} \to \mathbb{R}^{d}$ be the orthogonal transformation such that
	\begin{equation*}
		O_{n}(e_{n}) = e, \quad O_{n}(\sin(\theta_{n})e + \cos(\theta_{n})\eta_{n}) = \eta_{n}, \quad O_{n} \restriction_{\langle e,e_{n} \rangle^{\perp}} = \text{Id} \restriction_{\langle e, e_{n} \rangle^{\perp}}
	\end{equation*}
Notice that $\lim_{n \to \infty} O_{n} = \text{Id}$ and $O_{n}(\langle e_{n} \rangle^{\perp}) = \langle e \rangle^{\perp}$.  

Finally, we let $\{v_{1},\dots,v_{d-1}\}$ be an orthonormal basis of $\langle e \rangle^{\perp}$ and define, for each $R > 0$, the cube $Q^{*}_{R} \subseteq \langle e \rangle^{\perp}$ by 
	\begin{equation*}
		Q^{*}_{R} = \left\{ y \in \langle e \rangle^{\perp} \, \mid \, |\langle y,v_{1} \rangle| \vee \dots \vee |\langle y, v_{d-1}\rangle| < R/2 \right\}.
	\end{equation*}
Given $n \in \mathbb{N}$, we define the analogous cubes $\{Q^{(n)}_{R} \, \mid \, R > 0\}$ in $\langle e_{n} \rangle^{\perp}$ by $Q^{(n)}_{R} = O_{n}^{-1}(Q^{*}_{R})$.

\subsection{Obstacle Problems}  It is technically very convenient to replace the penalized correctors $(V^{\delta}_{n})_{(\delta,n)}$ of the previous discussion by solutions of a related family of obstacle problems.  In this section, we describe the set-up.

Given $n \in \mathbb{N}$, $\gamma \in \mathbb{R}$, $R > 0$, $X \in \mathcal{S}_{d}$, and $x \in \mathbb{T}^{d}$, we set $\nu = (n,\gamma,R,X)$ and define the obstacle sub- and supersolutions $u^{\nu,x}$ and $u_{\nu,x}$ as the solutions of the equations
	\begin{align}
		\left\{ \begin{array}{r l} \label{E: obstacle problem subsolution}
			\max \left\{ - F^{x}(e_{n}, \tilde{X}_{e_{n}} + D^{2}_{e_{n}} u^{\nu,x},\theta_{n}^{-1}y) - \gamma, u^{\nu,x} \right\} = 0 & \text{in} \, \, Q^{(n)}_{R}, \\
			u^{\nu,x} = 0 & \text{on} \, \, \partial Q^{(n)}_{R},
		\end{array} \right. \\
		\left\{ \begin{array}{r l} \label{E: obstacle problem supersolution}
			\min \left\{ - F^{x}(e_{n}, \tilde{X}_{e_{n}} + D^{2}_{e_{n}} u_{\nu,x},\theta_{n}^{-1}y) - \gamma, u_{\nu,x} \right\} = 0 & \text{in} \, \, Q^{(n)}_{R}, \\
			u_{\nu,x} = 0 & \text{on} \, \, \partial Q^{(n)}_{R}.
		\end{array} \right.
	\end{align}
Existence and uniqueness for these problems can be proved using a comparison principle or through a penalization procedure as in \cite{caffarelli souganidis wang}.  

When $\nu = (*,\gamma,R,X)$, $u^{\nu,x}$ and $u_{\nu,x}$ are the solutions of the homogenized problems
	\begin{align}
		\left\{ \begin{array}{r l} \label{E: obstacle problem subsolution homogenized}
			\max \left\{ - F_{e}^{\perp}(\tilde{X}_{e} + D^{2}_{e} u^{\nu,x}, \langle y, \eta \rangle e + \langle x, e \rangle e) - \gamma, u^{\nu,x} \right\} = 0 & \text{in} \, \, Q^{*}_{R}, \\
			u^{\nu,x} = 0 & \text{on} \, \, \partial Q^{*}_{R},
		\end{array} \right. \\
		\left\{ \begin{array}{r l} \label{E: obstacle problem supersolution homogenized}
			\min \left\{ - F_{e}^{\perp}(\tilde{X}_{e} + D^{2}_{e} u^{\nu,x}, \langle y, \eta \rangle e + \langle x, e \rangle e) - \gamma, u^{\nu,x} \right\} = 0 & \text{in} \, \, Q^{*}_{R}, \\
			u^{\nu,x} = 0 & \text{on} \, \, \partial Q^{*}_{R}.
		\end{array} \right.
	\end{align}
For these equations, well-posedness is clear in the context of Theorem \ref{T: level set PDE} since $\langle e \rangle^{\perp}$ is one-dimensional in this case.  Where Theorem \ref{T: quasilinear} is concerned, the regularity of $a^{\perp}_{e}$ and $m^{\perp}_{e}$ proved in Proposition \ref{P: rational correctors} is more than enough to guarantee well-posedness.

Next, for $n \in \mathbb{N} \cup \{*\}$, we define the contact sets $K^{\nu,x}$ and $K_{\nu,x}$ by 
	\begin{equation*}
		K^{\nu,x} = \{y \in Q^{(n)}_{R} \, \mid \, u^{\nu,x}(y) = 0\}, \quad K_{\nu,x} = \{y \in Q^{(n)}_{R} \, \mid \, u_{\nu,x}(y) = 0\}.
	\end{equation*}
	
By the results of \cite{caffarelli souganidis wang}, for each $n \in \mathbb{N} \cup \{*\}$, there are functions $\bar{\ell}^{\perp}_{n}, \underline{\ell}^{\perp}_{n} : \mathbb{R} \times \mathcal{S}_{d} \times \mathbb{T}^{d} \to [0,1]$ such that, for each $(\gamma,X) \in \mathbb{R} \times \mathcal{S}_{d}$, $\nu(R) = (n,\gamma,R,X)$, and $x \in \mathbb{T}^{d}$, we have
	\begin{align*}
		\bar{\ell}^{\perp}_{n}(\gamma,X,x) &= \lim_{R \to \infty} R^{1 -d} \mathcal{H}^{d-1}(K^{\nu_{n}(R),x}) \\
		\underline{\ell}^{\perp}_{n}(\gamma,X,x) &= \lim_{R \to \infty} R^{1-d} \mathcal{H}^{d-1}(K_{\nu_{n}(R),x}).
	\end{align*}
Notice that, by analogy with Theorem \ref{T: approximate correctors}, for each $n \in \mathbb{N}$, $\bar{\ell}^{\perp}_{n}(\gamma,X,\cdot)$ and $\underline{\ell}^{\perp}_{n}(\gamma,X,\cdot)$ vary only in the $e_{n}$ direction.

Next, note that these functions satisfy the following variational principles: if $e_{n} \in S^{d-1} \setminus \mathbb{R} \mathbb{Z}^{d}$, then the unique ergodicity of the associated group of translations (cf.\ \cite[Theorem 11]{pulsating einstein}) implies that, for each $x \in \mathbb{T}^{d}$,
	\begin{align}
		\bar{\ell}^{\perp}_{n}(\gamma,X,x) &= \inf \left\{ R^{1-d} \int_{\mathbb{T}^{d}} \mathcal{H}^{d-1}(K^{\nu_{n}(R),y}) \, dy \, \mid \, R > 0 \right\}, \label{E: variational principle subsolutions} \\
		\underline{\ell}_{n}(\gamma,X,x) &= \inf \left\{ R^{1-d} \int_{\mathbb{T}^{d}} \mathcal{H}^{d-1}(K_{\nu_{n}(R),y}) \, dy \, \mid \, R > 0 \right\}, \nonumber
	\end{align}
while the case $e_{n} \in \mathbb{R} \mathbb{Z}^{d}$ (cf.\ \cite[Proposition 53]{pulsating einstein}), for each $s \in [0,r_{e_{n}})$,
	\begin{align} 
		\overline{\ell}_{n}(\gamma,X, s e_{n}) &= \inf \left\{ R^{1-d} \fint_{\mathbb{T}^{d-1}_{e_{n}}(s)} \mathcal{H}^{d-1}(K^{\nu_{n}(R),\xi}) \, \mathcal{H}^{d-1}(d \xi) \, \mid \, R > 0 \right\}, \label{E: variational principle subsolutions rational} \\
		\underline{\ell}_{n}(\gamma,X, s e_{n}) &= \inf \left\{ R^{1-d} \fint_{\mathbb{T}^{d-1}_{e_{n}}(s)} \mathcal{H}^{d-1}(K_{\nu_{n}(R),\xi}) \, \mathcal{H}^{d-1}(d\xi) \, \mid \, R > 0 \right\}. \nonumber
	\end{align}
All of this follows from the sub-additive ergodic theorem (cf.\ \cite[Appendix A]{caffarelli souganidis wang}).

Finally, notice that, when $n = *$, we can argue as in the rational case.  Since the coefficients of the associated obstacle problems vary only in the $\eta$ direction, it follows that the integrals over $\mathbb{T}^{d-1}_{e}(s)$ so obtained do not depend on $s$.  Therefore, as in the irrational case, we find, for each $x \in \mathbb{T}^{d}$,
	\begin{align}
		\bar{\ell}^{\perp}_{*}(\gamma,X,x) &= \inf \left\{ R^{1-d} \int_{\mathbb{T}^{d}} \mathcal{H}^{d-1}(K^{\nu_{*}(R),y}) \, dy \, \mid \, R > 0 \right\}, \label{E: variational principle subsolutions *} \\
		\underline{\ell}^{\perp}_{*}(\gamma,X,x) &= \inf \left\{ R^{1-d} \int_{\mathbb{T}^{d}} \mathcal{H}^{d-1}(K_{\nu_{*}(R),y}) \, dy \, \mid \, R > 0 \right\}. \nonumber
	\end{align}	
\subsection{Homogenization of Obstacle Problems}  By a direct analogy to the discussion of the penalized correctors above, we now show that the obstacle problems \eqref{E: obstacle problem subsolution} and \eqref{E: obstacle problem supersolution} homogenize to the problems \eqref{E: obstacle problem subsolution homogenized} and \eqref{E: obstacle problem supersolution homogenized} as $n \to \infty$.  

	\begin{prop} \label{P: homogenization obstacle problem} Given $(\gamma,X,R) \in \mathbb{R} \times \mathcal{S}_{d} \times (0,\infty)$ and $x \in \mathbb{T}^{d}$, if $\nu_{n} = (n,\gamma,X,R)$ and $\nu_{*} = (*,\gamma,X,R)$, and if $(x_{n})_{n \in \mathbb{N}} \subseteq \mathbb{T}^{d}$ is such that $x= \lim_{n \to \infty} x_{n}$, then  
		\begin{align*}
			0 = \lim_{\delta \to 0^{+}} \sup \left\{ |u^{\nu_{n},x_{n}}(y_{1}) - u^{\nu_{*},x}(y_{2})| \, \mid \, (y_{1},y_{2}) \in Q^{(n)}_{R} \times Q^{*}_{R}, \, \, n^{-1} + \|y_{1} - y_{2}\| < \delta \right\}, \\
			0 = \lim_{\delta \to 0^{+}} \sup \left\{ |u_{\nu_{n},x_{n}}(y_{1}) - u_{\nu_{*},x}(y_{2})| \, \mid \, (y_{1},y_{2}) \in Q^{(n)}_{R} \times Q^{*}_{R}, \, \, n^{-1} + \|y_{1} - y_{2}\| < \delta \right\}.
		\end{align*}
	\end{prop}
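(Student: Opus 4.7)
The plan is to apply the standard three-step homogenization machinery for viscosity solutions of obstacle problems: uniform regularity, compactness, identification of the limit by a perturbed test function method, and closure by uniqueness of the limiting obstacle problem. For regularity, note that in both settings under consideration, the map $\tilde{Y} \mapsto -F^{x}(e_{n}, \tilde{X}_{e_{n}} + \tilde{Y}, \theta_{n}^{-1}y)$ is a uniformly $(\lambda,\Lambda)$-elliptic operator on the tangent hyperplane $\langle e_{n}\rangle^{\perp}$. Paraboloidal barriers yield uniform $L^{\infty}$ and Lipschitz bounds for $u^{\nu_{n},x}$ and $u_{\nu_{n},x}$, independent of $n$, and Krylov--Safonov upgrades these to a uniform $C^{0,\alpha}$ bound. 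After pulling back via $O_{n}$ to the fixed cube $Q^{*}_{R}$, Arzel\`{a}--Ascoli extracts a subsequential uniform limit $u^{*}$.

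Next I would identify $u^{*}$ as a solution of \eqref{E: obstacle problem subsolution homogenized} or \eqref{E: obstacle problem supersolution homogenized}, focusing on the subsolution case. The bounds $u^{*} \leq 0$ on $\overline{Q^{*}_{R}}$ and $u^{*} = 0$ on $\partial Q^{*}_{R}$ pass to the limit directly, the latter via standard boundary barriers. Away from the contact set, I argue by contradiction using the perturbed test function method. Suppose $\psi$ is smooth and $u^{*} - \psi$ attains a strict local maximum at $y_{0} \in Q^{*}_{R}$ with $u^{*}(y_{0}) < 0$, and that
$$
- F^{\perp}_{e}\bigl(\tilde{X}_{e} + D^{2}_{e}\psi(y_{0}),\, s_{0}e\bigr) - \mu \geq 2\varepsilon, \qquad s_{0} = \langle y_{0},\eta\rangle + \langle x,e\rangle.
$$
Apply Proposition \ref{P: rational correctors} (quasilinear case) or the approximate-corrector construction of Theorem \ref{T: approximate correctors} (Theorem \ref{T: level set PDE} setting, where $\langle e\rangle^{\perp}$ is one-dimensional) to the Hessian $\tilde{X}_{e} + D^{2}_{e}\psi(y_{0})$ to obtain $V \in C^{2}(\mathbb{T}^{d})$ with
$$
\bigl| F\bigl(e, \tilde{X}_{e} + D^{2}_{e}\psi(y_{0}) + D^{2}_{e}V(z),\, z\bigr) - F^{\perp}_{e}\bigl(\tilde{X}_{e} + D^{2}_{e}\psi(y_{0}),\, \langle z,e\rangle e\bigr) \bigr| \leq \varepsilon.
$$
Form the perturbed test function $\psi_{n}(y) = \psi(O_{n}y) + \theta_{n}^{2} V(\theta_{n}^{-1}y + x)$ and let $y_{n}$ be a contact point of $u^{\nu_{n},x} - \psi_{n}$ near $y_{0}$. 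The viscosity inequality at $y_{n}$, together with $e_{n} \to e$, $O_{n} \to \mathrm{Id}$, and the key identity $\theta_{n}^{-1}\langle y_{n}, e\rangle \to \langle y_{0},\eta\rangle$ (which follows from $\langle y_{n}, e_{n}\rangle = 0$ and $e_{n} = \cos\theta_{n}\, e - \sin\theta_{n}\, \eta_{n}$), yields $-F^{\perp}_{e}(\tilde{X}_{e} + D^{2}_{e}\psi(y_{0}), s_{0}e) \leq \mu + \varepsilon + o(1)$, contradicting the assumed slack. The supersolution property for $u_{\nu_{n},x}$ is entirely parallel.

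A standard comparison principle for the limiting uniformly elliptic obstacle problems---whose coefficients $a^{\perp}_{e}$ and $m^{\perp}_{e}$ are $C^{2,\alpha}$ by Proposition \ref{P: rational correctors}---gives uniqueness, so every subsequential limit coincides with the unique solution of \eqref{E: obstacle problem subsolution homogenized} (resp.\ \eqref{E: obstacle problem supersolution homogenized}). This promotes subsequential to full uniform convergence. The main technical hurdle is the correct decomposition of the fast variable $\theta_{n}^{-1}y$ into a slow part, namely its projection onto $\langle e\rangle$, which converges on bounded sets to the macroscopic coordinate $\langle y,\eta\rangle$ because $y \in \langle e_{n}\rangle^{\perp}$ tilts away from $\langle e\rangle^{\perp}$ by $O(\theta_{n})$, and an actually fast part in $\langle e\rangle^{\perp}$ that equidistributes at scale $\theta_{n}$ and is averaged by the corrector. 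Verifying that the $O(\theta_{n})$ discrepancy between $\langle e_{n}\rangle^{\perp}$ and $\langle e\rangle^{\perp}$ contributes only $o(1)$ errors to the viscosity inequality, combined with the freezing of $D^{2}_{e}\psi$ at $y_{0}$ and the continuity of $F^{\perp}_{e}$ in its first argument, constitutes the bulk of the work.
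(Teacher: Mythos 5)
Your argument is correct and is essentially the paper's own proof: the identification step --- a perturbed test function built from the corrector of Proposition \ref{P: rational correctors} (resp.\ Theorem \ref{T: approximate correctors}), the tilt computation $\theta_{n}^{-1}\langle y_{n},e\rangle \to \langle y_{0},\eta\rangle$ for contact points $y_{n}\in\langle e_{n}\rangle^{\perp}$, boundary barriers, and comparison for the limiting obstacle problems \eqref{E: obstacle problem subsolution homogenized}--\eqref{E: obstacle problem supersolution homogenized} --- is exactly what the paper does. The only deviation is the compactness mechanism: the paper works with half-relaxed limits and so needs no equicontinuity at all, whereas you invoke uniform Lipschitz bounds, which are not in fact available uniformly in $n$ (the coefficients oscillate at scale $\theta_{n}$); this is harmless, since the uniform interior H\"older estimate from Krylov--Safonov together with the boundary barriers already yields the Arzel\`a--Ascoli compactness your argument requires.
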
  
	
		\begin{proof}  To prove this, we will work with half-relaxed limits.  The proof for supersolutions follows by analogous arguments so we will restrict attention to subsolutions.
		
		Define $\bar{u}^{*}$ and $\bar{u}_{*}$ in $\overline{Q^{*}_{R}}$ by 
			\begin{align*}
				\bar{u}^{*}(y) &= \lim_{\delta \to 0^{+}} \sup \left\{ u^{\nu_{n},x_{n}}(y') \, \mid \, y' \in Q^{(n)}_{R}, \, \, n^{-1} + \|y' - y\| < \delta \right\}, \\
				\bar{u}_{*}(y) &= \lim_{\delta \to 0^{+}} \inf \left\{ u^{\nu_{n},x_{n}}(y') \, \mid \, y' \in Q^{(n)}_{R}, \, \, n^{-1} + \|y' - y\| < \delta \right\}.
			\end{align*}
		It suffices to show that $\bar{u}^{*} = \bar{u}_{*} = u^{\nu_{*},x}$.  To do this, we only need to prove that $\bar{u}^{*}$ and $\bar{u}_{*}$ are sub- and supersolution of \eqref{E: obstacle problem subsolution homogenized} and apply the comparison principle.  Since the proof for $\bar{u}^{*}$ is almost identical, we will restrict attention to $\bar{u}_{*}$.  
		
		Assume that $\varphi : \mathbb{R}^{d} \to \mathbb{R}$ is a smooth function and $y_{0} \in Q^{*}_{R}$ is a point where $\bar{u}_{*} - \varphi$ has a strict local minimum.  We claim that
			\begin{equation} \label{E: need to prove homogenization}
				\max \left\{ - F^{\perp}_{e}(\tilde{X}_{e} + D^{2}_{e} \varphi(y_{0}), - \langle y_{0}, \eta \rangle e + \langle x,e \rangle e) - \gamma, \bar{u}_{*}(y_{0}) \right\} \geq 0.
			\end{equation}
		Clearly, we can assume that $\bar{u}_{*}(y_{0}) < 0$.  
		
		Let us argue by contradiction.  If \eqref{E: need to prove homogenization} fails to hold, then there is a $\zeta > 0$ such that
			\begin{equation} \label{E: homogenization by contradiction obstacle}
				- F^{\perp}_{e}(\tilde{X}_{e} + D^{2}_{e} \varphi(y_{0}), \langle y_{0}, \eta \rangle e + \langle x,e \rangle e) - \gamma < - \zeta.
			\end{equation}
		
		Let $V \in C^{2}(\mathbb{T}^{d})$ be an approximate corrector satisfying the equation
			\begin{equation*}
				- \zeta \leq F^{\perp}_{e}(\tilde{X}_{e} + D^{2}_{e} \varphi(y_{0}), \langle y,e \rangle e + \langle x,e \rangle e) - F^{x}(e,\tilde{X}_{e} + D^{2}_{e} \varphi(y_{0}) + D^{2}_{e} V, y) \leq \zeta \quad \text{in} \, \, \mathbb{T}^{d}.
			\end{equation*}
		For each $n \in \mathbb{N}$, let $y_{n} \in \overline{Q^{(n)}_{R}}$ be a point where the function $y \mapsto u^{\nu_{n},x_{n}}(y) - \varphi(y) - \theta_{n}^{2} V(\theta_{n}^{-1} y)$ attains its minimum in $\overline{Q_{R}^{(n)}}$.  By classical arguments, we can pass to a subsequence $(y_{n_{j}})_{j \in \mathbb{N}}$ such that 
			\begin{equation*}
				\lim_{j \to \infty} y_{n_{j}} = y_{0}, \quad \lim_{j \to \infty} u^{\nu_{n_{j}},x_{n_{j}}}(y_{n_{j}}) = \bar{u}_{*}(y_{0}) < 0.
			\end{equation*}
		
		There is no loss of generality in assuming that both $y_{n_{j}} \in Q^{(n_{j})}_{R}$ and $u^{\nu_{n_{j}},x_{n_{j}}}(y_{n_{j}}) < 0$ for all $j \in \mathbb{N}$.  Thus, the equation satisfied by $u^{\nu_{n_{j}},x_{n_{j}}}$ gives
			\begin{align*}
				0 &\leq - F^{x_{n_{j}}}(e_{n_{j}}, \tilde{X}_{e_{n_{j}}} + D^{2}_{e_{n_{j}}} \varphi(y_{n_{j}}) + D^{2}_{e_{n_{j}}} V(\theta_{n_{j}}^{-1} y_{n_{j}}), \theta_{n_{j}}^{-1} y_{n_{j}}) - \gamma \\
				&= - F^{x}(e, \tilde{X}_{e} + D^{2}_{e} \varphi(y_{0}) + D^{2}_{e} V(\theta_{n_{j}}^{-1} y_{n_{j}}), \theta_{n_{j}}^{-1} y_{n_{j}}) - \gamma + o(1) \\
				&\leq - F^{\perp}_{e}(\tilde{X}_{e} + D^{2}_{e} \varphi(y_{0}), \theta_{n_{j}}^{-1} \langle y_{n_{j}}, e \rangle e + \langle x,e \rangle e) - \gamma + \zeta +  o(1).
			\end{align*}
		Since $y_{n_{j}} \in \langle e_{n_{j}} \rangle^{\perp}$, we have
			\begin{align*}
				\theta_{n_{j}}^{-1} \langle y_{n_{j}}, e \rangle &= \theta_{n_{j}}^{-1} \langle y_{n_{j}}, e - e_{n_{j}} \rangle = \left(\frac{1 - \cos(\theta_{n_{j}})}{\theta_{n_{j}}}\right) \langle e, y_{n_{j}} \rangle + \left(\frac{\sin(\theta_{n_{j}})}{\theta_{n_{j}}}\right) \langle \eta_{n_{j}}, y_{n_{j}} \rangle.
			\end{align*}
		Thus, $\theta_{n_{j}}^{-1} \langle y_{n_{j}}, e \rangle \to \langle y_{0}, \eta \rangle$ and we find, in the limit $n \to \infty$,
			\begin{equation*}
				-\zeta \leq - F^{\perp}_{e}(\tilde{X}_{e} + D^{2}_{e} \varphi(y_{0}), \langle y_{0}, \eta \rangle e + \langle x,e \rangle e) - \gamma.
			\end{equation*}
		This directly contradicts \eqref{E: homogenization by contradiction obstacle}. 
		
		We deduce that $\bar{u}_{*}$ is a supersolution of \eqref{E: obstacle problem subsolution homogenized} in the interior of $Q^{*}_{R}$.  Using barriers, it is not hard to show that $\bar{u}_{*} \geq 0$ in $\partial Q^{*}_{R}$.  Thus, $\bar{u}_{*}$ is a supersolution.  \end{proof}  

\subsection{Densities of Contact Sets}  Once we state some properties of the densities $\overline{\ell}_{n}^{\perp}$ and $\underline{\ell}_{n}^{\perp}$, we will have all the tools necessary to prove Proposition \ref{P: continuity in dimension 2} and Theorem \ref{T: limiting measures}.  

The following result follows by arguing as in \cite{caffarelli souganidis wang} (also see \cite{armstrong smart}).  In the statement, we write $e_{*} = e$ and assume we are in case (i).  

	\begin{prop} \label{P: densities} For each $(X,x) \in \mathcal{S}_{d} \times \mathbb{T}^{d}$, there is a sequence $(\gamma_{n}(X,x))_{n \in \mathbb{N} \cup \{*\}} \subseteq \mathbb{R}$ such that, for each $n \in \mathbb{N} \cup \{*\}$, the following statements hold:
		\begin{itemize}
			\item[(i)] $\bar{\ell}^{\perp}_{n}(\gamma,X,x) = 0$ if $\gamma < \gamma_{n}(X,x)$ and $\bar{\ell}^{\perp}_{n}(\gamma,X,x) \geq c |\gamma - \gamma_{n}|^{d - 1}$ if $\gamma > \gamma_{n}(X,x)$.
			\item[(ii)] $\underline{\ell}^{\perp}_{n}(\gamma,X,x) = 0$ if $\gamma > \gamma_{n}(X,x)$ and $\underline{\ell}^{\perp}_{n}(\gamma,X,x) \geq c|\gamma - \gamma_{n}|^{d - 1}$ if $\gamma < \gamma_{n}(X,x)$.
			\item[(iii)] If $n \in \mathbb{N} \cup \{*\}$ and, for each $\nu = (n,\mu,X,R)$, $v^{\nu,x}$ is the solution of the Dirichlet problem
				\begin{equation*}
					\left\{ \begin{array}{r l}
							- F^{x}(e_{n}, \tilde{X}_{e_{n}} + D^{2}_{e_{n}} v^{\nu,x}, y) = \gamma_{n}(X,x) & \text{in} \, \, Q^{(n)}_{R}, \\
							v^{\nu,x} = 0 & \text{on} \, \, \partial Q^{(n)}_{R},
						\end{array} \right.
				\end{equation*} 
			then 
				\begin{equation*}
					\lim_{R \to \infty} \sup \left\{ R^{-2} |v^{\nu_{n}(R),x}(y)| \, \mid \, y \in Q^{(n)}_{R} \right\} = 0.
				\end{equation*}
		\end{itemize}
	(The constant $c > 0$ depends on $\lambda$, $\Lambda$, and $d$, but not on $n$.) 		\end{prop}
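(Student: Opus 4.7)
The plan is to follow the obstacle-problem framework of \cite{caffarelli souganidis wang}, adapted to the one-parameter family of degenerate uniformly elliptic problems indexed by $n \in \mathbb{N} \cup \{*\}$.  The first step is to define the candidate thresholds
\begin{equation*}
\mu_n^+(X) := \inf\{\mu \in \mathbb{R} : \overline{\ell}_n(\mu, X) > 0\}, \qquad \mu_n^-(X) := \sup\{\mu \in \mathbb{R} : \underline{\ell}_n(\mu, X) > 0\}.
\end{equation*}
The comparison principle for the obstacle problems \eqref{E: obstacle problem subsolution} and \eqref{E: obstacle problem supersolution} gives that $\mu \mapsto u^{\nu,x}$ is pointwise non-increasing and $\mu \mapsto u_{\nu,x}$ is pointwise non-decreasing, so the contact sets are monotone in $\mu$, and the variational characterization \eqref{E: variational principle subsolutions} yields $\overline{\ell}_n(\mu,X) = 0$ for $\mu < \mu_n^+(X)$ (and the analogous vanishing for $\underline{\ell}_n$).

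The central step is to show $\mu_n^+(X) = \mu_n^-(X) =: \mu_n(X)$.  Arguing by contradiction, if $\mu \in (\mu_n^-(X), \mu_n^+(X))$, then both families $(u^{\nu(R),x})_{R > 0}$ and $(u_{\nu(R),x})_{R > 0}$ have contact sets of vanishing relative $\mathcal{H}^{d-1}$-density in $Q^{(n)}_R$, and off these contact sets the functions are classical solutions of $-F^x(e_n, \tilde{X}_{e_n} + D^2_{e_n} w, \theta_n^{-1} y) = \mu$.  Combining the ABP inequality on the $(d-1)$-dimensional cubes $Q^{(n)}_R$ with a Vitali-type covering argument over $\mathbb{T}^d$ (exactly as in \cite{caffarelli souganidis wang}), one extracts for some shift $x$ a globally defined sub-linear solution of this equation for two distinct values of $\mu$, contradicting the uniqueness of the ergodic constant established in Theorem \ref{T: cell problem ergodic constant}.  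The uniformity of the ABP constants in $n$ is automatic because the ellipticity bounds $\lambda, \Lambda$ are preserved by the orthogonal changes of coordinates $O_n$ and the spatial rescaling by $\theta_n$.

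With the common threshold $\mu_n(X)$ in hand, the lower bound in (i) follows by comparing $u^{\nu(R),x}$ to $v^{\nu(R),x}$ from (iii): on the complement of the contact set, the difference $u^{\nu(R),x} - v^{\nu(R),x}$ is a Pucci sub-solution with forcing of size $\mu - \mu_n(X)$, so the ABP inequality yields a lower bound on $\mathcal{H}^{d-1}(K^{\nu(R),x})/R^{d-1}$ polynomial in $|\mu - \mu_n(X)|$.  Averaging over $x \in \mathbb{T}^d$ via the variational principle yields the stated growth, and (ii) follows by the symmetric argument.  Part (iii) is then established by a sandwich: for each $\epsilon > 0$, the comparison principle applied to the obstacle problems gives that $v^{\nu(R),x}$ is trapped between $u^{\nu(R),x}(\mu_n + \epsilon)$ and $u_{\nu(R),x}(\mu_n - \epsilon)$ (up to sign), and the vanishing of the densities as $\mu \to \mu_n(X)^{\pm}$ forces $R^{-2}\|v^{\nu(R),x}\|_{L^\infty} \to 0$ as $R \to \infty$.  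The dominant technical obstacle is the equality of the two thresholds in the second step; the remaining estimates are routine consequences of ABP and the variational principle once the well-definedness of $\mu_n(X)$ is secured.
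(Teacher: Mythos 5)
Your proposal follows the same route the paper intends: its proof of Proposition \ref{P: densities} is simply an appeal to the obstacle-problem machinery of \cite{caffarelli souganidis wang} (see also \cite{armstrong smart}), which is exactly what you reconstruct — thresholds defined from the monotone contact-set densities, their coincidence forced by uniqueness of the ergodic constant, and ABP transfer between the obstacle and Dirichlet problems to get the polynomial lower bounds and part (iii). Apart from minor slips (the obstacle subsolution $u^{\nu,x}$ is non-decreasing in $\mu$, not non-increasing, and the global solutions extracted in the contradiction step should be subquadratic rather than sub-linear), this matches the intended argument.
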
  
	
%

In addition, we will need the following fact, adapted from \cite{caffarelli souganidis wang}, that follows from the homogenization result above:

	\begin{prop} \label{P: upper semicontinuous} For each $(\gamma,X,x) \in \mathbb{R} \times \mathcal{S}_{d} \times \mathbb{T}^{d}$, we have
		\begin{align*}
			\bar{\ell}^{\perp}_{*}(\gamma,X,x) &\geq \lim_{n \to \infty} \sup \left\{ \bar{\ell}_{n}^{\perp}(\gamma,X,x') \, \mid \, x' \in \mathbb{T}^{d} \right\}, \\
			\underline{\ell}_{*}^{\perp}(\gamma,X,x) &\geq \lim_{n \to \infty} \sup \left\{ \underline{\ell}^{\perp}_{n}(\mu,X,x') \, \mid \, x' \in \mathbb{T}^{d} \right\}.
		\end{align*}
	\end{prop}  
	
The proof uses the upper semi-continuity properties of the contact sets $\{K^{\nu,x}\}$.  When $(e_{n})_{n \in \mathbb{N}} \subseteq S^{d-1} \setminus \mathbb{R} \mathbb{Z}^{d}$, this can be combined with Fatou's Lemma by adapting the idea of \cite{caffarelli souganidis wang} directly to our setting using Proposition \ref{P: homogenization obstacle problem}.  

When $(e_{n})_{n \in \mathbb{N}} \subseteq \mathbb{R} \mathbb{Z}^{d}$, the situation is more delicate since the probability measures in the variational principle \eqref{E: variational principle subsolutions rational} depend on $n$.  As pointed out by W.M. Feldman, the same argument still applies if we replace Fatou's Lemma by the generalization due to Feinberg, Kasyanov, and Zadoianchuk \cite{fatou general}.  
	
		\begin{proof}  We give the details for $(\bar{\ell}^{\perp}_{n})_{n \in \mathbb{N}}$; the same basic idea also applies to $(\underline{\ell}^{\perp}_{n})_{n \in \mathbb{N}}$.  
		
		\textbf{Step 1: Property of Contact Sets}  
		
		Given $(\mu,X,R) \in \mathbb{R} \times \mathcal{S}_{d} \times (0,\infty)$ and $x \in \mathbb{T}^{d}$, let $\nu_{n} = (n,\mu,X,R)$ and $\nu_{*} = (*,\mu,X,R)$, and assume that $(x_{n})_{n \in \mathbb{N}} \subseteq \mathbb{T}^{d}$ satisfies $\lim_{n \to \infty} x_{n} = x$.  We claim that
			\begin{equation} \label{E: lower semicontinuity contact sets}
				\mathcal{H}^{d-1}(K^{\nu_{*},x}) \geq \limsup_{n \to \infty} \mathcal{H}^{d-1}(K^{\nu_{n},x_{n}}).
			\end{equation}
		To see this, first, define $\tilde{K} \subseteq Q^{*}_{R}$ by 
			\begin{equation*}
				\tilde{K} = \bigcap_{n = 1}^{\infty} \bigcup_{m = n}^{\infty} \left\{y \in Q^{*}_{R} \, \mid \, u^{\nu_{m},x_{m}}(O_{m}^{-1}(y)) = 0 \right\}.
			\end{equation*}
		Notice that Proposition \ref{P: homogenization obstacle problem} implies that $\tilde{K} \subseteq K^{\nu_{*},x}$.  Therefore, using the measure preserving property of the orthogonal transformations $\{O_{m}\}_{m \in \mathbb{N}}$, we find
			\begin{align*}
				\mathcal{H}^{d-1}(K^{\nu_{*},x}) &\geq \mathcal{H}^{d-1}(\tilde{K}) \\
					&= \lim_{n \to \infty} \mathcal{H}^{d-1} \left( \bigcup_{m = n}^{\infty} \left\{y \in Q^{*}_{R} \, \mid \, u^{\nu_{m},x_{m}}(O_{m}^{-1}(y)) = 0 \right\} \right) \\
					&\geq \limsup_{n \to \infty} \mathcal{H}^{d-1}(K^{\nu_{n},x_{n}}).
			\end{align*}
			
		\textbf{Step 2: Convenient Reformulation}  
		
		The result of the previous step can be reformulated slightly.  Define Borel functions $\{f_{n}\}_{n \in \mathbb{N}}$ and $f_{*}$ in $\mathbb{T}^{d}$ by 
			\begin{equation*}
				f_{n}(x) = \mathcal{H}^{d-1}(K^{\nu_{n},x}), \quad f_{*}(x) = \mathcal{H}^{d-1}(K^{\nu_{*},x}).
			\end{equation*}
		By Step 1, for each $x \in \mathbb{T}^{d}$, we have
			\begin{equation} \label{E: pure upper semicontinuity}
				\lim_{\delta \to 0^{+}} \sup \left\{ f_{n}(y) \, \mid \, n^{-1} + |x - y| < \delta \right\} \leq f_{*}(x).
			\end{equation}
			
		\textbf{Step 2: Conclusion}  
		
		We conclude the proof by considering two cases: (i) $(e_{n})_{n \in \mathbb{N}} \subseteq S^{d-1} \setminus \mathbb{R} \mathbb{Z}^{d}$ and (ii) $(e_{n})_{n \in \mathbb{N}} \subseteq \mathbb{R} \mathbb{Z}^{d}$.  Note that there is no loss of generality in assuming that either (i) or (ii) holds since we can always pass to subsequences if necessary.  
		
		In either case, we start by fixing an $\epsilon > 0$.  By invoking \eqref{E: variational principle subsolutions *}, we can fix an $R > 0$ such that
			\begin{equation*}
				\bar{\ell}^{\perp}_{*}(\gamma,X,x) \geq R^{1-d} \int_{\mathbb{T}^{d}} \mathcal{H}^{d-1}(K^{\nu_{*},x}) \, dx - \epsilon.
			\end{equation*}
		Now we consider cases (i) and (ii) separately.  
		
		In case (i), Fatou's Lemma, the conclusion of Step 1, and \eqref{E: variational principle subsolutions} combine to give
			\begin{align*}
				\int_{\mathbb{T}^{d}} \mathcal{H}^{d-1}(K^{\nu_{*},y}) \, dy \geq \limsup_{n \to \infty} \int_{\mathbb{T}^{d}} \mathcal{H}^{d-1}(K^{\nu_{n},y}) \, dy \geq \limsup_{n \to \infty} R^{d-1} \sup \left\{ \overline{\ell}_{n}(\gamma,X,x') \, \mid \, x' \in \mathbb{T}^{d} \right\}.
			\end{align*}
		Thus,
			\begin{equation*}
				\overline{\ell}_{*}(\gamma,X) \geq \limsup_{n \to \infty} \sup \left\{ \overline{\ell}_{n}(\gamma,X,x') \, \mid \, x' \in \mathbb{T}^{d} \right\} - \epsilon.
			\end{equation*}
		
		In case (ii), we can combine \eqref{E: variational principle subsolutions rational}, \eqref{E: pure upper semicontinuity}, Lemma \ref{L: equidistribution}, and the generalization of Fatou's Lemma in \cite[Theorem 1.1]{fatou general} to find, for each $(s_{n})_{n \in \mathbb{N}} \subseteq [0,\infty)$,
			\begin{align*}
				\int_{\mathbb{T}^{d}} \mathcal{H}^{d-1}(K^{\nu_{*},y}) \, dy &\geq \limsup_{n \to \infty} \fint_{\mathbb{T}^{d - 1}_{e_{n}}(s_{n})} \mathcal{H}^{d-1}(K^{\nu_{n},\xi}) \, \mathcal{H}^{d-1}(d \xi) \\
				&\geq \limsup_{n \to \infty} R^{d-1} \overline{\ell}_{n}(\gamma,X,s_{n}e_{n}).
			\end{align*}
		Since $(s_{n})_{n \in \mathbb{N}}$ was arbitrary, we conclude
			\begin{equation*}
				\overline{\ell}_{*}(\gamma,X) \geq \limsup_{n \to \infty} \sup \left\{ \overline{\ell}_{n}(\gamma,X,x') \, \mid \, x' \in \mathbb{T}^{d} \right\} - \epsilon.
			\end{equation*}
			
		In any case, the arbitrariness of $\epsilon > 0$ gives the desired result. 
		\end{proof}  
		
Finally, combining Propositions \ref{P: densities} and \ref{P: upper semicontinuous}, we obtain
	
	\begin{prop} \label{P: limit of mu} For each $(X,x) \in \mathcal{S}_{d} \times \mathbb{T}^{d}$, we have
		\begin{equation*}
			\lim_{n \to \infty} \sup \left\{ |\gamma_{n}(X,x') - \gamma_{*}(X,x)| \, \mid \, x' \in \mathbb{T}^{d} \right\} = 0.
		\end{equation*}  \end{prop} 
	
		\begin{proof}  Note that it is enough to prove the following two inequalities:
			\begin{align*}
				\gamma_{*}(X,x) &\geq \limsup_{n \to \infty} \sup \left\{ \mu_{n}(X,x') \, \mid \, x' \in \mathbb{T}^{d} \right\}, \\
				\gamma_{*}(X,x) &\leq \liminf_{n \to \infty} \inf \left\{ \mu_{n}(X,x') \, \mid \, x' \in \mathbb{T}^{d} \right\}.
			\end{align*}
		We will only prove the first inequality since the second one follows by a similar argument in which $(\bar{\ell}^{\perp}_{n})_{n \in \mathbb{N}}$ replaces $(\underline{\ell}^{\perp}_{n})_{n \in \mathbb{N}}$.
		
		Choose a sequence $(x_{n})_{n \in \mathbb{N}} \subseteq \mathbb{T}^{d}$ such that
			\begin{equation*}
				\limsup_{n \to \infty} \sup \left\{ \gamma_{n}(X,x') \, \mid \, x \in \mathbb{T}^{d} \right\} = \lim_{n \to \infty} \gamma_{n}(X,x_{n}).
			\end{equation*}
		To obtain the desired result, we will show that if $\gamma < \lim_{n \to \infty} \gamma_{n}(X,x_{n})$, then $\gamma < \gamma_{*}(X,x)$.

		To see this, suppose that $\gamma < \limsup_{n \to \infty} \gamma_{n}(X,x_{n})$.  By Proposition \ref{P: densities},
			\begin{equation*}
				\underline{\ell}^{\perp}_{*}(\gamma,X,x) \geq \limsup_{n \to \infty} \underline{\ell}^{\perp}_{n}(\gamma,X,x_{n}) \geq c \limsup_{n \to \infty} (\gamma_{n} - \gamma)^{d - 1} > 0.
			\end{equation*}
		From this, the same proposition with $n = *$ yields $\gamma < \gamma_{*}(X,x)$. \end{proof}   
		
\subsection{Proof of Proposition \ref{P: preliminary but main}}  The results of the previous section directly imply Proposition \ref{P: preliminary but main}, as we now show. 

	To start with, we note that the functions $(\gamma_{n})_{n \in \mathbb{N}}$ of Proposition \ref{P: densities} are precisely the oscillating functions $(F^{\perp}_{e_{n}})_{n \in \mathbb{N}}$.  

	\begin{lemma} \label{L: homogenization and density}  For each $n \in \mathbb{N}$ and $(X,x) \in \mathcal{S}_{d} \times \mathbb{T}^{d}$, the following identity holds:
		\begin{equation*}
			\gamma_{n}(X,x) = - F^{\perp}_{e_{n}}(X,x).
		\end{equation*}
	\end{lemma}  
	
		\begin{proof}  Fix $n \in \mathbb{N}$ and $(X,x) \in \mathcal{S}_{d} \times \mathbb{T}^{d}$.  By Proposition \ref{P: densities}, if we let $(v_{\epsilon})_{\epsilon > 0}$ be the solutions of the Dirichlet problem
			\begin{equation*}
				\left\{ \begin{array}{r l}
					- F^{x}(e_{n},\tilde{X}_{e_{n}} + D^{2}_{e_{n}} v_{\epsilon}, \epsilon^{-1} x') = \mu_{n}(X) & \text{in} \, \, Q^{(n)}_{1}, \\
					v_{\epsilon} = 0 & \text{on} \, \, \partial Q^{(n)}_{1},
				\end{array} \right.
			\end{equation*}
		then $v_{\epsilon} \to 0$ uniformly in $Q^{(n)}_{1}$ as $\epsilon \to 0^{+}$.  
		
		At the same time, this is an almost periodic elliptic homogenization problem in $\langle e_{n} \rangle^{\perp}$ and we know (e.g.\ using the approximate correctors of Theorem \ref{T: approximate correctors}) that $v_{\epsilon} \to \bar{v}$, where $\bar{v}$ is the solution of the constant coefficient equation
			\begin{equation*}
				\left\{ \begin{array}{r l}
					- F^{\perp}_{e_{n}}(\tilde{X}_{e_{n}} + D^{2}_{e_{n}} \bar{v}, x) = \gamma_{n}(X,x) & \text{in} \, \, Q^{(n)}_{1} \\
					\bar{v} = 0 & \text{on} \, \, \partial Q^{(n)}_{1}
				\end{array} \right.
			\end{equation*}
		The previous paragraph says that $\bar{v} \equiv 0$ in $Q^{(n)}_{1}$.  Therefore, 
			\begin{equation*}
				\gamma_{n}(X,x) = - F^{\perp}_{e_{n}}(\tilde{X}_{e_{n}},x).
			\end{equation*}
		\end{proof}  
		
	Now Proposition \ref{P: preliminary but main} follows by combining Lemma \ref{L: homogenization and density} with Proposition \ref{P: limit of mu}.
	
	Before moving on to the proofs of Proposition \ref{P: continuity in dimension 2} and Theorem \ref{T: limiting measures}, let us identify the function $\gamma_{*}$.  We begin with the setting of Theorem \ref{T: level set PDE}:
		
		\begin{lemma} \label{L: two d resolution}  If $d = 2$, then, for each $x \in \mathbb{T}^{d}$, the function $-\gamma_{*}(\cdot,x) : \mathcal{S}_{d} \to \mathbb{R}$ is the homogenized coefficient associated with the (e.g.\ Dirichlet) elliptic homogenization problem:
			\begin{equation} \label{E: one dimensional problem}
				\left\{ \begin{array}{r l}
					U - F^{\perp}_{e}(D^{2}_{e} U, \epsilon^{-1} \langle x',e^{\perp} \rangle e) = 0 & \text{in} \, \, Q^{*}_{1} \\ 
					U = G & \text{on} \, \, \partial Q^{*}_{1}
				\end{array} \right.
			\end{equation}
		Here $e^{\perp}$ is any unit vector with $\langle e^{\perp}, e \rangle = 0$.  In particular, $\gamma_{*}$ is independent of $\eta$.  \end{lemma}

			\begin{proof}  Fix $e^{\perp} \in S^{1} \cap \langle e \rangle^{\perp}$ and note that $S^{1} \cap \langle e \rangle^{\perp} = \{e^{\perp},-e^{\perp}\}$.
			
			Arguing as in Lemma \ref{L: homogenization and density} and replacing $e^{\perp}$ by $-e^{\perp}$ if necessary in \eqref{E: one dimensional problem}, we see that $-\mu_{*}(X)$ is the homogenized coefficient associated with \eqref{E: one dimensional problem}.  At the same time, a direct manipulation of \eqref{E: one dimensional problem} shows this coefficient is unchanged if $e^{\perp}$ is replaced by $-e^{\perp}$.  Therefore, $-\mu_{*}(X)$ does not depend on $\eta$. \end{proof}  
			
The quasi-linear problem can be treated in the same way.  However, as soon as $d \geq 3$, the limiting coefficient $\mu_{*}$ has a non-trivial dependence on the direction $\eta$.
		
\begin{lemma} \label{L: limiting measure equation} If $m$ and $a$ satisfy the assumptions of Theorem \ref{T: quasilinear} and $F(p,X,y) = m(y,\hat{p})^{-1} \text{tr}(a(y,\hat{p}) \tilde{X}_{\hat{p}})$, then, for each $x \in \mathbb{T}^{d}$,
	\begin{equation} \label{E: limiting coefficient}
		- \gamma_{*}(X,x) = (\tilde{m}_{e}^{\eta})^{-1} \text{tr}(\tilde{a}^{\eta}_{e} \tilde{X}_{e}),
	\end{equation}
where $\tilde{a}^{\eta}_{e}$ and $\tilde{m}^{\eta}_{e}$ are given by 
	\begin{equation*}
		\tilde{a}^{\eta}_{e} = \int_{\mathbb{T}^{d}} a(y,e) \, \tilde{\mu}_{e}^{\eta}(dy),\quad \tilde{m}^{\eta}_{e} = \int_{\mathbb{T}^{d}} m(y,e) \, \tilde{\mu}_{e}^{\eta}(dy).
	\end{equation*}
(Here $\tilde{\mu}^{\eta}_{e}$ is the measure defined in Theorem \ref{T: limiting measures}.)  \end{lemma}  

	\begin{proof}  As in Lemma \ref{L: two d resolution}, $-\gamma_{*}(X,x)$ is the homogenized coefficient associated with the homogenization problem
		\begin{equation*}
			\left\{ \begin{array}{r l}
				m^{\perp}_{e}(\epsilon^{-1}\langle x', \eta \rangle e) - \text{tr}\left(a^{\perp}_{e}(\epsilon^{-1} \langle x', \eta \rangle e) D^{2}_{e} U\right) = 0 & \text{in} \, \, Q^{*}_{1}, \\
				U = G & \text{on} \, \, \partial Q^{*}_{1}.
			\end{array} \right.
		\end{equation*}
	Since the coefficients only vary in the $\eta$ direction, the homogenized coefficients are the same as if we consider the one-dimensional problem.  In particular, by a well-known computation, $-\gamma_{*}(X,x)$ is given by \eqref{E: limiting coefficient}, where $\tilde{a}^{\eta}_{e}$ and $\tilde{m}^{\eta}_{e}$ are defined by 
		\begin{align*}
			\tilde{a}^{\eta}_{e} &= \left( \int_{0}^{r_{e}} \langle a_{e}^{\perp}(se) \eta, \eta \rangle^{-1} \, ds \right)^{-1} \int_{0}^{r_{e}} a_{e}^{\perp}(se) \langle a_{e}^{\perp}(se) \eta, \eta \rangle^{-1} \, ds, \\
			\tilde{m}^{\eta}_{e} &= \left( \int_{0}^{r_{e}} \langle a_{e}^{\perp}(se) \eta, \eta \rangle^{-1} \, ds \right)^{-1} \int_{0}^{r_{e}} m_{e}^{\perp}(se) \langle a_{e}^{\perp}(se) \eta, \eta \rangle^{-1} \, ds.
		\end{align*}
	We conclude by recalling the definitions of $a^{\perp}_{e}$ and $m^{\perp}_{e}$ (see \eqref{E: oscillating tensor} and \eqref{E: oscillating function linear}).
	\end{proof}  
	
\subsection{Proofs of Propositions \ref{P: continuity in dimension 2} and Theorem \ref{T: limiting measures}}  By applying Proposition \ref{P: preliminary but main}, we readily deduce the continuity of $\overline{F}$ in $d = 2$ and the limiting behavior of the set-valued maps $e \in \mathscr{I}^{a}_{e}$ in the quasi-linear setting.  

\begin{proof}[Proof of Proposition \ref{P: preliminary but main}]  Since Proposition \ref{P: preliminary but main} applies to an arbitrary sequence and $\gamma_{*}^{e,\eta}$ is independent of $\eta$ by Lemma \ref{L: two d resolution}, we deduce that, for each $e \in S^{1} \cap \mathbb{R} \mathbb{Z}^{2}$,
	\begin{equation*}
		\lim_{\delta \to 0^{+}} \sup \left\{ |\overline{F}(e',X) + \gamma_{*}^{e,e^{\perp}}(X)| \, \mid \, e' \in S^{1} \setminus \mathbb{R} \mathbb{Z}^{2}, \, \, \|e' - e\| < \delta \right\} = 0.
	\end{equation*}
Hence $\overline{F}$ extends continuously to $(\mathbb{R}^{2} \setminus \{0\}) \times \mathcal{S}_{2}$.  \end{proof}  

In light of the form of $\{F^{\perp}_{e}\}$ in the quasi-linear case, a similar argument implies Theorem \ref{T: limiting measures}, as we now show:

\begin{proof}[Proof of Theorem \ref{T: limiting measures}]  Notice that to obtain the conclusion of Theorem \ref{T: limiting measures}, it suffices to show that if $(e_{n})_{n \in \mathbb{N}} \subseteq S^{d-1}$ is such that $e_{n} \to e$ and $\frac{e_{n} - e}{\|e_{n} - e\|} \to - \eta$ as $n \to \infty$, then, for each positive $m \in C^{\infty}(\mathbb{T}^{d})$ and each $(s_{n})_{n \in \mathbb{N}} \subseteq \mathbb{R}$, we have
	\begin{equation*}
		\lim_{n \to \infty} \int_{\mathbb{T}^{d}} m(y) \mu_{e_{n}}^{s_{n}}(dy) = \int_{\mathbb{T}^{d}} m(y) \tilde{\mu}^{\eta}_{e}(dy).
	\end{equation*}
(Here define $\mu^{s}_{e'} = \bar{\mu}_{e'}$ if $s \in \mathbb{R}$ and $e' \in S^{d-1} \setminus \mathbb{R} \mathbb{Z}^{d}$.) This follows directly from Proposition \ref{P: preliminary but main}, Lemmas \ref{L: homogenization and density} and \ref{L: limiting measure equation}, and \eqref{E: oscillating function linear} by varying $m$ while $a$ remains fixed. \end{proof}  
	
\subsection{Generic Discontinuities}  In light of the formulas obtained for the limiting measures in the previous section, it is natural to expect that $\bar{a}$ and $\bar{m}$ are generically discontinuous at some rational directions.  In this section, we prove that, in fact, $\bar{a}$ and $\bar{m}$ are generically discontinuous at every rational direction when $d \geq 3$.

\begin{proof}[Proof of Corollary \ref{C: generic}]  To start with, since $d \geq 3$, we can fix $e \in S^{d-1}$ and let $(\eta_{n})_{n \in \mathbb{N}}$ be a sequence of points in $S^{d-1} \cap \langle e \rangle^{\perp}$ with $\eta_{n} \notin \{\eta_{m}, - \eta_{m}\}$ for all $n \neq m$.  The goal is to prove that, for each $n, m \in \mathbb{N}$ with $n \neq m$, the following sets are open and dense in $C^{2,\alpha}(\mathbb{T}^{d}; \mathcal{S}_{d}(\lambda,\Lambda))$ in the $C^{2,\alpha}$ norm topology:
	\begin{align*}
		\mathcal{U}_{e}(n,m) = \left\{ a \in C^{2,\alpha}(\mathbb{T}^{d}; \mathcal{S}_{d}(\lambda,\Lambda)) \, \mid \, \tilde{\mu}_{e}^{\eta_{n}} \neq \tilde{\mu}_{e}^{\eta_{m}} \right\}, \\
		\mathcal{V}_{e}(n,m) = \left\{ a \in C^{2,\alpha}(\mathbb{T}^{d}; \mathcal{S}_{d}(\lambda,\Lambda)) \, \mid \, \tilde{a}_{e}^{\eta_{n}} \neq \tilde{a}_{e}^{\eta_{m}} \right\}.
	\end{align*}
That these sets are open is immediate.  It only remains to show they are dense.  


We start with $\mathcal{U}_{e}(n,m)$.  It turns out that we only need to understand the derivative of the map $a \mapsto a_{e}^{\perp}$.  Toward that end, for each $n \in \mathbb{N}$, define $u_{n} \in C^{2,\alpha}(\mathbb{T}^{d})$ to be the solution of the cell problem \eqref{E: exotic cell problem} with $f(y) = \langle a(y) \eta_{n}, \eta_{n} \rangle$.  Notice that the oscillating function $f_{e}^{\perp}$ equals $\langle a_{e}^{\perp} \eta_{n}, \eta_{n} \rangle$ in this case.

\textbf{Step 1: Perturb so that $D^{2}_{e}u_{n} \neq D^{2}_{e}u_{m}$}

To start with, we claim that there is no loss of generality in assuming that $D^{2}_{e}u_{n} \neq D^{2}_{e}u_{m}$.  Indeed, if $D^{2}_{e}u_{n} = D^{2}_{e}u_{m}$, then \eqref{E: exotic cell problem} implies $\langle [a - a^{\perp}_{e}] \eta_{n}, \eta_{n} \rangle = \langle [a - a^{\perp}_{e}] \eta_{m}, \eta_{m} \rangle$.  That is, 
	\begin{equation*}
		\langle a(y) \eta_{n}, \eta_{n} \rangle - \langle a(y) \eta_{m}, \eta_{m} \rangle = \langle a_{e}^{\perp}(\langle y,e \rangle e) \eta_{n}, \eta_{n} \rangle - \langle a_{e}^{\perp}(\langle y, e \rangle e) \eta_{m}, \eta_{m} \rangle \quad \text{if} \, \, y \in \mathbb{T}^{d}.
	\end{equation*}
In particular, the left-hand side varies only in the $e$ direction.  This symmetry is easily broken, for instance, by replacing $a$ by $y \mapsto a(y) + \nu \cos(2 \pi \langle k, y \rangle) \eta_{n} \otimes \eta_{n}$ for some $k \in \mathbb{Z}^{d} \setminus \langle e \rangle$ and $\nu \in \mathbb{R}$ sufficiently small.  

\textbf{Step 2: Restrict attention to $a_{e}^{\perp}$}

First, observe that $\tilde{\mu}^{\eta_{n}}_{e} = \tilde{\mu}^{\eta_{m}}_{e}$ if and only if there is a $C > 0$ such that $\langle a_{e}^{\perp} \eta_{n}, \eta_{n} \rangle = C \langle a_{e}^{\perp} \eta_{m}, \eta_{m} \rangle$ in $\mathbb{T}^{d}$.  This is a direct consequence of the formula \eqref{E: limiting measure} proved in the previous section.  We claim that if $\langle a_{e}^{\perp} \eta_{n}, \eta_{n} \rangle = C \langle a_{e}^{\perp} \eta_{m}, \eta_{m} \rangle$ for some $C > 0$, then this symmetry is broken by some arbitrarily small perturbation of $a$.

To see this, we will differentiate the function $a \mapsto a_{e}^{\perp}$.  Given $a_{*} \in C^{2,\alpha}(\mathbb{T}^{d}; \mathcal{S}_{d})$ and $h \in \mathbb{R}$ small enough, define $a_{h} = a + h a_{*}$ and let $(a_{h})_{e}^{\perp}$ be the associated averaged tensor.

Employing arguments similar to those in the proof of Proposition \ref{P: rational correctors}, we see that, for each $r \in [0,r_{e})$, $h \in \mathbb{R}$, and $j \in \{n,m\}$, if $\tilde{U}_{j,re}^{h}$ is the solution of the equation
	\begin{equation*}
		\left\{ \begin{array}{r l}
			- \text{tr} \left( a_{h}(re + x') D^{2} \tilde{U}_{j,re}^{h} \right) = \langle a_{h}(re + x') \eta_{j}, \eta_{j} \rangle - \langle (a_{h})_{e}^{\perp}(re) \eta_{j}, \eta_{j} \rangle & \text{in} \, \, \mathbb{T}^{d-1}_{e}(0) \\
			\tilde{U}_{j,re}^{h}(0) = 0 
		\end{array} \right.
	\end{equation*} 
then there are functions $\{\tilde{U}_{n,re}, \tilde{U}_{m,re}\} \in C(\mathbb{T}^{d-1}_{e}(0))$ and a function $a_{*}^{\perp} \in C(\mathbb{T}^{d})$ varying only in the $e$ direction such that, for $j \in \{n,m\}$ and $r \in [0,r_{e})$,
	\begin{equation*}
		\tilde{U}_{j,re} = \lim_{h \to 0} \frac{\tilde{U}^{h}_{j,re} - \tilde{U}^{0}_{j,re}}{h} \quad \text{uniformly in} \, \, \mathbb{T}^{d-1}_{e}(0), \quad a_{*}^{\perp} = \lim_{h \to 0} \frac{(a_{h})_{e}^{\perp} - a_{e}^{\perp}}{h} \quad \text{uniformly in} \, \, \mathbb{T}^{d}.
	\end{equation*}
Furthermore, $\tilde{U}_{j,re}$ is a solution of the equation
	\begin{equation*}
		- \text{tr} \left( a(re + x') D^{2}_{e}\tilde{U}_{j,re}\right) = \text{tr} \left(a_{*}(re + x') D^{2}_{e} u_{j}(re + x') \right) - \langle a_{*}^{\perp}(re) \eta_{j}, \eta_{j} \rangle \quad \text{in} \, \, \mathbb{T}^{d-1}_{e}(0).
	\end{equation*} 
Notice that $a_{*}^{\perp}(y) = \int_{\mathbb{T}^{d}} \text{tr} \left(a_{*}(y') D^{2}_{e} u_{j}(y') \right) \mu_{e}^{\langle y, e \rangle}(dy')$.  

By the previous step, there is no loss in generality assuming that $D^{2}_{e}u_{n} \neq D^{2}_{e}u_{m}$.  In fact, since these functions are $C^{\alpha}$, we can fix $y_{1},y_{2} \in \mathbb{T}^{d}$ so that $\langle y_{1}, e \rangle \neq \langle y_{2}, e \rangle$ and $D^{2}_{e}u_{n}(y_{i}) \neq D^{2}_{e}u_{m}(y_{i})$ for $i \in \{1,2\}$.  Since $\mu_{e}^{s}$ is supported on $\mathbb{T}^{d-1}_{e}(s)$ for each $s \in [0,r_{e})$, we can we fix $a_{*} \in C^{\infty}(\mathbb{T}^{d}; \mathcal{S}_{d})$ so that
	\begin{align*}
		\int_{\mathbb{T}^{d}} \text{tr} \left(a_{*}(y') D^{2}_{e} u_{n}(y') \right) \mu_{e}^{\langle y_{1}, e \rangle}(dy') > 0 > \int_{\mathbb{T}^{d}} \text{tr} \left(a_{*}(y') D^{2}_{e}u_{m}(y') \right) \, \mu_{e}^{\langle y_{1},e\rangle}(dy'), \\
		\int_{\mathbb{T}^{d}} \text{tr} \left(a_{*}(y') D^{2}_{e} u_{n}(y') \right) \mu_{e}^{\langle y_{2}, e \rangle}(dy') < 0 < \int_{\mathbb{T}^{d}} \text{tr} \left(a_{*}(y') D^{2}_{e}u_{m}(y') \right) \, \mu_{e}^{\langle y_{2},e\rangle}(dy').
	\end{align*}
From this, we find that, for all $h > 0$ sufficiently small,
	\begin{equation*}
		\frac{\langle (a_{h})^{\perp}_{e}(y_{1}) \eta_{m}, \eta_{m} \rangle}{\langle (a_{h})^{\perp}_{e}(y_{1}) \eta_{n}, \eta_{n} \rangle} < C < \frac{\langle (a_{h})^{\perp}_{e}(y_{2}) \eta_{m}, \eta_{m} \rangle}{\langle (a_{h})^{\perp}_{e}(y_{2}) \eta_{n}, \eta_{n} \rangle}.
	\end{equation*}
Therefore, $\langle (a_{h})_{e}^{\perp} \eta_{n}, \eta_{n} \rangle$ is not a constant multiple of $\langle (a_{h})_{e}^{\perp} \eta_{m}, \eta_{m} \rangle$ even while $\|a - a_{h}\|_{C^{2,\alpha}(\mathbb{T}^{d})} \leq C h$.   

We conclude from the preceding that $\mathcal{U}_{e}(n,m)$ is dense in $C^{2,\alpha}(\mathbb{T}^{d}; \mathcal{S}_{d}(\lambda,\Lambda))$.  It remains to prove the same thing for $\mathcal{V}_{e}(n,m)$.  

\textbf{Step 3: Ensure $\tilde{a}^{\eta_{n}}_{e} \neq \tilde{a}^{\eta_{m}}_{e}$}  

We want to show that $\mathcal{V}_{e}(n,m)$ is dense.  Given that $\tilde{a}^{\eta}_{e} = \int_{\mathbb{T}^{d}} a(y) \, \tilde{\mu}^{\eta}_{e}(dy)$, this is intuitively clear given what we just proved.  

The previous arguments show that we can assume that $a \in \mathcal{U}_{e}(n,m) \setminus \mathcal{V}_{e}(n,m)$ to start with.  We will show that there is an $m \in C^{2,\alpha}(\mathbb{T}^{d})$ varying only in the $e$ direction such that the function $a_{h} = (1 + hm)^{-1} a$ is in $\mathcal{V}_{e}(n,m)$ for all $h \in \mathbb{R}$ sufficiently small.  

To start with, notice that, by arguing as in Section \ref{S: invariant measures}, we see that, for each $\eta \in S^{d-1} \cap \langle e \rangle^{\perp}$, the limiting coefficient $\tilde{a}_{h,e}^{\eta}$ associated with $a_{h}$ is given by
	\begin{equation} \label{E: final computation}
		\tilde{a}_{h,e}^{\eta} = \frac{\tilde{a}_{e}^{\eta}}{1 + h \int_{\mathbb{T}^{d}} m(y) \tilde{\mu}^{\eta}_{e}(dy)}.
	\end{equation}
Since $m$ only varies in the $e$ direction, the integral in the denominator becomes
	\begin{equation*}
		\int_{\mathbb{T}^{d}} m(y) \tilde{\mu}^{\eta}_{e}(dy) = \frac{\int_{0}^{r_{e}} m(se) \langle a_{e}^{\perp}(se) \eta, \eta \rangle^{-1} \, ds}{\int_{0}^{r_{e}} \langle a_{e}^{\perp}(se) \eta, \eta \rangle^{-1} \, ds}.
	\end{equation*}
Given that $a \in \mathcal{U}_{e}(n,m)$, we know that $\langle a_{e}^{\perp}(se) \eta_{n}, \eta_{n} \rangle$ does not equal a multiple of $\langle a_{e}^{\perp}(se) \eta_{m}, \eta_{m} \rangle$, and, therefore, we can choose $m$ so that $\int_{\mathbb{T}^{d}} m(\langle y,e \rangle e) \tilde{\mu}^{\eta_{n}}_{e}(dy) \neq \int_{\mathbb{T}^{d}} m(\langle y,e \rangle e) \tilde{\mu}^{\eta_{m}}_{e}(dy)$.  Therefore, since $\tilde{a}^{\eta_{n}}_{e} = \tilde{a}^{\eta_{m}}_{e}$, \eqref{E: final computation} implies $\tilde{a}^{\eta_{n}}_{h,e} \neq \tilde{a}^{\eta_{m}}_{h,e}$ for all $h$ small enough.  This proves $a$ is a limit point of $\mathcal{V}_{e}(n,m)$.

\textbf{Conclusion}  

We showed that $\mathcal{U}_{e}(n,m)$ and $\mathcal{V}_{e}(n,m)$ are both open and dense in $C^{2,\alpha}(\mathbb{T}^{d} ; \mathcal{S}_{d}(\lambda,\Lambda))$ in the $C^{2,\alpha}$ norm topology.  Define $\mathscr{C}_{d}$ by
	\begin{equation*}
		\mathscr{C}_{d} = \bigcap_{n \in \mathbb{N}} \bigcap_{m \in \mathbb{N} \setminus \{n\}} \mathcal{U}_{e}(n,m) \cap \mathcal{V}_{e}(n,m).
	\end{equation*}  
This set is residual, being a counta`ble intersection of open, dense sets.  Further, since $C^{2,\alpha}(\mathbb{T}^{d}; \mathcal{S}_{d}(\lambda,\Lambda))$ is an open subset of the Banach space $C^{2,\alpha}(\mathbb{T}^{d}; \mathcal{S}_{d})$, $\mathscr{C}_{d}$ is itself dense.
\end{proof}  

\section{Comparison Principle}  \label{S: comparison}

In this section, we prove the well-posedness of \eqref{E: effective equation}, completing the proof of Theorem \ref{T: quasilinear}.   

To clarify the exposition, we will consider the following general framework.  Given operators $G^{*}, G_{*} : \mathbb{R}^{d} \times \mathcal{S}_{d} \to \mathbb{R}$ and a countable set $\{e_{n}\}_{n \in \mathbb{N}} \subseteq S^{d-1}$ of ``bad" directions, we consider sub- and supersolutions of the viscosity inequalities
	\begin{equation} \label{E: sub and super solutions}
		u_{t} - G^{*}(Du,D^{2}u) \leq 0, \quad u_{t} - G_{*}(Du,D^{2}u) \geq 0 \quad \text{in} \, \, \mathbb{R}^{d} \times (0,T).
	\end{equation}
Here we assume that the operators $G^{*}, G_{*} : \mathbb{R}^{d} \times \mathcal{S}_{d} \to \mathbb{R}$ satisfy the following assumptions:
	\begin{itemize}
		\item[(i)](Geometric) If $G \in \{G^{*},G_{*}\}$, $(p,X) \in \mathbb{R}^{d} \times \mathcal{S}_{d}$, $\mu \in \mathbb{R}$, and $\kappa > 0$, then
			\begin{equation*}
				G(\kappa p,\kappa X + \mu p \otimes p) = \kappa G(p, X).
			\end{equation*}
		\item[(ii)](Strongly degenerate elliptic)  There are constants $\lambda,\Lambda > 0$ such that if $G \in \{G^{*},G_{*}\}$, $p \in \mathbb{R}^{d} \setminus \{0\}$, $X, Y \in \mathcal{S}_{d}$, and $Y \geq 0$, then 
			\begin{equation*}
				\lambda \|\tilde{Y}_{\hat{p}}\| \leq G(p,X + Y) - G(p,X) \leq \Lambda \|\tilde{Y}_{\hat{p}}\|.
			\end{equation*}
		\item[(iii)](Stationary planes) $G^{*}(e,0) = G_{*}(e,0) = 0$ for each $e \in S^{d-1}$.
		\item[(iv)](Semi-continuity) $G^{*}$ is upper semi-continuous, $G_{*}$ is lower semi-continuous, $(G^{*})_{*} = G_{*}$, and $(G_{*})^{*} = G^{*}$.
		\item[(v)](Continuity at ``good" directions)  If $(p,X) \in (\mathbb{R}^{d} \setminus \{0\}) \times \mathcal{S}_{d}$ and $\hat{p} \notin \{e_{n}\}_{n \in \mathbb{N}}$, then 
			\begin{equation*}
				G^{*}(p,X) = G_{*}(p,X).
			\end{equation*}
		\item[(vi)](Controlled oscillation)  The discontinuities of $G^{*}$ and $G_{*}$ can be controlled in the following manner:
			\begin{equation*} 
				\lim_{N \to \infty} \sup \left\{ \frac{G^{*}(e_{n},X) - G_{*}(e_{n},X)}{1 + \|X\|} \, \mid \, X \in \mathcal{S}_{d}, \, \, n \geq N \right\} = 0.
			\end{equation*} 
	\end{itemize}
	
In the next subsection, we will show that the assumptions above are satisfied when $G^{*} = \overline{F}^{*}$ and $G_{*} = \overline{F}_{*}$.  Thus, a comparison result for operators satisfying (i)-(vi) implies homogenization in Theorem \ref{T: quasilinear} just as the usual one did in Theorem \ref{T: level set PDE}.

The remainder of the section is devoted to the proof of just such a comparison principle, stated next:

	\begin{theorem} \label{T: comparison}  Assume that $u : \mathbb{R}^{d} \times (0,T) \to \mathbb{R}$ is a locally bounded, upper semi-continuous subsolution of \eqref{E: sub and super solutions} and $v : \mathbb{R}^{d} \times (0,T) \to \mathbb{R}$ is a locally bounded, lower semi-continuous supersolution.  If (i)-(vi) all hold and $u$ and $v$ satisfy the following condition
		\begin{equation*}
			\lim_{\delta \to 0^{+}} \sup \left\{ u^{*}(x,0) - v_{*}(y,0) \, \mid \, \|x - y\| < \delta \right\} \leq 0,
		\end{equation*}
	then $u \leq v$ in $\mathbb{R}^{d} \times (0,T)$.  \end{theorem}
	
The idea of the proof is this: assumption (v) implies that, for each $\beta > 0$, $F^{*}$ and $F_{*}$ \emph{almost} coincide (up to a $\beta$ error) except at finitely many rational directions.  The papers of Gurtin, Soner, and Souganidis \cite{gurtin soner souganidis}, Ohnuma and Sato \cite{ohnuma sato}, and Ishii \cite{ishii level set} show how to prove a comparison principle in the case when $G^{*}$ and $G_{*}$ coincide at all but finitely many directions.  Therefore, if we can manage the $\beta$ error, a comparison principle should hold in our setting as well.  

\subsection{Application to $\overline{F}$}  Let us verify that the semi-continuous envelopes $\overline{F}^{*}$ and $\overline{F}_{*}$ of the effective operator $\overline{F}$ of \eqref{E: homogenized operator} satisfy assumptions (i)-(vi) above with $\{e_{n}\}_{n \in \mathbb{N}}$ being any enumeration of $S^{d-1} \cap \mathbb{R} \mathbb{Z}^{d}$.  For the sake of completeness, we will then recall how Theorem \ref{T: comparison} can be combined with what we previously proved to give Theorem \ref{T: quasilinear}.  Finally, in a concluding remark, we note that the limiting behavior of the forced motion \eqref{E: quasilinear forced} cannot be described by a level set PDE with a comparison principle.

	Let us start with assumptions (i)-(v).

	\begin{lemma}  If $\{e_{n}\}_{n \in \mathbb{N}}$ is any enumeration of $S^{d-1} \cap \mathbb{R} \mathbb{Z}^{d}$, then the pair $(\overline{F}^{*},\overline{F}_{*})$ satisfies assumptions (i)-(v) with ``bad directions" $\{e_{n}\}_{n \in \mathbb{N}}$. \end{lemma}    
	
		\begin{proof}  (i) follows by the definition \eqref{E: homogenized operator} and (ii) and (iii) were proved in Corollary \ref{C: basic properties}.  (iv) is a direct consequence of the definition of $\overline{F}^{*}$ and $\overline{F}_{*}$ (see Section \ref{S: irrational directions}).
		
		It only remains to prove (v).  If $e \in S^{d-1} \setminus \mathbb{R} \mathbb{Z}^{d}$ and $(e_{n})_{n \in \mathbb{N}} \subseteq S^{d-1}$, then, for any $(s_{n})_{n \in \mathbb{N}} \subseteq (0,\infty)$, each accumulation point of the measures $(\mu^{s_{n}}_{e_{n}})_{n \in \mathbb{N}}$ is necessarily in $\mathscr{I}^{a}_{e}$.  (Here, if necessary, we set $\mu^{s}_{e'} = \bar{\mu}_{e'}$ when $e' \notin \mathbb{R} \mathbb{Z}^{d}$.)  Therefore, by Theorem \ref{T: invariant measures}, (i), we have $\mu^{s_{n}}_{e_{n}} \overset{*}{\rightharpoonup} \bar{\mu}_{e}$.  From this, we conclude that, for each $X \in \mathcal{S}_{d}$ and each sequence $(x_{n})_{n \in \mathbb{N}} \subseteq \mathbb{T}^{d}$, we must have $F^{\perp}_{e_{n}}(X,x_{n}) \to \overline{F}(e,X)$.  From this, a straightforward computation shows that $\overline{F}^{*}(e,\cdot) = \overline{F}(e,\cdot) = \overline{F}_{*}(e,\cdot)$ in $\mathcal{S}_{d}$.      \end{proof}  
		
	Finally, we treat (vi).  By analogy with similar results in homogenization and Aubry-Mather theory (cf.\ \cite[Lemma 3.1]{feldman} and \cite[Theorem 3]{senn differentiation}), we expect that there is a modulus $\omega : [0,\infty) \to [0,\infty)$ with $\lim_{\delta \to 0^{+}} \omega(\delta) = 0$ such that, for each $e \in S^{d-1} \cap \mathbb{R} \mathbb{Z}^{d}$, the following estimate holds:		
		\begin{equation} \label{E: oscillation estimate}
			\sup \left\{ \frac{\overline{F}^{*}(e,X) - \overline{F}_{*}(e,X)}{1 + \|X\|} \, \mid \, X \in \mathcal{S}_{d} \right\} \leq \omega(r_{e}).
		\end{equation}
	When $d = 2$, it is not hard to show that if $F$ satisfies \eqref{E: standard lipschitz}, then there is a constant $A > 0$ such that
		\begin{equation*}
			\sup \left\{ \frac{F^{\perp}_{e}(X,x) - F^{\perp}_{e}(X,y)}{1 + \|X\|} \, \mid \, X \in \mathcal{S}_{d}, \, \, x,y \in \mathbb{T}^{d}\right\} \leq A r_{e}.
		\end{equation*} 
	If such an estimate were to hold in higher dimensions (possibly with $A r_{e}$ replaced by $\omega(r_{e})$), then it would imply \eqref{E: oscillation estimate}.  However, this remains to be seen.
	
	Instead, we employ a soft argument pointed out by I.C. Kim:
	
	\begin{lemma}  The pair $(\overline{F}^{*},\overline{F}_{*})$ satisfies (vi) with $\{e_{n}\}_{n \in \mathbb{N}}$ any enumeration of $S^{d-1} \cap \mathbb{R} \mathbb{Z}^{d}$.  \end{lemma}  
	
		\begin{proof}  We claim that 
			\begin{equation*}
				\lim_{N \to \infty} \sup \left\{ \text{diam}(\mathscr{I}^{a}_{e_{n}}) \, \mid \, n \geq N \right\} = 0.
			\end{equation*}
		Here $\text{diam}(\mathscr{I}^{a}_{e})$ is the diameter of $\mathscr{I}^{a}_{e}$ with respect to $D$, the metric on $\mathscr{P}(\mathbb{T}^{d})$ chosen just prior to Theorem \ref{T: limiting measures}.  Notice that, in view of Theorem \ref{T: limiting measures} and the definition \eqref{E: homogenized operator} of $\overline{F}$, the claim implies (vi) holds.  
		
		To prove it, we argue by contradiction, exploiting the compactness of $S^{d-1}$.  If (vi) fails, then we can find $\zeta > 0$, $(e_{n})_{n \in \mathbb{N}} \subseteq S^{d-1}$, and sequences $(s_{n})_{n \in \mathbb{N}}, (t_{n})_{n \in \mathbb{N}} \subseteq \mathbb{R}$ such that
			\begin{equation} \label{E: big diameter}
				\inf \left\{ D(\mu^{s_{n}}_{e_{n}},\mu^{t_{n}}_{e_{n}}) \, \mid \, n \in \mathbb{N} \right\} \geq \zeta.
			\end{equation}
		To see this is impossible, note that, up to extraction, we can assume that there is an $e \in S^{d-1}$ such that $\lim_{n \to \infty} e_{n} = e$.  
		
		If $e \in S^{d-1} \setminus \mathbb{R} \mathbb{Z}^{d}$, then any accumulation point of $(\mu^{s_{n}}_{e_{n}})_{n \in \mathbb{N}}$ or $(\mu^{t_{n}}_{e_{n}})_{n \in \mathbb{N}}$ is in $\mathscr{I}^{a}_{e}$, hence must equal $\bar{\mu}_{e}$.  In particular, $\mu^{s_{n}}_{e_{n}} \overset{*}{\rightharpoonup} \bar{\mu}_{e}$ and $\mu^{t_{n}}_{e_{n}} \overset{*}{\rightharpoonup} \bar{\mu}_{e}$, contradicting \eqref{E: big diameter}.
		
		On the other hand, if $e \in S^{d-1} \setminus \mathbb{R} \mathbb{Z}^{d}$, then, passing to another subsequence if necessary, we can assume that there is an $\eta \in S^{d-1} \cap \langle e \rangle^{\perp}$ such that $\frac{e_{n} - e}{\|e_{n} - e\|} \to -\eta$ as $n \to \infty$.  By Theorem \ref{T: limiting measures}, this implies $\mu^{s_{n}}_{e_{n}} \overset{*}{\rightharpoonup} \tilde{\mu}^{\eta}_{e}$ and $\mu^{t_{n}}_{e_{n}} \overset{*}{\rightharpoonup} \tilde{\mu}^{\eta}_{e}$, another contradiction.
		\end{proof}  

Now, for the sake of completeness, observe that we can combineTheorem \ref{T: comparison} with the results of Section \ref{S: irrational directions} to prove Theorem \ref{T: quasilinear}:

	\begin{proof}[Proof of Theorem \ref{T: quasilinear}]  By Proposition \ref{P: homogenization irrational directions} and Theorem \ref{T: irrational directions theorem}, the half-relaxed limits $\bar{u}^{*}$ and $\bar{u}_{*}$ of $(u^{\epsilon})_{\epsilon > 0}$ are, respectively, sub- and supersolution of \eqref{E: effective equation} in the usual viscosity sense, and we can show that $\bar{u}^{*} \leq u_{0} \leq \bar{u}_{*}$ by constructing appropriate $\epsilon$-independent sub- and super-solutions of \eqref{E: quasilinear}.  Hence Theorem \ref{T: comparison} implies $\bar{u}^{*} \leq \bar{u}_{*}$, from which $\bar{u}^{*} = \bar{u}_{*}$ necessarily follows.  Denoting this function by $\bar{u}$, we see that it is a continuous viscosity solution of \eqref{E: effective equation}, necessarily unique, and $u^{\epsilon} \to \bar{u}$.      \end{proof}  
	
A last remark is in order:

	\begin{remark}  Since $\overline{m}_{\text{pl}}$ can be discontinuous, Theorem \ref{T: homogenization forced planes} shows that, in general, the limiting behavior of \eqref{E: quasilinear forced} with $\alpha \neq 0$ and arbitrary initial data $u_{0} \in UC(\mathbb{R}^{d})$ cannot be described by an effective equation with a comparison principle, unlike the $\alpha = 0$ case.  The reason is that comparison forces the solution map to be continuous with respect to the topology of local uniform convergence, whereas Theorem \ref{T: homogenization forced planes} shows that any solution map, if well-defined, has to act discontinuously on linear functions when $\overline{m}_{\text{pl}}$ is discontinuous.  It remains to be seen what can be said about these problems.  \end{remark}  

\subsection{Proof of Theorem \ref{T: comparison}}  As in \cite{ishii level set}, the proof of Theorem \ref{T: comparison} proceeds by replacing the Euclidean norm by some other Finsler norm in a variable doubling argument.  (Recall that $\psi : \mathbb{R}^{d} \to [0,\infty)$ is a Finsler norm if it is convex, positively one-homogeneous, and positive away from zero.)  To improve the result of \cite{ishii level set} from finitely many discontinuity points to our setting, we use the following fact: 

	\begin{prop} \label{P: norm}  There is a universal constant $c_{0} > 0$ such that if $\{e_{n}\}_{n \in \mathbb{N}} \subseteq S^{d-1}$, then, for each $N \in \mathbb{N}$, there is a Finsler norm $\psi_{N} \in C^{2}(\mathbb{R}^{d} \setminus \{0\})$ such that
		\begin{equation*}
			1 \leq \psi_{N}(e) \leq \frac{5}{4}, \quad D^{2} \psi_{N}(e) \leq c_{0} (\text{Id} - e \otimes e) \quad \text{if} \, \, e \in S^{d-1}
		\end{equation*}
	and the following property holds: given $p \in \mathbb{R}^{d} \setminus \{0\}$, if $\widehat{D\psi}_{N}(p) = e_{i}$ for some $i \in \{1,2,\dots,N\}$, then $D^{2} \psi_{N}(p) = 0$.  
	\end{prop}  
	
Before proving Proposition \ref{P: norm}, which follows from a relatively simple but tedious geometric construction, let us see how it implies Theorem \ref{T: comparison}.

	\begin{proof}[Proof of Theorem \ref{T: comparison}]  By (i), if $\varphi : \mathbb{R} \to \mathbb{R}$ is a smooth, non-decreasing function, then $\varphi(u)$ remains a subsolution and $\varphi(v)$, a supersolution.  Therefore, we can assume that $u$ and $v$ are bounded.
	
	We argue by contradiction, assuming that the following inequality holds:
		\begin{equation*}
			\sup \left\{ u(x,t) - v(x,t) \, \mid \, (x,t) \in \mathbb{R}^{d} \times (0,T] \right\} > 0.
		\end{equation*}
	It follows that we can fix $\sigma > 0$ small enough that 
		\begin{equation} \label{E: tip up}
			\sup \left\{ u(x,t) - v(x,t) - \sigma t \, \mid \, (x,t) \in \mathbb{R}^{d} \times (0,T] \right\} > 0.
		\end{equation}
		
	Let $\zeta, \beta, \gamma > 0$ be free variables.  By (v), we can fix an $N = N(\gamma) \in \mathbb{N}$ such that 
		\begin{equation*}
			\sup \left\{ G^{*}(e_{i}, X) - G_{*}(e_{i}, X) \, \mid \, i \in \mathbb{N} \setminus \{1,2,\dots,N\} \right\} \leq \gamma (1 + \|X\|) \quad \text{if} \, \, X \in \mathcal{S}_{d}.
		\end{equation*}  
		
	Letting $\psi_{N}$ be the Finsler norm of Proposition \ref{P: norm} with $\{e_{n}\}_{n \in \mathbb{N}}$ the set of ``bad" directions associated with the pair $(G^{*},G_{*})$, define $\Phi = \Phi_{\zeta,\beta}: \mathbb{R}^{d} \times \mathbb{R}^{d} \times [0,T] \to \mathbb{R}$ by 
		\begin{equation*}
			\Phi(x,y,t) = u(x,t) - v(y,t) - \frac{\psi_{N}(x - y)^{4}}{4 \zeta} - \frac{\beta}{2} \|y\|^{2} - \sigma t.
		\end{equation*}
	Since $u$ and $v$ are bounded, $\Phi$ is bounded above and attains its maximum in $\mathbb{R}^{d} \times \mathbb{R}^{d} \times [0,T]$.  Let $(\bar{x},\bar{y},\bar{t}) = (\bar{x}_{\zeta,\beta}, \bar{y}_{\zeta,\beta}, \bar{t}_{\zeta,\beta})$ be such a global maximum, that is,
		\begin{equation*}
			\Phi(\bar{x},\bar{y},\bar{t}) = \max \left\{ \Phi(x,y,t) \, \mid \, (x,y,t) \in \mathbb{R}^{d} \times \mathbb{R}^{d} \times [0,T] \right\}.
		\end{equation*}
	By the boundedness of $u$ and $v$ and the lower bound $\psi_{N} \geq \|\cdot\|$, there is a $\gamma$-independent constant $C > 0$ such that
		\begin{equation} \label{E: trivial bound}
			\sup \left\{ \frac{\beta \|\bar{y}_{\zeta,\beta}\|^{2}}{2} + \frac{\|\bar{x}_{\zeta,\beta} - \bar{y}_{\zeta,\beta}\|^{4}}{\zeta} \, \mid \, (\zeta,\beta) \in (0,\infty) \times (0,\infty) \right\} \leq C.
		\end{equation}
		
	In view of \eqref{E: tip up} and the assumptions on $u$ and $v$, there are constants $\zeta_{0}, \beta_{0} > 0$ such that $\bar{t}_{\zeta,\beta} > 0$ for all $(\zeta,\beta) \in (0,\zeta_{0}) \times (0,\beta_{0})$ and all $\gamma \in (0,1)$.  Henceforth let $(\zeta,\beta) \in (0,\zeta_{0}) \times (0,\beta_{0})$ and assume $\gamma < 1$.
	
	Since $\bar{t} > 0$, we can invoke the maximum principle for semi-continuous functions, \cite[Lemma 1]{ishii level set}, and the equations satisfied by $u$ and $v$.  This gives matrices $X, Y \in \mathcal{S}_{d}$ and numbers $a, b \in \mathbb{R}$ so that if $\bar{A} = \bar{A}(\bar{x} - \bar{y}) \in \mathcal{S}_{d}$ and $\bar{p} = \bar{p}(\bar{x} - \bar{y}) \in \mathbb{R}^{d}$ are defined by 
		\begin{align*}
			\bar{p}(\bar{x} - \bar{y}) &= \zeta^{-1} \psi_{N}(\bar{x} - \bar{y})^{3} D\psi_{N}(\bar{x} - \bar{y}), \\
			\bar{A}(\bar{x} - \bar{y}) &= \left\{ \begin{array}{r l}
				\zeta^{-1} \psi_{N}(\bar{x} - \bar{y})^{3} D^{2} \psi_{N}(\bar{x} - \bar{y}) + 3 \zeta^{-1} \psi_{N}(\bar{x} - \bar{y})^{2} D\psi_{N}(\bar{x} - \bar{y})^{\otimes 2}, & \text{if} \, \, \bar{x} \neq \bar{y}, \\
				0, & \text{otherwise},
			\end{array} \right.
		\end{align*}
	then
		\begin{align*}
			\sigma = a - b, \quad -3 \left( \begin{array}{c c}
										\bar{A} & 0 \\
										0 & \bar{A} 
									\end{array} \right) &\leq \left( \begin{array}{c c}
										X & 0 \\
										0 & -(Y + \beta \text{Id}) 
									\end{array} \right) \leq 3 \left( \begin{array}{c c}
	\bar{A} & -\bar{A} \\
	-\bar{A} & \bar{A} \end{array} \right), \\
			a - G^{*}(\bar{p}, X) \leq 0, &\quad b - G_{*}(\bar{p} - \beta \bar{y}, Y) \geq 0.
		\end{align*}  
	Note, in addition, that Proposition \ref{P: norm} and \eqref{E: trivial bound} yield the following $\beta$-independent estimates on $\|\bar{p}\|$ and $\|\bar{A}\|$:
		\begin{equation} \label{E: important estimate for uniqueness}
			\|\bar{p}\| \leq \frac{5}{4} C^{\frac{3}{4}} \zeta^{-\frac{1}{4}}, \quad \|\bar{A}\| \leq \sqrt{C} \zeta^{-\frac{1}{2}}.
		\end{equation}
	Hence, we can send $\beta \to 0^{+}$ and invoke \eqref{E: trivial bound} to obtain $\xi \in \mathbb{R}^{d}$,  $\tilde{p} = \bar{p}(\xi) \in \mathbb{R}^{d}$, $\tilde{A} = \bar{A}(\xi) \in \mathcal{S}_{d}$, and $\tilde{X}, \tilde{Y} \in \mathcal{S}_{d}$ such that
		\begin{equation*}
			\sigma + G_{*}(\tilde{p},\tilde{Y}) - G^{*}(\tilde{p},\tilde{X}) \leq 0, \quad -3 \left( \begin{array}{c c}
										\tilde{A} & 0 \\
										0 & \tilde{A} 
									\end{array} \right) \leq \left( \begin{array}{c c}
										\tilde{X} & 0 \\
										0 & -\tilde{Y} 
									\end{array} \right) \leq 3 \left( \begin{array}{c c}
	\tilde{A} & -\tilde{A} \\
	-\tilde{A} & \tilde{A} \end{array} \right).
		\end{equation*}

There are four cases left to consider: (i) $\xi = 0$, (ii) $\widehat{D\psi}_{N}(\xi) \in \{e_{1},\dots,e_{N}\}$, (iii) $\widehat{D\psi}_{N}(\xi) \in \{e_{N + 1},e_{N+ 2},\dots\}$, and (iv) $\widehat{D\psi}_{N}(\xi) \in S^{d-1} \setminus \mathbb{R} \mathbb{Z}^{d}$.  

\textbf{Case (i): $\xi = 0$}  

	In this case, we have $\tilde{p} = 0$ and $\tilde{A} = 0$, hence $\tilde{X} = 0$ and $\tilde{Y} = 0$.  This yields the estimate
		\begin{equation} \label{E: case i}
			\sigma \leq \sigma + G_{*}(0,0) - G^{*}(0,0) \leq 0.
		\end{equation}
		
	\textbf{Case (ii): $\widehat{D\psi}_{N}(\xi) \in \{e_{1},e_{2},\dots,e_{N}\}$}
	
		In this case, Proposition \ref{P: norm} implies that $D^{2} \psi_{N}(\xi) = 0$.  Thus, $\tilde{A} = c e_{i} \otimes e_{i}$ for some $c > 0$ and $\|\tilde{p}\|^{-1} \tilde{p} = e_{i}$ so (i) and (vi) give
			\begin{equation*}
				G^{*}(\tilde{p},X) \leq G^{*}(\tilde{p}, 3c e_{i} \otimes e_{i}) = 0, \quad G_{*}(\tilde{p},\tilde{Y}) \geq G_{*}(\tilde{p},-3ce_{i} \otimes e_{i}) = 0.
			\end{equation*}
		Combining these estimates, we obtain
			\begin{equation} \label{E: case ii}
				\sigma \leq \sigma + G_{*}(\tilde{p},\tilde{Y}) - G^{*}(\tilde{p},\tilde{X}) \leq 0.
			\end{equation}
			
	\textbf{Case (iii): $\widehat{D\psi}_{N}(\xi) \in \{e_{N+1},e_{N+2},\dots\}$}  
	
	By the choice of $N$,
		\begin{equation*}
			G_{*}(\tilde{p},\tilde{Y}) \geq G^{*}(\tilde{p},\tilde{Y}) - \gamma \|\tilde{Y}\| - \gamma \|\tilde{p}\| \geq G^{*}(\tilde{p},\tilde{Y}) - \gamma \|\tilde{p}\| - 3 \gamma \|\tilde{A}\|.
		\end{equation*}
	From this and the inequality $\tilde{X} \leq \tilde{Y}$, we find
		\begin{equation} \label{E: case iii}
			\sigma - \gamma \|\tilde{p}\| - 3 \gamma \|\tilde{A}\| \leq \sigma + G_{*}(\tilde{p},\tilde{Y}) - G^{*}(\tilde{p},\tilde{X}) \leq 0.
		\end{equation}
		
	\textbf{Case (iv): $\widehat{D \psi}_{N}(\xi) \in S^{d-1} \setminus \mathbb{R} \mathbb{Z}^{d}$}  
	
	Here $G^{*}(\tilde{p},\tilde{X}) = G_{*}(\tilde{p},\tilde{X})$ and, thus, 
		\begin{equation*}
			G_{*}(\tilde{p},\tilde{Y}) - G^{*}(\tilde{p},\tilde{X}) = G_{*}(\tilde{p},\tilde{Y}) - G_{*}(\tilde{p},\tilde{X}) \geq 0.
		\end{equation*}
	 This gives our last estimate:
	 	\begin{equation} \label{E: case iv}
			\sigma \leq \sigma + G_{*}(\tilde{p},\tilde{Y}) - G^{*}(\tilde{p},\tilde{X}) \leq 0.
		\end{equation}
		
	Combining \eqref{E: case i}, \eqref{E: case ii}, \eqref{E: case iii}, and \eqref{E: case iv}, we conclude that
		\begin{equation} \label{E: final estimate}
			\sigma \leq (\|\tilde{p}\| + 3 \|\tilde{A}\|) \gamma.
		\end{equation}
	However, in view of \eqref{E: important estimate for uniqueness}, this is a contradiction as soon as $\gamma$ is small enough compared to $\zeta$ and $\sigma$.
	\end{proof}

\section{Effective Mobility as Linear Response} \label{S: linear response}

In this section, we prove Theorems \ref{T: homogenization forced planes} and \ref{T: linear response formula} and their corollaries.  As already indicated in the introduction, the proofs are relatively routine when $e \notin \mathbb{R} \mathbb{Z}^{d}$.  We will start by motivating the modifications that are necessary in the case that $e \in \mathbb{R} \mathbb{Z}^{d}$.

\subsection{Strategy of proof}  As already mentioned in Section \ref{S: main results}, the analysis of the forced problem \eqref{E: quasilinear forced} requires an additional correction.  Fix $e \in \mathbb{R} \mathbb{Z}^{d}$, $\alpha \in \mathbb{R} \setminus \{0\}$, and let $(u^{\epsilon}_{e})_{\epsilon > 0}$ be the solutions of \eqref{E: quasilinear forced}.  Proceeding as in the proof of Theorems \ref{T: level set PDE} and \ref{T: quasilinear}, we seek to identify the limit of $(u^{\epsilon}_{e})_{\epsilon > 0}$ by showing that the functions $\bar{u}^{*}_{e}$ and $\bar{u}_{*,e}$ are sub- and supersolutions of an effective equation.  

Let us see what can be deduced na\"{i}vely about $\bar{u}^{*}_{e}$ using the perturbed test function method.  As in Section \ref{S: rational correctors}, we let $\tilde{V}_{e}$ be the solution of the following cell problem:
	\begin{equation} \label{E: oscillating cell problem}
		m(y,e) - m_{e}^{\perp}(y) - \text{tr} \left(A(y,e) D^{2}\tilde{V}_{e} \right) = 0 \quad \text{in} \, \, \mathbb{T}^{d}.
	\end{equation} 
Suppose that $(x_{0},t_{0}) \in \mathbb{R}^{d} \times (0,\infty)$, $\varphi$ is smooth, and $\bar{u}^{*}_{e} - \varphi$ has a strict local maximum at $(x_{0},t_{0})$.  Since we expect that $\bar{u}^{*}_{e}(x,t) = \langle x,e \rangle + \alpha \overline{m}_{pl}(e) t$ for some $\overline{m}_{pl}(e) > 0$, we may as well assume that $\widehat{D\varphi}(x_{0},t_{0}) = e$.  

With $\tilde{V}_{e}$ a solution of \eqref{E: oscillating cell problem}, define the perturbed test function $\varphi^{\epsilon}$ by 
	\begin{equation*}
		\varphi^{\epsilon}(x,t) = \varphi(x,t) + \epsilon^{2} \varphi_{t}(x_{0},t_{0}) \tilde{V}_{e}(\epsilon^{-1} x).
	\end{equation*}
Letting $(x_{\epsilon},t_{\epsilon})$ be a local maximum of $u^{\epsilon}_{e} - \varphi^{\epsilon}$ close enough to $(x_{0},t_{0})$, we know that $(x_{\epsilon},t_{\epsilon}) \to (x_{0},t_{0})$ (along a subsequence) and, thus, we can invoke the equation satisfied by $u^{\epsilon}_{e}$, which gives, after some simplification,
	\begin{equation*}
		\limsup_{\epsilon \to 0^{+}} \left[ m_{e}^{\perp}(\epsilon^{-1}\langle x_{\epsilon}, e \rangle) \varphi_{t}^{\epsilon}(x_{\epsilon},t_{\epsilon}) - \alpha \|D\varphi^{\epsilon}(x_{\epsilon},t_{\epsilon})\| \right] \leq 0.
	\end{equation*}
This shows that to homogenize $(u^{\epsilon}_{e})_{\epsilon > 0}$, it is not enough to treat the transversal fluctuations of the front using $\tilde{V}_{e}$; the position of the front along the $e$ axis continues to oscillate.  Nonetheless, we have lost $(d - 1)$-degrees of freedom by averaging, and the equation obtained so far suggests that we should be able to proceed by exploiting the asymptotic behavior of the averaged, one-dimension equation:
	\begin{equation} \label{E: averaged equation}
		\left\{ \begin{array}{r l}
			m_{e}^{\perp}(s e) \mathcal{U}_{t} - \alpha |\mathcal{U}_{s}| = 0 & \text{in} \, \, \mathbb{R} \times (0,\infty), \\
			\mathcal{U}(s,0) = s & \text{if} \, \, s \in \mathbb{R}.
		\end{array} \right.
	\end{equation}
This is precisely the approach taken in what follows.


\subsection{Intermediate results}  Let us state precisely some of the results used in the proof of Theorem \ref{T: homogenization forced planes}.  

First, we construct traveling wave sub- and supersolutions of \eqref{E: quasilinear forced} in rational directions:

\begin{prop} \label{P: rational traveling}  Under the same assumptions as in Theorems \ref{T: quasilinear} and \ref{T: homogenization forced planes}, if $e \in \mathbb{R} \mathbb{Z}^{d}$ and $\alpha \in \mathbb{R} \setminus \{0\}$, then:
		\begin{itemize}
			\item[(i)] The cell problem \eqref{E: oscillating cell problem} has a solution $\tilde{V}_{e} \in C^{2, \alpha}(\mathbb{T}^{d})$ and $m^{\perp}_{e} \in C^{2,\alpha}(\mathbb{T}^{d})$.
			\item[(ii)] There is a $C^{2}$, $r_{e}$-periodic function $P_{e} : \mathbb{R} \to \mathbb{R}$ and a constant $\overline{m}_{pl}(e)$ such that
				\begin{equation*}
					\overline{m}_{pl}(e)^{-1} m_{e}^{\perp}(se) - |1 + P_{e,s}| = 0 \quad \text{in} \, \, \mathbb{R}.
				\end{equation*}
			In particular, for each $\alpha \in \mathbb{R} \setminus \{0\}$, the function $(s,t) \mapsto s + P_{e}(s) +\alpha \overline{m}_{pl}(e)^{-1} t$ is a pulsating wave solution of \eqref{E: averaged equation}.  Moreover, $\overline{m}_{pl}(e)$ is given explicitly by \eqref{E: linear response}.  
			\item[(iii)]  For each $\alpha \in \mathbb{R} \setminus \{0\}$, there is an $\epsilon_{0}^{+} > 0$ and a family $(\alpha^{+}_{\epsilon})_{\epsilon \in (0,\epsilon_{0}^{+})}$ such that the functions $(u^{+,\epsilon})_{\epsilon \in (0,\epsilon_{0}^{+})}$ defined by
				\begin{equation*}
					u^{+,\epsilon}(x,t) = \langle x,e \rangle + \epsilon P_{e}(\epsilon^{-1} \langle x,e \rangle) + \alpha_{\epsilon}^{+} \overline{m}_{pl}(e)^{-1}  \left( \epsilon^{2} \tilde{V}_{e}(\epsilon^{-1} x) + t\right)
				\end{equation*}
			are super-solutions of \eqref{E: quasilinear forced}.  Furthermore, there is a constant $C_{+} > 0$ depending only on $\tilde{V}_{e}$ and $P_{e}$ such that, for each $(x,t) \in \mathbb{R}^{d} \times \mathbb{R}$,
				\begin{align*}
					|\alpha^{+}_{\epsilon} - \alpha| &\leq C_{+} \epsilon \\
					|u^{+,\epsilon}(x,t) - \langle x,e \rangle - \alpha \overline{m}(e)^{-1} t| &\leq C_{+} \epsilon(1 + |t|)
				\end{align*}
			\item[(iv)]  For each $\alpha \in \mathbb{R} \setminus \{0\}$, there is an $\epsilon_{0}^{-} > 0$ and a family $(\alpha^{-}_{\epsilon})_{\epsilon \in (0,\epsilon_{0}^{-})}$ such that the functions $(u^{-,\epsilon})_{\epsilon \in (0,\epsilon_{0}^{-})}$ defined by
				\begin{equation*}
					u^{-,\epsilon}(x,t) = \langle x,e \rangle + \epsilon P_{e}(\epsilon^{-1} \langle x,e \rangle) + \alpha_{\epsilon}^{-} \overline{m}_{pl}(e)^{-1} \left(\epsilon^{2}  \tilde{V}_{e}(\epsilon^{-1} x) + t \right)
				\end{equation*}
			are sub-solutions of \eqref{E: quasilinear forced} in $\mathbb{R}^{d} \times \mathbb{R}$.  Furthermore, there is a constant $C_{-} > 0$ depending only on $\tilde{V}_{e}$ and $P_{e}$ such that, for each $(x,t) \in \mathbb{R}^{d} \times \mathbb{R}$,
				\begin{align*}
					|\alpha^{-}_{\epsilon} - \alpha| &\leq C_{-} \epsilon \\
					|u^{-,\epsilon}(x,t) - \langle x,e \rangle - \alpha \overline{m}(e)^{-1} t| &\leq C_{-} \epsilon(1 + |t|)
				\end{align*}
		\end{itemize}
	\end{prop}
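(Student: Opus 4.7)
The plan is to address (i)--(iv) in order. For (i), we apply Proposition \ref{P: rational correctors} to the source term $f(y) = m(y,e)$, which lies in $C^{2,\alpha}(\mathbb{T}^d)$ by \eqref{A: m smooth}; this produces $\tilde{V}_e \in C^{2,\alpha}(\mathbb{T}^d)$ solving \eqref{E: oscillating cell problem} and gives $m_e^\perp \in C^{2,\alpha}(\mathbb{T}^d)$, and by the same argument as in the proof of Theorem \ref{T: invariant measures}(iii) one has $m_e^\perp(se) = \int_{\mathbb{T}^d} m(y',e)\,\mu_e^s(dy')$. For (ii), make the ansatz $P_{e,s}(s) = \overline{m}_{\text{pl}}(e)^{-1} m_e^\perp(se) - 1$, which inherits $C^{2,\alpha}$ regularity from (i). Demanding $r_e$-periodicity of $P_e$ forces the compatibility condition $\overline{m}_{\text{pl}}(e) = r_e^{-1}\int_0^{r_e} m_e^\perp(se)\,ds$, which after inserting the formula for $m_e^\perp$ matches \eqref{E: linear response}. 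Since $m_e^\perp > 0$, we have $1 + P_{e,s} > 0$ identically, so the absolute value drops and the pulsating wave formula is immediate; we fix the free additive constant by $P_e(0) = 0$.

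For (iii) and (iv), the strategy is a direct perturbed test function computation. Writing $y = \epsilon^{-1}x$ and $s = \langle y,e\rangle$,
\begin{align*}
    u^{+,\epsilon}_t &= \alpha_\epsilon^+\,\overline{m}_{\text{pl}}(e)^{-1}, \\
    Du^{+,\epsilon} &= (1 + P_{e,s}(s))\,e + \alpha_\epsilon^+\,\overline{m}_{\text{pl}}(e)^{-1}\,\epsilon\,D\tilde{V}_e(y), \\
    D^2 u^{+,\epsilon} &= \epsilon^{-1} P_{e,ss}(s)\,e\otimes e + \alpha_\epsilon^+\,\overline{m}_{\text{pl}}(e)^{-1}\,D^2 \tilde{V}_e(y).
\end{align*}
Since $1 + P_{e,s}$ is uniformly positive, $\widehat{Du^{+,\epsilon}} = e + O(\epsilon)$ and $\|Du^{+,\epsilon}\| = (1 + P_{e,s}(s)) + O(\epsilon^2)$. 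Two cancellations drive the argument. First, because $A(y,\hat{p})\hat{p} = 0$ and $\hat{p} - e = O(\epsilon)$, a direct expansion gives $e^\top A(y,\widehat{Du^{+,\epsilon}})\,e = O(\epsilon^2)$; this kills the nominally singular contribution $\epsilon^{-1} P_{e,ss}\,e\otimes e$, leaving only $O(\epsilon)$. Second, continuity of $a$ together with the boundedness of $D^2\tilde{V}_e$ from (i) yields $\text{tr}(A(y,\widehat{Du^{+,\epsilon}})\,D^2 \tilde{V}_e(y)) = \text{tr}(A(y,e)\,D^2 \tilde{V}_e(y)) + o(1)$, whose leading term is identified via \eqref{E: oscillating cell problem} as $m(y,e) - m_e^\perp(se)$. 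Using (ii) to rewrite $|1 + P_{e,s}(s)| = \overline{m}_{\text{pl}}(e)^{-1} m_e^\perp(se)$ and invoking continuity of $m$ in the direction variable, the expression \eqref{E: quasilinear forced} evaluated at $u^{+,\epsilon}$ reduces to
\begin{equation*}
    (\alpha_\epsilon^+ - \alpha)\,\overline{m}_{\text{pl}}(e)^{-1}\,m_e^\perp(se) + E_\epsilon(y),
\end{equation*}
where $E_\epsilon$ is controlled by moduli of continuity. Choosing $\alpha_\epsilon^+ = \alpha + C_+\epsilon$ with $C_+$ sufficiently large produces a supersolution, and the $L^\infty$ bounds on $P_e$ and $\tilde{V}_e$ yield the pointwise estimate on $u^{+,\epsilon}$. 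Part (iv) is completely symmetric, with $\alpha_\epsilon^- = \alpha - C_-\epsilon$.

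The main obstacle is quantifying $E_\epsilon$ to obtain the claimed $O(\epsilon)$ rate rather than a merely vanishing one. The potentially singular contribution from $\epsilon^{-1} P_{e,ss}$ is tamed by the purely algebraic cancellation $e^\top A(y,\hat{p})e = O(\|e - \hat{p}\|^2)$ inherent in the structure \eqref{E: structure condition}, which is independent of any regularity in the $e$ variable. The residual errors come from replacing $a(y,\widehat{Du^{+,\epsilon}})$ by $a(y,e)$ and $m(y,\widehat{Du^{+,\epsilon}})$ by $m(y,e)$, for which only continuity in $e$ is imposed in \eqref{A: a continuous} and \eqref{A: m continuous}; upgrading this to a Lipschitz (hence $O(\epsilon)$) bound requires either interpolating against the uniform-in-$e$ $C^{2,\alpha}(\mathbb{T}^d)$ regularity in $y$ or deriving the rate a posteriori from the equation itself. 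This bookkeeping is what the detailed proof must ultimately carry out.
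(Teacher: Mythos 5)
Your proof follows essentially the same route as the paper's: (i) is Proposition \ref{P: rational correctors} applied with $f = m(\cdot,e)$, (ii) is the explicit one-dimensional pulsating-wave construction (the paper's Lemma \ref{L: rotation number}), and (iii)--(iv) use the same ansatz with $\alpha^{\pm}_{\epsilon} = \alpha \pm C\epsilon$, the singular $\epsilon^{-1}P_{e,ss}\, e \otimes e$ contribution being removed by exactly the cancellation $\langle A(y,\hat{p})e, e \rangle = O(\|\hat{p} - e\|^{2})$ that the paper encodes through its Taylor expansion of $p \mapsto (\text{Id} - \hat{p}\otimes\hat{p})\,a(y,\hat{p})\,(\text{Id} - \hat{p}\otimes\hat{p})$ around $p = e$. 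The bookkeeping you flag at the end is not treated as an obstruction in the paper either---its expansion invokes $D_{p}a(y,e)$ and records all such replacement errors as $O(\epsilon)$, implicitly taking Lipschitz dependence on the direction variable for granted---and your only slip, $\|Du^{+,\epsilon}\| = (1 + P_{e,s}) + O(\epsilon^{2})$, should read $O(\epsilon)$ (since $\langle D\tilde{V}_{e}, e \rangle$ need not vanish), which is harmless for the argument.
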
  

A similar construction works when $e \notin \mathbb{R} \mathbb{Z}^{d}$ and \eqref{E: oscillating cell problem} has a $C^{2}$ solution.  Here is the result in that case:

	\begin{prop} \label{P: irrational traveling} If there is a solution $V_{e} \in C^{2}(\mathbb{T}^{d})$ of \eqref{E: oscillating cell problem} and $e \in S^{d-1} \setminus \mathbb{R} \mathbb{Z}^{d}$, then there is an $\epsilon_{0} > 0$ and families $(\alpha_{\epsilon}^{+})_{\epsilon \in (0,\epsilon_{0})}$ and $(\alpha_{\epsilon}^{-})_{\epsilon \in (0,\epsilon_{0})}$ such that the functions $(u^{+,\epsilon})_{\epsilon \in (0,\epsilon_{0})}$ and $(u^{-,\epsilon})_{\epsilon \in (0,\epsilon_{0})}$ defined by 
		\begin{equation*}
			u^{\pm,\epsilon}(x,t) = \langle x,e \rangle + \alpha_{\epsilon}^{\pm} \overline{m}(e)^{-1} \left( \epsilon^{2} V_{e}(\epsilon^{-1} x) + t \right)
		\end{equation*} 
	are, respectively, super- and sub-solutions of \eqref{E: quasilinear forced} in $\mathbb{R}^{d} \times \mathbb{R}$.  Furthermore, there is a $C > 0$ such that, for each $(x,t) \in \mathbb{R}^{d} \times \mathbb{R}$,
		\begin{align*}
			|\alpha^{+}_{\epsilon} - \alpha| + |\alpha^{-}_{\epsilon} - \alpha| &\leq C \epsilon \\
			|u^{+,\epsilon}(x,t) - \langle x,e \rangle - \alpha \overline{m}(e)^{-1} t| + |u^{-,\epsilon}(x,t) - \langle x,e \rangle - \alpha \overline{m}(e)^{-1} t| &\leq C \epsilon (1 + |t|)
		\end{align*}  \end{prop}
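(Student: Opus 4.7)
The plan is a direct perturbed test function computation. By Theorem \ref{T: invariant measures}(i), in an irrational direction the oscillating function satisfies $m_e^\perp \equiv \overline{m}(e)$, so the assumed $C^{2}$ solution $V_e$ of \eqref{E: oscillating cell problem} satisfies the much cleaner identity
\[
-\text{tr}\bigl(A(y,e) D^{2} V_e(y)\bigr) = \overline{m}(e) - m(y,e) \quad \text{in } \mathbb{T}^{d}.
\]
I would substitute the ansatz directly into \eqref{E: quasilinear forced} and use this identity to kill the leading-order oscillations, then choose $\alpha_\epsilon^\pm$ to give the residual the correct sign.

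Write $y = \epsilon^{-1} x$ and $\beta = \alpha_\epsilon^\pm \overline{m}(e)^{-1}$. Differentiation gives $u_t^{\pm,\epsilon} = \beta$, $Du^{\pm,\epsilon} = e + \beta \epsilon DV_e(y)$, $D^{2} u^{\pm,\epsilon} = \beta D^{2} V_e(y)$. Since $V_e \in C^{2}(\mathbb{T}^{d})$ and $\beta$ stays bounded for small $\epsilon$, the normal vector and gradient magnitude satisfy $\widehat{Du^{\pm,\epsilon}} = e + O(\epsilon)$ and $\|Du^{\pm,\epsilon}\| = 1 + \beta\epsilon\langle e, DV_e(y)\rangle + O(\epsilon^{2})$, uniformly in $(x,t)$. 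Using joint continuity of $m$ and $A$ on $\mathbb{T}^{d} \times S^{d-1}$ to replace $\widehat{Du^{\pm,\epsilon}}$ by $e$ in the coefficients, and then invoking the cell problem identity, the LHS of \eqref{E: quasilinear forced} simplifies to
\[
\underbrace{m(y,e)\beta - \text{tr}\bigl(\beta A(y,e) D^{2} V_e(y)\bigr)}_{=\,\beta\overline{m}(e)\,=\,\alpha_\epsilon^\pm} \;-\; \alpha \;+\; r_\epsilon(x,t) \;=\; \alpha_\epsilon^\pm - \alpha + r_\epsilon(x,t),
\]
where $r_\epsilon$ collects the error from replacing $\widehat{Du^{\pm,\epsilon}}$ by $e$ in $m$ and $A$ and from the expansion of $\|Du^{\pm,\epsilon}\|$ about $1$.

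Choosing $C > 0$ large enough that $\|r_\epsilon\|_{L^\infty} \leq C\epsilon$ for small $\epsilon$, and then setting $\alpha_\epsilon^+ = \alpha + C\epsilon$ and $\alpha_\epsilon^- = \alpha - C\epsilon$, one makes the residual $\alpha_\epsilon^\pm - \alpha + r_\epsilon$ nonnegative (resp.\ nonpositive). Since the ansatz is $C^{2}$, this yields classical, hence viscosity, super- and subsolutions of \eqref{E: quasilinear forced} on $\mathbb{R}^{d} \times \mathbb{R}$. The $L^\infty$ bound $|u^{\pm,\epsilon}(x,t) - \langle x,e\rangle - \alpha\overline{m}(e)^{-1}t| \leq C\epsilon(1+|t|)$ then follows by splitting
\[
u^{\pm,\epsilon}(x,t) - \langle x,e\rangle - \alpha\overline{m}(e)^{-1}t = (\alpha_\epsilon^\pm - \alpha)\overline{m}(e)^{-1}t + \alpha_\epsilon^\pm \overline{m}(e)^{-1} \epsilon^{2} V_e(\epsilon^{-1}x),
\]
using $|\alpha_\epsilon^\pm - \alpha| \leq C\epsilon$ and $\|V_e\|_{L^\infty(\mathbb{T}^{d})} < \infty$.

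The principal technical hurdle is verifying that $r_\epsilon$ is genuinely $O(\epsilon)$, not merely $O(\epsilon + \omega(\epsilon))$ for $\omega$ the modulus of continuity of $m, A$ in the direction variable. Here one exploits that $\widehat{Du^{\pm,\epsilon}} - e = \beta\epsilon(\text{Id} - e\otimes e)DV_e(y) + O(\epsilon^{2})$ lies to leading order in $\langle e \rangle^{\perp}$ at size $O(\epsilon)$, that the projections in \eqref{E: structure condition} depend smoothly on $\hat{p}$, and that the regularity of $a$ and $m$ in Theorem \ref{T: quasilinear} together with the $C^{2}$ regularity of $V_e$ is strong enough to absorb these errors into a true $O(\epsilon)$ remainder. (If only the weaker $O(\omega(\epsilon))$ is available, one replaces $C\epsilon$ by $C(\epsilon + \omega(\epsilon))$ in the choice of $\alpha_\epsilon^\pm$; this still suffices for the downstream homogenization applications, but the cleanest rate $O(\epsilon)$ is the one claimed.)
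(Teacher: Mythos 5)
Your proposal is correct and follows essentially the same route as the paper: the omitted proof is stated to be a simpler version of Proposition \ref{P: rational traveling}, which is exactly this direct substitution of the ansatz, Taylor expansion of the coefficients about $\hat{p} = e$ (using $D_{p}a$ and the smooth dependence of the projection $\hat{p} \otimes \hat{p}$ on $\hat{p}$), cancellation via the cell problem, and the choice $\alpha_{\epsilon}^{\pm} = \alpha \pm C\epsilon$. The technical hurdle you flag is handled there in the way you suggest -- the expansion treats the direction-dependence of the coefficients as differentiable, and since in the irrational case $D^{2}u^{\pm,\epsilon} = \alpha_{\epsilon}^{\pm}\overline{m}(e)^{-1}D^{2}V_{e}(\epsilon^{-1}x)$ is bounded (no $\epsilon^{-1}P_{e}''$ term), the remainder is genuinely $O(\epsilon)$.
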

		
	Since the proof of Proposition \ref{P: irrational traveling} is effectively a simpler version of that of Proposition \ref{P: rational traveling}, we omit it.  In general, it is far from clear whether or not \eqref{E: oscillating cell problem} has a solution when $e \notin \mathbb{R} \mathbb{Z}^{d}$ due to the loss of compactness; cases where this is possible are discussed in Appendix \ref{A: technical lemmata}.   
	
	When applicable, Propositions \ref{P: rational traveling} and \ref{P: irrational traveling} allow us to quantify the convergence in Theorem \ref{T: homogenization forced planes}.  More precisely, we have
	
		\begin{prop} \label{P: good estimates} If $e \in S^{d-1} \cap \mathbb{R} \mathbb{Z}^{d}$ or there is a $V_{e} \in C^{2}(\mathbb{T}^{d})$ solving \eqref{E: cell problem}, then there is an $\epsilon_{0} \in (0,1)$ and a $C_{e} > 0$ such that if $\epsilon \in (0,\epsilon_{0})$ and $u_{e}^{\epsilon}$ solves \eqref{E: quasilinear forced} with $u_{e}^{\epsilon}(x,0) = \langle x,e \rangle$, then
		\begin{equation*}
			|u_{e}^{\epsilon}(x,t) - \langle x,e \rangle - \alpha \overline{m}_{pl}(e)^{-1} t| \leq C_{e} \epsilon (1 + t) \quad \text{if} \, \, (x,t) \in \mathbb{R}^{d} \times (0,\infty)
		\end{equation*}
	\end{prop}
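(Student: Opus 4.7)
The plan is to sandwich $u_e^\epsilon$ between the sub- and supersolutions $u^{-,\epsilon}$ and $u^{+,\epsilon}$ produced by Proposition \ref{P: rational traveling}(iii)--(iv) in the rational case, or by Proposition \ref{P: irrational traveling} in the irrational case with a $C^2$ cell solution. Note that $\overline{m}_{pl}(e)=\overline{m}(e)$ when $e\notin\mathbb{R}\mathbb{Z}^d$ by Theorem \ref{T: linear response formula}(i), so in both regimes the two propositions describe the same target profile $\langle x,e\rangle+\alpha\overline{m}_{pl}(e)^{-1}t$. Because both propositions already assert that $u^{\pm,\epsilon}$ lies within $C\epsilon(1+|t|)$ of this profile, the desired estimate will follow from the triangle inequality once the comparison is established.

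The first step is to match the initial data up to an $O(\epsilon)$ error. At $t=0$, the corrector contributions $\epsilon P_e(\epsilon^{-1}\langle x,e\rangle)$ and $\epsilon^2\alpha_\epsilon^\pm\overline{m}_{pl}(e)^{-1}\tilde V_e(\epsilon^{-1}x)$ are bounded uniformly in $x\in\mathbb{R}^d$ by a multiple of $\epsilon$, since $P_e$ is $r_e$-periodic and $C^2$ and $\tilde V_e\in C^{2,\alpha}(\mathbb{T}^d)$ (and analogously $V_e\in C^2(\mathbb{T}^d)$ in the irrational case). Hence there is $C_0>0$, depending only on the $L^\infty$-norms of $P_e$ and $\tilde V_e$ (or $V_e$), $|\alpha|$, and $\overline{m}_{pl}(e)$, such that
\begin{equation*}
u^{-,\epsilon}(\cdot,0)-C_0\epsilon\;\leq\;\langle\cdot,e\rangle\;\leq\;u^{+,\epsilon}(\cdot,0)+C_0\epsilon\quad\text{on }\mathbb{R}^d.
\end{equation*}
Because \eqref{E: quasilinear forced} is invariant under addition of constants to the unknown (the equation involves $u_e^\epsilon$ only through $u_{e,t}^\epsilon$, $Du_e^\epsilon$, and $D^2 u_e^\epsilon$), $u^{+,\epsilon}+C_0\epsilon$ remains a supersolution and $u^{-,\epsilon}-C_0\epsilon$ remains a subsolution. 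Invoking the comparison principle for \eqref{E: quasilinear forced} recorded in Appendix \ref{A: wellposed} then yields the pointwise sandwich
\begin{equation*}
u^{-,\epsilon}(x,t)-C_0\epsilon\;\leq\;u_e^\epsilon(x,t)\;\leq\;u^{+,\epsilon}(x,t)+C_0\epsilon\quad\text{in }\mathbb{R}^d\times[0,\infty),
\end{equation*}
and combining this with the $O(\epsilon(1+|t|))$ proximity of $u^{\pm,\epsilon}$ to the target profile delivers the claim with $C_e=C_0+\max(C_+,C_-)$.

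The only genuine technical point is that the comparison principle for \eqref{E: quasilinear forced} must be applied to solutions of linear growth at infinity; this is precisely the scope of the well-posedness theory reviewed in Appendix \ref{A: wellposed}, and is the one place where global behavior of the unknowns enters. Granting that, the proposition is essentially a repackaging of the rate estimates already supplied by Propositions \ref{P: rational traveling} and \ref{P: irrational traveling}, with no new oscillatory analysis required.
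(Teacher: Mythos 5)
Your argument is exactly the one the paper intends: the paper omits the proof, stating it "follows directly from Propositions \ref{P: rational traveling} and \ref{P: irrational traveling} and the comparison principle," which is precisely your sandwich-by-translates construction (using that $\overline{m}_{\text{pl}}(e)=\overline{m}(e)$ off $\mathbb{R}\mathbb{Z}^{d}$ and that the equation is invariant under adding constants). Your proof is correct and matches the intended route, so there is nothing further to add.
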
  
	
	Since the proof of Proposition \ref{P: good estimates} follows directly from Propositions \ref{P: rational traveling} and \ref{P: irrational traveling} and the comparison principle, we omit it.
	
	It only remains to consider the case when $e \notin \mathbb{R} \mathbb{Z}^{d}$ yet the hypotheses of Proposition \ref{P: irrational traveling} fail.  Using approximate correctors, it is still possible to construct traveling wave sub- and supersolutions and use these to give a qualitative proof of Theorem \ref{T: homogenization forced planes}.  This is briefly treated in Section \ref{S: forced planes weak proof} below.  
	
\subsection{Proof of Proposition \ref{P: rational traveling}}  Most of the work in proving the proposition lies in (iii) and (iv) since (i) is Proposition \ref{P: rational correctors} and (ii) is an exercise in ODE theory.  More precisely, (ii) is covered by the following lemma, the proof of which is left to the interested reader:

	\begin{lemma} \label{L: rotation number} Fix $r > 0$.  If $\tilde{m} : r \mathbb{T} \to (0,\infty)$ is $C^{1}$, then the solution $X^{s,\tilde{m}}$ of the one-dimensional ODE
		\begin{equation*}
			\left\{ \begin{array}{r l}
				\tilde{m}(X^{s,\tilde{m}}_{t}) \dot{X}^{s,\tilde{m}}_{t} = 1, \\
				X^{s,\tilde{m}}_{0} = s,
			\end{array} \right.
		\end{equation*}
	satisfies 
		\begin{equation*}
			\lim_{t \to \infty} \frac{X^{s,\tilde{m}}_{t} - s}{t} = \frac{1}{\fint_{0}^{r} \tilde{m}(u) \, du}.
		\end{equation*}
	Furthermore, the function $\tilde{P} : r \mathbb{T} \to \mathbb{R}$ given by 
		\begin{equation*}
			\tilde{P}(s) = \left( \fint_{0}^{r_{e}} \tilde{m}(u) \, du \right)^{-1} \int_{0}^{s} \tilde{m}(s) \, ds - 1
		\end{equation*}
	solves the pulsating wave equation
		\begin{equation*}
			\frac{\tilde{m}(s)}{\fint_{0}^{r} \tilde{m}(u) \, du} - |1 + P'(s)| = 0 \quad \text{in} \, \, r \mathbb{T}.
		\end{equation*} \end{lemma}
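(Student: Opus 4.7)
The plan is to solve the ODE explicitly by separation of variables. Since $\tilde{m}$ is positive and $C^{1}$, the equation can be rewritten as $\frac{d}{dt} M(X^{s,\tilde{m}}_{t}) = 1$, where $M(x) = \int_{0}^{x} \tilde{m}(u)\, du$. Integration from $0$ to $t$ and the initial condition then give $M(X^{s,\tilde{m}}_{t}) = t + M(s)$. Because $\tilde{m} > 0$, the antiderivative $M : \mathbb{R} \to \mathbb{R}$ is a strictly increasing $C^{2}$ diffeomorphism, so we obtain the explicit representation
    \begin{equation*}
        X^{s,\tilde{m}}_{t} = M^{-1}(t + M(s)).
    \end{equation*}

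The next step is to extract the asymptotic slope using periodicity. Setting $\bar{m} = \fint_{0}^{r} \tilde{m}(u)\, du$, the $r$-periodicity of $\tilde{m}$ implies $M(x + r) - M(x) = r \bar{m}$ for all $x \in \mathbb{R}$, so the function $x \mapsto M(x) - \bar{m} x$ is continuous and $r$-periodic, hence bounded. Consequently $M(x) = \bar{m} x + O(1)$ as $|x| \to \infty$, and inverting yields $M^{-1}(y) = y/\bar{m} + O(1)$ as $|y| \to \infty$. Plugging this into the explicit formula gives $X^{s,\tilde{m}}_{t} = t/\bar{m} + O(1)$, from which the stated limit $\lim_{t \to \infty} (X^{s,\tilde{m}}_{t} - s)/t = 1/\bar{m}$ follows immediately.

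For the second part, I would simply check that the formula proposed for $\tilde{P}$ (which, read with the evident typographical corrections, should be $\tilde{P}(s) = \bar{m}^{-1} M(s) - s$) does the job. Differentiation gives $1 + \tilde{P}'(s) = \tilde{m}(s)/\bar{m}$, which is strictly positive, so $|1 + \tilde{P}'(s)| = \tilde{m}(s)/\bar{m}$ and the pulsating wave equation is satisfied identically. The identity $M(s + r) - M(s) = r \bar{m}$ established above also shows $\tilde{P}(s + r) = \tilde{P}(s)$, so $\tilde{P}$ descends to a $C^{2}$ function on $r\mathbb{T}$ as claimed.

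There is no real obstacle here, as the whole argument reduces to the classical trick of straightening out a positive periodic vector field on the circle by its antiderivative; the only subtlety is keeping track of the constant $\bar{m}$ appearing in the sublinear correction $M(x) - \bar{m} x$, which is handled automatically by the periodicity of $\tilde{m}$. The lemma is genuinely an ODE exercise and I would write the proof in a few lines, as the excerpt already suggests by delegating the details to the reader.
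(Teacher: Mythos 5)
Your proof is correct and is precisely the routine ODE argument the paper has in mind (the paper leaves this lemma to the reader, so there is no written proof to diverge from): straightening the flow by the antiderivative $M$, using $M(x+r)-M(x)=r\fint_{0}^{r}\tilde{m}$ to get the linear growth of $M^{-1}$, and verifying the pulsating wave equation by direct differentiation. Your reading of the misprints in the statement of $\tilde{P}$ (replacing $r_{e}$ by $r$ and the trailing $-1$ by $-s$, so that $1+\tilde{P}'(s)=\tilde{m}(s)\bigl(\fint_{0}^{r}\tilde{m}\bigr)^{-1}>0$ and $\tilde{P}$ is $r$-periodic) is the correct one, consistent with how $P_{e}$ is used in Proposition \ref{P: rational traveling}.
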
    
		
	Now we proceed with the

	\begin{proof}[Proof of Proposition \ref{P: rational traveling}]  (i) follows from Proposition \ref{P: rational correctors} above, and (ii) is an application of Lemma \ref{L: rotation number}.  We will only prove (iii) since (iv) follows similarly.  
	
	Let $\epsilon > 0$ and $\alpha_{\epsilon}^{+} \in \mathbb{R}$ be free variables for the moment and define $u^{+,\epsilon} : \mathbb{R}^{d} \times \mathbb{R} \to \mathbb{R}$ by 
		\begin{equation} 
			u^{+,\epsilon}(x,t) = \langle x,e \rangle + \epsilon P_{e}(\epsilon^{-1} \langle x,e \rangle) + \alpha_{\epsilon} \overline{m}_{pl}(e)^{-1}\left( \epsilon^{2} \tilde{V}_{e}(\epsilon^{-1} x) + t\right).
		\end{equation}
	We will show that if $\epsilon > 0$ is small enough and $\alpha_{\epsilon}^{+} = \alpha + C_{0} \epsilon$ for some large enough constant $C_{0} > 0$, then $u^{+,\epsilon}$ is a super-solution of \eqref{E: quasilinear forced}.
	
	Let us study the equation for $u^{+,\epsilon}$ term by term.  First, the term with the time derivative.  To declutter the notation, we define $p_{\epsilon} : \mathbb{R}^{d} \times \mathbb{R} \to \mathbb{R}^{d}$ by 
		\begin{equation*}
			p_{\epsilon}(x,t) = Du^{+,\epsilon}(x,t) = (1 + P'_{e}(\epsilon^{-1} \langle x,e \rangle)) e + \epsilon \alpha_{\epsilon}^{+} \overline{m}_{pl}(e)^{-1} D\tilde{V}_{e}(\epsilon^{-1} x).
		\end{equation*}
	Observe that since $1 + P'_{e} > 0$, it follows that $\hat{p_{\epsilon}} = e + O(\epsilon)$.   Thus,
		\begin{align*}
			m(\epsilon^{-1} x, \widehat{Du^{+,\epsilon}}) u^{+,\epsilon}_{t} = \alpha_{\epsilon}^{+} \overline{m}_{pl}(e)^{-1} m(\epsilon^{-1} x, e) + O(\epsilon).
		\end{align*}
		
	Next, the curvature term.  First, notice that if we write $\mathcal{N}(p) = \hat{p} \otimes \hat{p}$, then, by Taylor expansion,
		\begin{align*}
			\text{tr} \left(A(\epsilon^{-1} x, \widehat{Du^{+,\epsilon}}) D^{2}u^{+,\epsilon} \right) &= \text{tr} \left(a(\epsilon^{-1} x, p_{\epsilon}) (\text{Id} - \hat{p}_{\epsilon} \otimes \hat{p}_{\epsilon}) D^{2} u^{+,\epsilon} (\text{Id} - \hat{p}_{\epsilon} \otimes \hat{p}_{\epsilon}) \right) \\
				&= \text{tr} \left( a(\epsilon^{-1} x, e) (\text{Id} - e \otimes e) D^{2}u^{+,\epsilon} (\text{Id} - e \otimes e) \right) \\
				&\quad + \text{tr} \left(D_{p}a(\epsilon^{-1}x,e)[p_{\epsilon} - e] (\text{Id} - e \otimes e) D^{2}u^{+,\epsilon} (\text{Id} - e \otimes e) \right) \\
				&\quad + \text{tr} \left( a(\epsilon^{-1} x,e) D_{p}\mathcal{N}(e)[p_{\epsilon} - e] D^{2}u^{+,\epsilon} (\text{Id} - e \otimes e)\right) \\
				&\quad + \text{tr} \left( a(\epsilon^{-1} x, e) (\text{Id} - e \otimes e) D^{2}u^{+,\epsilon} D_{p}\mathcal{N}(e)[p_{\epsilon} - e] \right) \\
				&\quad + \|D^{2}u^{+,\epsilon}\|_{L^{\infty}(\mathbb{T}^{d})} O(\|\hat{p}_{\epsilon} - e\|^{2})
		\end{align*}
	Next, observe that $D^{2}u^{+,\epsilon} = \epsilon^{-1} P_{e}'' e \otimes e + \alpha_{\epsilon} \overline{m}_{pl}(e)^{-1} D^{2} \tilde{V}_{e}$ and, thus, in each of the first four terms in the previous expressions, we can replace $D^{2}u^{+,\epsilon}$ by $\alpha_{\epsilon}^{+} \overline{m}_{pl}(e)^{-1} D^{2} \tilde{V}_{e}$.  This leads to
		\begin{align*}
			\text{tr} \left(A(\epsilon^{-1} x, \widehat{Du^{+,\epsilon}}) D^{2}u^{+,\epsilon} \right) &= \alpha_{\epsilon}^{+} \overline{m}_{pl}(e)^{-1} \text{tr} \left( A(\epsilon^{-1} x, e) D^{2} \tilde{V}_{e} \right)  + \|D^{2}\tilde{V}_{e}\|_{L^{\infty}(\mathbb{T}^{d})} O(\|\hat{p}_{\epsilon} - e\|) \\
				&\quad + \|D^{2}u^{+,\epsilon}\|_{L^{\infty}(\mathbb{T}^{d})} O(\|\hat{p}_{\epsilon} - e\|^{2})
		\end{align*}
	Since $\|D^{2}u^{+,\epsilon}\| \leq C\epsilon^{-1}$ and $\|\hat{p}_{\epsilon} - e\| = O(\epsilon)$, we conclude
		\begin{equation*}
			\text{tr} \left(A(\epsilon^{-1} x, \widehat{Du^{+,\epsilon}}) D^{2}u^{+,\epsilon} \right) = \alpha_{\epsilon}^{+} \overline{m}_{pl}(e)^{-1} \text{tr} \left( A(\epsilon^{-1} x, e) D^{2} V_{e} \right) + O(\epsilon).
		\end{equation*}
		
	Finally, we treat the first-order term.  Since $\|e\| = 1$, we have
		\begin{align*}
			\alpha \|Du^{+,\epsilon}\| &= \alpha |1 + P_{e}'| + O(\epsilon).
		\end{align*}
		
	Putting it all together and invoking the equations satisfied by $\tilde{V}_{\epsilon}$ and $P_{e}$, we find, for some $\overline{C} > 0$,
		\begin{align*}
			&m(\epsilon^{-1} x, \widehat{Du^{+,\epsilon}}) u^{+,\epsilon}_{t} - \text{tr} \left(A(\epsilon^{-1} x, \widehat{Du^{+,\epsilon}}) D^{2}u^{+,\epsilon} \right) - \alpha \|Du^{+,\epsilon}\| \geq \\
			&\qquad \alpha_{\epsilon}^{+}\overline{m}_{pl}(e)^{-1} \left( m(\epsilon^{-1} x, e) - \text{tr} \left( A(\epsilon^{-1} x, e) D^{2}\tilde{V}_{e} \right) \right) - \alpha |1 + P_{e}'| - (\overline{C} + o(1))\epsilon = \\
			&\qquad \alpha_{\epsilon}^{+} \overline{m}_{pl}(e)^{-1} m_{e}^{\perp}(\epsilon^{-1} \langle x, e \rangle) - \alpha \overline{m}_{pl}(e)^{-1} m_{e}^{\perp}(\epsilon^{-1} \langle x,e \rangle) - (\overline{C} + o(1)) \epsilon \\
			&\qquad =(C_{0} - C + o(1)) \epsilon \overline{m}_{pl}(e)^{-1} m_{e}^{\perp}(\epsilon^{-1} \langle x,e \rangle).
		\end{align*}
	Setting $C_{0} = C + 1$, we deduce that there is an $\epsilon_{0} > 0$ such that $u^{+,\epsilon}$ is a super-solution of \eqref{E: quasilinear forced} in $\mathbb{R}^{d} \times \mathbb{R}$.\end{proof}  
	
\subsection{Proofs of Theorems \ref{T: homogenization forced planes} and \ref{T: linear response formula}} \label{S: forced planes weak proof} Finally, we complete the proof of Theorem \ref{T: homogenization forced planes} and address Theorem \ref{T: linear response formula}.  

In view of Propositions \ref{P: rational traveling} and \ref{P: irrational traveling}, Theorem \ref{T: homogenization forced planes} is proved as soon as we analyze the case that $e \notin \mathbb{R} \mathbb{Z}^{d}$ and yet \eqref{E: oscillating cell problem} has no smooth solution.  Not surprisingly, we argue using approximate correctors; the argument is included only for the sake of completeness.

Theorem \ref{T: linear response formula} follows directly from Proposition \ref{P: rational traveling} when $e \in \mathbb{R} \mathbb{Z}^{d}$ and the proof of Theorem \ref{T: homogenization forced planes}, otherwise.

	\begin{proof}[Proof of Theorem \ref{T: homogenization forced planes}]  Assume $e \in S^{d-1} \setminus \mathbb{R} \mathbb{Z}^{d}$; otherwise, Proposition \ref{P: rational traveling} applies.  Given $\delta > 0$, let $V^{\delta}$ be the solution of \eqref{E: penalized cell problem linear} with $f(y) = -m(y,e)$.  By Schauder estimates, there is a constant $C > 0$ such that
		\begin{equation} \label{E: penalized corrector estimates}
			\delta \|V^{\delta}\|_{L^{\infty}(\mathbb{T}^{d})} + \delta^{2} \|DV^{\delta}\|_{L^{\infty}(\mathbb{T}^{d})} + \delta^{3} \|D^{2} V^{\delta}\|_{L^{\infty}(\mathbb{T}^{d})} \leq C.
		\end{equation}
		
	To start with, we claim that
		\begin{equation} \label{E: upper bound}
			\limsup \nolimits^{*} u_{e}^{\epsilon}(x,t) \leq \langle x,e \rangle + \alpha \overline{m}(e)^{-1} t.
		\end{equation}
	
	To see this, fix $\beta \in (\alpha,\infty)$, let $\epsilon \in (0,1)$, set $\delta(\epsilon) = \epsilon^{\frac{1}{4}}$, and define $v^{\epsilon} : \mathbb{R}^{d} \times \mathbb{R} \to \mathbb{R}$ by 
			\begin{equation*}
				v^{\epsilon}(x,t) = \langle x,e \rangle + \beta \overline{m}(e)^{-1} \left(t + \epsilon^{2} V^{\delta(\epsilon)}_{e} (\epsilon^{-1} x) \right).
			\end{equation*}
		We claim there is an $\epsilon_{0} > 0$ depending only on $m$ and $\beta$ such that $v^{\epsilon}$ is a super-solution of \eqref{E: quasilinear forced} if $\epsilon \in (0,\epsilon_{0})$.  
		
		Indeed, invoking \eqref{E: penalized corrector estimates}, we find
			\begin{align*}
				m(\epsilon^{-1} x, \widehat{Dv^{\epsilon}}) & v^{\epsilon}_{t} - \text{tr} \left( A(\epsilon^{-1}x,\widehat{Dv^{\epsilon}}) D^{2}v^{\epsilon} \right) - \alpha \|Dv^{\epsilon}\| \\
				&\quad = (\beta - \alpha) + \beta \overline{m}(e)^{-1}(-\delta V^{\delta}_{e}(\epsilon^{-1}x) - \overline{m}(e)) + O(\epsilon^{\frac{1}{2}}) \\
				&\quad = \beta - \alpha + o(1).
			\end{align*}
		Thus, there is an $\epsilon_{0} \in (0,1)$ such that if $\epsilon \in (0,\epsilon_{0})$, then
			\begin{equation*}
				m(\epsilon^{-1} x, \widehat{Dv^{\epsilon}})  v^{\epsilon}_{t} - \text{tr} \left( A(\epsilon^{-1}x,\widehat{Dv^{\epsilon}}) D^{2}v^{\epsilon} \right) - \alpha \|Dv^{\epsilon}\| \geq \frac{\beta - \alpha}{2} \quad \text{in} \, \, \mathbb{R}^{d} \times \mathbb{R}.
			\end{equation*}
			
		Now we prove \eqref{E: upper bound}.  First, notice that, by the choice of $\delta(\epsilon)$, 
			\begin{equation*}
				\langle x,e \rangle \leq v^{\epsilon}(x,0) + \beta \overline{m}(e)^{-1} \|\delta V^{\delta}_{e}\|_{L^{\infty}(\mathbb{T}^{d})} \epsilon^{\frac{7}{4}}
			\end{equation*}
		Thus, the comparison principle implies
			\begin{equation*}
				u^{\epsilon}(x,t) \leq v^{\epsilon}(x,t) + \beta \overline{m}(e)^{-1} \|m\|_{L^{\infty}(\mathbb{T}^{d})} \epsilon^{\frac{7}{4}}.
			\end{equation*}
		Sending $\epsilon \to 0^{+}$, we deduce that
			\begin{equation*}
				\limsup \nolimits^{*} u^{\epsilon}(x,t) \leq \langle x,e \rangle + \beta \overline{m}(e)^{-1} t.
			\end{equation*}
		At the same time, $\beta$ was an arbitrary number in $(\alpha,\infty)$.  Therefore, sending $\beta \to \alpha^{+}$, we recover \eqref{E: upper bound}.  
		
		Replacing $\beta \in (\alpha, \infty)$ by $\beta \in (-\infty,\alpha)$, we similarly prove that 
			\begin{equation*}
				\liminf \nolimits_{*} u^{\epsilon}(x,t) \geq \langle x,e \rangle + \alpha \overline{m}(e)^{-1} t.
			\end{equation*}  

	\end{proof} 
	
\subsection{Derivatives of Front Speeds}  This section treats the proof of Corollary \ref{C: derivatives}.  In the proof, we use the fact that there is a pulsating wave solution of \eqref{E: hyperbolic problem}.  This is constructed using a viscosity solution $P_{e,\alpha} \in C(\mathbb{T}^{d})$ of the equation
	\begin{equation*}
		\lambda_{e}(\alpha) m(y,\widehat{(e + DP_{e,\alpha})}) - \text{tr} \left(A(y,\widehat{(e + DP_{e,\alpha})}) D^{2}P_{e,\alpha}\right) - \alpha \|e + DP_{e,\alpha}\| = 0 \quad \text{in} \, \, \mathbb{T}^{d}.
	\end{equation*}
Since $d = 2$, the existence of such a function follows from \cite{caffarelli monneau}.  

		\begin{proof}[Proof of Corollary \ref{C: derivatives}]  As in the previous section, the proof is neater depending on whether or not \eqref{E: oscillating cell problem} has a smooth solution.  We only give the arguments in the case when $e \in S^{d-1} \setminus \mathbb{R} \mathbb{Z}^{d}$, but \eqref{E: oscillating cell problem} does not have a smooth solution.
		
		Fix $\beta \in (1,\infty)$.  Let $v^{\epsilon}(x,t) = \langle x,e \rangle + \beta \overline{m}(e)^{-1}(\epsilon^{2} V^{\delta(\epsilon)}_{e}(\epsilon^{-1} x) + t)$ for $\delta(\epsilon) = \epsilon^{\frac{1}{4}}$.  Arguing as in the proof of Theorem \ref{T: homogenization forced planes}, we see that there is an $\epsilon_{0} \in (0,1)$ such that $v^{\epsilon}$ is a super-solution of \eqref{E: quasilinear forced} with $\alpha = 1$ if $\epsilon \in (0,\epsilon_{0})$.
		
		Fix $P_{e,\epsilon}$ as above and set $u^{\epsilon}(y,t) = \langle y,e \rangle + P_{e,\epsilon}(y) + \lambda_{e}(\epsilon) t$.  Notice that this is a viscosity solution of \eqref{E: hyperbolic problem}.  Hence $(x,t) \mapsto \epsilon u^{\epsilon}(\epsilon^{-1} x,\epsilon^{-2} t)$ is a viscosity solution of the first equation in \eqref{E: quasilinear forced} with $\alpha = 1$.  Since $P_{e,\epsilon}$ is bounded,
			\begin{equation*}
				\epsilon u^{\epsilon}(\epsilon^{-1} x,0) \leq v^{\epsilon}(x,0) + \epsilon \|P_{e,\epsilon}\|_{L^{\infty}(\mathbb{T}^{d})}
			\end{equation*}
		and, thus, the comparison principle implies that, for each $(x,t) \in \mathbb{R}^{d} \times (0,\infty)$,
			\begin{align*}
				\langle x,e \rangle + \epsilon P_{e,\epsilon}(\epsilon^{-1} x) + \epsilon^{-1} \lambda(\epsilon) t &= \epsilon u^{\epsilon}(\epsilon^{-1} x, \epsilon^{-2} t) \leq v^{\epsilon}(x,t) + \epsilon \|P_{e,\epsilon}\|_{L^{\infty}(\mathbb{T}^{d})} \\
				&= \langle x,e \rangle + \beta \overline{m}(e)^{-1} \epsilon^{2}V_{e}^{\delta(\epsilon)}(\epsilon^{-1}x) + \epsilon \|P_{e,\epsilon}\|_{L^{\infty}(\mathbb{T}^{d})} \\
				&\quad + \beta \overline{m}(e)^{-1} t.
			\end{align*}
		Since $P_{e,\epsilon}$ is bounded and $t > 0$ is arbitrary, we deduce that $\epsilon^{-1} \lambda_{e}(\epsilon) \leq \beta \overline{m}(e)^{-1}$.
		Sending first $\epsilon \to 0^{+}$ and then $\beta \to 1^{+}$, we conclude
			\begin{equation*}
				\limsup_{\epsilon \to 0^{+}} \epsilon^{-1} \lambda_{e}(\epsilon) \leq \overline{m}(e)^{-1}.
			\end{equation*}
			
		Arguing using subsolutions instead of supersolutions, we find
			\begin{equation*}
				\liminf_{\epsilon \to 0^{+}} \epsilon^{-1} \lambda_{e}(\epsilon) \geq \overline{m}(e)^{-1}.
			\end{equation*}
		
		It remains to show that $\tilde{\alpha}^{-1} \lambda_{e}(\tilde{\alpha}) \to \overline{m}(e)^{-1}$ as $\tilde{\alpha} \to 0^{-}$.  Here we repeat the previous proof, replacing $\alpha = 1$ by $\alpha = -1$.  \end{proof}

\begin{remark}  When \eqref{E: oscillating cell problem} has a smooth solution, it is possible to prove the following rate: $|\epsilon^{-1} \lambda_{e}(\epsilon) - \overline{m}(e)^{-1}| \leq C_{e} \epsilon$ for some $C_{e} > 0$ depending on bounds on the derivatives of the corrector. \end{remark} 

\appendix

\section{Technical Lemmata}  \label{A: technical lemmata}

This appendix covers some of the technical results that were needed above.  

\subsection{Comparison and Well-posedness} \label{A: wellposed}  In this section, we briefly review some of the comparison principles and well-posedness results that were used above.  To start with, following \cite{caffarelli monneau}, we make assumption (v) of Theorem \ref{T: level set PDE} precise:

\begin{itemize}
	\item[(v)] (comparison)  There is a $K \geq 9$ and a modulus $\omega_{K} : [0,\infty) \to [0,\infty)$ satisfying $\lim_{\delta \to 0^{+}} \omega_{K}(\delta) = 0$ such that if $\alpha \geq 0$, $X,Y \in \mathcal{S}_{d}$, and $x,y \in \mathbb{R}^{d}$ satisfy
		\begin{equation*}
			- K \alpha \left( \begin{array}{c c}
								\text{Id} & 0 \\
								0 & \text{Id} 
						\end{array} \right) \leq \left( \begin{array}{c c}
												X & 0 \\
												0 & Y 
											\end{array} \right) \leq K \alpha \left( \begin{array}{c c}
				\text{Id} & -\text{Id} \\
				- \text{Id} & \text{Id} 
	\end{array} \right)
		\end{equation*}
	then
		\begin{equation*}
			F^{*}(\alpha (x - y), X, x) - F_{*}(\alpha(x - y), Y, y) \leq \omega_{K}(\|x - y\| (1 + \alpha \|x -y\|))
		\end{equation*}
	with the understanding that $\alpha = 0$ if $x = y$.
\end{itemize}  

\begin{prop} \label{P: level set wellposed} If $F$ satisfies the assumptions of Theorem \ref{T: level set PDE} or $F$ is given by 
	\begin{equation*}
		F(p,X,y) = m(y,\hat{p})^{-1} \text{tr}(a(y,\hat{p}) \tilde{X}_{\hat{p}})
	\end{equation*} 
with $a$ and $m$ satisfying the assumptions of Theorem \ref{T: quasilinear}, then, for each $u_{0} \in UC(\mathbb{R}^{d})$ and $\epsilon > 0$, there is a unique $u^{\epsilon}$ solving \eqref{E: level set PDE} or \eqref{E: quasilinear}, respectively. \end{prop}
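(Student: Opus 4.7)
The plan is to deduce Proposition \ref{P: level set wellposed} from the usual two-pronged viscosity-solution recipe: a comparison principle together with Perron's method, modulo the verification that the hypotheses (in particular the technical assumption (v)) are satisfied for both classes of operators.

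First I would establish the comparison principle. When $F$ satisfies the assumptions of Theorem \ref{T: level set PDE}, this is immediate: assumption (v) is precisely the structural condition appearing in the Crandall--Ishii--Lions user's guide adapted to geometric operators (as formulated by Chen--Giga--Goto and reworked by Caffarelli--Monneau), so the standard doubling-of-variables argument, combined with the time-penalization $\mu/(T - t)$ to handle terminal blow-up and the geometric homogeneity from (i) to resolve the singularity of $F$ at $p = 0$, yields comparison between bounded upper- and lower-semicontinuous sub- and supersolutions of \eqref{E: level set PDE}. For the quasilinear case, I would verify assumption (v) directly from the structural conditions \eqref{E: structure condition}, \eqref{A: a continuous}, \eqref{A: a lip}, \eqref{A: m continuous}, \eqref{A: m lipschitz}. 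The key observation is the Ishii--Lions-type bound: given matrices $X, Y$ and points $x,y$ satisfying the matrix inequality in (v), one has
\begin{equation*}
   \mathrm{tr}\bigl(A(x,\hat{p})X\bigr) - \mathrm{tr}\bigl(A(y,\hat{p})Y\bigr) \leq K\alpha \, \|A(x,\hat{p}) - A(y,\hat{p})\| \leq C \alpha \|x - y\|,
\end{equation*}
where $p = \alpha(x - y)$ and $C$ depends on the Lipschitz constants in \eqref{A: a lip} and \eqref{A: m lipschitz}. Combined with $\alpha \|x - y\|^{2} \to 0$ in the doubling argument, this gives the modulus $\omega_{K}$ required by (v). I would then invoke the argument of \cite{ishii lions} exactly as mentioned in the paper's discussion of fully nonlinear operators in dimension two.

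Next, for existence, I would apply Perron's method of Ishii. What is needed are continuous sub- and supersolutions $u^{\epsilon}_{\pm}$ of \eqref{E: level set PDE} (resp.\ \eqref{E: quasilinear}) with $u^{\epsilon}_{\pm}(\cdot,0) = u_{0}$. These barriers are built from the uniform continuity of $u_{0}$: letting $\omega_{0}$ denote the modulus of $u_{0}$, I would define
\begin{equation*}
   u^{\epsilon}_{\pm}(x,t) = \sup_{y \in \mathbb{R}^{d}} \bigl\{ u_{0}(y) \mp \Psi(\|x - y\|) \mp C t \bigr\},
\end{equation*}
where $\Psi$ is a smooth concave modulus dominating $\omega_{0}$ and $C > 0$ is chosen using the bound $|F(p,X,y)| \leq \Lambda \|\tilde{X}_{\hat p}\|$ coming from \eqref{E: stationary planes}, \eqref{A: strong degenerate ellipticity} (resp.\ \eqref{E: structure condition} and \eqref{A: a continuous}) together with the fact that $F$ vanishes on planes. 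Plugging such a sup-convolution into the equation, one checks that the degenerate ellipticity and the stationary-planes condition force the left-hand side to be non-positive, respectively non-negative, for $C$ large. Perron's method then produces a (discontinuous) viscosity solution sandwiched between these barriers, and the comparison principle upgrades it to a continuous solution with the correct initial trace.

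The main obstacle, as anticipated above, is the verification of assumption (v) for the quasilinear operator near the singular set $\{p = 0\}$, since $\hat{p}$ is not defined there and the matrix field $A(\cdot,\hat{p})$ is only continuous in $\hat{p}$. I would handle this by noting that, in the doubling variables argument, whenever $x_{\alpha} \neq y_{\alpha}$ the direction $\hat{p}_{\alpha} = \widehat{\alpha(x_{\alpha} - y_{\alpha})}$ is well-defined and the estimate above applies uniformly in $\hat{p}_{\alpha}$ thanks to \eqref{A: a lip} and \eqref{A: m lipschitz}; the degenerate case $x_{\alpha} = y_{\alpha}$ is handled by the convention stated in (v) combined with \eqref{E: stationary planes} (resp.\ the fact that $F^{*}(0,0,\cdot) = F_{*}(0,0,\cdot) = 0$). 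Once this uniform Lipschitz-in-$y$ estimate is in hand, both existence and uniqueness fit into the framework of \cite{chen giga goto, barles souganidis} and yield the claimed well-posedness.
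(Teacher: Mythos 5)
Your overall route (comparison principle plus Perron's method with barriers built from the modulus of $u_{0}$) is the same as the paper's, and the fully nonlinear case is handled the same way: assumption (v) is taken as the comparison hypothesis and one cites the Caffarelli--Monneau/user's-guide machinery. The gap is in your verification of (v) for the quasilinear operator. The estimate you propose,
\begin{equation*}
	\mathrm{tr}\bigl(A(x,\hat{p})X\bigr) - \mathrm{tr}\bigl(A(y,\hat{p})Y\bigr) \leq K\alpha\,\|A(x,\hat{p}) - A(y,\hat{p})\| \leq C\alpha\|x-y\|,
\end{equation*}
is too weak even if one grants it (it can be obtained by writing $\mathrm{tr}(A(x)X)+\mathrm{tr}(A(y)Y)=\mathrm{tr}(A(y)(X+Y))+\mathrm{tr}((A(x)-A(y))X)$ and using $\|X\|\leq K\alpha$): the right-hand side $C\alpha\|x-y\|$ is \emph{not} of the form $\omega_{K}(\|x-y\|(1+\alpha\|x-y\|))$ demanded by (v), and in the doubling argument only $\alpha\|x_{\alpha}-y_{\alpha}\|^{2}\to 0$ is available, not $\alpha\|x_{\alpha}-y_{\alpha}\|\to 0$ (take $\|x_{\alpha}-y_{\alpha}\|\sim\alpha^{-3/4}$: then $\alpha\|x-y\|^{2}\to 0$ while $\alpha\|x-y\|\to\infty$). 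So your argument loses a factor of $\|x-y\|$ and comparison does not follow from it.

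The correct verification, which is exactly the point the paper makes, uses the square root rather than $A$ itself: since $a(\cdot,e)$ is uniformly elliptic and Lipschitz in $y$ (assumptions \eqref{A: a continuous}, \eqref{A: a lip}), the matrix $\sqrt{a(\cdot,e)}$ is uniformly Lipschitz in $y$, so writing $A(y,e)=\sigma(y,e)\sigma(y,e)^{T}$ with $\sigma(y,e)=(\mathrm{Id}-e\otimes e)\sqrt{a(y,e)}$ and testing the matrix inequality in (v) against the nonnegative matrix
\begin{equation*}
	\left( \begin{array}{c c}
		\sigma(x)\sigma(x)^{T} & \sigma(x)\sigma(y)^{T} \\
		\sigma(y)\sigma(x)^{T} & \sigma(y)\sigma(y)^{T}
	\end{array} \right)
\end{equation*}
yields the quadratic bound $K\alpha\|\sigma(x)-\sigma(y)\|^{2}\leq C\alpha\|x-y\|^{2}$, which is admissible for $\omega_{K}$; this is the argument of \cite[Example 3.6]{users guide} cited in the paper (the mobility term is then absorbed using \eqref{A: m continuous} and \eqref{A: m lipschitz}). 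With that substitution your proof matches the paper's; the Perron/barrier part is fine as written.
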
  

	\begin{proof}  In the case of \eqref{E: level set PDE} and Theorem \ref{T: level set PDE}, the assumptions of the theorem imply that \eqref{E: level set PDE} satisfies a comparison principle.  See \cite[Theorem 3.3]{caffarelli monneau}.  This guarantees uniqueness of the solution, and then existence follows through Perron's Method.
	
	The same idea pertains to \eqref{E: quasilinear} and Theorem \ref{T: quasilinear}.  The key point is the matrix $\sqrt{a}$ is uniformly Lipschitz continuous in the spatial variable and, thus, one can obtain the inequality in (v) by arguing as in \cite[Example 3.6]{users guide}.    \end{proof}  
	
Next, we turn to well-posedness of the penalized cell problem studied in Section \ref{S: approximate correctors}:

	\begin{prop} \label{P: cell problem wellposed} If $F$ satisfies the assumptions of Theorem \ref{T: level set PDE} or 
		\begin{equation*}
			F(p,X,y) = m(y,\hat{p})^{-1} \text{tr}(a(y,\hat{p})\tilde{X}_{\hat{p}})
		\end{equation*} 
	with $a$ and $m$ satisfying the assumptions of Theorem \ref{T: quasilinear}, then, for each $\delta > 0$, $e \in S^{d-1}$, and $X \in \mathcal{S}_{d}$, the penalized cell problem \eqref{E: penalized cell problem second instance} has a unique solution $V^{\delta} \in C(\mathbb{T}^{d})$.  \end{prop}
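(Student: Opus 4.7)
The plan is to prove existence via Perron's method and uniqueness via a doubling-of-variables comparison principle, with the coercive term $\delta V^{\delta}$ providing the control needed to overcome the degeneracy of the operator in the direction of $e$.

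For existence, I would first note that the uniform degenerate ellipticity (iii) together with the stationary planes assumption \eqref{E: stationary planes} (or, in the quasilinear case, the fact that $F(e,0,y) = 0$ follows tautologically from the formula $F(p,X,y) = m(y,\hat p)^{-1} \text{tr}(a(y,\hat p)\tilde{X}_{\hat p})$) yields the a priori bound $|F(e,X,y)| \leq C(\lambda,\Lambda)\|\tilde{X}_e\|$. Consequently, the constants $V_{\pm} = \pm \delta^{-1} C \|\tilde{X}_e\|$ serve as a classical supersolution and subsolution of \eqref{E: penalized cell problem second instance}, and Perron's method produces a viscosity solution $V^{\delta}$ sandwiched between them; continuity of $V^{\delta}$ follows from the comparison principle once that is established.

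For uniqueness, let $V_1, V_2 \in C(\mathbb{T}^d)$ be two solutions of \eqref{E: penalized cell problem second instance} and, for $\alpha > 0$, let $(\hat{y}_1^{\alpha}, \hat{y}_2^{\alpha})$ be a maximizer on $\mathbb{T}^d \times \mathbb{T}^d$ of $\Phi_{\alpha}(y_1,y_2) = V_1(y_1) - V_2(y_2) - \frac{\alpha}{2}\|y_1 - y_2\|^2$. Ishii's lemma produces matrices $X_1, X_2 \in \mathcal{S}_d$ satisfying the standard pairing inequality, and combining the two viscosity inequalities gives
\[
\delta\bigl(V_1(\hat{y}_1^{\alpha}) - V_2(\hat{y}_2^{\alpha})\bigr) \leq F(e, X + X_1, \hat{y}_1^{\alpha}) - F(e, X + X_2, \hat{y}_2^{\alpha}).
\]
Under the hypotheses of Theorem \ref{T: level set PDE}, the comparison condition (v), applied with the fixed unit vector $e$ in place of $\alpha(x-y)$ (legitimate because $F$ is jointly continuous on the nonsingular set $(\mathbb{R}^d \setminus \{0\}) \times \mathcal{S}_d \times \mathbb{T}^d$), bounds the right-hand side by a modulus vanishing as $\|\hat{y}_1^{\alpha} - \hat{y}_2^{\alpha}\| \to 0$. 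Under the hypotheses of Theorem \ref{T: quasilinear}, the Lipschitz estimate \eqref{A: a lip}, combined with the uniform positive definiteness of $a$ which transfers Lipschitz regularity to $\sqrt{a}$, yields the same bound directly from the pairing inequality. Sending $\alpha \to \infty$ and using the standard estimate $\alpha\|\hat{y}_1^{\alpha} - \hat{y}_2^{\alpha}\|^2 \to 0$ forces $\delta \max_{\mathbb{T}^d}(V_1 - V_2) \leq 0$, whence $V_1 \leq V_2$; symmetry yields $V_1 = V_2$.

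The main obstacle is the mismatch between condition (v) as stated and the form of the operator in the cell problem: (v) is phrased with the full gradient slot, whereas the cell problem has the fixed unit vector $e$ there. The workaround is conceptually straightforward: because the momentum variable is pinned at the nonsingular value $e$, the familiar difficulties at $p = 0$ never arise, and a direct adaptation of \cite[Theorem 3.3]{caffarelli monneau} (respectively of the machinery in \cite{users guide} for the quasilinear case) supplies the required modulus estimate for the frozen operator $F(e,\cdot,\cdot)$. Once this technical point is in place, both Theorems \ref{T: level set PDE} and \ref{T: quasilinear} are handled by the same argument.
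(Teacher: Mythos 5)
Your existence step (constant barriers of size $\delta^{-1}\Lambda\|\tilde{X}_{e}\|$ plus Perron's method, then descent to the torus by uniqueness) is essentially the paper's, and in the quasilinear case your uniqueness argument also goes through: writing the frozen operator as $\operatorname{tr}\bigl(\sigma_{e}(y)\sigma_{e}(y)^{T}(\tilde{X}_{e}+D^{2}_{e}V)\bigr)$ with $\sigma_{e}(y)=m(y,e)^{-1/2}(\text{Id}-e\otimes e)\sqrt{a(y,e)}$, assumptions \eqref{A: a lip}, \eqref{A: m lipschitz} and the uniform bounds make $\sigma_{e}$ Lipschitz, so the standard structure condition of \cite[Example 3.6]{users guide} closes the doubling argument. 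The genuine gap is in the fully nonlinear case of Theorem \ref{T: level set PDE}. At the point where Ishii's lemma has produced matrices $X_{1},X_{2}$ of size $O(\alpha)$, you need to bound $F(e,X+X_{1},\hat{y}_{1}^{\alpha})-F(e,X+X_{2},\hat{y}_{2}^{\alpha})$, and assumption (v) cannot be invoked with $e$ in place of $\alpha(x-y)$: it is a hypothesis only for gradient arguments coupled to the doubling variable in that specific way, and nothing in (i)--(iv) converts it into a structure condition for the frozen direction $e$. The only remaining regularity in $y$ is plain continuity (iv), which yields a modulus only on compact sets of Hessians, while the Ishii matrices blow up as $\alpha\to\infty$; the issue is not the singularity at $p=0$ but the absence of any quantitative $y$-modulus uniform in $\|X\|$ for the frozen operator, so the "direct adaptation of \cite[Theorem 3.3]{caffarelli monneau}" you invoke is exactly the missing step, not a routine one.

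The paper's proof is built to avoid this. Instead of doubling variables against the $y$-dependent operator, it restricts sub- and supersolutions to the affine slices $y+\langle e\rangle^{\perp}$, where both satisfy equations with the same spatial dependence and the operator is uniformly elliptic, and observes that the difference satisfies $\delta \tilde{V}_{0,y}-\mathscr{P}^{+}_{e,\frac{\lambda}{d-1},\Lambda}(D^{2}_{e}\tilde{V}_{0,y})\leq 0$ in $\langle e\rangle^{\perp}$, a translation-invariant, constant-coefficient extremal inequality; comparison with the zero function then gives $V_{1}\leq V_{2}$ using only the two-sided ellipticity bound \eqref{A: strong degenerate ellipticity}, with no modulus of continuity of $F$ in $y$ ever entering. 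If you wish to keep a direct doubling proof, you must either restrict to the quasilinear class or add a Lipschitz-in-$y$ hypothesis of the type appearing in the examples of Section 3; under the bare assumptions (i)--(v), your uniqueness argument is incomplete, and the slicing-to-Pucci reduction (or some substitute for it) is the idea you are missing.
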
  
	
		\begin{proof}  If $V_{1}$ and $V_{2}$ are respectively bounded sub- and super-solutions of $\delta V_{1} - F(e,X + D^{2}_{e} V_{1},y) \leq 0$ and $\delta V_{2} - F(e,X + D^{2}_{e} V_{2},y) \geq 0$ in $\mathbb{R}^{d}$, then the function $\{\tilde{V}_{1,y}\}_{y \in \mathbb{R}^{d}}$ and $\{\tilde{V}_{2,y}\}_{y \in \mathbb{T}^{d}}$ defined by analogy with \eqref{E: slices} lead to sub- and supersolutions of the equations
			\begin{equation*}
				\delta \tilde{V}_{1,y} - F(e,\tilde{X}_{e} + D^{2}_{e}\tilde{V}_{1,y},y + x') \leq 0, \quad \delta \tilde{V}_{2,y} - F(e,\tilde{X}_{e} + D^{2}_{e} \tilde{V}_{2,y},y + x') \geq 0 \quad \text{in} \, \, \langle e \rangle^{\perp}.
			\end{equation*}
		Thus, the functions $\{\tilde{V}_{0,y}\}_{y \in \mathbb{T}^{d}}$ given by $\tilde{V}_{0,y} = \tilde{V}_{1,y} - \tilde{V}_{2,y}$ satisfy
			\begin{equation*}
				\delta \tilde{V}_{0,y} - \mathscr{P}^{+}_{e,\frac{\lambda}{d -1}, \Lambda}(D^{2}_{e} \tilde{V}_{0,y}) \leq 0 \quad \text{in} \, \, \langle e \rangle^{\perp}.
			\end{equation*}
		Here the Pucci maximal operator $\mathscr{P}^{+}_{e,\frac{\lambda}{d-1},\Lambda}$ is given by 
			\begin{equation*}
				\mathscr{P}^{+}_{e,\frac{\lambda}{d - 1},\Lambda}(X) = \sup \left\{ \text{tr}(A \tilde{X}_{e}) \, \mid \, A \in \mathcal{S}_{d}, \, \, (d-1)^{-1} \lambda \text{Id} \leq A \leq \Lambda \text{Id} \right\}.
			\end{equation*}
		Since we are working in $\langle e \rangle^{\perp}$, this is a uniformly elliptic, translationally invariant operator.  Therefore, a standard comparison argument shows that $\tilde{V}_{0,y} \leq 0$ for all $y \in \mathbb{R}^{d}$.  In particular, $V_{1} \leq V_{2}$ in $\mathbb{R}^{d}$.  
		
	Applying Perron's Method, we find a continuous $V$ satisfying $\delta V - F(e,X + D^{2}_{e} V,y) = 0$ in $\mathbb{R}^{d}$.  By uniqueness, $V$ descends to a function in $C(\mathbb{T}^{d})$.       \end{proof}  

\subsection{Comparison Principle for Ergodic Constants}  Here we state and prove the comparison principles for ergodic constants that were used in Sections \ref{S: approximate correctors} and \ref{S: discontinuity}.  The idea goes back at least as far as \cite{evans}.  

	\begin{lemma} \label{L: comparison ergodic constants}  Assume that $F$ satisfies the assumptions of Theorem \ref{T: level set PDE} or $F(p,X,y) = m(y,\hat{p})^{-1} \text{tr}(a(y,\hat{p}) \tilde{X}_{\hat{p}})$ with $a$ and $m$ satisfying the assumptions of Theorem \ref{T: quasilinear}.  Fix $e \in S^{d-1}$, $g : \langle e \rangle^{\perp} \to \mathbb{R}$ is a bounded, uniformly continuous function, $X \in \mathcal{S}_{d}$, and $C_{1},C_{2} \in \mathbb{R}$.  If $V_{1} : \langle e \rangle^{\perp} \to \mathbb{R}$ is a bounded, upper semi-continuous subsolution of the differential inequality
		\begin{equation*}
			C_{1} + g(x') - F(e, \tilde{X}_{e} + D^{2}_{e} V_{1},x') \leq 0 \quad \text{in} \, \, \langle e \rangle^{\perp}
		\end{equation*}
	and $V_{2} : \langle e \rangle^{\perp} \to \mathbb{R}$ is a bounded, lower semi-continuous supersolution of the differential inequality
		\begin{equation*}
			C_{2} + g(x') - F(e,\tilde{X}_{e} + D^{2}_{e} V_{2},x') \geq 0 \quad \text{in} \, \, \langle e \rangle^{\perp},
		\end{equation*}
	then $C_{1} \leq C_{2}$.  \end{lemma}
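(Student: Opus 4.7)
The plan is to argue by contradiction: suppose $C_1 > C_2$ and derive a contradiction from the boundedness of $V_1$ and $V_2$. The underlying idea, going back to \cite{evans}, is to combine the two viscosity inequalities and observe that the uniform ellipticity assumption \eqref{A: strong degenerate ellipticity} forces $V_1 - V_2$ to be a viscosity subsolution of a uniformly elliptic Pucci-type equation on $\langle e \rangle^{\perp}$ with a strictly negative right-hand side, which no bounded function can satisfy.

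To implement this, I would use the standard doubling of variables. Set $\delta_0 := (C_1 - C_2)/2 > 0$ and, for parameters $\alpha, \epsilon > 0$, consider the auxiliary function
\[
\Phi_{\alpha,\epsilon}(x',y') \;=\; V_1(x') - V_2(y') - \tfrac{1}{2\alpha}\|x' - y'\|^2 - \tfrac{\epsilon}{2}\bigl(\|x'\|^2 + \|y'\|^2\bigr).
\]
Since $V_1$ and $V_2$ are bounded and $\langle e \rangle^{\perp}$ can be identified with $\mathbb{R}^{d-1}$, $\Phi_{\alpha,\epsilon}$ attains its maximum at some point $(\hat{x}, \hat{y}) \in \langle e \rangle^\perp \times \langle e \rangle^\perp$. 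The classical estimates for such maxima give $\alpha^{-1}\|\hat{x}-\hat{y}\|^2 \to 0$ as $\alpha \to 0^+$ for fixed $\epsilon$, while $\epsilon(\|\hat{x}\|^2 + \|\hat{y}\|^2)$ remains bounded.

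Applying the Crandall--Ishii lemma then yields symmetric matrices $X_1, X_2$ lying respectively in the closure of the second-order superjet of $V_1$ at $\hat x$ and the subjet of $V_2$ at $\hat y$, with a tangential-Hessian inequality of the form $X_1 - X_2 \leq \epsilon\,\mathrm{Id} + O(\alpha)$ (in the $(d-1)$-dimensional sense on $\langle e \rangle^\perp$). Inserting these into the two viscosity inequalities and using \eqref{A: strong degenerate ellipticity} to write
\[
F(e,\tilde X_e + X_1,\hat x) - F(e,\tilde X_e + X_2,\hat y) \;\leq\; \mathscr{P}^{+}_{e,\tfrac{\lambda}{d-1},\Lambda}(X_1 - X_2) + \omega_F(\|\hat x - \hat y\|),
\]
where $\omega_F$ comes from the $y$-continuity of $F$ (trivial in the quasilinear setting by \eqref{A: a lip}, \eqref{A: m lipschitz}, and controlled by assumption (v) in the fully nonlinear setting), I would obtain
\[
2\delta_0 \;\leq\; C_1 - C_2 \;\leq\; \mathscr{P}^{+}_{e,\tfrac{\lambda}{d-1},\Lambda}(X_1 - X_2) + \omega_g(\|\hat x - \hat y\|) + \omega_F(\|\hat x - \hat y\|).
\]
Passing to the limit $\alpha \to 0^+$ with $\epsilon$ fixed kills the modulus terms and forces the Pucci term to be bounded by $\epsilon(d-1)\Lambda$; sending $\epsilon \to 0^+$ afterwards drives the right-hand side to $0$, contradicting $\delta_0 > 0$.

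The main obstacle is bookkeeping the $y$-dependence of $F$ across the doubled points $\hat x \neq \hat y$: the operator is not translation-invariant, so the two inequalities are not directly subtractable. This is why one must work via the Crandall--Ishii formulation (rather than simply differencing the equations) and why the assumptions on regularity of $F$ in the $y$ variable (either (v) in the fully nonlinear case or the Lipschitz bounds \eqref{A: a lip}, \eqref{A: m lipschitz} in the quasilinear case) are used here. Once that is handled, the restriction of the problem to $\langle e \rangle^\perp$ makes the operator uniformly elliptic in the relevant $(d-1)$-dimensional sense and the Pucci comparison goes through exactly as in the standard theory.
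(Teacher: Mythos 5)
Your overall strategy is the right one and is essentially the expanded version of the paper's own (very terse) argument: the paper forms $V_{0}=V_{1}-V_{2}$, asserts that it is a bounded, upper semi-continuous subsolution of $(C_{1}-C_{2})-\mathscr{P}^{+}_{e,\frac{\lambda}{d-1},\Lambda}(D^{2}_{e}V_{0})\leq 0$ in $\langle e\rangle^{\perp}$, and then concludes by a standard comparison argument (touching $V_{0}$ from above by a small paraboloid is enough, since the Pucci operator is translation invariant and $V_{0}$ is bounded). Your doubling-of-variables argument with the localization terms is precisely the machinery hiding behind both of those steps, so the two routes coincide in substance.

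There is, however, one step that would fail exactly as you wrote it. The displayed estimate $F(e,\tilde X_{e}+X_{1},\hat{x})-F(e,\tilde X_{e}+X_{2},\hat{y})\leq \mathscr{P}^{+}_{e,\frac{\lambda}{d-1},\Lambda}(X_{1}-X_{2})+\omega_{F}(\|\hat{x}-\hat{y}\|)$, with a modulus $\omega_{F}$ independent of the Hessian arguments, is not available: in the quasilinear case, \eqref{A: a lip} and \eqref{A: m lipschitz} only give $|F(e,M,\hat{x})-F(e,M,\hat{y})|\leq C\|M\|\,\|\hat{x}-\hat{y}\|$, and along the doubling one has $\|X_{i}\|\lesssim \alpha^{-1}$ while $\|\hat{x}-\hat{y}\|=o(\alpha^{1/2})$, so this term is $o(\alpha^{-1/2})$ and does not vanish; calling the $y$-dependence ``trivial'' here is the naive differencing you rightly warn against in your last paragraph. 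The correct implementation, which you gesture at but do not carry out, is to keep the full two-sided matrix inequality from the Crandall--Ishii lemma and test it against vectors of the form $(\sqrt{a(\hat{x},e)}\,\xi,\sqrt{a(\hat{y},e)}\,\xi)$, using the Lipschitz continuity of $\sqrt{a}$ exactly as in \cite[Example 3.6]{users guide} (this is what the paper itself invokes for Proposition \ref{P: level set wellposed}); the offending term then becomes $C\alpha^{-1}\|\hat{x}-\hat{y}\|^{2}\to 0$. A related caveat: in the fully nonlinear case assumption (v) does not apply verbatim, because there the gradient slot is $\alpha(x-y)$ while in this lemma it is frozen at $e$; what is needed is the analogous structure condition for $(M,y)\mapsto F(e,\tilde X_{e}+M,y)$. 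The paper glosses over this at the same level of detail, so it is a shared imprecision, but in a self-contained write-up you should state the structure condition you actually use at the doubled points.
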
  
	
		\begin{proof}  The assumptions of Theorem \ref{T: level set PDE} and \ref{T: quasilinear} imply that the difference $V_{0} = V_{1} - V_{2}$ is a bounded, upper semi-continuous subsolution of the differential inequality
			\begin{equation*}
				(C_{1} - C_{2}) - \mathscr{P}^{+}_{e,\frac{\lambda}{d-1},\Lambda}(D^{2}_{e}V_{0}) \leq 0 \quad \text{in} \, \, \langle e \rangle^{\perp},
			\end{equation*}
		where the Pucci maximal operator $\mathscr{P}^{+}_{e,\frac{\lambda}{d-1},\Lambda}$ is as in the previous proof.  Therefore, another standard comparison argument shows that $C_{1} \leq C_{2}$.  
		\end{proof}  
		
\subsection{Equi-distribution of codimension one sub-tori}  We will be interested in certain probability measures supported on the sub-tori $(\mathbb{T}^{d-1}_{e}(s))_{s \in [0,r_{e})}$ defined by \eqref{E: subtori}.  Toward that end, the result that follows is fundamental.

In this section, if $\mu$ is a finite measure, we write $\fint_{A} (\cdot) \, \mu(dy) = \frac{1}{\mu(A)} \int_{A} (\cdot) \, \mu(dy)$.  

	\begin{lemma} \label{L: equidistribution} If $(e_{n})_{n \in \mathbb{N}} \subseteq S^{d-1} \cap \mathbb{R} \mathbb{Z}^{d}$ is any infinite sequence and $(s_{n})_{n \in \mathbb{N}} \subseteq [0,\infty)$ satisfies $s_{n} \in [0,r_{e_{n}})$ for each $n \in \mathbb{N}$, then 
		\begin{equation} \label{E: equidistribution}
			\mathcal{H}^{d-1}(\mathbb{T}^{d-1}_{e_{n}}(s_{n}))^{-1} \mathcal{H}^{d-1} \restriction_{\mathbb{T}^{d-1}_{e}(s_{n})} \overset{*}{\rightharpoonup} \mathcal{L}^{d}
		\end{equation}
	Furthermore, if $(f_{n})_{n \in \mathbb{N}}$ are $\mathcal{H}^{d-1}$-measurable functions in $\mathbb{T}^{d}$, $p \in [1,\infty]$, and $C > 0$ is such that
		\begin{equation*}
			\fint_{\mathbb{T}^{d-1}_{e_{n}}(s_{n})} |f_{n}(\xi)|^{p} \, \mathcal{H}^{d-1}(d \xi)  \leq C^{p}
		\end{equation*} 
	and if we define measures $(\mu_{n})_{n \in \mathbb{N}}$ on $\mathbb{T}^{d}$ by 
		\begin{equation*}
			\int_{\mathbb{T}^{d}} g(y) \, \mu_{n}(dy) = \fint_{\mathbb{T}^{d - 1}_{e_{n}}(s_{n})} g(\xi) f_{n}(\xi) \, \mathcal{H}^{d-1}(d \xi),
		\end{equation*}
	then there is a subsequence $(n_{j})_{j \in \mathbb{N}} \subseteq \mathbb{N}$ and a measure $\tilde{\mu}$ such that $\tilde{\mu} = \lim_{j \to \infty} \mu_{n_{j}}$ weakly-$*$, $\tilde{\mu} \ll \mathcal{L}^{d}$, and 
		\begin{equation} \label{E: lebesgue bounds}
			\left\| \frac{d \tilde{\mu}}{d \mathcal{L}^{d}} \right\|_{L^{p}(\mathbb{T}^{d})} \leq C.
		\end{equation}
	\end{lemma}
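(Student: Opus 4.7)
The plan is to prove the equidistribution \eqref{E: equidistribution} by a Weyl-type character argument and then deduce the weighted statement via Hölder's inequality together with duality in $L^{p'}(\mathbb T^d, \mathcal L^d)$. Throughout I assume the sequence $(e_n)$ takes infinitely many distinct values, which is the only case of interest here (for instance when $e_n \to e \notin \mathbb R\mathbb Z^d$, as used in Proposition \ref{P: existence invariant measures}).

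Write $e_n = v_n/\|v_n\|$ with $v_n \in \mathbb Z^d$ primitive, and let $\nu_n$ denote the probability measure on the left side of \eqref{E: equidistribution}. The subtorus $\mathbb T^{d-1}_{e_n}(0)$ is precisely $\ker \chi_{v_n}$, where $\chi_k(y) := e^{2\pi i \langle k, y\rangle}$, so $\mathbb T^{d-1}_{e_n}(s_n)$ is a coset of this compact subgroup of $\mathbb T^d$. Standard Haar calculus then yields, for every $k \in \mathbb Z^d$,
\begin{equation*}
\int_{\mathbb T^d} \chi_k(y)\,\nu_n(dy) \;=\; \begin{cases} \chi_k(y_n), & k \in \mathbb Z v_n, \\ 0, & k \notin \mathbb Z v_n, \end{cases}
\end{equation*}
where $y_n$ is any point of $\mathbb T^{d-1}_{e_n}(s_n)$; indeed, $\chi_k$ restricts to the trivial character on the subgroup iff $k$ lies in the character annihilator $\mathbb Z v_n$. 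For fixed $k \in \mathbb Z^d \setminus \{0\}$ the inclusion $k \in \mathbb Z v_n$ forces $e_n \in \{\pm k/\|k\|\}$, and hence can hold for at most finitely many indices $n$. Consequently $\int \chi_k\,d\nu_n \to \int \chi_k\,d\mathcal L^d$ for every $k \in \mathbb Z^d$, and Stone-Weierstrass density of trigonometric polynomials in $C(\mathbb T^d)$ upgrades this to the weak-$*$ convergence $\nu_n \overset{*}{\rightharpoonup} \mathcal L^d$, which is \eqref{E: equidistribution}.

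For the weighted statement, set $\mu_n = f_n \nu_n$. Hölder's inequality gives
\begin{equation*}
\left| \int_{\mathbb T^d} g\,d\mu_n \right| \;\leq\; \Bigl(\int_{\mathbb T^d} |g|^{p'}\,d\nu_n\Bigr)^{1/p'} \Bigl(\int_{\mathbb T^d} |f_n|^p\,d\nu_n\Bigr)^{1/p} \;\leq\; C \Bigl(\int_{\mathbb T^d} |g|^{p'}\,d\nu_n\Bigr)^{1/p'}
\end{equation*}
for every $g \in C(\mathbb T^d)$; in particular $\mu_n(\mathbb T^d) \leq C$, so Banach-Alaoglu produces a subsequence with $\mu_{n_j} \overset{*}{\rightharpoonup} \tilde \mu$ for some finite Borel measure $\tilde \mu$. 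Passing to the limit using \eqref{E: equidistribution} applied to the continuous integrand $|g|^{p'}$ yields $\bigl|\int_{\mathbb T^d} g\,d\tilde \mu\bigr| \leq C\,\|g\|_{L^{p'}(\mathbb T^d,\mathcal L^d)}$ for every $g \in C(\mathbb T^d)$. When $p \in (1,\infty]$, density of $C(\mathbb T^d)$ in $L^{p'}(\mathbb T^d,\mathcal L^d)$ together with the Riesz representation theorem then produce a density $d\tilde \mu/d\mathcal L^d \in L^p$ with norm at most $C$, which is \eqref{E: lebesgue bounds}; the borderline case $p=1$ requires a separate equi-integrability argument and is not needed in the applications below.

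The main obstacle is the character integral computation, which reduces the whole problem to a simple counting argument about which primitive lattice vectors $v_n$ can divide a fixed $k$; once that identification is made, everything else is standard weak-$*$ compactness and duality.
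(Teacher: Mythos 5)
You take essentially the same route as the paper: the identical character computation over the cosets $\mathbb{T}^{d-1}_{e_{n}}(s_{n})$ (your annihilator $\mathbb{Z}v_{n}$ is exactly the paper's $\mathbb{Z}^{d}\cap\langle e_{n}\rangle$, since $v_{n}$ is primitive), a density upgrade to all of $C(\mathbb{T}^{d})$ --- yours via trigonometric polynomials and Stone--Weierstrass, the paper's via functions with absolutely summable Fourier coefficients --- and then the same H\"{o}lder/weak-$*$ compactness/duality argument for the weighted part. Two caveats. First, ``takes infinitely many distinct values'' does not by itself yield your claim that a fixed $k\neq 0$ lies in $\mathbb{Z}v_{n}$ for at most finitely many $n$: a single rational direction could recur infinitely often while infinitely many other values also appear. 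What is actually needed (and how the paper's ``infinite sequence'' must be read, since its proof uses the same fact) is that no direction repeats infinitely often, equivalently that $\min\{\|k\| \,\mid\, k\in\mathbb{Z}^{d}\cap\langle e_{n}\rangle\setminus\{0\}\}\to\infty$; this holds in the application, where $e_{n}$ converges to an irrational direction. Second, your exclusion of $p=1$ is a point in your favor rather than a defect: the paper dismisses the conjugate case $q=\infty$ with ``a similar argument applies,'' but the $L^{1}$ bound alone does not rule out concentration (normalized indicators of shrinking caps in $\mathbb{T}^{d-1}_{e_{n}}(s_{n})$ produce a Dirac limit), so the absolute continuity in \eqref{E: lebesgue bounds} genuinely requires $p>1$ or an additional equi-integrability hypothesis, exactly as you note.
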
  
	
		\begin{proof}  First, we prove that the normalized surface measures converge to $\mathcal{L}^{d}$.  Assume that $g \in C(\mathbb{T}^{d})$ satisfies $\sum_{k \in \mathbb{Z}^{d}} |\hat{g}(k)| < \infty$.
		An exercise in Fourier analysis shows that if $k \in \mathbb{Z}^{d}$, then
			\begin{equation*}
				\fint_{\mathbb{T}^{d-1}_{e_{n}}(s_{n})} e^{i 2 \pi \langle k, \xi \rangle} \mathcal{H}^{d-1}(d \xi) = \left\{ \begin{array}{r l}
							e^{i 2 \pi \langle k, e_{n} \rangle s_{n}}, & \text{if} \, \, k \in \langle e_{n} \rangle, \\
							0, & \text{otherwise}.
						\end{array} \right.
			\end{equation*}
		Therefore, 
			\begin{align*}
				\left| \fint_{\mathbb{T}^{d-1}_{e_{n}}(s_{n})} g(\xi) \, \mathcal{H}^{d-1}(d\xi) - \int_{\mathbb{T}^{d}} g(y) \, dy \right| &= \left| \sum_{k \in \mathbb{Z}^{d} \setminus \{0\}} \hat{g}(k) \fint_{\mathbb{T}^{d-1}_{e_{n}}(s_{n})} e^{i 2 \pi \langle k, \xi \rangle} \mathcal{H}^{d-1}(d \xi) \right| \\
					&\leq \sum_{k \in \mathbb{Z}^{d} \cap \langle e_{n} \rangle \setminus \{0\}} |\hat{g}(k)|.
			\end{align*}
		Since $(e_{n})_{n \in \mathbb{N}}$ is infinite, it follows that for each $R > 0$, there is an $N \in \mathbb{N}$ such that if $n \geq N$, then
			\begin{equation*}
				\mathbb{Z}^{d} \cap \langle e_{n} \rangle \setminus \{0\} \subseteq \mathbb{R}^{d} \setminus B(0,R).
			\end{equation*}
		Thus,
			\begin{align*}
				\limsup_{n \to \infty} \left| \fint_{\mathbb{T}^{d-1}_{e_{n}}(s_{n})} g(\xi) \, \mathcal{H}^{d-1}(d\xi) - \int_{\mathbb{T}^{d}} g(y) \, dy \right| &\leq \lim_{R \to \infty} \sum_{k \in \mathbb{Z}^{d} \setminus B(0,R)} |\hat{g}(k)| = 0.
			\end{align*}
			
		Recalling that functions with summable Fourier coefficients are dense in $C(\mathbb{T}^{d})$, we conclude that \eqref{E: equidistribution} holds as claimed.
		
	Next, we prove the claim concerning $(\mu_{n})_{n \in \mathbb{N}}$.  First, observe that H\"{o}lder's inequality implies 
		\begin{equation*}
			\|\mu_{n}\|(\mathbb{T}^{d}) = \fint_{\mathbb{T}^{d-1}_{e_{n}}(s_{n})} |f_{n}(\xi)| \, \mathcal{H}^{d-1}(d \xi) \leq C \quad \text{if} \, \, n \in \mathbb{N}.
		\end{equation*}
	Thus, $(\mu_{n})_{n \in \mathbb{N}}$ is pre-compact in $C(\mathbb{T}^{d})^{*}$, which gives the desired sub-sequence $(n_{j})_{j \in \mathbb{N}}$ and limit point $\tilde{\mu}$.
	
	Note that if $g \in C(\mathbb{T}^{d})$ and $q \in [1,\infty]$ is the conjugate exponent of $p$ (i.e.\ the $q$ so that $p^{-1} + q^{-1} = 1$), then 
		\begin{equation*}
			\left| \fint_{\mathbb{T}^{d-1}_{e_{n}}(s_{n})} g(\xi) f_{n}(\xi) \, \mathcal{H}^{d-1}(d\xi) \right| \leq C \left( \fint_{\mathbb{T}^{d-1}_{e_{n}}(s_{n})} |g(\xi)|^{q} \, \mathcal{H}^{d-1}(d \xi) \right)^{\frac{1}{q}}.
		\end{equation*}
	Assume first that $q < \infty$.  Since $|g|^{q} \in C(\mathbb{T}^{d})$, we have
		\begin{equation*}
			\left| \int_{\mathbb{T}^{d}} g(y) \, \tilde{\mu}(d y) \right| \leq C \lim_{n \to \infty} \left( \fint_{\mathbb{T}^{d-1}_{e_{n}}(s_{n})} |g(\xi)|^{q} \, \mathcal{H}^{d-1}(d \xi) \right)^{\frac{1}{q}} = C \|g\|_{L^{q}(\mathbb{T}^{d})}.
		\end{equation*}
	This proves $\tilde{\mu} \ll \mathcal{L}^{d}$ and \eqref{E: lebesgue bounds} holds when $q < \infty$.  When $q = \infty$, a similar argument applies.       \end{proof}  
		
\subsection{Smooth Correctors in a Particular Case}  \label{S: diophantine}

In this section, we prove that \eqref{E: cell problem} does have solutions in the setting of Theorem \ref{T: quasilinear} when $a$ is constant, $m$ is sufficiently regular, and $e$ is well-chosen. 
		
\begin{prop} \label{P: smooth correctors}  Fix $e \in S^{d-1}$.  If $m(\cdot,e) \in H^{s}(\mathbb{T}^{d})$ for some $s > \frac{d}{2} + \frac{1}{d- 1} + 2$ and there is a $C_{e} \in (0, 1)$ and $\frac{1}{d - 1} < \tau < s - \frac{d}{2} -2$ such that
		\begin{equation} \label{E: diophantine}
		\|k - \langle k,e \rangle e\| \geq C_{e}\|k\|^{-\tau} \quad (\forall k \in \mathbb{Z}^{d} \setminus \{0\}),
		\end{equation}
	then there is a solution $V_{e} \in C^{2}(\mathbb{T}^{d})$ of the equation
		\begin{equation} \label{E: smooth corrector}
			 m(\cdot,e) - \text{tr}((\text{Id} - e \otimes e) D^{2}V_{e}) = \overline{m}(e).
		\end{equation}
	Furthermore, $V_{e}$ is the unique such solution among all functions $U \in L^{2}(\mathbb{T}^{d})$ with $\int_{\mathbb{T}^{d}} U(y) \, dy = \int_{\mathbb{T}^{d}} V_{e}(y) \, dy$.
\end{prop}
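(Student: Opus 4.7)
The plan is to solve \eqref{E: smooth corrector} directly by Fourier analysis on $\mathbb{T}^{d}$. Writing $m(\cdot,e) - \overline{m}(e) = \sum_{k \in \mathbb{Z}^{d} \setminus \{0\}} \hat{m}(k) e^{2\pi i \langle k, \cdot \rangle}$ and using the identity
$$\widehat{\text{tr}\left((\text{Id} - e \otimes e) D^{2} U\right)}(k) = -(2\pi)^{2} \left(\|k\|^{2} - \langle k,e \rangle^{2}\right) \hat{U}(k) = -(2\pi)^{2} \|k - \langle k,e \rangle e\|^{2} \hat{U}(k),$$
I would define the candidate solution by
$$V_{e}(y) := -\sum_{k \in \mathbb{Z}^{d} \setminus \{0\}} \frac{\hat{m}(k)}{(2\pi)^{2} \|k - \langle k,e \rangle e\|^{2}} e^{2\pi i \langle k, y \rangle}.$$
The Diophantine hypothesis \eqref{E: diophantine} guarantees that every denominator is nonzero, with $\|k - \langle k,e \rangle e\|^{2} \geq C_{e}^{2} \|k\|^{-2\tau}$, so each Fourier coefficient of $V_{e}$ is pointwise dominated by $C \|k\|^{2\tau} |\hat{m}(k)|$.

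The next step is to promote this formal expression to a genuine $C^{2}$ function. I would show that $V_{e} \in H^{\sigma}(\mathbb{T}^{d})$ for some $\sigma > d/2 + 2$ and conclude $V_{e} \in C^{2}(\mathbb{T}^{d})$ by Sobolev embedding. Indeed, the Diophantine bound gives
$$\|V_{e}\|_{H^{\sigma}}^{2} \leq C \sum_{k \neq 0} (1 + \|k\|^{2})^{\sigma + 2\tau} |\hat{m}(k)|^{2} \leq C \|m(\cdot,e)\|_{H^{\sigma + 2\tau}}^{2}.$$
Taking $\sigma$ slightly larger than $d/2 + 2$, the joint conditions on $s$ and $\tau$ in the hypothesis make $\sigma + 2\tau \leq s$, placing the right-hand side under control. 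Absolute convergence of the Fourier series for $D^{2} V_{e}$ (which follows from the same Cauchy--Schwarz computation applied to $\|k\|^{2+2\tau}|\hat m(k)|$) then justifies termwise differentiation and verifies that $V_{e}$ is a classical solution.

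For uniqueness, let $U \in L^{2}(\mathbb{T}^{d})$ be another solution with $\int_{\mathbb{T}^{d}} U = \int_{\mathbb{T}^{d}} V_{e}$, and set $W = U - V_{e}$. Then $\text{tr}((\text{Id} - e \otimes e) D^{2} W) = 0$ in the sense of distributions, which on the Fourier side reads $\|k - \langle k,e \rangle e\|^{2} \hat{W}(k) = 0$ for every $k \in \mathbb{Z}^{d}$. Because the Diophantine condition forces $\|k - \langle k,e \rangle e\| > 0$ for all $k \in \mathbb{Z}^{d} \setminus \{0\}$ (in particular, $e \notin \mathbb{R} \mathbb{Z}^{d}$), we obtain $\hat{W}(k) = 0$ for $k \neq 0$; combined with $\hat{W}(0) = 0$ from the matched averages, this yields $W \equiv 0$. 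The main technical point in the argument is the regularity bookkeeping linking the Sobolev exponent $s$, the Diophantine exponent $\tau$, and the Sobolev embedding threshold $d/2 + 2$; this is what dictates the precise numerical conditions appearing in the hypothesis, and where any subtlety would lie.
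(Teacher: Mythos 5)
Your construction is the same as the paper's: the identical Fourier formula $\hat V_{e}(k) = -\hat m(k)/(4\pi^{2}\|k-\langle k,e\rangle e\|^{2})$, with \eqref{E: diophantine} controlling the small divisors, and the same Fourier-side uniqueness argument (the matched averages kill $\hat W(0)$, the nonvanishing divisors kill the rest). The one place you diverge from the paper is the regularity bookkeeping, and that is where your argument has a genuine gap. Since \eqref{E: diophantine} bounds $\|k-\langle k,e\rangle e\|$ and not its square, the estimate you correctly state is $|\hat V_{e}(k)| \leq C_{e}^{-2}(4\pi^{2})^{-1}\|k\|^{2\tau}|\hat m(k)|$. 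Your Sobolev route then needs $\sigma + 2\tau \leq s$ together with $\sigma > \frac{d}{2}+2$, i.e.\ $\tau < \frac{1}{2}\left(s - \frac{d}{2} - 2\right)$, which is strictly stronger than the stated hypothesis $\tau < s - \frac{d}{2} - 2$. So the assertion that ``the joint conditions on $s$ and $\tau$ make $\sigma + 2\tau \leq s$'' fails on part of the admissible range: for $d=3$, $s=4.6$, $\tau=1$ the hypothesis holds ($\frac12 < 1 < 1.1$), yet $\sigma + 2\tau \leq s$ forces $\sigma \leq 2.6 < 3.5 = \frac{d}{2}+2$. Your parenthetical fallback (absolute convergence of $\sum_{k}\|k\|^{2+2\tau}|\hat m(k)|$ by Cauchy--Schwarz) needs $2(s-2-2\tau) > d$, i.e.\ the very same condition $s > \frac{d}{2}+2+2\tau$, so it does not rescue the step.

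For comparison, the paper skips Sobolev embedding and sums $\sum_{k}\|k\|^{i}|\hat V_{e}(k)|$, $i \in \{0,1,2\}$, directly by Cauchy--Schwarz; but the factor displayed there is $\left(\sum_{k}\|k\|^{2(\tau+i-s)}\right)^{1/2}$, which amounts to bounding $\|k-\langle k,e\rangle e\|^{-2}$ by $C_{e}^{-2}\|k\|^{\tau}$ rather than $C_{e}^{-2}\|k\|^{2\tau}$ --- under a literal reading of \eqref{E: diophantine} the same factor of two is missing there, so your accounting is in fact the more careful one. The discrepancy disappears, and both your argument and the paper's close, if either \eqref{E: diophantine} is interpreted as a lower bound on $\|k-\langle k,e\rangle e\|^{2}$ (equivalently, with exponent $-\tau/2$), or the hypothesis is strengthened to $\tau < \frac{1}{2}\left(s-\frac{d}{2}-2\right)$. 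As written, however, your step ``$\sigma+2\tau\leq s$ for some $\sigma>\frac{d}{2}+2$'' is a real gap: with the estimates you use, the conclusion $V_{e}\in C^{2}(\mathbb{T}^{d})$ genuinely requires $s > \frac{d}{2}+2+2\tau$.
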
  

Concerning the generality of the assumption \eqref{E: diophantine}, see \cite{homogenization and boundary layers}, where it is shown that $\mathcal{H}^{d-1}$-a.e.\ $e \in S^{d-1}$ satisfies such an estimate for any given $\tau > \frac{1}{d -1}$.  More precisely, if $A(C_{e},\tau)$ is the set of all such $e$, then there is a constant $B(d,\tau) > 0$ such that
	\begin{equation*}
		\mathcal{H}^{d-1}(S^{d-1} \setminus A(C_{e},\tau)) \leq B(d,\tau) C_{e}^{d-1}.
	\end{equation*}

	\begin{proof}  Define $\hat{V}_{e} : \mathbb{Z}^{d} \to \mathbb{C}$ by $\hat{V}_{e}(0) = 0$ and
		\begin{equation*} 
			\hat{V}_{e}(k) = - \frac{\hat{m}(k)}{4 \pi^{2} \|k - \langle k, e \rangle e\|^{2}}.
		\end{equation*}
	Since $m(\cdot,e) \in H^{s}(\mathbb{T}^{d})$ and $s > \tau + \frac{d}{2} + 2$, for each $i \in \{0,1,2,\}$, we have
		\begin{equation*}
			\sum_{k \in \mathbb{Z}^{d}} \|k\|^{i} |\hat{V}_{e}(k)| \leq C_{e}^{-2} \left(\sum_{k \in \mathbb{Z}^{d}}  \|k\|^{2(\tau + i - s)} \right)^{\frac{1}{2}} \left(\sum_{k \in \mathbb{Z}^{d}} \|k\|^{2s} |\hat{m}(k,e)|^{2} \right)^{\frac{1}{2}} < \infty.
		\end{equation*}
	Thus, we can define $V_{e} \in C^{2}(\mathbb{T}^{d})$ by 
		\begin{equation*}
			V_{e}(y) = \sum_{k \in \mathbb{Z}^{d} \setminus \{0\}} \hat{V}_{e}(k) e^{i2 \pi \langle k ,y \rangle}
		\end{equation*}  
	and then 
		\begin{equation*}
			(\text{Id} - e \otimes e) D^{2}V_{e}(y) = - \sum_{k \in \mathbb{Z}^{d}} 4 \pi^{2}( k \otimes k - \langle k, e \rangle e \otimes k) \hat{V}_{e}(k) e^{i2 \pi \langle k, y \rangle}.
		\end{equation*}
	In particular, by construction, $V_{e}$ is a solution of \eqref{E: smooth corrector}.  Since $k - \langle k,e \rangle e \neq 0$ for each $k \in \mathbb{Z}^{d}$, a straightforward argument shows that $V$ is unique up to the addition of a constant.
	\end{proof}  
	

\section{Construction of Some Finsler Norms} \label{A: geometric construction}

To construct the Finsler norms of Proposition \ref{P: norm}, we proceed by constructing appropriate convex perturbations of the unit ball $B(0,1)$.

Recall that if $\mathcal{O}$ is a convex, open subset of $\mathbb{R}^{d}$ and $0 \in \mathcal{O}$, then there is a unique Finsler norm $\psi_{\mathcal{O}}$ such that $\{\psi_{\mathcal{O}} < 1\} = \mathcal{O}$.  We will prove Proposition \ref{P: norm} by building a sequence $\{\mathcal{O}_{N}\}$ of such sets, each of which is obtained from $B(0,1)$ by flattening its boundary at the points $\{e_{1},\dots,e_{N}\}$.  Once this is done, it will be easy to see that the corresponding Finsler norms $\{\psi_{N}\}_{N \in \mathbb{N}}$ have the desired properties.

In what follows, we define $\varphi : [-1,1] \to [0,1]$ by $\varphi(x) = \sqrt{1 - x^{2}}$.  We will take advantage of the fact that if $e \in S^{d-1}$, then the map $B(0,1) \cap \langle e \rangle^{\perp} \ni x' \mapsto x' + \varphi(x')e$ parametrizes a neighborhood of $e$ in $S^{d-1}$.  In particular, we build $\mathcal{O}_{N}$ by perturbing these maps.

\subsection{Graphs}  To start with, we modify $\varphi$ so that it is flat at the points $\{e_{1},\dots,e_{N}\}$.  Recall that $\varphi'$ and $\varphi''$ are given by
	\begin{equation*}
		\varphi'(x) = -\frac{x}{\sqrt{1 - x^{2}}}, \quad \varphi''(x) = - \frac{1}{\sqrt{1 - x^{2}}} + \frac{x^{2}}{(1 - x^{2})^{\frac{3}{2}}}.
	\end{equation*}  

Let $\eta : \mathbb{R} \to [0,1]$ be a smooth function satisfying the following:
	\begin{equation*}
		\eta(x) = 1 \quad \text{if} \, \, |x| \leq 1, \quad \eta(x) = 0 \quad \text{if} \, \, |x| \geq 2, \quad \eta(-x) = \eta(x), \quad \eta' > 0 \quad \text{in} \, \, (-2,-1).
	\end{equation*}
	
Given $\epsilon > 0$, define the modified parametrization $\varphi_{\epsilon} : [-1,1] \to \mathbb{R}$ by 
	\begin{equation*}
		\varphi_{\epsilon}(x) = \int_{-1}^{x} \left(1 - \eta(\epsilon^{-1}y)\right) \varphi'(y) \, dy.
	\end{equation*}

	\begin{prop} \label{P: main construction}  $\varphi_{\epsilon}$ is $C^{2}$ in $[-1,1]$ and it satisfies the following conditions:
		\begin{itemize}
			\item[(i)] $0 \leq \varphi_{\epsilon} \leq 1$ in $[-1,1]$,
			\item[(ii)] $\varphi_{\epsilon}''$ is given by $\varphi_{\epsilon}''(x) = (1 - \eta(\epsilon^{-1}x)) \varphi''(x) - \epsilon^{-1} \eta'(\epsilon^{-1} x) \varphi'(x)$ and, for $\epsilon < 1/4$, we have
				\begin{equation*}
					\|\varphi_{\epsilon}''\|_{L^{\infty}([-1,1])} \leq \|\varphi''\|_{L^{\infty}([-1,1])} + \frac{4}{\sqrt{3}} \|\eta'\|_{L^{\infty}(\mathbb{R})}.
				\end{equation*}
			\item[(iii)] For each $x \in [-2\epsilon,2\epsilon]$, $\varphi_{\epsilon}(x) \geq \sqrt{1 - 4 \epsilon^{2}}$.
		\end{itemize}
	\end{prop}  
	
		\begin{proof}  First, notice that $\varphi_{\epsilon}(-1) = 0$.  Furthermore, since $\varphi_{\epsilon}'(-x) = -\varphi_{\epsilon}'(x)$, it follows that $\varphi_{\epsilon}(x) = \varphi_{\epsilon}(-x)$, which gives $\varphi_{\epsilon}(1) = 0$.  
		
		Next, observe that $\varphi_{\epsilon}'(x) = 0$ if and only if $|x| \leq \epsilon$.  Thus, the maximum of $\varphi_{\epsilon}$ is attained in $[-\epsilon,\epsilon]$.  Furthermore, 
			\begin{equation*}
				\|\varphi_{\epsilon}\|_{L^{\infty}([-1,1])} = \varphi_{\epsilon}(0) \leq \int_{-1}^{0} \varphi'(y) \, dy = 1.
			\end{equation*}
			
		Concerning (ii), the equation for $\varphi_{\epsilon}''$ is a direct consequence of the chain rule, and the second term can be estimated as follows:
			\begin{align*}
				\epsilon^{-1} |\eta'(\epsilon^{-1}x) \varphi'(x)| &\leq \|\eta'\|_{L^{\infty}(\mathbb{R})} \frac{2}{\sqrt{1 - x^{2}}} \chi_{[-2\epsilon,2\epsilon]}(x) \leq \frac{4}{\sqrt{3}} \|\eta'\|_{L^{\infty}(\mathbb{R})}.
			\end{align*}
		
		Finally, if $x \in [-2\epsilon,0]$, then
			\begin{equation*}
				\varphi_{\epsilon}(x) \geq \int_{-1}^{-2\epsilon} \varphi_{\epsilon}'(y) \, dy = \varphi(-2 \epsilon) = \sqrt{1 - 4 \epsilon^{2}}.
			\end{equation*}
		Since $\varphi_{\epsilon}(-x) = \varphi_{\epsilon}(x)$, this proves (iii).
	\end{proof}

\subsection{Construction of $\mathcal{O}_{N}$}  Let $\epsilon_{N} \in (0,1/4)$ be a free variable.  

We define the convex set $\mathcal{O}_{N} \subseteq B(0,1)$ as follows: given $j \in \{1,2,\dots,N\}$, define $\varphi^{e_{j}}_{N} : B(0,1) \cap \langle e_{j} \rangle^{\perp} \to \mathbb{R}$ by
	\begin{equation*}
		\varphi^{e_{j}}_{N}(x') = \varphi_{\epsilon_{N}}(\|x'\|).
	\end{equation*}Notice that $\varphi^{e_{j}}_{\epsilon}(x') e_{j} + x' = \sqrt{1 - \|x'\|^{2}} e_{j} + x'$ if $\|x'\| \geq 2 \epsilon_{N}$.  Thus, it follows that there is an $\epsilon_{N}' \in (0,1/4)$ such that if $\epsilon_{N} \in (0,\epsilon_{N}')$ and we define $\mathcal{G}_{N}$, $\mathcal{B}_{N}$, and $\mathcal{S}_{N}$ by 
	\begin{align*}
		\mathcal{G}_{N} &= \bigcup_{j = 1}^{N} \left\{\varphi^{e_{j}}_{N}(x') e_{j} + x' \, \mid \, x' \in \overline{B(0,2\epsilon_{N})} \cap \langle e_{j} \rangle^{\perp} \right\}, \\
		\mathcal{B}_{N} &= \bigcup_{j = 1}^{N} \left\{ \varphi(\|x'\|)e_{j} + x' \, \mid \, x' \in \overline{B(0,2\epsilon_{N})} \cap \langle e_{j} \rangle^{\perp} \right\}, \\
		\mathcal{S}_{N} &= \mathcal{G}_{N} \cup (S^{d-1} \setminus \mathcal{B}_{N}),
	\end{align*}
then $\mathcal{S}_{N}$ is a compact, connected $C^{2}$ hypersurface in $\mathbb{R}^{d}$.  Furthermore, for each $j \in \{1,2,\dots,N\}$, the function $x' \mapsto x' + \varphi_{N}^{e_{j}}(x') e_{j}$ is a parametrization of $\mathcal{S}_{N}$.  

Being connected, $\mathcal{S}_{N} = \partial \mathcal{O}_{N}$ for some bounded, connected open set $\mathcal{O}_{N}$ (see \cite[Section 2.5]{guillemin pollack}, \cite{mapping degree theory}, or argue directly using the fact that $\mathcal{S}_{N}$ is obtained by perturbing parametrizations of $S^{d-1}$).  Since $\varphi^{e_{j}}_{N}$ is concave for each $j$, it follows that, for each $x \in \overline{\mathcal{O}_{N}}$, there is an $r > 0$ such that $\overline{\mathcal{O}_{N}} \cap B(x,r)$ is convex.  Therefore, by the Tietze-Nakajima Theorem (cf.\ \cite[Section 2]{tietze nakajima exposition}), $\mathcal{O}_{N}$ is a convex subset of $\mathbb{R}^{d}$.

Next, we note that, by making $\epsilon_{N} > 0$ smaller if necessary, we can ensure that the outward normal $\nu$ to $\mathcal{S}_{N}$ points in the $e_{j}$ direction for some $j \in \{1,2,\dots,N\}$ if and only if the corresponding point is in the flat part of $\varphi^{e_{j}}_{\epsilon_{N}}$.  This is the content of the next result:

	\begin{prop} \label{P: key proposition for flat parts}  Let $\nu$ denote the outward normal vector to $\partial \mathcal{O}_{N}$.  Making $\epsilon_{N}$ smaller if necessary, given $p \in \mathcal{S}_{N}$, we have $\nu(p) = e_{j}$ for some $j \in \{1,2,\dots,N\}$ if and only if $p = x' + \varphi^{e_{j}}_{N}(x') e_{j}$ for some $x' \in \overline{B(0,\epsilon_{N})} \cap \langle e_{j} \rangle^{\perp}$.  \end{prop}
	
		\begin{proof}  Fix $k \in \{1,2,\dots,N\}$.  If $x' \in B(0,1) \cap \langle e_{k} \rangle^{\perp}$, $\|x'\| \geq 2 \epsilon_{N}$, and $\tilde{p} := \varphi^{e_{k}}_{N}(x') e_{k} + x' \in \mathcal{S}_{N}$, then $\nu(\tilde{p}) = \tilde{p}$.  Thus, by construction, $\nu(p) = e_{k}$ only if $p = \varphi^{e_{k}}_{N}(x') e_{k} + x'$ for some $x' \in B(0,2\epsilon_{N}) \cap \langle e_{k} \rangle^{\perp}$.  
		
		It follows that $\nu(p)$ is given by 
			\begin{equation*}
				\nu(p) = \frac{e_{k} - D\varphi^{e_{k}}_{N}(x')}{\sqrt{1 + \|D\varphi^{e_{k}}_{N}(x')\|^{2}}}.
			\end{equation*}
		However, if $\epsilon_{N}$ is small enough, then the relation $e_{\ell} \neq e_{j}$ for $\ell \neq j$ forces $k = j$.  Indeed,
			\begin{align*}
				\left\| \nu(p) - e_{k} \right\| &\leq (\sqrt{1 + \|D\varphi^{e_{k}}_{\epsilon_{N}}(x')\|^{2}} - 1) + \|D\varphi^{e_{k}}_{\epsilon_{N}}(x')\|.
			\end{align*}
		Since there is a $C > 0$ independent of $\epsilon_{N}$ and $k$ such that $\|D\varphi^{e_{k}}_{N}(x')\| \leq C \epsilon_{N}$ for $x' \in B(0,2 \epsilon_{N}) \cap \langle e_{k} \rangle^{\perp}$, it follows that there is an $\epsilon_{N}'' \in (0,1/2)$ such that if $\epsilon_{N} < \epsilon_{N}''$, then $\nu(p) = e_{j}$ only if $k = j$.  
		
		Finally, we claim that $\|x'\| \leq \epsilon_{N}$.  Indeed, if $\epsilon_{N} < \|x'\| \leq 2\epsilon_{N}$, then $\nu(p)$ is given by 
			\begin{equation*}
				\nu(p) = \frac{e_{j} + (1 - \eta(\epsilon_{N}^{-1} x'))(1 - \|x'\|^{2})^{-\frac{1}{2}} x'}{\sqrt{1 + \|D\varphi^{e_{k}}_{N}(x')\|^{2}}}
			\end{equation*}
		and this gives $(\text{Id} - e_{j} \otimes e_{j}) \nu(p) \neq 0$.  Therefore, $\nu(p) = e_{j}$ only if $\|x'\| \leq \epsilon_{N}$.  
		\end{proof}    

%
%
%
%
\subsection{Proof of Proposition \ref{P: norm}}  Let $\psi_{N} : \mathbb{R}^{d} \to [0,\infty)$ be the Minkowski gauge associated with $\mathcal{O}_{N}$, that is,
	\begin{equation*}
		\psi_{N}(p) = \inf \left\{ \alpha > 0 \, \mid \, \alpha^{-1} p \in \mathcal{O}_{N} \right\}.
	\end{equation*}
$\psi_{N}$ is the unique Finsler norm with $\{\psi_{N} < 1\} = \mathcal{O}_{N}$.  It has the following properties:

	\begin{prop}  (i)  For each $e \in S^{d-1}$, $1 \leq \psi_{N}(e) \leq (1 - 4 \epsilon_{N}^{2})^{-\frac{1}{2}}$.  
	
	(ii)  Given $p \in \mathbb{R}^{d} \setminus \{0\}$, if $\widehat{D\psi}_{N}(p) = e_{i}$ for some $i \in \{1,2,\dots,N\}$, then 
		\begin{equation*}
			D^{2}\psi_{N}(p) = 0.
		\end{equation*}
	
	(iii)  There is a $c_{0} > 0$ such that $D^{2}\psi_{N}(e) \leq c_{0} (\text{Id} - e \otimes e)$ for each $e \in S^{d-1}$.    
	\end{prop}  
	
	Notice that this implies Proposition \ref{P: norm}.  
	
		\begin{proof}  First, we prove (i).  Assume that $e \in S^{d-1}$.  Choose $\kappa > 0$ such that $\kappa e \in \partial \mathcal{O}_{N}$.  If $\kappa e = \varphi_{N}^{e_{i}}(x')e_{i} + x'$ for some $i \in \{1,2,\dots,N\}$ and $x' \in B(0,1) \cap \langle e_{i} \rangle^{\perp}$, then Proposition \ref{P: main construction}, (iii) implies
			\begin{equation*}
				1 \geq \|\kappa e\|^{2} = \|x'\|^{2} + \varphi_{\epsilon_{N}}^{e_{i}}(x')^{2} \geq 1 - 4 \epsilon^{2}.
			\end{equation*}
		From this, we find 
			\begin{equation*}
				1 \leq \kappa^{-1} = \psi_{N}(e) \leq \frac{1}{\sqrt{1 - 4 \epsilon^{2}}}.
			\end{equation*}
		Otherwise, if $\kappa e \in S^{d-1} \cap \partial \mathcal{O}_{N}$, then $\kappa = 1$  and $\psi_{N}(e) = \psi_{N}(\kappa e) = 1$.
			
		Next, we tackle (ii).  Suppose that $p \in \mathbb{R}^{d}$ and $\widehat{D \psi}_{N}(p) = e_{i}$ for some $i \in \{1,2,\dots,N\}$.  By homogeneity, there is a $\gamma > 0$ such that $D\psi_{N}(p) = \gamma \nu(\psi_{N}(p)^{-1}p)$.  Hence $\nu(\psi_{N}(p)^{-1} p) = e_{i}$ and we can invoke Proposition \ref{P: key proposition for flat parts} to find that $p = x' + \varphi^{e_{i}}_{N}(x') e_{i}$ for some $x' \in \overline{B(0,\epsilon_{N})} \cap \langle e_{i} \rangle^{\perp}$.  From this, the flatness of $\varphi_{N}^{e_{i}}$ in $B(0, \epsilon_{N}) \cap \langle e_{i} \rangle^{\perp}$ implies that $D^{2}\psi_{N}(\psi_{N}(p)^{-1} p) = 0$, and then homogeneity implies $D^{2}\psi_{N}(p) = 0$.  
		
		Finally, concerning (iii), we note that the $\epsilon$-independent bounds on $\varphi''_{\epsilon}$ in Proposition \ref{P: main construction} give corresponding $N$-independent bounds on $D^{2}\varphi^{e_{j}}_{N}$, and then this readily shows that $D^{2}\psi_{N}$ is bounded on $\{\psi_{N} = 1\}$ independently of $N$.  Using homogeneity and (i), this gives an $N$-independent bound on $D^{2}\psi_{N}$ in $S^{d-1}$.  
		\end{proof}

\section*{Acknowledgements}  

It is a pleasure to acknowledge P.E.\ Souganidis for helpful discussions and considerable patience.  The author would also like to thank W.M. Feldman and I.C. Kim, whose correspondence and suggestions led to a number of significant improvements to the paper, ultimately culminating in the proof of Theorem \ref{T: comparison}.

\end{document}